\newcommand*{\mailto}[1]{\href{mailto:#1}{\nolinkurl{#1}}}
\newcommand{\arxiv}[1]{\href{http://arxiv.org/abs/#1}{arXiv:#1}}
\numberwithin{equation}{section}
\newtheorem{theorem}{Theorem}[section]
\newtheorem{proposition}[theorem]{Proposition}
\newtheorem{lemma}[theorem]{Lemma}
\newtheorem{corollary}[theorem]{Corollary}
\newtheorem{definition}[theorem]{Definition}
\newtheorem{hypothesis}[theorem]{Hypothesis}
\theoremstyle{remark}
\newtheorem{remark}[theorem]{Remark}
\newcommand{\beq}{\begin{equation}}
\newcommand{\enq}{\end{equation}}
\newcommand{\C}{{\bbC}}
\newcommand{\bbC}{{\mathbb{C}}}
\newcommand{\bbN}{{\mathbb{N}}}
\newcommand{\bbR}{{\mathbb{R}}}
\newcommand{\bbT}{{\mathbb{T}}}
\newcommand{\bbZ}{{\mathbb{Z}}}
\newcommand{\bsA}{{\boldsymbol{A}}}
\newcommand{\bsB}{{\boldsymbol{B}}}
\newcommand{\bsC}{{\boldsymbol{C}}}
\newcommand{\bsD}{{\boldsymbol{D}}}
\newcommand{\bsH}{{\boldsymbol{H}}}
\newcommand{\bsI}{{\boldsymbol{I}}}
\newcommand{\bsP}{{\boldsymbol{P}}}
\newcommand{\bsT}{{\boldsymbol{T}}}
\newcommand{\cB}{{\mathcal B}}
\newcommand{\cE}{{\mathcal E}}
\newcommand{\cF}{{\mathcal F}}
\newcommand{\cH}{{\mathcal H}}
\newcommand{\cK}{{\mathcal K}}
\newcommand{\cN}{{\mathcal N}}
\DeclareMathOperator{\dom}{dom}
\DeclareMathOperator{\tr}{tr}
\DeclareMathOperator*{\nlim}{n-lim}
\DeclareMathOperator*{\slim}{s-lim}
\DeclareMathOperator*{\sgn}{sgn}
\renewcommand{\Re}{\text{\rm Re}}
\renewcommand{\Im}{\text{\rm Im}}
\renewcommand{\ln}{\text{\rm ln}}
\newcommand{\pv}{\text{\rm p.v.}}
\newcommand{\loc}{\operatorname{loc}}
\newcommand{\ind}{\operatorname{index}}
\newcommand{\no}{\notag}
\newcommand{\lb}{\label}
\newcommand{\f}{\frac}
\newcommand{\ol}{\overline}
\newcommand{\wti}{\widetilde}
\newcommand{\bi}{\bibitem}
\newcommand{\mnn}[1]{M^{m\times m}\left(#1\right)}
\newcommand{\bca}{\left(\begin{array}{c}}
\newcommand{\eca}{\end{array}\right)}
\begin{document}
\title{Trace Formulas for a Class of non-Fredholm Operators: A Review}

\author[Carey]{Alan Carey}  
\address{Mathematical Sciences Institute, Australian National University, 
Kingsley St., Canberra, ACT 0200, Australia
and School of Mathematics and Applied Statistics, University of Wollongong, NSW, Australia,  2522}  
\email{\mailto{acarey@maths.anu.edu.au}}
\urladdr{\url{http://maths.anu.edu.au/~acarey/}}
  
\author[F.\ Gesztesy]{Fritz Gesztesy}
\address{Department of Mathematics,
University of Missouri, Columbia, MO 65211, USA}
\address{Present address: Department of Mathematics
Baylor University, One Bear Place \#97328,
Waco, TX 76798-7328, USA}
\email{\mailto{Fritz\_Gesztesy@baylor.edu}}
\urladdr{\url{http://www.baylor.edu/math/index.php?id=935340}}

\author [Grosse]{Harald Grosse}
\address{Faculty of Physics, University of Vienna, Boltzmanngasse 5, A-1090 Vienna, Austria}
\email{harald.grosse@univie.ac.at}

\author[Levitina]{Galina Levitina} 
\address{School of Mathematics and Statistics, UNSW, Kensington, NSW 2052,
Australia} 
\email{\mailto{g.levitina@student.unsw.edu.au}}

\author[Potapov]{Denis Potapov} 
\address{School of Mathematics and Statistics, UNSW, Kensington, NSW 2052,
Australia} 
\email{\mailto{d.potapov@unsw.edu.au}}

\author[Sukochev]{Fedor Sukochev}
\address{School of Mathematics and Statistics, UNSW, Kensington, NSW 2052,
Australia} 
\email{\mailto{f.sukochev@unsw.edu.au}}

\author[Zanin]{Dmitriy Zanin} 
\address{School of Mathematics and Statistics, UNSW, Kensington, NSW 2052,
Australia} 
\email{\mailto{d.zanin@unsw.edu.au}}
\thanks{Submitted to {\it Rev. Math. Phys.}}


\date{\today}
\subjclass[2010]{Primary 47A53, 58J30; Secondary 47A10, 47A40.}
\keywords{Fredholm and Witten index, trace formulas, spectral shift function.}

\begin{abstract} 
Take a one-parameter family of self-adjoint Fredholm operators $\{A(t)\}_{t\in \bbR}$ 
on a Hilbert space 
$\mathcal H$, joining endpoints $A_\pm$.  There is a long history of work on the question of whether the spectral flow along this path
is given by the index of the operator $\bsD_{\bsA}^{}= (d/d t) +\bsA$ acting in 
$L^2(\bbR; \mathcal H)$, where $\bsA$ denotes the multiplication operator
$(\bsA f)(t) = A(t)f(t)$ for $f\in \dom(\bsA)$. Most results are about the case
where the operators $A(\cdot)$ have compact resolvent. In this article we review what is known
when these operators have some essential spectrum and describe some new results.

Using the operators
$\bsH_1=\bsD_\bsA^*\bsD^{}_\bsA$, $\bsH_2=\bsD_\bsA^{}\bsD_\bsA^*$, 
 an abstract trace formula for Fredholm  operators with  essential spectrum was proved in \cite{GLMST11}, extending a result of Pushnitski \cite{Pu08}, although, still under strong 
 hypotheses on $A(\cdot)$:
$$ 
 \tr_{L^2(\bbR; \mathcal H)}
\big((\bsH_2 - z \, \bsI)^{-1}-(\bsH_1 - z \, \bsI)^{-1}\big) 
 = \frac{1}{2z}\tr_{L^2(\mathcal H)} (g_z(A_+)-g_z(A_-)),  
$$
where $g_z(x)=x(x^2-z)^{-1/2}$, $x \in \bbR$, $z \in \bbC \backslash [0,\infty)$. Associated to the pairs $(\bsH_2, \bsH_1)$ and 
$(A_+,A_-)$ are Krein spectral shift functions
$\xi(\, \cdot \, ; \bsH_2, \bsH_1)$ and $\xi(\, \cdot \, ; A_+,A_-)$ respectively.
From the trace formula it was shown that there is a second, 
Pushnitski-type, formula: 
$$
\xi(\lambda; \bsH_2, \bsH_1)=\frac{1}{\pi}\int_{-\lambda^{1/2}}^{\lambda^{1/2}}
\frac{\xi(\nu; A_+,A_-)\, d\nu}{(\lambda-\nu^2)^{1/2}} \, 
\text{  for a.e.~$\lambda>0$.} 
$$
This can be employed to establish the desired equality, 
$$
{\textit Fredholm \; index = \xi(0; A_+,A_-) = spectral flow} 
$$  
This equality was generalized to non-Fredholm operators in \cite{CGPST15} in the form 
$$
{\textit Witten \; index = [\xi_R(0; A_+,A_-) + \xi_L(0; A_+,A_-)]/2},  
$$
replacing the Fredholm index on the LHS by the Witten index of $\bsD_{\bsA}$ and 
$\xi(0; A_+,A_-)$ on the RHS by an appropriate arithmetic mean (assuming $0$ is a right and  left Lebesgue point for $\xi(\, \cdot \, ; A_+,A_-)$ denoted by $\xi_R(0; A_+,A_-)$ and 
$\xi_L(0; A_+,A_-)$, respectively).  But this applies only under
the restrictive assumption that the endpoint $A_+$ is a relatively trace class perturbation 
of $A_-$ (ruling out general differential operators).

In addition to reviewing this previous work we  describe in this article some extensions  
using  a $(1+1)$-dimensional setup, where $A_\pm$ are non-Fredholm differential operators. 
By a  careful analysis  we prove,  for a class of examples, that the preceding trace formula still 
holds in this more general situation. Then we prove that the  Pushnitski-type formula for 
spectral shift functions also holds and this then gives the equality of spectral shift functions 
in the form
$$
\xi(\lambda; \bsH_2, \bsH_1) = \xi(\nu; A_+,A_-) \, 
\text{  for a.e.~$\lambda>0$ and a.e.~$\nu \in \bbR$,} 
$$
for the $(1+1)$-dimensional model operator at hand. This shows that neither the relatively trace class perturbation assumption nor the Fredholm assumption are required if one works with spectral shift functions. The results support the view that the spectral shift function should be a replacement for the spectral flow in certain non-Fredholm situations and  also point the way to the study of higher-dimensional cases.
We discuss the connection with summability questions in Fredholm modules in an appendix. 

\end{abstract}

\date{\today}
\maketitle

\newpage 

{\scriptsize{\tableofcontents}}

\section{Introduction and Review} \lb{s1} 

The issue of the relationship between the spectral flow and the Fredholm index was first raised in the work of Atiyah--Patodi--Singer \cite{APS76} and settled in the most definitive fashion for certain self-adjoint differential operators with compact resolvent in a paper of Robbin--Salamon, \cite{RS95}. For differential operators on noncompact manifolds it is typically the case that they possess some essential spectrum. An extension of the result of \cite{RS95} to this situation and its relationship to scattering theory was initiated in \cite{GLMST11} following \cite{Pu08}. However, the key assumption in \cite{GLMST11} is that one considers the spectral flow between self-adjoint operators that differ by a relatively trace class perturbation.  This latter assumption is violated in general for differential operators (although, not necessarily for certain classes of pseudo-differential operators). Indeed, as the bulk of the available literature focuses on systems with purely discrete spectra, there is relatively little work available in the way of index formulas for operators with essential spectrum except for \cite{BGGSS87}, \cite{BMS88} and previous work by the present authors. Motivation for this study stems, for example, from the fact that the spectral flow is a useful tool in condensed matter theory
where the operators that arise do have some essential spectrum \cite{St96}.

In the first two sections of this article we review previous work and also initiate our main objective of explaining extensions of previous efforts
(in particular, \cite{GLMST11}) so as to apply to differential operators in higher dimensions.
We will focus on examples of the non-Fredholm case motivated by recent progress in 
\cite{{CGLPSZ14}}--\cite{CGPST15}.
  
\begin{remark} \lb{Dirac} The critical fact in connection with partial differential operators is the relative Schatten--von Neumann class constraint.
To describe this, suppose for example that we have the flat space Dirac-type operator $A_-$ acting in $L^2(\bbR^n) \otimes \bbC^m$ and a smooth, $m \times m$ matrix-valued bounded function $F:\bbR^n\to M^{m \times m}(L^{\infty}(\bbR^n) \cap C^{\infty}(\bbR^n))$, $m \in \bbN$.  Then $F$ acts as a bounded $m \times m$ matrix-valued multiplication operator on $L^2(\bbR^n) \otimes \bbC^m$. 
Under suitable decay conditions at infinity for $F$, the product $F (1+A_-^2)^{-s/2}$ is trace class for $s>n$ and no smaller value of $s$ (see \cite[Remark~4.3]{Si05}). \hfill $\diamond$
\end{remark}

Thus, in differential operator terms, \cite{CGPST15, GLMST11}
considered the zero-dimensional case
 because,  if we write in the notation of the remark, $A_+=A_-+F$, then 
 $F(1+A_-^2)^{-1/2}=(A_+-A_-)(1+A_-^2)^{-1/2}$ 
which is trace class only if $n=0$. In this article, following the ideas in \cite{CGLS14}, we consider
the situation when $(A_+-A_-)(1+A_-^2)^{-1/2}$ is Hilbert--Schmidt and show how this allows some general one-dimensional
examples. We note that our results point the way to an attack on the problem of partial differential operators in higher dimensions.

\subsection{The Model Operator.} 
To make the discussion precise we start by introducing the model operators
that form the basis of study in later sections. In this instance  $A_-$,  acting in the Hilbert 
space $L^2(\bbR)\otimes \bbC^m$, $m\in\bbN$, is the self-adjoint ``chiral Dirac operator'' 
\begin{equation} 
A_-=- i \frac{d}{dx} \otimes I_m, \quad \dom(A_-) = W^{1,2}(\bbR) \otimes \bbC^m.
\end{equation} 
For a matrix potential
$\Phi \in \mnn{L^\infty(\bbR)}$ with essentially bounded entries we will also use the 
abbreviation $\Phi$ for the associated bounded operator acting by multiplication on 
$L^2(\bbR)\otimes\bbC^m$. 

Under certain assumptions on a bounded real-valued function $\theta$ on $\bbR$ and a 
self-adjoint $m \times m$ matrix-valued function $\Phi$ on $\bbR$ we consider the family of operators
\begin{equation} 
A(t)=A_-+\theta(t) \Phi, \quad \dom(A(t)) = W^{1,2}(\bbR) \otimes \bbC^m, \; t \in \bbR,  
\end{equation} 
and the associated operator $\bsA$ on the Hilbert space 
$L^2(\bbR; dt; L^2(\bbR;dx))\otimes\bbC^m$, which we will identify with $L^2(\bbR^2; dtdx) \otimes \bbC^m$ (in short, with $L^2(\bbR^2) \otimes \bbC^m$), defined by 
\begin{align}\lb{def_bsA}
&(\bsA f)(t) = A(t) f(t) \, \text{ for a.e.~$t\in\bbR$,}   \no \\
& \, f \in \dom(\bsA) = \big\{g \in L^2(\bbR^2) \otimes \bbC^m\,\big|\,
g(t)\in W^{1,2}(\bbR) \otimes \bbC^m \text{ for a.e.~$t\in\bbR$,}     \\
& \quad\;\,\, t \mapsto A(t) g(t) \text{ is (weakly) measurable,} \,  
\int_{\bbR} \|A(t) g(t)\|_{L^2(\bbR) \otimes \bbC^m}^2 \, dt < \infty\big\}.   \no 
\end{align}
Our hypothesis ensures the existence of the asymptote 
\begin{equation} 
A_+=A_-+ \Phi, \quad \dom(A_+) = W^{1,2}(\bbR) \otimes \bbC^m,
\end{equation} 
as a norm resolvent limit of $A(t)$ as $ t \to \infty$. (Similarly, $A(t)$ converges in the norm resolvent sense to $A_-$ as $t \to - \infty$.) {\it We will show later that the operators 
$A_+$ and $A_-$ are unitarily equivalent and thus both have, as continuous spectrum, the whole real line. In particular, $A_{\pm}$ are not Fredholm.}

Next, we introduce the densely defined, closed operator $\bsD_\bsA^{}$ acting in 
$L^2(\bbR^2)\otimes\bbC^m$ by setting
\begin{equation} \lb{model1}
\bsD^{}_\bsA=\frac{d}{dt} \otimes I_m + \bsA, \quad 
\dom(\bsD^{}_{\bsA}) = W^{1,2}(\bbR^2) \otimes \bbC^m, 
\end{equation} 
with $I_m$ the identity operator in $\bbC^m$. 
We also define self-adjoint operators $\bsH_1$ and $\bsH_2$ acting in 
$L^2(\bbR^2) \otimes \bbC^m$, by 
$
\bsH_1=\bsD_\bsA^*\bsD_\bsA^{},\quad \bsH_2=\bsD_\bsA^{}\bsD_\bsA^*. 
$ 
Clearly  $A_\pm$ are one-dimensional differential operators and $\bsH_1,\bsH_2$ are 
two-dimensional, hence our terminology: this situation describes the $(1+1)$-dimensional case.

It should be noted that the family of bounded operators 
$\{B(t)=\theta(t) \Phi\}_{t\in\bbR}$ on $L^2(\bbR) \otimes \bbC^m$ do not satisfy the 
assumptions in \cite{Pu08}. 

Next, we review previous work starting with \cite{Pu08}.

{\it{\bf  The Pushnitski Assumptions.}
Let $\cH$ be a complex, separable Hilbert space. \\
$(i)$ Assume $A_- \in \cB(\cH)$ is self-adjoint in $\cH$. \\
$(ii)$ Suppose there exists a family of bounded self-adjoint operators $\{B(t)\}_{t\in\bbR}$ 
$($the allowed perturbations of $A_-$$)$ 
in $\cH$ with $B(\cdot)$ weakly locally absolutely continuous on $\bbR$, implying the 
existence of a family of bounded self-adjoint operators $\{B'(t)\}_{t\in\bbR}$ in $\cH$ such that for 
a.e.\ $t\in\bbR$,  
\begin{equation} 
\frac{d}{dt} (g,B(t) h)_{\cH} = (g,B'(t) h)_{\cH}, \quad g, h\in\cH.     \lb{2.1}
\end{equation} 
$(iii)$ Assume that $B'(t) \in \cB_1(\cH)$, $t \in \bbR$ $($cf.\ our choice of notation for trace ideals described in Subsection \ref{ss1.5}$)$, and 
\begin{equation}  \lb{2.2}
\int_\bbR \big\|B'(t)\big\|_{\cB_1(\cH)} \, dt < \infty.
\end{equation}}

Using these assumptions the trace formula and the Pushnitski-type formula stated in the abstract 
are proved.  These results motivated the paper \cite{GLMST11} which sought to prove analogous
results under weaker (i.e., relative trace class) conditions. For comparison we state the key assumption of \cite{GLMST11} on these perturbations that replaces item $(iii)$ of the Pushnitski assumptions.

{\it {\bf The GLMST assumptions.}~$(iii^\prime)$ Assume the relatively trace class perturbation assumption
\begin{equation}  
\big\|B'(t)(A_-^2 + I_{\cH})^{-1/2}\big\|_{\cB_1(\cH)} < \infty \mbox{ and }
\int_\bbR \big\|B'(t)(A_-^2 + I_{\cH})^{-1/2}\big\|_{\cB_1(\cH)}\, dt < \infty.
\end{equation} 
}

We emphasize that the operators $\{B(t)=\theta(t) \Phi\}_{t\in\bbR}$, though deceptively simple, 
satisfy neither $(iii)$ nor $(iii^\prime)$ (see Remarks \ref{Dirac} and \ref{not_rtc}).
This motivated the paper \cite{CGLS14} which seeks to provide an abstract framework for
generalizations of the Pushnitski results.

However \cite{CGLS14} alone is not enough to establish a 
trace formula of the kind stated in our abstract for the model operator (\ref{model1}).  Nor can we use 
\cite{CGPST15}, where non-Fredholm operators were studied, as it needs assumption 
$(iii^\prime)$ as well and so cannot be applied to our present context.

Remark \ref{Dirac} shows that 
one has, for the examples we study in this paper, a relatively Hilbert--Schmidt perturbation condition.
This Hilbert--Schmidt constraint also appears in  \cite{CGLS14} in an abstract setting where it is used to obtain a Pushnitski-type formula. 

While this progression, going from the $(0+1)$-dimensional case in \cite{GLMST11} to the $(1+1)$-dimensional case both here and in \cite{CGLS14} may appear incremental at first sight, approximation methods in \cite{CGLS14} and \cite{CGLNPS16} to make progress on the general case of model operators in higher dimensions are in preparation. Hence our interest in giving here an accessible exposition via a class of models  of this new  approach.

In the following we will explain further the relevance of previous papers to the current investigation as well as discuss in more detail the model operator that forms the main
focus of this article.

\subsection{The Witten Index.}
In this subsection we briefly review the notion of a (resolvent regularized) Witten index following 
\cite{CGLS14} and \cite{CGPST15}. 

We start by recalling the hypotheses used in  \cite{CGLS14}. 

\begin{hypothesis} \lb{h3.4} 
Suppose $\cH$ is a complex, separable Hilbert space with  $A_-$  self-adjoint on 
$\dom(A_-) \subseteq \cH$. \\
$(i)$ Suppose we have a family of bounded operators $A(t)=A_-+B(t)$, $t \in \bbR$, satisfying 
$\{B(t)\}_{t \in \bbR} \subset \cB(\cH)$, which is continuously differentiable in norm on 
$\bbR$ and such that 
\begin{equation}
\|B'(\cdot)\|_{\cB(\cH)} \in L^1(\bbR; dt).     \lb{intB'a}
\end{equation} 
$(ii)$ Suppose that $|B_+|^{1/2}(A_- -z_0 I)^{-1} \in \cB_2(\cH)$ for some $($and hence 
for all\,$)$ $z_0 \in \rho(A_-)$. $($Here $B_+ = \nlim_{t \to +\infty} B(t)$, and we again refer to our choice of notation for trace ideals described in Subsection \ref{ss1.5}.$)$ \\
$(iii)$ Assume that $\sup_{t \in \bbR} \|B'(t)\|_{\cB(\cH)} < \infty$. \\  
$(iv)$ Acting in the space $L^2(\bbR; \cH)$ we have the operators
$\bsD_{\bsA_-}^{}= \frac{d}{dt} + \bsA_-^{}$, $\bsH_0=\bsD_{\bsA_-}^*\bsD_{\bsA_-}^{}$,
$\bsD_{\bsA}= \frac{d}{dt} + \bsA$,
and $\bsB$, the operator of multiplication by the family $\{B(t)\}_{t\in\bbR}$. Suppose 
that $\bsA_- \bsB$ is bounded with respect to $\bsH_0$ with bound strictly 
less than one, that is, there exists $0 \leq a < 1$ and $b \in (0,\infty)$ such that 
\begin{equation}
\|\bsA_- \bsB f\|_{L^2(\bbR; \cH)} \leq a \|\bsH_0 f\|_{L^2(\bbR; \cH)} 
+ b \|f\|_{L^2(\bbR; \cH)}, \quad f \in \dom(\bsH_0).
\end{equation} 
$(v)$ Suppose that
\begin{align} \lb{rel_H-S}
\begin{split} 
& |B'(t)|^{1/2}(|A_-|+I)^{-1}\in\cB_2(\cH), \quad t \in \bbR,  \\ 
& \big\||B'(\cdot)|^{1/2}(|A_-|+I)^{-1}\big\|_{\cB_2(\cH)}\in L^2(\bbR; dt).
\end{split} 
\end{align} 
\end{hypothesis}

The definition of the Witten index used in the next result is taken from 
\cite{CGLS14} and is reviewed in the present article  
 in Section \ref{WI_section}. The following result also uses the Krein spectral shift 
 function for the pairs $(A_+, A_-)$ and $(\bsH_2, \bsH_1)$, 
and we  previously reviewed the relevant background on this in 
our conference proceedings article \cite{CGLS15}. For more detailed information 
we refer the reader to \cite{BY93}, \cite[Ch.~8]{Ya92}, \cite[Sect.~0.9]{Ya10}, here we just mention the following facts on the spectral shift function 
$\xi(\, \cdot \, ; A, A_0)$  corresponding to a pair of self-adjoint operators $(A,A_0)$ in 
some separable, complex Hilbert space $\cK$, under the assumption that for some (and hence for all\,) $z_0 \in \rho(A) \cap \rho(A_0)$, 
\begin{equation}
\big[(A - z_0 I_{\cK})^{-1} - (A_0 - z_0 I_{\cK})^{-1}\big] \in \cB_1(\cK).    \lb{B.26a} 
\end{equation}
Introduces the modified perturbation determinant,
\begin{align}
\begin{split} 
\wti D_{A/A_0}(z;z_0) = {\det}_{\cK} 
\big((A - z I_{\cK})(A - \ol{z_0} I_{\cK})^{-1} (A_0 - \ol{z_0} I_{\cK})(A_0 - z I_{\cK})^{-1}\big),&  \\     
z \in \rho(A) \cap \rho(A_0), \; \Im(z_0) > 0,&     \lb{B.27a}
\end{split} 
\end{align}
and notes that (cf.\ \cite[p.270]{Ya92}) 
\begin{equation}
\ol{\wti D_{A/A_0} (z; z_0)} = \wti D_{A/A_0} (\ol z; z_0)/ \wti D_{A/A_0} (z_0; z_0), \quad 
 \wti D_{A/A_0} (\ol{z_0}; z_0) =1, 
\end{equation}
and 
\begin{align}
\begin{split} 
{\tr}_{\cK} \big[(A - z I_{\cK})^{-1} - (A_0 - z I_{\cK})^{-1}\big] 
= - \f{d}{dz} \ln\big( \wti D_{A/A_0} (z; z_0)\big),&  \\ 
z \in \rho(A) \cap \rho(A_0), \; \Im(z_0) > 0.& 
\end{split}
\end{align}
In addition,
\begin{equation}
\f{\wti D_{A/A_0} (z; z_0)}{\wti D_{A/A_0} (\ol{z}; z_0)} 
= \f{\wti D_{A/A_0} (z; z_1)}{\wti D_{A/A_0} (\ol{z}; z_1)}, \quad   
z \in \rho(A) \cap \rho(A_0), \; \Im(z_0) > 0, \, \Im(z_1) > 0. 
\end{equation}
Then, defining
\begin{align}
\begin{split}
& \xi(\lambda; A,A_0; z_0) = (2\pi)^{-1} \lim_{\varepsilon \downarrow 0} 
\big[\Im\big(\ln\big(\wti D_{A/A_0} (\lambda + i \varepsilon; z_0)\big)\big)  \lb{B.31a} \\
& \hspace*{4.2cm} - \Im\big(\ln\big(\wti D_{A/A_0} (\lambda - i \varepsilon; z_0)\big)\big)\big]
\, \text{ for a.e.~$\lambda \in \bbR$,} 
\end{split}
\end{align}
one obtains for $z \in \rho(A) \cap \rho(A_0)$, $\Im(z_0) > 0$, $\Im(z_1) > 0$, 
\begin{align}
&  \xi(\, \cdot \,; A,A_0; z_0) \in L^1\big(\bbR; (\lambda^2 + 1)^{-1} d\lambda\big),    \\
& \ln\big(\wti D_{A/A_0} (z; z_0)\big) = \int_{\bbR} \xi(\lambda; A,A_0; z_0) d\lambda 
\big[(\lambda -z)^{-1} - (\lambda - \ol{z_0})^{-1}\big],    \\
& \xi(\lambda; A,A_0; z_0) = \xi(\lambda; A,A_0; z_1) + n(z_0,z_1) \, 
\text{ for some $n(z_0,z_1) \in \bbZ$,}     \lb{B.34a} \\
& {\tr}_{\cK} \big[(A - z I_{\cK})^{-1} - (A_0 - z I_{\cK})^{-1}\big] 
= - \int_{\bbR} \f{\xi(\lambda; A,A_0; z_0) d \lambda}{(\lambda - z)^2},    \\
& [f(A) - f(A_0)] \in \cB_1(\cK), \quad f \in C_0^{\infty}(\bbR),    \\
& {\tr}_{\cH} (f(A) - f(A_0)) = 
\int_{\bbR} \xi(\lambda; A,A_0; z_0) d\lambda \, f'(\lambda), 
\quad f \in C_0^{\infty}(\bbR)  
\end{align}
(the final two assertions can be greatly improved). In this context $\xi(\, \cdot \,; A,A_0; z_0)$ is defined up to an integer. The latter can be fixed giving rise to $\xi(\, \cdot \,; A,A_0)$ as discussed in Section \ref{s5}. 

\begin{theorem} \lb{t8.3} 
Assume Hypothesis \ref{h3.4} and assume that $0$ is a right 
and a left Lebesgue point of $\xi(\,\cdot\,\, ; A_+, A_-)$ $($denoted by $\xi_R(0; A_+,A_-)$ 
and $\xi_L(0; A_+, A_-)$, respectively\,$)$. In addition, consider 
\begin{equation} 
\bsH_1=\bsD_\bsA^*\bsD^{}_\bsA,\quad \bsH_2=\bsD_\bsA^{}\bsD_\bsA^*. 
\end{equation} 
Then $0$ is a right Lebesgue point of 
$\xi(\,\cdot\,\, ; \bsH_2, \bsH_1)$ $($denoted by $\xi_R(0; \bsH_2, \bsH_1)$$)$ 
and $W_r(\bsD_\bsA^{})$ exists and equals 
\begin{equation}
W_r(\bsD_\bsA^{}) = \xi_R(0; \bsH_2, \bsH_1) 
= [\xi_R(0; A_+,A_-) + \xi_L(0; A_+, A_-)]/2.     \lb{8.5a}
\end{equation}
In particular, if $0 \in \rho(A_+) \cap \rho(A_-)$, then $\bsD_\bsA^{}$ is Fredholm and
\begin{equation} 
\ind(\bsD_\bsA^{}) = W_r(\bsD_\bsA^{}) = \xi(0; A_+, A_-).     \lb{8.6} 
\end{equation} 
\end{theorem}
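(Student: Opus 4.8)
The plan is to run the three-step program underlying \cite{CGLS14} and \cite{CGPST15}: (a) produce the abstract trace formula for the pair $(\bsH_2,\bsH_1)$; (b) convert it into a Pushnitski-type relation between $\xi(\,\cdot\,;\bsH_2,\bsH_1)$ and $\xi(\,\cdot\,;A_+,A_-)$; and (c) identify the resolvent-regularized Witten index $W_r(\bsD_\bsA)$ with the right Lebesgue value of $\xi(\,\cdot\,;\bsH_2,\bsH_1)$ at $0$, evaluating the latter by letting $\lambda\downarrow 0$ in the Pushnitski formula.

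First I would check that Hypothesis \ref{h3.4} supplies exactly the ingredients for step (a). Items $(iii)$--$(iv)$ keep $\bsD_\bsA$ closed on $\dom(\bsD_{\bsA_-})$ and control $\bsH_1,\bsH_2$ relative to $\bsH_0=\bsD_{\bsA_-}^*\bsD_{\bsA_-}$, while $(ii)$ and \eqref{rel_H-S} furnish the Hilbert--Schmidt operators $|B_+|^{1/2}(|A_-|+I)^{-1}$ and $|B'(\cdot)|^{1/2}(|A_-|+I)^{-1}$ used below. Expanding $g_z(A_+)-g_z(A_-)$, and the resolvent difference for the $\bsH_j$, by the resolvent identity yields finite sums of operator products each of which, after pairing factors, carries two $\cB_2$ factors and is therefore trace class; hence $[g_z(A_+)-g_z(A_-)]\in\cB_1(\cH)$ and $[(\bsH_2-z\bsI)^{-1}-(\bsH_1-z\bsI)^{-1}]\in\cB_1(L^2(\bbR;\cH))$ for $z\in\bbC\backslash[0,\infty)$. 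Truncating $B(\cdot)$ to $B_n(\cdot)$ satisfying the Pushnitski hypotheses $(i)$--$(iii)$, for which
\[
\tr_{L^2(\bbR;\cH)}\big((\bsH_{2,n}-z\bsI)^{-1}-(\bsH_{1,n}-z\bsI)^{-1}\big)=\tfrac{1}{2z}\,\tr_{\cH}\big(g_z(A_{+,n})-g_z(A_-)\big)
\]
holds by \cite{Pu08}, and passing to the limit with the $\cB_1$/$\cB_2$ bounds from \eqref{intB'a} and \eqref{rel_H-S}, produces the same formula for $(\bsH_2,\bsH_1)$ and $(A_+,A_-)$.

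Next, for step (b), both sides of the trace formula are Stieltjes transforms: the left equals $-\int_0^\infty\xi(\lambda;\bsH_2,\bsH_1)(\lambda-z)^{-2}\,d\lambda$, and the right, after writing $\tr_{\cH}(g_z(A_+)-g_z(A_-))=\int_\bbR\xi(\nu;A_+,A_-)g_z'(\nu)\,d\nu$ and evaluating the explicit kernel $(2z)^{-1}g_z'(\nu)$, becomes (via Fubini, legitimate since $\xi(\,\cdot\,;A_+,A_-)\in L^1(\bbR;(\nu^2+1)^{-1}d\nu)$) another Stieltjes transform; uniqueness then gives $\xi(\lambda;\bsH_2,\bsH_1)=\pi^{-1}\int_{-\lambda^{1/2}}^{\lambda^{1/2}}\xi(\nu;A_+,A_-)(\lambda-\nu^2)^{-1/2}\,d\nu$ for a.e.\ $\lambda>0$. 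For step (c), the definition recalled in Section \ref{WI_section} gives $W_r(\bsD_\bsA)=\lim_{\varepsilon\downarrow 0}\varepsilon\,\tr_{L^2(\bbR;\cH)}((\bsH_1+\varepsilon\bsI)^{-1}-(\bsH_2+\varepsilon\bsI)^{-1})$; inserting the Stieltjes representation and an Abelian theorem of Gesztesy--Simon type shows this limit exists and equals $\xi_R(0;\bsH_2,\bsH_1)$ whenever the latter exists. To verify that existence and compute it, substitute $\nu=\lambda^{1/2}\sin\varphi$ in the Pushnitski formula, so its right side becomes $\pi^{-1}\int_{-\pi/2}^{\pi/2}\xi(\lambda^{1/2}\sin\varphi;A_+,A_-)\,d\varphi$; averaging over $\lambda\in(0,h)$, interchanging the order of integration, and using that $0$ is a right and a left Lebesgue point of $\xi(\,\cdot\,;A_+,A_-)$ (with dominated convergence supplied by local integrability of $\xi(\,\cdot\,;A_+,A_-)$) yields $\lim_{h\downarrow 0}h^{-1}\int_0^h\xi(\lambda;\bsH_2,\bsH_1)\,d\lambda=\tfrac12[\xi_R(0;A_+,A_-)+\xi_L(0;A_+,A_-)]$, i.e.\ $0$ is a right Lebesgue point of $\xi(\,\cdot\,;\bsH_2,\bsH_1)$ with this value, which together with the Abelian step proves \eqref{8.5a}. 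If moreover $0\in\rho(A_+)\cap\rho(A_-)$, a standard Weyl/decoupling argument shows $\sigma_{\mathrm{ess}}(\bsH_j)$ is bounded away from $0$ (it is governed by the invertible asymptotes $A_\pm^2$), so $0$ is at most an isolated eigenvalue of finite multiplicity of $\bsH_1$ and $\bsH_2$ and $\bsD_\bsA$ is Fredholm; then $W_r(\bsD_\bsA)=\ind(\bsD_\bsA)$ by the general principle that the regularized Witten index reduces to the Fredholm index when the latter is defined, while $\xi(\,\cdot\,;A_+,A_-)$ is an integer-valued constant near $0$, giving $\xi_R(0;A_+,A_-)=\xi_L(0;A_+,A_-)=\xi(0;A_+,A_-)$ and hence \eqref{8.6}.

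I expect the main obstacle to be step (a). Because Hypothesis \ref{h3.4} provides only a \emph{relatively Hilbert--Schmidt} perturbation, condition \eqref{rel_H-S} rather than the relatively trace class condition $(iii^\prime)$ used in \cite{CGPST15}, the trace-class memberships and the trace formula cannot be read off a single-term expansion; one must group factors in pairs so each surviving product carries two $\cB_2$ factors, and then justify the truncation limit $B_n(\cdot)\to B(\cdot)$ and the interchange of trace and limit through the integrability in \eqref{intB'a} and \eqref{rel_H-S}. A secondary difficulty is the boundary passage $\lambda\downarrow 0$: since $0$ is assumed only a Lebesgue point of $\xi(\,\cdot\,;A_+,A_-)$, the limit interchanges in step (c) must be carried out via Ces\`aro/Abelian averages rather than pointwise, and one must extract the \emph{existence} of $W_r(\bsD_\bsA)$, not merely its value, from the Abelian theorem.
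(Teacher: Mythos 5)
Your overall three-step plan — principal trace formula, Pushnitski-type formula via Stieltjes inversion, then an Abelian/Ces\`aro argument turning the Lebesgue-point hypothesis into existence of $W_r(\bsD_\bsA)$ and the identity \eqref{8.5a} — is exactly the route the paper attributes to \cite{CGLS14}, \cite{CGPST15} and mirrors for its model in Sections \ref{s3}--\ref{WI_section}; your steps (b) and (c), including the substitution $\nu=\lambda^{1/2}\sin\varphi$ and the averaging over $(0,h)$, and the reduction of the Fredholm case \eqref{8.6}, are in line with that program.

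The genuine gap is in your step (a). Under Hypothesis \ref{h3.4} the only smallness of the perturbation is $|B_+|^{1/2}(A_--z_0)^{-1}\in\cB_2(\cH)$ together with \eqref{rel_H-S}, and the inclusion $[g_z(A_+)-g_z(A_-)]\in\cB_1(\cH)$ does \emph{not} follow by ``pairing two $\cB_2$ factors''. In the expansion $R_{+,\lambda}-R_{-,\lambda}=-R_{-,\lambda}B_+R_{-,\lambda}+R_{+,\lambda}B_+R_{-,\lambda}B_+R_{-,\lambda}$ the first-order term contains only a single factor of $B_+$: writing $B_+=|B_+|^{1/2}S|B_+|^{1/2}$ leaves one factor $|B_+|^{1/2}$ with no adjacent resolvent once the $\lambda$-integral is performed (the analogue of the term $\Phi(A_-^2-zI)^{-3/2}$ in \eqref{gir}), so abstractly it is only Hilbert--Schmidt; moreover, abstractly $\||B_+|^{1/2}R_{-,\lambda}\|_{\cB_2}$ has no decay in $\lambda$ — the decay $(\lambda-z)^{-1/4}$ in \eqref{norm_U_lambda} comes from the explicit one-dimensional kernel computation — so even the second-order term's $\lambda$-integral is not absolutely convergent from Hypothesis \ref{h3.4} alone. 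Nor can you fall back on trace-class invariance from the trace-class resolvent difference of $(A_+,A_-)$, because $g_z$ has distinct limits $\pm1$ at $\pm\infty$ (Appendix \ref{sA} shows how delicate functions of this type are). The paper itself flags \eqref{incl_rhs} as a surprising result and as its main advance, proves it only for the concrete $(1+1)$-model by exploiting $[A_-,\Phi]=-i\Phi'$, $\Phi\in\mnn{W^{1,1}(\bbR)\cap C_b(\bbR)}$, the gauge-transformation Lemma \ref{unit_equiv} and Birman--Solomyak $\ell^1(L^2)$ bounds (Proposition \ref{g_complex}), and states that the abstract framework of \cite{CGLS14} is not by itself enough to establish a trace formula of the form \eqref{principle}. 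Consequently your derivation of the principal trace formula under Hypothesis \ref{h3.4}, and with it the Krein identity $\tr_{\cH}(g_z(A_+)-g_z(A_-))=\int_\bbR\xi(\nu;A_+,A_-)g_z'(\nu)\,d\nu$ that feeds your Stieltjes-inversion step, is unjustified; the argument for Theorem \ref{t8.3} has to reach the Pushnitski-type relation through the truncations $A_{+,n}=A_-+P_nB_+P_n$ (for which the Pushnitski/GLMST trace formula holds) and control the limit on the left-hand side and at the level of spectral shift functions, rather than by asserting the right-hand-side trace-class inclusion in the abstract setting. Without repairing this, steps (b) and (c) have no starting identity.
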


\subsection{The Principle Trace Formula.} We summarize the main result of 
Sections \ref{s3} and \ref{s4}.
The principle trace formula was obtained in \cite{GLMST11} and \cite{Pu08}.  
There is a generalization in \cite{CGK} that applies to all space dimensions, that is, it handles 
Dirac-type operators in dimensions $n \in \bbN$ under certain technical assumptions. 
However, in this article we will not follow \cite{CGK} due to the complexity of the argument
given there and the fact that still further effort is needed to establish the results described here. 
In fact, for the case of the examples under discussion, a direct proof is somewhat
more instructive.

\begin{theorem}\lb{main1} Assume Hypothesis \ref{h3.1}, let 
$z\in\C\backslash [0,\infty)$ and $g_z(x)=x(x^2-z)^{-1/2}$, $x \in \bbR$. Then the following assertions hold:  
\begin{align}
& [g_z(A_+)-g_z(A_-)] \in\cB_1(L^2(\bbR)\otimes\bbC^m),\lb{incl_rhs}\\
& \big[(\bsH_2 - z \, \bsI)^{-1}-(\bsH_1 - z \, \bsI)^{-1}\big] 
\in \cB_1\big(L^2\big(\bbR^2) \otimes \bbC^m\big),\lb{incl_lhs}\\
& \tr_{L^2(\bbR^2) \otimes \bbC^m}
\big((\bsH_2 - z \, \bsI)^{-1}-(\bsH_1 - z \, \bsI)^{-1}\big)  \no \\
& \quad = \frac{1}{2z}\tr_{L^2(\bbR) \otimes \bbC^m} (g_z(A_+)-g_z(A_-)).  \lb{principle} 
\end{align}
\end{theorem}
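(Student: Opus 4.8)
The plan is to reduce the statement to trace‑class facts that are visible from the explicit form of the model, and then to evaluate the left‑hand trace by a computation fibered over the variable $t$. Throughout put $\partial_t := (d/dt)\otimes I_m$ and $\cH := L^2(\bbR)\otimes\bbC^m$, and record the elementary identities $\bsH_1 = -\partial_t^2 + \bsA^2 - \bsA'$, $\bsH_2 = -\partial_t^2 + \bsA^2 + \bsA'$, where $\bsA'$ denotes multiplication by $\theta'(t)\Phi$; in particular $\bsH_1 - \bsH_2 = -2\bsA'$ and
\[
(\bsH_2 - z\bsI)^{-1} - (\bsH_1 - z\bsI)^{-1} = -2\,(\bsH_1 - z\bsI)^{-1}\,\bsA'\,(\bsH_2 - z\bsI)^{-1}.
\]
Note also that $\bsD_{\bsA_-}^*\bsD_{\bsA_-} = \bsD_{\bsA_-}\bsD_{\bsA_-}^* = -\Delta\otimes I_m$ on $L^2(\bbR^2)\otimes\bbC^m$, so $\bsH_1,\bsH_2$ are uniformly elliptic second order operators on $\bbR^2$. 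For \eqref{incl_lhs}: under Hypothesis~\ref{h3.1} one has $\theta'\in L^1(\bbR)$ and $\Phi$ integrable, so $\bsA'$ factors as $\bsW^*\bsV\bsW$ with $\bsV$ bounded and $\bsW^*$ multiplication by the $L^2(\bbR^2)$‑function $|\theta'(t)|^{1/2}|\Phi(x)|^{1/2}$ (legitimate since the $t$‑ and $x$‑multiplications commute); ellipticity then gives $(\bsH_i - z\bsI)^{-1}\bsW^*$ and $\bsW(\bsH_i - z\bsI)^{-1}$ in $\cB_2(L^2(\bbR^2)\otimes\bbC^m)$, so the resolvent difference, being a product of two Hilbert--Schmidt operators, is trace class. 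For \eqref{incl_rhs}: writing $(A_\pm^2 - z)^{-1/2} = \tfrac{2}{\pi}\int_0^\infty (A_\pm^2 - z + s^2)^{-1}\,ds$, expanding the integrand of $g_z(A_+) - g_z(A_-)$ by the resolvent identity together with $A_+^2 - A_-^2 = A_-\Phi + \Phi A_- + \Phi^2$, and invoking the relatively Hilbert--Schmidt bound $\Phi(A_-^2 + I)^{-1/2}\in\cB_2(\cH)$ of Remark~\ref{Dirac} (with $n=1$), one finds each term trace class with an $s$‑integrable norm; alternatively one may use the unitary equivalence $A_+ = U A_- U^{-1}$, with $U$ the operator of multiplication by the unitary matrix solution of $U' = -i\Phi U$, $U(0) = I_m$.

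For the trace formula itself I would proceed as follows. From the intertwining relations $\bsD_\bsA(\bsH_1 - z\bsI)^{-1} = (\bsH_2 - z\bsI)^{-1}\bsD_\bsA$ and $\bsD_\bsA^*(\bsH_2 - z\bsI)^{-1} = (\bsH_1 - z\bsI)^{-1}\bsD_\bsA^*$ one obtains $\bsD_\bsA(\bsH_1 - z\bsI)^{-1}\bsD_\bsA^* = \bsI + z(\bsH_2 - z\bsI)^{-1}$ and $\bsD_\bsA^*(\bsH_2 - z\bsI)^{-1}\bsD_\bsA = \bsI + z(\bsH_1 - z\bsI)^{-1}$, hence
\[
z\big[(\bsH_2 - z\bsI)^{-1} - (\bsH_1 - z\bsI)^{-1}\big] = \bsD_\bsA(\bsH_1 - z\bsI)^{-1}\bsD_\bsA^* - \bsD_\bsA^*(\bsH_2 - z\bsI)^{-1}\bsD_\bsA.
\]
I would then substitute $\bsD_\bsA = \partial_t + \bsA$, expand, and take the trace using cyclicity repeatedly; this is legitimate because, once the localizing factor $\bsA'$ hidden in the resolvent difference is brought into view, every term collapses to a product of Hilbert--Schmidt operators, and because $[\partial_t,\bsA] = \bsA'$. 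Writing $R_i(t,s;z)\in\cB(\cH)$ for the integral kernel of $(\bsH_i - z\bsI)^{-1}$ in $t$, Step~1 also shows $R_2(t,t;z) - R_1(t,t;z)\in\cB_1(\cH)$ with $\cB_1(\cH)$‑norm integrable in $t$ and
\[
\tr_{L^2(\bbR^2)\otimes\bbC^m}\!\big((\bsH_2 - z\bsI)^{-1} - (\bsH_1 - z\bsI)^{-1}\big) = \int_{\bbR}\tr_\cH\!\big(R_2(t,t;z) - R_1(t,t;z)\big)\,dt.
\]
The goal is then to show that, after the rearrangement above, the $t$‑integrand equals $(2z)^{-1}$ times an exact $t$‑derivative $\tfrac{d}{dt}\,\tr_\cH\!\big(g_z(A(t)) - c_z\big)$, where $c_z$ is a fixed $t$‑independent comparison operator inserted only to make $g_z(A(t)) - c_z$ trace class; integrating in $t$ and using $\int_\bbR \tfrac{d}{dt}g_z(A(t))\,dt = g_z(A_+) - g_z(A_-)$ in $\cB_1(\cH)$ then yields \eqref{principle}. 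The appearance of $g_z$ is natural: as $t\to\pm\infty$ the operator‑valued potential of $\bsH_i$ tends to $A_\pm^2$, whose Weyl function is $\pm(A_\pm^2 - z)^{1/2}$, and $g_z(A_\pm) = A_\pm(A_\pm^2 - z)^{-1/2}$.

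The main obstacle is this last step — recognizing the $t$‑density as an exact derivative with the correct boundary values. Because the operators $\{A(t)\}_{t\in\bbR}$ do not commute for distinct $t$ and $g_z(A(t))$ is itself not trace class, one cannot simply differentiate $g_z(A(t))$ under the trace; the computation must be carried out throughout with resolvent differences $(A(t)^2 - z)^{-1} - (A_-^2 - z)^{-1}$, controlled by the relatively Hilbert--Schmidt bound of Step~1, while carefully tracking the commutator terms generated by $[\partial_t,\bsA] = \bsA'$ and verifying at every stage that the cyclic rearrangements of the trace are performed inside genuinely trace‑class products. A subsidiary technical point, used repeatedly, is the passage between $\tr_{L^2(\bbR^2)\otimes\bbC^m}$ and the fibered trace $\int_\bbR \tr_\cH(\cdot)\,dt$, which rests on continuity and $\cB_1(\cH)$‑valued integrability of the diagonal kernels; both this and the Hilbert--Schmidt bounds above are consequences of the ellipticity of $\bsH_1,\bsH_2$ together with the decay hypotheses on $\theta'$ and $\Phi$ contained in Hypothesis~\ref{h3.1}.
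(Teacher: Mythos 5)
Your argument for the two inclusions is close in spirit to the paper's, but for \eqref{incl_rhs} it glosses precisely the point where the work lies. After expanding $g_z(A_+)-g_z(A_-)$, the terms that are quadratic in $\Phi$ (and the commutator term, via $[R_{-,\lambda},\Phi]=iR_{-,\lambda}\Phi'R_{-,\lambda}$) do split into two relatively Hilbert--Schmidt factors, but the term \emph{linear} in $\Phi$ does not, and $\Phi(A_-^2+I)^{-1/2}\in\cB_2(\cH)$ is of no help there. The paper's Proposition \ref{g_complex} handles it by taking the real part to convert $\int_0^\infty\lambda^{-1/2}\Phi R_{-,\lambda}^2\,d\lambda$ into $-\pi\,\Phi(A_-^2-zI)^{-3/2}$ and then invoking the Birman--Solomyak criterion in $\ell^1(L^2(\bbR))$ together with the embedding $W^{1,1}(\bbR)\cap C_b(\bbR)\hookrightarrow\ell^1(L^2(\bbR))$ of Lemma \ref{Cwikel_par}; neither this step nor a substitute for it appears in your sketch, and the alternative you mention (the unitary equivalence $A_+=\Psi A_-\Psi^*$) does not by itself yield the trace class inclusion (in the paper it only telescopes the bound $\le C\|\Phi\|_{1,1}$). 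Also, your estimates are naturally confined to $z<0$; the passage to all $z\in\bbC\setminus[0,\infty)$ requires the analyticity argument of Theorem \ref{thm_PTF}. Your treatment of \eqref{incl_lhs} is essentially Proposition \ref{fritz2}, provided ``ellipticity'' is spelled out as the factorization $(\bsH_j-z\bsI)^{-1}\bsW^*=\big[\ol{(\bsH_j-z\bsI)^{-1}(\bsH_0-z\bsI)}\big](\bsH_0-z\bsI)^{-1}\bsW^*$ with the bounded first factor supplied by the closed graph theorem (Lemma \ref{boundedness}, Lemma \ref{l3.5}).

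The genuine gap is the trace formula \eqref{principle} itself. Your supersymmetric identity $z\big[(\bsH_2-z\bsI)^{-1}-(\bsH_1-z\bsI)^{-1}\big]=\bsD_\bsA(\bsH_1-z\bsI)^{-1}\bsD_\bsA^*-\bsD_\bsA^*(\bsH_2-z\bsI)^{-1}\bsD_\bsA$ is fine, but neither term on the right is separately trace class, the fibered-trace identity $\tr_{L^2(\bbR^2)\otimes\bbC^m}(\cdot)=\int_\bbR\tr_{\cH}(R_2(t,t;z)-R_1(t,t;z))\,dt$ is asserted rather than proved, and the decisive step---exhibiting the $t$-density as an exact derivative of $\tr_{\cH}(g_z(A(t))-c_z)$ for some comparison operator $c_z$, with the correct boundary values---is exactly what you yourself label the ``main obstacle'' and leave open. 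Since the $A(t)$ do not commute and $g_z(A(t))$ is not trace class, no part of that computation is routine; as written, the proposal proves the two inclusions (modulo the issues above) but not the equality. The paper avoids this direct computation altogether: it cuts off with the spectral projections $P_n=E_{A_-}((-n,n))$, so that $B_n(t)=P_nB(t)P_n$ satisfies Pushnitski's trace class hypothesis \eqref{trclass}; the formula \eqref{trfn} for the approximants is then imported from \cite[Proposition~1.3]{Pu08} (Proposition \ref{propAppr}), and one passes to the limit using trace-norm convergence of the right-hand side (Theorem \ref{conv_rhs}) and of the left-hand side (Proposition \ref{t3.7}), first for $z<0$ and then for all $z\in\bbC\setminus[0,\infty)$ by analytic continuation (Theorem \ref{thm_PTF}). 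To complete your route you would either have to carry out the exact-derivative computation with full trace-class bookkeeping---essentially reproving Pushnitski's theorem in this non-relatively-trace-class setting---or adopt such an approximation scheme.
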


\noindent 
The inclusion \eqref{incl_rhs} is an interesting result on its own; we feel that this rather strong result  \eqref{incl_rhs} is somewhat surprising. 

\subsection{The Generalized Pushnitski Formula.} We summarize the main result of 
Section \ref{s5}.
For an exposition of the theory of Krein's spectral shift function we refer to 
\cite{BY93}, \cite[Ch.~8]{Ya92}, \cite[Sect.~0.9]{Ya10}. For those aspects relevant to 
this article we refer to the review paper \cite{CGLS15}. 

Given the pairs $(\bsH_2, \bsH_1)$ and $(A_+, A_-)$, the corresponding Krein spectral shift functions (to be introduced in detail in Section \ref{s5}) are denoted by 
$\xi(\, \cdot \, ; \bsH_2, \bsH_1)$ and $\xi(\, \cdot \, ; A_+,A_-)$, respectively. These functions are only determined a.e. in general. They give formulas for both sides of \eqref{principle}.
The main application of the trace formula (\ref{principle}) that we make in this paper is to 
prove the following result:

\begin{theorem}\lb{main2}
Assume Hypothesis \ref{h3.1}. Then for a.e.~$\lambda>0$ 
and a.e.~$\nu \in \bbR$, 
\begin{equation}
\xi(\lambda; \bsH_2,\bsH_1)=\xi(\nu; A_+,A_-) 
= \frac{1}{2\pi}\int_\bbR\tr_{\bbC^m}(\Phi(x))\,dx. \lb{Push}
\end{equation}
\end{theorem}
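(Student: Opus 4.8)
The plan is to combine the principle trace formula (Theorem~\ref{main1}) with the general theory of Krein's spectral shift function reviewed above, following the strategy used by Pushnitski and in \cite{GLMST11}, but exploiting the extra rigidity coming from the explicit $(1+1)$-dimensional structure. First I would rewrite the right-hand side of \eqref{principle}. Since $g_z(x) = x(x^2-z)^{-1/2}$, one computes $g_z'(x) = -z(x^2-z)^{-3/2}$, and by the trace formula for the pair $(A_+,A_-)$ (the last displayed identity before Theorem~\ref{t8.3}, applied after an approximation argument to extend it from $C_0^\infty$ to the function $g_z$, which is legitimate by the strong trace-class inclusion \eqref{incl_rhs}) we get
\begin{equation*}
\tr_{L^2(\bbR)\otimes\bbC^m}(g_z(A_+)-g_z(A_-)) = \int_\bbR \xi(\nu;A_+,A_-)\,g_z'(\nu)\,d\nu = -z\int_\bbR \frac{\xi(\nu;A_+,A_-)\,d\nu}{(\nu^2-z)^{3/2}}.
\end{equation*}
Similarly, the left-hand side of \eqref{principle} equals $-\int_0^\infty \xi(\lambda;\bsH_2,\bsH_1)(\lambda-z)^{-2}\,d\lambda$ by the resolvent trace identity for the pair $(\bsH_2,\bsH_1)$. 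Substituting both into \eqref{principle} and cancelling, we obtain the identity of Stieltjes-type transforms
\begin{equation*}
\int_0^\infty \frac{\xi(\lambda;\bsH_2,\bsH_1)}{(\lambda-z)^2}\,d\lambda = \frac{1}{2}\int_\bbR \frac{\xi(\nu;A_+,A_-)}{(\nu^2-z)^{3/2}}\,d\nu, \qquad z\in\bbC\backslash[0,\infty).
\end{equation*}

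Next I would perform the change of variables $\lambda = \nu^2$ on the left, or rather recognise the right-hand side as a superposition: writing $(\nu^2-z)^{-3/2}$ in terms of $\lambda=\nu^2$ and manipulating (this is exactly the computation underlying the Pushnitski-type formula $\xi(\lambda;\bsH_2,\bsH_1)=\frac1\pi\int_{-\lambda^{1/2}}^{\lambda^{1/2}}\xi(\nu;A_+,A_-)(\lambda-\nu^2)^{-1/2}\,d\nu$ stated in the abstract), I would first derive that Pushnitski-type formula by uniqueness of the Stieltjes transform (injectivity of $z\mapsto \int (\lambda-z)^{-2}\,d\mu(\lambda)$ on measures with the relevant growth control, which holds since $\xi(\,\cdot\,;\bsH_2,\bsH_1)\in L^1(\bbR;(\lambda^2+1)^{-1}d\lambda)$). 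This is the step where the normalization of the spectral shift functions (fixing the undetermined integer, as promised in Section~\ref{s5}) must be pinned down carefully so that both sides are genuinely equal and not merely equal modulo a constant. This gives the first, general, half of the theorem.

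The second, and genuinely new, half is the identification of both quantities with the single constant $\frac{1}{2\pi}\int_\bbR \tr_{\bbC^m}(\Phi(x))\,dx$. The key observation here is that $A_+$ and $A_-$ are unitarily equivalent (as asserted in the italicized remark after the definition of $A_+$ — concretely, conjugation by the multiplication operator $\exp(i\int_0^x \Phi(y)\,dy)$, or rather the ordered exponential, intertwines $A_-$ with $A_+$ when $\Phi$ is self-adjoint, so $A_+ = U^* A_- U$). Unitary equivalence forces $\xi(\,\cdot\,;A_+,A_-)$ to be \emph{constant} a.e.; the constant is then computed from the trace formula by evaluating $\tr_{L^2(\bbR)\otimes\bbC^m}(g_z(A_+)-g_z(A_-))$ asymptotically as $z\to-\infty$ along the negative real axis. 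In that limit $g_z'(\nu)=-z(\nu^2-z)^{-3/2}$ behaves like $(-z)^{-1/2}$ uniformly, so $\tr(g_z(A_+)-g_z(A_-)) \sim \xi \cdot (-z)^{-1/2}\cdot(\text{const})$, while the same trace can be evaluated directly from the explicit integral kernels (or via a first-order perturbative/Birman--Schwinger expansion, the higher-order terms vanishing because $A_+,A_-$ are unitarily equivalent) to produce $\frac{1}{2\pi}\int_\bbR\tr_{\bbC^m}(\Phi(x))\,dx$ as the leading coefficient. Matching coefficients identifies the constant. Then \eqref{Push} follows by feeding this back through the Pushnitski-type formula: $\xi(\lambda;\bsH_2,\bsH_1) = \frac1\pi \int_{-\lambda^{1/2}}^{\lambda^{1/2}}(\lambda-\nu^2)^{-1/2}\,d\nu \cdot \xi = \xi$, since $\frac1\pi\int_{-a}^a(a^2-\nu^2)^{-1/2}\,d\nu = 1$.

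The main obstacle I expect is \emph{not} the soft Stieltjes-transform inversion but the rigorous justification of the two approximation/limiting steps: (a) extending the trace identity $\tr(f(A_+)-f(A_-))=\int\xi f'$ from $f\in C_0^\infty$ to the non-compactly-supported $f=g_z$, which requires the strong trace-class statement \eqref{incl_rhs} together with a careful tail estimate controlling $\xi(\nu;A_+,A_-)$ at $\nu\to\pm\infty$ (here the fact that it is a priori constant, once unitary equivalence is established, is what saves the day); and (b) the asymptotic evaluation $z\to-\infty$ of the trace, where one must show the remainder terms beyond the leading $(-z)^{-1/2}$ order are genuinely lower order and that the leading term is computed from the right (distributional) kernel on the diagonal. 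In the $(1+1)$-dimensional model everything is explicit enough that these are technical rather than conceptual, but they are where the real work lies.
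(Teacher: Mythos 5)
Your overall architecture (principle trace formula $\Rightarrow$ Stieltjes-type identity $\Rightarrow$ Pushnitski-type formula $\Rightarrow$ constancy of $\xi(\,\cdot\,;A_+,A_-)$ $\Rightarrow$ evaluation of the constant) matches the paper's, and the inversion step you describe is essentially the paper's derivation of \eqref{SSFP} via \eqref{2.38}--\eqref{2.43}. However, two of your key steps have genuine gaps. First, constancy of $\xi(\,\cdot\,;A_+,A_-)$ does \emph{not} follow from the unitary equivalence $A_+=\Psi A_-\Psi^*$: unitary equivalence of the endpoints is compatible with a highly non-constant spectral shift function (think of a Schr\"odinger pair $(-\Delta+V,-\Delta)$ with no bound states, which is unitarily equivalent via wave operators while its SSF is the non-constant scattering phase). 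The mechanism the paper actually uses (Proposition \ref{first lemma}) is a different symmetry: conjugation by $e^{-i\alpha x}\otimes I_m$ shifts \emph{both} $A_-$ and $A_+$ by the same constant $\alpha$, so $\tr(h(A_++\alpha)-h(A_-+\alpha))=\tr(h(A_+)-h(A_-))$, whence $\xi(\nu-\alpha;A_+,A_-)=\xi(\nu;A_+,A_-)$ a.e.\ for every $\alpha$, i.e.\ constancy. The gauge unitary $\Psi$ enters elsewhere (in computing the value of the constant), not here.

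Second, your evaluation of the constant does not work as stated. Since $\xi(\,\cdot\,;A_+,A_-)$ is constant on all of $\bbR$ and $\int_\bbR g_z'(\nu)\,d\nu=g_z(+\infty)-g_z(-\infty)=2$ for every $z<0$, one has $\tr_{L^2(\bbR)\otimes\bbC^m}(g_z(A_+)-g_z(A_-))=2\xi$ \emph{exactly}, independent of $z$; there is no asymptotic regime in which this trace behaves like $\xi\cdot(-z)^{-1/2}$, so there are no coefficients to match as $z\to-\infty$. What remains is precisely the hard part: an independent computation of a trace such as $\tr(g_z(A_+)-g_z(A_-))$ or $\tr(\arctan(A_+)-\arctan(A_-))$. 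Your proposed shortcut (``first-order Birman--Schwinger expansion, higher-order terms vanishing because $A_+,A_-$ are unitarily equivalent'') is an unsubstantiated assertion, not an argument, and this is exactly where the paper invests its real effort (Section \ref{s6}): it computes the Fourier transform of $\arctan$ as the tempered distribution $-\tfrac{(2\pi)^{1/2}}{2i}\,\pv\tfrac{e^{-|x|}}{x}$ (Lemma \ref{pv_arctan}), uses the gauge unitary to represent $\arctan(A_+)-\arctan(A_-)$ as an integral operator with \emph{continuous} kernel $K(x,y)=-\tfrac{1}{2i}\,\ol{\psi(x)}\,\tfrac{\psi(y)-\psi(x)}{y-x}\,e^{-|y-x|}$, takes the trace as $\int_\bbR K(x,x)\,dx=\tfrac12\int_\bbR\phi(x)\,dx$ (with the matrix case handled via unitarity of $\Psi$), first for Schwartz $\Phi$ and then by a $\|\cdot\|_{1,1}$-approximation using the bound of Proposition \ref{g_complex} (Theorem \ref{value_via_arctan}). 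Without a replacement for this computation, your proof of the value $\tfrac{1}{2\pi}\int_\bbR\tr_{\bbC^m}(\Phi(x))\,dx$ is incomplete; the remaining steps (extension of the trace identity beyond $C_0^\infty$, which the paper handles through the definition \eqref{def_xiA} and Peller's theorem, and the normalization of both SSFs) are handled adequately in your outline.
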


Relation (\ref{Push}) will follow directly from the Pushnitski-type  formula (cf.\ 
\cite[Theorem~1.1]{Pu08}), 
\begin{equation} 
\xi(\lambda; \bsH_2, \bsH_1)=\frac{1}{\pi}\int_{-\lambda^{1/2}}^{\lambda^{1/2}}
\frac{\xi(\nu; A_+,A_-)\, d\nu}{(\lambda-\nu^2)^{1/2}} \, 
\text{  for a.e.~$\lambda>0$.} 
\end{equation} 
Moreover, employing some classical harmonic analysis, we are able to compute the actual  pointwise
value of the spectral shift function for the pair $A_+,A_-$. 

To complete this circle of ideas we still need to understand in more detail how the spectral flow enters the picture.
 Our view at this time is that the results of this paper support the idea that in the non-Fredholm setting the spectral shift function may provide information analogous to the spectral flow \cite{ACS07}. 

\begin{remark} 
We note that the inclusion, obtained in \eqref{incl_rhs}, has an interesting connection with the theory of Hankel operators. One of the fundamental results of Peller \cite{Pe80} describes the class of functions $\psi$ on $\bbR$ for which the commutator $[\sgn(D),\psi]$ is in the Schatten--von Neumann class 
$\cB_p\big(L^2(\bbR)\big)$. Here, $D$ in $L^2(\bbR)$ denotes the operator 
$D= -id/dx$, $\dom(D) = W^{1,2}(\bbR)$. Our result (discussed in Appendix \ref{sA}) shows that if one takes instead of the function $\sgn(\cdot)$ the ``smooth'' sign  function $g_{-1}(\cdot)$,  then the class of functions $\psi$ on $\bbR$, for which the commutator $[g_{-1}(D),\psi]$ is trace class, becomes much larger.   \hfill $\diamond$
\end{remark}

\begin{remark}  We have also studied this class of examples from the viewpoint of scattering 
theory in \cite{CGLPSZ14} (see also \cite{CGLS14})
to give an alternative approach to equation (\ref{Push}) and the explicit computation of the spectral shift function for the pair $(A_+,A_-)$.
However, the methods of these two papers are completely different. Most importantly \cite{CGLPSZ14} 
does not have the trace formula \eqref{principle}, it relies on an approximation result employing 
pseudo-differential operators, instead.     \hfill $\diamond$
\end{remark}

\subsection{Notation.} \lb{ss1.5}
We briefly summarize some of the notation used throughout this paper.
Let $\cH$ be a separable complex Hilbert space, $(\cdot,\cdot)_{\cH}$ the scalar product in $\cH$
(linear in the second argument), and $I_{\cH}$ the identity operator in $\cH$.
If $T$ is a linear operator mapping (a subspace of) a Hilbert space into another, then 
$\dom(T)$ and $\ker(T)$ denote the domain and kernel (i.e., null space) of $T$. 
The closure of a closable operator $S$ is denoted by $\ol S$. 
The convergence of bounded operators in the strong operator topology (i.e., pointwise limits) will be denoted by $\slim$, similarly, norm limits of bounded operators are denoted by $\nlim$. 

The Banach spaces of bounded and compact linear operators on a separable complex Hilbert space $\cH$ are denoted by $\cB(\cH)$ and $\cB_\infty(\cH)$, respectively; the corresponding $\ell^p$-based Schatten--von Neumann ideals are denoted by $\cB_p (\cH)$, with associated norm abbreviated by 
$\|\cdot\|_{\cB_p(\cH)}$, $p \geq 1$. Moreover, 
$\tr_{\cH}(A)$ denotes 
the trace of a trace class operator $A\in\cB_1(\cH)$. 

Linear operators in the Hilbert space $L^2(\bbR; dt; \cH)$, in short, $L^2(\bbR; \cH)$, will be denoted by calligraphic boldface symbols of the type $\bsT$, to distinguish them from 
operators $T$ in $\cH$. In particular, operators denoted by 
$\bsT$ in the Hilbert space $L^2(\bbR;\cH)$ typically represent operators associated with a 
family of operators $\{T(t)\}_{t\in\bbR}$ in $\cH$, defined by
\begin{align}
&(\bsT f)(t) = T(t) f(t) \, \text{ for a.e.\ $t\in\bbR$,}    \no \\
& f \in \dom(\bsT) = \big\{g \in L^2(\bbR;\cH) \,\big|\,
g(t)\in \dom(T(t)) \text{ for a.e.\ } t\in\bbR;    \lb{1.1}  \\
& \quad t \mapsto T(t)g(t) \text{ is (weakly) measurable;} \, 
\int_{\bbR} \|T(t) g(t)\|_{\cH}^2 \, dt <  \infty\bigg\}.   \no
\end{align}
In the special case, where $\{T(t)\}$ is a family of bounded operators on $\cH$ with 
$\sup_{t\in\bbR}\|T(t)\|_{\cB(\cH)}<\infty$, the associated operator $\bsT$ is a bounded operator on $L^2(\bbR;\cH)$ with $\|\bsT\|_{\cB(L^2(\bbR;\cH))} = \sup_{t\in\bbR}\|T(t)\|_{\cB(\cH)}$.

Capital letters $\Phi$, $\Psi$, etc. stand for $m\times m$ matrix-valued 
functions, while $f,\psi$ are typically real or complex-valued functions. 
By $L^p(\bbR)$ we denote the classical $L^p$-space of complex-valued measurable $p$-integrable functions on $\bbR$, 
employing Lebesgue measure if no measure is indicated, with associated norm 
denoted by $\|\cdot\|_p$, $p\geq 1$, and by $W^{k,p}(\bbR)$, $1\leq p<\infty$, $k\in\bbN$, the Sobolev space  consisting of all real-valued measurable functions $f$ on $\bbR$ such that
$
\|f\|_{k,p}:= \sum_{j=0}^k\|f^{(j)}\|_p<\infty. 
$
By $S(\bbR)$ we denote the test function space of all Schwartz functions on $\bbR$ and by $S'(\bbR)$ its dual consisting of tempered distributions (i.e., continuous, linear functionals on $S(\bbR)$). The bounded continuous functions 
on $\bbR$ are denoted by $C_b(\bbR)$. The symbol $AC_{\loc}(\bbR)$ represents locally absolutely continuous functions on $\bbR$. 

To simplify notation, we will frequently omit Lebesgue measure whenever possible 
and simply use $L^2(\bbR)$ instead of 
$L^2(\bbR; dx)$, and $L^2(\bbR^2)$ instead of $L^2(\bbR^2; dtdx)$, etc. 
If no confusion can arise, the identity operator in $L^2(\bbR)$ and $L^2(\bbR) \otimes \bbC^m$  
is simply denoted by $I$, the identity operator in $L^2(\bbR^2$ ) and 
$L^2(\bbR^2) \otimes \bbC^m$ by $\bsI$, and finally, $I_m$ represents the identity operator in $\bbC^m$.  

For a space $X$, $\mnn{X}$, denotes the space of all $m\times m$ matrices 
with entries in $X$. 
Moreover, the (maximally defined) operator of multiplication by the function 
$\phi$ in $L^2(\bbR)$, respectively, 
by the $m \times m$ matrix $\Phi \in M^{m \times m}(\bbR)$ in $L^2(\bbR) \otimes \bbC^m$, 
is simply denoted by $\phi$, 
respectively, $\Phi$  (instead of the more elaborate notation $M_{\phi}$, respectively, 
$M_\Phi$).   

Finally, we employ the abbreviations
\begin{equation}\lb{dfngz} 
g_z(x) = x(x^2-z)^{-1/2}, \; z\in\C\backslash [0,\infty), \quad 
g(x) = g_{-1}(x), \quad  x\in\bbR. 
\end{equation}

\section{Preliminaries} \lb{s2}

We start this section with the main hypothesis used in the statements of Theorems \ref{main1} and  \ref{main2}. We then provide a complete introduction to the operators $A_-,A_+, \bsH_1$, and $\bsH_2$ and describe their simplest properties (noting that we have deliberately kept our notation consistent with the usage in previous papers to ease comparisons). 

\begin{hypothesis} \lb{h3.1}
$(i)$ Introduce a self-adjoint matrix-valued function $\Phi$ satisfying
\begin{equation} 
\Phi \in \mnn{W^{1,1}(\bbR)\cap C_{b}(\bbR)},\,\, \Phi'\in\mnn{L^\infty(\bbR;dx)}. 
\end{equation} 
$(ii)$ Let $\theta\in L^{\infty}(\bbR; dt)$, $0 < \theta$,  satisfy 
$\theta' \in L^{\infty}(\bbR; dt) \cap L^1(\bbR; dt)$ and
\begin{equation}   
\lim_{t\rightarrow-\infty}\theta(t)=0, \quad  \lim_{t\rightarrow+\infty}\theta(t)=1.
\end{equation} 
\end{hypothesis}

\subsection{The Setup.}
The operator
\begin{equation}\lb{def_A_-}
A_-=D\otimes I_m, \quad \dom(A_-)=W^{1,2}(\bbR)\otimes \bbC^m, 
\end{equation}
is self-adjoint in $L^2(\bbR; dx)\otimes \bbC^m$, where 
\begin{equation} 
D=-i\frac{d}{dx}, \quad \dom(D) = W^{1,2} (\bbR).     \lb{defD}
\end{equation}  
The families of bounded self-adjoint operators $\{B(t)\}_{t \in \bbR}$ and 
self-adjoint operators $\{A(t)\}_{t \in \bbR}$ acting in 
$L^2(\bbR)\otimes \bbC^m$ are defined by
\begin{align}\lb{def_B(t)A(t)} 
\begin{split} 
& B(t)=\theta(t) \Phi , \quad \dom(B(t)) = L^2(\bbR; dx)\otimes \bbC^m,  \\ 
& A(t)=A_-+B(t), \quad
\dom(A(t)) = W^{1,2}(\bbR)\otimes \bbC^m, \; t \in \bbR.
\end{split} 
\end{align}
In particular, 
\begin{equation}\lb{def_Bprime}
B'(t)=\theta'(t) \Phi, \quad \dom(B(t)) = L^2(\bbR; dx)\otimes \bbC^m, \; t \in \bbR.
\end{equation}

From  Hypothesis \ref{h3.1} 
one concludes that there  exist limits
\begin{equation} 
\nlim_{t\rightarrow -\infty}B(t)=\nlim_{t\rightarrow -\infty}\theta(t) \Phi  =0, 
\mbox{
and 
}
\nlim_{t\rightarrow +\infty}B(t)=\nlim_{t\rightarrow +\infty}\theta(t) \Phi  = \Phi. 
\end{equation} 
Therefore, setting
$A_+=A_-+ \Phi ,$ with $\dom(A_+)= W^{1,2}(\bbR)\otimes \bbC^m, $
and using the standard resolvent identity one obtains   
\begin{align}
\begin{split} 
& \|(A(t)-z I)^{-1}-(A_--z I)^{-1}\|_{\cB(L^2(\bbR) \otimes \bbC^m)}     \\
& \quad =\|(A(t)-z I)^{-1}\theta(t) \Phi  (A_--z I)^{-1}\|_{\cB(L^2(\bbR) \otimes \bbC^m)}   
  \leq \ C \, |\theta(t)|\| \Phi  \|_\infty, 
  \end{split} 
\end{align}
and similarly 
\begin{align} 
& \|(A(t)-z I)^{-1}-(A_+-z I)^{-1}\|_{\cB(L^2(\bbR) \otimes \bbC^m)}    \\ 
& \quad =\|(A(t)-z I)^{-1}(1-\theta(t)) \Phi  (A_+-z I)^{-1}\|_{\cB(L^2(\bbR) \otimes \bbC^m)}     \leq \ C \, |1-\theta(t)|\| \Phi  \|_\infty.   \no 
\end{align} 
That is,  the limits
$
\nlim_{t\rightarrow \pm\infty}(A(t)-z I)^{-1}=(A_\pm-z I)^{-1} 
$ exist.

Subsequently, we will exploit the unitary equivalence of the operators $A_-$ and 
$A_+=A_-+ \Phi$. The following lemma establishes this fact (it corresponds to the 
well-known possibility of ``gauging away'' magnetic fields in one dimension).

\begin{lemma}\lb{unit_equiv} Assume Hypothesis \ref{h3.1}\,$(i)$ and let $x_0 \in \bbR$. 
Then there exists a 
unitary $m\times m$-matrix-valued function $\Psi(\,\cdot\,,x_0)$ on $\bbR$ such that 
\begin{equation} 
\Psi(\,\cdot\,,x_0)^* A_+ \Psi(\,\cdot\,,x_0) = A_-.   \lb{unitary}
\end{equation} 
\end{lemma}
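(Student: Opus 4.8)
The plan is to construct $\Psi(\,\cdot\,,x_0)$ explicitly by solving a first-order matrix ODE. Writing $A_+ = -i\frac{d}{dx}\otimes I_m + \Phi$ and conjugating by a multiplication operator $\Psi(\,\cdot\,,x_0)$, formally one has
\begin{equation}
\Psi^* A_+ \Psi = -i\Psi^*\Psi' \otimes I_m + \Psi^* (-i\Psi')\,\cdot\, + \Psi^*\Phi\Psi + (-i\tfrac{d}{dx}\otimes I_m)
\end{equation}
wait — more carefully, $\Psi^* A_+ \Psi f = \Psi^*\big(-i(\Psi f)' + \Phi \Psi f\big) = -i\Psi^*\Psi' f - i f' + \Psi^*\Phi\Psi f$. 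So $\Psi^* A_+ \Psi = A_- + \big(-i\Psi^*\Psi' + \Psi^*\Phi\Psi\big)$, and this equals $A_-$ precisely when $\Psi$ solves
\begin{equation}
-i\,\Psi'(x,x_0) + \Phi(x)\,\Psi(x,x_0) = 0, \qquad \Psi(x_0,x_0) = I_m,
\end{equation}
i.e. $\Psi'(x,x_0) = -i\,\Phi(x)\,\Psi(x,x_0)$. First I would invoke standard linear ODE theory: since $\Phi \in M^{m\times m}(C_b(\bbR))$ is continuous and bounded, this initial value problem has a unique global solution $\Psi(\,\cdot\,,x_0) \in M^{m\times m}(C^1(\bbR))$, given concretely by the matrix (ordered) exponential / Dyson series. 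Because $\Phi$ is self-adjoint, one checks that $\frac{d}{dx}\big(\Psi(x,x_0)^*\Psi(x,x_0)\big) = i\Psi^*\Phi^*\Psi - i\Psi^*\Phi\Psi = 0$, so $\Psi(x,x_0)^*\Psi(x,x_0) \equiv I_m$; hence $\Psi(x,x_0)$ is unitary for every $x$, and the associated multiplication operator on $L^2(\bbR)\otimes\bbC^m$ is unitary with $\sup_x\|\Psi(x,x_0)\|_{\cB(\bbC^m)} = 1$.

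Next I would verify the operator identity \eqref{unitary} rigorously on the appropriate domains rather than just formally. One shows $\Psi(\,\cdot\,,x_0)$ maps $\dom(A_-) = W^{1,2}(\bbR)\otimes\bbC^m$ onto itself: if $f \in W^{1,2}(\bbR)\otimes\bbC^m$ then $\Psi f$ is in $L^2$ (as $\Psi$ is bounded) and $(\Psi f)' = \Psi' f + \Psi f' = -i\Phi\Psi f + \Psi f'$, which lies in $L^2$ since $\Phi$ is bounded and $f, f' \in L^2$; the same argument with $\Psi^*$ (which solves $\Psi^{*\prime} = i\Psi^*\Phi$) gives the reverse inclusion. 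Then for $f \in W^{1,2}(\bbR)\otimes\bbC^m$ the computation above is legitimate and yields $\Psi^* A_+ \Psi f = A_- f$, which is precisely \eqref{unitary}. The unitary equivalence of $A_-$ and $A_+$, hence the coincidence of their spectra, follows immediately.

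I do not expect any serious obstacle here; the lemma is essentially the classical "gauging away a one-dimensional potential" fact and the only points requiring a little care are (a) justifying the domain mapping property so that the formal conjugation identity is an honest operator identity, and (b) confirming unitarity from self-adjointness of $\Phi$ via the Wronskian-type computation $\frac{d}{dx}(\Psi^*\Psi) = 0$. If one wanted an even more explicit $\Psi$ in the scalar case $m=1$, one could simply take $\Psi(x,x_0) = \exp\!\big(-i\int_{x_0}^x \Phi(y)\,dy\big)$, and for general $m$ the ordered-exponential version records the noncommutativity of the $\Phi(x)$'s; the role of $x_0$ is merely to fix the (irrelevant) overall unitary normalization.
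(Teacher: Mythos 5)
Your proof is correct and follows essentially the same route as the paper: both solve the first-order system $\Psi'(x,x_0)=-i\,\Phi(x)\,\Psi(x,x_0)$, $\Psi(x_0,x_0)=I_m$, deduce unitarity of $\Psi$ from self-adjointness of $\Phi$ (the paper via standard ODE theory/Dyson iteration, you via the Wronskian-type identity $\tfrac{d}{dx}(\Psi^*\Psi)=0$), and then verify the conjugation identity by the same product-rule computation. Your explicit check that $\Psi$ and $\Psi^*$ preserve $W^{1,2}(\bbR)\otimes\bbC^m$ is a slightly more careful version of the paper's verification on the core $C_0^\infty(\bbR)\otimes\bbC^m$, but it is not a different argument.
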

\begin{proof}
We consider the first-order system of differential equations
\begin{equation}
\partial_x \Psi(x,x_0) = - i \Phi(x) \Psi(x,x_0), \quad \Psi(x_0, x_0) = I_m,      \lb{5.10} 
\end{equation} 
and take a fundamental $m \times m$ matrix of solutions, 
$\Psi(\, \cdot \, , \, \cdot \,)$ of \eqref{5.10}. Standard ODE theory (taking into account 
that $\Phi(x)$ is self-adjoint for all $x \in \bbR$), see, for instance, \cite[Sect.~IV.1]{Ha82}, 
then yields the properties, 
\begin{align}
& \Psi(x,x) = I_m, \quad x \in \bbR,    \lb{5.11} \\
& \Psi(x,x')^* = \Psi(x,x')^{-1} = \Psi(x',x), \quad x, x'  \in \bbR,     \lb{5.12} \\
& \Psi(x, x') \Psi(x', x'') = \Psi(x,x''), \quad x, x', x'' \in \bbR,      \lb{5.13} \\
& \ln({\det}_{\bbC^m} (\Psi(x,x'))) = - i \int_{x'}^x {\tr}_{\bbC^m} (\Phi(x''))\, dx'', \quad 
x, x' \in \bbR.    \lb{5.14} 
\end{align}

Properties \eqref{5.11}--\eqref{5.13} of $\Psi(\,\cdot\, , \, \cdot \,)$ can also be proved by a 
(norm-convergent) iteration (i.e., a Dyson expansion, cf.\ \cite[Theorem~X.69]{RS80}) of 
\begin{equation}
\Psi(x,x') = I_m - i \int_{x'}^x \ \Phi(x'') \Psi(x'', x')\, dx'', 
\quad x, x' \in \bbR.      \lb{5.14a} 
\end{equation}
In particular, because of the $L^1$-assumption made on $\Phi$ in Hypothesis \ref{h3.1}, 
iterating \eqref{5.14a} also permits one to take the limits of $\Psi(x,x')$ as $x$ and/or $x'$ 
tend to $\pm\infty$, that is, 
\begin{equation}
\Psi(\infty,x'), \; \Psi(x, - \infty), \; \Psi(\infty, -\infty), \, \text{etc., } \, x, x' \in \bbR,  
\end{equation} 
all exist.

Employing \eqref{5.10}, one verifies that $A_+$ is unitarily equivalent to $A_-$,
\begin{equation}
\Psi(\,\cdot\,,x_0)^{-1} A_+ \Psi(\,\cdot\,,x_0) = A_-    \lb{5.15}
\end{equation}
since 
\begin{align}
& \big([-i (d/dx) I_m + \Phi] \Psi(\, \cdot \, , x_0) f\big)(x)    \no \\
& \quad = -i [\Psi'(x,x_0)f(x) + \Psi(x,x_0) f'(x)] + \Phi(x) f(x)  \no \\
& \quad = -i \Psi(x,x_0) f'(x), \quad f \in C_0^{\infty}(\bbR)\otimes \bbC^m.
\end{align}
\end{proof}

\begin{remark}\lb{Psi} $(i)$ Given $N \in \bbN \cup\{\infty\}$, 
equality \eqref{5.10} yields inductively upon $N$ that 
$\Psi(\,\cdot\,,x_0) \in M^{m \times m}\big(C^{N}(\bbR)\big)$ whenever 
$\Phi\in M^{m \times m}\big(C^{N-1}(\bbR)\big)$. (For the first induction step,  
$N=1$, see, e.g., \cite[Lemma~IV.1.1]{Ha82}.) \\
$(ii)$ In the scalar case $m=1$, the function $\psi(x,x_0)=\exp{\big(-i\int_{x_0}^x \phi(y)\,dy\big)}$, 
$x \in \bbR$, yields the unitary equivalence in \eqref{unitary}.  \hfill $\diamond$
\end{remark}

\begin{corollary} 
Assume Hypothesis \ref{h3.1}\,$(i)$. 
Since the operators $A_-$ and $A_+$ are self-adjoint, Lemma \ref{unit_equiv} and the  functional calculus imply that
\begin{equation}\lb{fc_D,DMf}
h(A_+)= \Psi(\,\cdot\,,x_0) h(A_-) \Psi(\,\cdot\,,x_0)^*
\end{equation}
for any locally bounded Borel function $h\colon\bbR\to\bbC$.
\end{corollary}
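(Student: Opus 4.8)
The plan is to obtain \eqref{fc_D,DMf} directly from the spectral theorem together with the unitary equivalence $\Psi(\,\cdot\,,x_0)^* A_+ \Psi(\,\cdot\,,x_0) = A_-$ supplied by Lemma \ref{unit_equiv}. Write $U = \Psi(\,\cdot\,,x_0)$ for the relevant unitary operator on $L^2(\bbR)\otimes\bbC^m$, so that $A_+ = U A_- U^*$, and let $E_-$ denote the projection-valued spectral measure of the self-adjoint operator $A_-$.

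First I would note that $\Omega \mapsto U E_-(\Omega) U^*$ is again a projection-valued measure on the Borel subsets of $\bbR$: conjugation by $U$ sends orthogonal projections to orthogonal projections, preserves strong countable additivity, and normalizes correctly since $U E_-(\bbR) U^* = U U^* = I$. Moreover $\int_{\bbR} \lambda\, d\big(U E_-(\lambda) U^*\big) = U\Big(\int_{\bbR}\lambda\, dE_-(\lambda)\Big)U^* = U A_- U^* = A_+$, with the attendant equality of domains $\dom(A_+) = U\,\dom(A_-)$. By the uniqueness part of the spectral theorem for self-adjoint operators, the spectral measure $E_+$ of $A_+$ therefore coincides with $U E_-(\,\cdot\,) U^*$.

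Next, for any Borel function $h\colon\bbR\to\bbC$ the functional calculus gives $h(A_+) = \int_{\bbR} h(\lambda)\, dE_+(\lambda) = \int_{\bbR} h(\lambda)\, d\big(U E_-(\lambda) U^*\big)$; pulling the unitary through the spectral integral --- valid first on simple functions and then by the standard strong-sense approximation argument, with $\dom(h(A_+)) = U\,\dom(h(A_-))$ --- yields $h(A_+) = U\Big(\int_{\bbR} h(\lambda)\, dE_-(\lambda)\Big)U^* = U\, h(A_-)\, U^*$. Substituting back $U = \Psi(\,\cdot\,,x_0)$ and $U^* = \Psi(\,\cdot\,,x_0)^{-1} = \Psi(\,\cdot\,,x_0)^*$ gives exactly \eqref{fc_D,DMf}.

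There is essentially no obstacle here: the assertion is the unitary covariance of the Borel functional calculus, and since $A_\pm$ are genuinely self-adjoint (not merely symmetric) the spectral theorem applies without modification. The only comment worth making is that local boundedness of $h$ is in fact more than is needed for the identity to make sense --- for an arbitrary Borel $h$ both sides are well defined as the (possibly unbounded) operators $\int_{\bbR} h\, dE_\pm$ on their natural domains, and the asserted equality of those domains is part of the conclusion. When $h$ is bounded, as for the functions $g_z$ of interest, both sides are bounded operators and \eqref{fc_D,DMf} reduces to the elementary relation $h(U A_- U^*) = U h(A_-) U^*$.
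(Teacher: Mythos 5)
Your proposal is correct and coincides with the paper's own reasoning: the paper proves \eqref{fc_D,DMf} by simply invoking the unitary equivalence $\Psi(\,\cdot\,,x_0)^* A_+ \Psi(\,\cdot\,,x_0)=A_-$ from Lemma \ref{unit_equiv} together with the unitary covariance of the Borel functional calculus, which is exactly the argument you spell out (via the transported spectral measure $U E_-(\,\cdot\,)U^*$). Your added remarks on domains and on boundedness of $h$ are fine but not needed beyond what the paper asserts.
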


Note, that for any $\Phi \in \mnn{L^{\infty}(\bbR) \cap AC_{\loc}(\bbR)}$ with $\Phi' \in \mnn{L^\infty(\bbR)}$, 
the equality
\begin{equation}\lb{comut_D}
[A_-, \Phi  ] = -i \Phi'
\end{equation}
holds.

\begin{remark}The operators $A_-$ and the family $B(t)$ do not satisfy 
\cite[Hypothesis~2.1]{GLMST11}. Indeed, Hypothesis 2.1 in \cite{GLMST11}, in particular, 
requires that the family  $B(t)$ is relative trace class, that is,  
$B'(t)(|A_-|+I)^{-1}\in\cB_1(L^2(\bbR)).$
In our setting, $B'(t)(|A_-|+I)^{-1}=\theta'(t) \Phi  (|A_-|+I)^{-1}$ and  by \cite[Theorem~4.1, Remark~4(a)]{Si05} a necessary condition for the operator $ \Phi  (|A_-|+I)^{-1}$ 
to be trace class is that the function $t\mapsto (|t|+1)^{-1}$ is integrable, which is 
obviously not the case. \hfill $\diamond$
\end{remark}

Next, we introduce the operator $(d/dt) \otimes I_m$ on 
$ L^2(\bbR^2) \otimes \bbC^m$  defined on its domain $ W^{1,2} \big(\bbR; L^2(\bbR)\big)\otimes \bbC^m$
in the obvious way and we make the identifications 
\begin{equation}
L^2\big(\bbR; dt; L^2(\bbR; dx)\big) \otimes \bbC^m = L^2(\bbR^2;dt dx) \otimes \bbC^m 
= L^2(\bbR^2) \otimes \bbC^m.  
\end{equation}
In addition, we define the  operator
\begin{equation}
\bsD_{\bsA_-}^{} = \f{d}{dt} \otimes I_m + \bsA_-,
\quad \dom(\bsD_{\bsA^{}_-})= W^{1,2}(\bbR^2)\otimes \bbC^m,  
\end{equation}
where $\bsA_-$ is the operator acting in $L^2(\bbR^2)\otimes \bbC^m$ 
given by 
\begin{align}\lb{def_bsA_-}
& (\bsA_- f)(t)=A_- f(t), \\
& \, f \in \dom(\bsA_-) = \big\{g \in L^2(\bbR^2) \otimes \bbC^m \,\big|\,
g(t)\in W^{1,2}(\bbR) \otimes \bbC^m \text{ for a.e.~$t\in\bbR$,}    \no \\
& \quad t \mapsto A_- g(t) \text{ is (weakly) measurable,} \,  
\int_{\bbR} \|A_- g(t)\|_{L^2(\bbR) \otimes \bbC^m}^2 \, dt < \infty\bigg\}. \no  
\end{align} 
By \cite[Lemma~4.2]{GLMST11}  the operator $\bsD_{\bsA_-}^{}$ is closed and densely 
defined with adjoint 
\begin{equation} 
\bsD_{\bsA_-}^*=-\frac{d}{dt} \otimes I_m + \bsA_-, \quad  
\dom(\bsD_{\bsA^*_-}) = \dom(\bsD_{\bsA^{}_-}) 
= W^{1,2}(\bbR^2) \otimes \bbC^m. 
\end{equation} 
Introduce the operator $\bsA$ in $L^2(\bbR^2) \otimes \bbC^m$ associated 
with the family $\{A(t)\}_{t \in \bbR}$ by  equation(\ref{def_bsA}),  
Introducing also the bounded operator $\bsB$ on $L^2(\bbR^2) \otimes \bbC^m$ 
by setting
\begin{equation}\lb{def_bsB}
(\bsB f)(t)=B(t) f(t) = \theta(t) (\Phi f)(t), \quad f\in L^2(\bbR^2)\otimes \bbC^m,
\end{equation}
one obtains  
$\bsA=\bsA_-+\bsB.$
Finally we come to the definition of  the model operator $\bsD_\bsA^{}$ acting in $L^2(\bbR^2) \otimes \bbC^m$.
\begin{align}\lb{def_D_A}
\begin{split} 
& \, \bsD_\bsA^{}=\frac{d}{dt} \otimes I_m + \bsA=\frac{d}{dt} \otimes I_m + \bsA_-+\bsB
=\bsD_{\bsA_-}^{}+\bsB,\\
& \dom(\bsD_\bsA^{})=\dom(\bsD_{\bsA_-}^{})=W^{1,2}(\bbR^2) \otimes \bbC^m. 
\end{split} 
\end{align}
Since $\bsD_{\bsA_-}^{}$ is a closed densely defined operator and $\bsB$ is a bounded operator on $L^2(\bbR^2) \otimes \bbC^m$, it follows that $\bsD_\bsA^{}$ is also closed and densely defined in  $L^2(\bbR^2) \otimes \bbC^m$ with adjoint given by
\begin{align}\lb{def_D_A^*}
& \, \bsD_\bsA^*=\bsD_{\bsA_-}^*+\bsB^*=-\frac{d}{dt} \otimes I_m + \bsA_-+\bsB
= - \frac{d}{dt} \otimes I_m + \bsA, \\
& \dom(\bsD_\bsA^*) =\dom(\bsD_\bsA^{}) = W^{1,2}(\bbR^2)\otimes \bbC^m.  \no
\end{align}
 
The second order operator $\bsH_0$ in $L^2(\bbR^2)\otimes \bbC^m$ is now constructed by 
\begin{align}\lb{def_H0}
\begin{split}
& \bsH_0=\bsD_{\bsA_-}^*\bsD^{}_{\bsA_-} 
= \big(-\frac{d^2}{dt^2} - \frac{d^2}{dx^2}\big) \otimes I_m 
= - \Delta \otimes I_m,\\ 
& \dom(\bsH_0)=W^{2,2}(\bbR^2) \otimes \bbC^m,  
\end{split} 
\end{align}
and the bounded operator $\bsB^\prime$ on $L^2(\bbR^2) \otimes \bbC^m$ associated with the family $\{B'(t)\}_{t \in \bbR}$ defined in \eqref{def_Bprime} is,  
\begin{equation}\lb{def_bsBprime}
(\bsB^\prime f)(t)=B'(t) f(t)=\theta'(t) (\Phi f)(t), \quad f \in L^2(\bbR^2) \otimes \bbC^m.
\end{equation}
An application of \cite[Theorem~VIII.33]{RS80} shows that the operator $\bsH_0$ is self-adjoint with 
$\dom(\bsH_0)\subset\dom(\bsA_-)$. 

Now we come to the operators $\bsH_1$ and $\bsH_2$ acting  in  $L^2(\bbR^2) \otimes \bbC^m$. Set
\begin{equation}\lb{def_H_j}
\bsH_1=\bsD_\bsA^*\bsD_\bsA^{}, \quad \bsH_2=\bsD_\bsA^{} \bsD_\bsA^*.
\end{equation}
By \eqref{def_D_A} and \eqref{def_D_A^*} one can write 
\begin{align}\lb{def_dom_H_j}
\begin{split}
& \, \bsH_j = \bsH_0 + \bsB \bsA_- + \bsA_- \bsB + \bsB^2 + (-1)^j \bsB^{\prime},   \\
& \dom(\bsH_j) = \dom(\bsH_0) = W^{1,2}(\bbR^2)\otimes \bbC^m, \quad j =1,2. 
\end{split}
\end{align} 
Hence one concludes that $\dom(\bsH_1)=\dom(\bsH_2)=\dom(\bsH_0)$ and that the 
operators $\bsH_j$, $j=1,2$, are well-defined  
since $\bsB$ leaves the domain of $\bsA_-$ invariant. In addition, one can write 
\begin{equation} 
\bsB\bsA_-+\bsA_-\bsB = [\bsA_-,\bsB]+2\bsB\bsA_-
= - [\bsA_-,\bsB]+2\bsA_-\bsB, 
\end{equation} 
and 
\begin{align}
([\bsA_-,\bsB] f)(t)&=(\bsA_-\bsB f)(t)-(\bsB\bsA_- f)(t) 
= A_-\theta(t) \Phi f(t) - \theta(t) \Phi  A_- f(t)   \no \\
&=\theta(t)[A_-, \Phi  ] f(t)= i^{-1} \theta(t) \Phi' f(t), 
\quad f \in W^{2,2}(\bbR^2)\otimes \bbC^m.  
\end{align}
Employing the fact that $\Phi'\in\mnn{L^\infty(\bbR)}$, $\theta \in L^\infty(\bbR)$, one obtains 
that the commutator $[\bsA_-,\bsB]$ has a bounded closure.
For subsequent purposes we denote 
\begin{equation}\lb{commutBA}
\bsC:= \ol{[\bsA_-,\bsB]}, 
\end{equation}
and write
\begin{align}\lb{HwithC}
\begin{split} 
\bsH_j &= \bsH_0 + 2\bsB\bsA_- +\bsC+ \bsB^2 + (-1)^j \bsB^{\prime}   \\
&= \bsH_0 + 2\bsA_- \bsB-\bsC+ \bsB^2 + (-1)^j \bsB^{\prime}, \quad j =1,2.
\end{split} 
\end{align} 

\begin{lemma}\lb{boundedness} 
Assume Hypothesis \ref{h3.1} and let $z\in\bbC\backslash [0,\infty)$. Then the operators 
$\bsA_-$, $\bsH_0$, and $\bsH_j$, $j=1,2$, defined above satisfy the following properties: \\
$(i)$ $\bsA_-(\bsH_0-z\bsI)^{-1/2} \in \cB\big(L^2(\bbR^2) \otimes \bbC^m\big)$. \\
$(ii)$ $(\bsH_0-z\bsI)(\bsH_j-z\bsI)^{-1} \in \cB\big(L^2(\bbR^2) \otimes \bbC^m\big)$, 
$j=1,2$. 
\end{lemma}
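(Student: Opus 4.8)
\emph{Plan.} Part $(i)$ concerns only the free operators, and the strategy is to use that $\bsA_-=D\otimes I_m$ commutes (strongly) with $\bsH_0=-\Delta\otimes I_m$, since both $-\partial_t^2$ and $-\partial_x^2$ commute with $D=-i\,d/dx$. Passing to the double Fourier transform in $(t,x)$ turns $\bsA_-(\bsH_0-z\bsI)^{-1/2}$ into multiplication by $\xi(\tau^2+\xi^2-z)^{-1/2}$, whose modulus is dominated by $(\tau^2+\xi^2)^{1/2}\,|\tau^2+\xi^2-z|^{-1/2}$, and this is bounded on $\bbR^2$ by a constant depending only on $\dist(z,[0,\infty))$ and $|z|$ because $z\notin[0,\infty)$. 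Equivalently, and without Fourier analysis, one records the form inequality $\bsA_-^2\le\bsH_0$: integration by parts in $t$ gives $\Re(\partial_t f,\bsA_- f)_{L^2(\bbR^2)\otimes\bbC^m}=0$, hence $(\bsH_0 f,f)=\|\partial_t f\|^2+\|\bsA_- f\|^2\ge\|\bsA_- f\|^2$; inserting $f=(\bsH_0-z\bsI)^{-1/2}g$ and using $\bsH_0(\bsH_0-z\bsI)^{-1}=\bsI+z(\bsH_0-z\bsI)^{-1}\in\cB(L^2(\bbR^2)\otimes\bbC^m)$ yields $\|\bsA_-(\bsH_0-z\bsI)^{-1/2}g\|^2\le\|\bsH_0(\bsH_0-z\bsI)^{-1}\|\,\|g\|^2$, which is $(i)$.

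For $(ii)$, by \eqref{def_dom_H_j}--\eqref{HwithC} one has $\bsW_j:=\bsH_j-\bsH_0=2\bsB\bsA_-+\bsC+\bsB^2+(-1)^j\bsB'$ on $\dom(\bsH_0)$, with $\bsB,\bsB',\bsB^2,\bsC\in\cB(L^2(\bbR^2)\otimes\bbC^m)$ (here $\bsC=\ol{[\bsA_-,\bsB]}$, cf.\ \eqref{commutBA}). The key step is to show that $\bsW_j$ is relatively bounded with respect to $\bsH_0^{1/2}$: since $\bsB\bsA_-$ is the bounded operator $\bsB$ composed with $\bsA_-$, part $(i)$ gives
\[
\bsW_j(\bsH_0-z\bsI)^{-1/2}=2\,\bsB\,\bsA_-(\bsH_0-z\bsI)^{-1/2}+\big(\bsC+\bsB^2+(-1)^j\bsB'\big)(\bsH_0-z\bsI)^{-1/2}\in\cB\big(L^2(\bbR^2)\otimes\bbC^m\big).
\]
Together with the elementary spectral estimate $\|(\bsH_0-z\bsI)^{1/2}f\|\le\varepsilon\|\bsH_0 f\|+C_\varepsilon\|f\|$, valid for every $\varepsilon>0$, this shows that $\bsW_j$ is $\bsH_0$-bounded with relative bound $0$.

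It then remains to upgrade this to the boundedness of $(\bsH_0-z\bsI)(\bsH_j-z\bsI)^{-1}$. Since $\bsW_j$ is symmetric (each summand is) and $\bsH_0$-bounded with relative bound $0$, the Kato--Rellich theorem shows $\bsH_0+\bsW_j$ is self-adjoint on $\dom(\bsH_0)$; as it coincides on $W^{2,2}(\bbR^2)\otimes\bbC^m$ with the self-adjoint operator $\bsH_j=\bsD_\bsA^*\bsD_\bsA^{}$, one gets $\dom(\bsH_j)=\dom(\bsH_0)$ (exactly the identification already noted after \eqref{def_dom_H_j}). Hence $(\bsH_j-z\bsI)^{-1}$ maps $L^2(\bbR^2)\otimes\bbC^m$ onto $\dom(\bsH_0)$, so $(\bsH_0-z\bsI)(\bsH_j-z\bsI)^{-1}$ is everywhere defined; being the composition of the closed operator $(\bsH_0-z\bsI)$ with the bounded operator $(\bsH_j-z\bsI)^{-1}$, which has bounded inverse $\bsH_j-z\bsI$, it is closed, hence bounded by the closed graph theorem. (Alternatively: for $z=-\sigma$ with $\sigma$ large one has $\|\bsW_j(\bsH_0-z\bsI)^{-1}\|<1$, so a Neumann series gives $(\bsH_j-z\bsI)^{-1}=(\bsH_0-z\bsI)^{-1}\big(\bsI+\bsW_j(\bsH_0-z\bsI)^{-1}\big)^{-1}$ and therefore $(\bsH_0-z\bsI)(\bsH_j-z\bsI)^{-1}=\big(\bsI+\bsW_j(\bsH_0-z\bsI)^{-1}\big)^{-1}$; the general case $z\in\bbC\backslash[0,\infty)$ then follows from the resolvent identity for $\bsH_j$ and the boundedness of $(\bsH_0-z\bsI)(\bsH_0+\sigma\bsI)^{-1}$.)

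The substantive part is $(i)$ together with the commutator identity $\bsA_-\bsB=\bsB\bsA_-+\bsC$: these are precisely what turn the a priori unbounded cross-terms $\bsA_-\bsB$, $\bsB\bsA_-$ into $[\text{bounded}]\cdot\bsA_-(\bsH_0-z\bsI)^{-1/2}+[\text{bounded}]$. After the relative $\bsH_0^{1/2}$-boundedness of $\bsW_j$ is established, $(ii)$ is routine perturbation theory plus the closed graph theorem; the only mild care needed is the domain matching $\dom(\bsH_j)=\dom(\bsH_0)$, which is already at our disposal.
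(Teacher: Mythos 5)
Your proof is correct and follows essentially the same route as the paper: part $(i)$ is the identical Fourier-multiplier argument (your form inequality $\bsA_-^2\le\bsH_0$ being an equivalent alternative), and part $(ii)$ rests, exactly as in the paper, on the domain identity $\dom(\bsH_j)=\dom(\bsH_0)$ combined with the closed graph theorem. The only difference is that you re-derive that domain identity via relative boundedness and Kato--Rellich, whereas the paper simply invokes it from \eqref{def_dom_H_j}; this extra justification is sound (note only that $2\bsB\bsA_-$ and $\bsC$ are not individually symmetric, but their sum $\bsB\bsA_-+\bsA_-\bsB$ is, which is all Kato--Rellich needs).
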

\begin{proof}
$(i)$ Via the two-dimensional Fourier transform the operator  $\bsA_-(\bsH_0-z)^{-1/2}$
is isometric  to the multiplication operator on $L^2(\bbR^2)\otimes \bbC^m$ given by the function
$(s,p)\mapsto p\bsI(s^2\bsI+p^2\bsI-z\bsI)^{-1/2}$ and this is clearly bounded.\\
(ii) By \eqref{def_dom_H_j} the operators $\bsH_j,$\, $j=1,2$ and $\bsH_0$ have the same domain. Therefore, the operator $(\bsH_0-z\bsI)({\bsH}_j-z\bsI)^{-1}$, $j=1,2,$ is everywhere defined and closed as the product of a bounded and a closed operator. Hence, by the closed graph theorem, the operator  $(\bsH_0-z\bsI)({\bsH}_j-z\bsI)^{-1}$, $j=1,2$, is bounded. 
\end{proof}

\subsection{The Approximation Technique.}
The principal method exploited in \cite{GLMST11} to pass from the Pushnitski 
assumptions to the weaker ones 
involving relatively trace class perturbations was an approximation scheme. Here we illustrate 
a  modified version of this scheme that enables us (albeit with considerable effort) to handle relatively 
Hilbert--Schmidt perturbations. 

The idea is that we  approximate the operators $A_-$ and $A_+$, and hence, $\bsH_1,\bsH_2$
in such a fashion that, for the approximants, the relatively trace class perturbation property
of \cite{GLMST11}  is restored. To this end let $P_n = E_{A_-}((-n,n))$ be the spectral projection of $A_-$ corresponding to the interval $(-n,n)$. We set
\begin{equation}\lb{def_B_n}
B_n(t)=P_nB(t)P_n,\quad A_n(t)=A_-+B_n(t), \quad t \in \bbR.
\end{equation}
Therefore, 
\begin{equation}\lb{A_+,n}
A_{-,n}=A_-,\quad A_{+,n} = A_- + P_n \Phi  P_n, \quad n \in \bbN.
\end{equation}

\begin{remark}\lb{not_rtc}
By the estimate \eqref{trclass} below, the operator $A_-$ and the family of bounded operators $B_n(t)=P_nB(t)P_n$, $t \in \bbR$, $n \in \bbN$, satisfy \cite[Hypothesis~2.1]{GLMST11} (and even 
\cite[Hypothesis~(1.3)]{Pu08}). \hfill $\diamond$
\end{remark}

For the projection $\bsP_n$ defined by 
\begin{equation} 
(\bsP_n f)(t) = P_n f(t), \quad f \in L^2(\bbR^2)\otimes \bbC^m, \; t \in \bbR, \; n \in \bbN, 
\end{equation} 
one infers that $\bsP_n = E_{\bsA_-}((-n,n))$ is the spectral projection of $\bsA_-$ corresponding to the interval $(-n,n)$; moreover, $\bsP_n$ commutes with $\bsH_0$, $n \in \bbN$.
In addition, for the operators $\bsH_{j,n}$, $j=1,2$, defined in terms of the family 
$\{A_n(t)\}_{t \in \bbR}$, one  obtains decompositions similar to \eqref{HwithC}, 
\begin{align} \lb{H_nwithC}
\begin{split} 
\bsH_{j,n} &= \bsH_0 + 2\bsB_n \bsA_- +\bsC_n + \bsB_n^2 + (-1)^j \bsB_n^{\prime}
\\
&= \bsH_0 + 2\bsA_-\bsB_n - \bsC_n + \bsB_n^2 + (-1)^j \bsB_n^{\prime},  
\quad n \in \bbN, \; j =1,2, 
\end{split} 
\end{align} 
with
\begin{equation}\lb{def_bsB_n}
\bsB_n =\bsP_n \bsB \bsP_n, \quad 
\bsB_n^{\prime} = \bsP_n\bsB^{\prime}\bsP_n, \quad \bsC_n=\bsP_n\bsC \bsP_n, \quad n \in \bbN. 
\end{equation} 

\begin{remark}\lb{common_core}
Since $\dom(\bsH_0)\subset\dom(\bsA_-)$, it follows from \eqref{HwithC} and \eqref{H_nwithC} that the operators $\bsH_j$ and $\bsH_{j,n}$, $j=1,2$, have the common core $\dom(\bsH_j)=\dom(\bsH_0) = W^{2,2}(\bbR) \otimes \bbC^m$, $j=1,2$. \hfill $\diamond$
\end{remark}


\section{The Right-Hand Side of the Trace Formula \eqref{principle}} \lb{s3} 

In the first part of this section we prove the inclusion 
\begin{equation} 
[g_z(A_+)-g_z(A_-)] \in \cB_1\big(L^2(\bbR)\big), \quad z<0. 
\end{equation} 
Later, in Theorem \ref{thm_PTF}, we extend this result to the first inclusion  in Theorem \ref{main1}, that is, 
\begin{equation} 
[g_z(A_+)-g_z(A_-)] \in \cB_1\big(L^2(\bbR)\big), \quad z\in\bbC \backslash [0,\infty). 
\end{equation} 
This result is the main advance that we make in the proof of the trace formula over 
the approach in \cite{CGK}. 
In the second part of this section we prove that the difference $[g_z(A_+)-g_z(A_-)]$, $z<0$, 
can be approximated in $\cB_1\big(L^2(\bbR)\otimes \bbC^m\big)$-norm by the operators 
$[g_z(A_{+,n})-g_z(A_-)]$ as $ n \to \infty$.

\subsection{The Trace Result.}
For brevity, we introduce the notations $R_{+,\lambda}(z)$, $R_{-,\lambda}(z)$ 
for appropriate resolvents of the operators $A_-$ and $A_+$, respectively, that is,  
\begin{equation}\lb{def_Alambda}
 R_{+,\lambda}(z)= \big(A_++i(\lambda-z)^{1/2}I\big)^{-1}, \ 
 R_{-,\lambda}(z)= \big(A_-+i(\lambda-z)^{1/2} I\big)^{-1}, \ \lambda>0. 
\end{equation}

\begin{lemma}\lb{general integral lemma} 
Let $z<0$, then,  
\begin{equation} 
g_z(A_+)-g_z(A_-)={\pi}^{-1}\Re\bigg(\int_0^{\infty} \lambda^{-1/2} 
[R_{+,\lambda}(z) - R_{-,\lambda}(z)]\,d\lambda\bigg).
\end{equation} 
\end{lemma}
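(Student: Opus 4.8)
The plan is to establish the representation of $g_z(A_+)-g_z(A_-)$ by first writing $g_z$ itself as an integral over resolvents, and then taking the difference. Recall the elementary identity for $x \in \bbR$, $z < 0$:
\begin{equation}
g_z(x) = x(x^2 - z)^{-1/2} = \frac{1}{\pi}\int_0^\infty \lambda^{-1/2}\, \frac{x}{x^2 + (\lambda - z)}\, d\lambda, \notag
\end{equation}
which follows from the standard formula $a^{-1/2} = \pi^{-1}\int_0^\infty \lambda^{-1/2}(a+\lambda)^{-1}\,d\lambda$ applied with $a = x^2 - z > 0$, after multiplying by $x$. The integrand $x/(x^2 + \lambda - z)$ can be decomposed via partial fractions (in the variable $x$, treating $\mu := (\lambda - z)^{1/2} > 0$ as a parameter) as
\begin{equation}
\frac{x}{x^2 + \mu^2} = \frac{1}{2}\left[\frac{1}{x + i\mu} + \frac{1}{x - i\mu}\right] = \Re\!\left(\frac{1}{x + i\mu}\right), \notag
\end{equation}
so that, upon applying the spectral theorem to the self-adjoint operator $A_\pm$, one gets $g_z(A_\pm) = \pi^{-1}\Re\big(\int_0^\infty \lambda^{-1/2}(A_\pm + i(\lambda-z)^{1/2}I)^{-1}\,d\lambda\big) = \pi^{-1}\Re\big(\int_0^\infty \lambda^{-1/2} R_{\pm,\lambda}(z)\,d\lambda\big)$. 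Subtracting the two identities yields the claim.

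The steps I would carry out in order: (1) verify the scalar integral identity and the partial-fraction/real-part decomposition pointwise in $x$; (2) justify, via the spectral theorem and dominated convergence for the spectral measures, that the scalar identity lifts to the operator identity $g_z(A_\pm) = \pi^{-1}\Re\big(\int_0^\infty \lambda^{-1/2} R_{\pm,\lambda}(z)\,d\lambda\big)$ in the strong (or weak) operator topology — here one must check that for each fixed vector $f$, the function $\lambda \mapsto \lambda^{-1/2}(R_{\pm,\lambda}(z)f, f)$ is integrable on $(0,\infty)$, which reduces to integrability of $\lambda^{-1/2}(x^2 + \lambda - z)^{-1}$ against the finite spectral measure; (3) subtract to obtain the stated formula, being careful that the individual integrals converge so that linearity of $\Re$ and of the integral applies.

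The main obstacle is the convergence of the integral $\int_0^\infty \lambda^{-1/2} R_{\pm,\lambda}(z)\,d\lambda$: near $\lambda = 0$ the factor $\lambda^{-1/2}$ is singular but integrable, while as $\lambda \to \infty$ one has $\|R_{\pm,\lambda}(z)\|_{\cB(L^2(\bbR)\otimes\bbC^m)} = O(\lambda^{-1/2})$ (since $A_\pm$ is self-adjoint and $i(\lambda-z)^{1/2}$ has modulus $(\lambda-z)^{1/2}$), so $\lambda^{-1/2}\|R_{\pm,\lambda}(z)\| = O(\lambda^{-1})$, which is \emph{not} integrable at infinity. Thus the operator-norm Bochner integral diverges, and the identity must be understood as a strong (or weak) limit: $\int_0^N$ as $N \to \infty$, where the convergence is salvaged by oscillation/cancellation encoded in the spectral measure (equivalently, by the fact that $\lambda^{-1/2}(x^2+\lambda-z)^{-1}$ \emph{is} integrable for each fixed $x$, with integral $\pi |x|^{-1}\cdot|x|(x^2-z)^{-1/2}$ up to the real-part manipulation). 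Handling this carefully — stating the precise sense in which the integral converges and verifying the hypotheses of dominated convergence against the spectral measure $d(E_{A_\pm}(x)f, f)$ — is the one point that requires genuine care; everything else is routine functional calculus.
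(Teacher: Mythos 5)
Your scalar identity and the real-part decomposition are exactly the ingredients of the paper's argument (which invokes Kato's formula $(T^2-zI)^{-1/2}=\pi^{-1}\int_0^\infty\lambda^{-1/2}\big(T^2+(\lambda-z)I\big)^{-1}d\lambda$ together with $A_\pm\big(A_\pm^2+(\lambda-z)I\big)^{-1}=\Re\big(R_{\pm,\lambda}(z)\big)$), but your handling of the convergence issue --- the very point you flag as the one requiring care --- contains a genuine error. The integrals $\int_0^\infty\lambda^{-1/2}R_{\pm,\lambda}(z)\,d\lambda$ for the two operators \emph{separately} do not exist in any sense, not even weakly: for fixed $x$ one has $\big|(x+i(\lambda-z)^{1/2})^{-1}\big|=(x^2+\lambda-z)^{-1/2}$, not $(x^2+\lambda-z)^{-1}$, so $\lambda^{-1/2}\big|(f,R_{\pm,\lambda}(z)f)\big|$ decays only like $\lambda^{-1}$. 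Concretely, the skew-adjoint part, coming from $\Im\big((x+i\mu)^{-1}\big)=-\mu(x^2+\mu^2)^{-1}$ with $\mu=(\lambda-z)^{1/2}$, contributes $\sim-\lambda^{-1}\|f\|^2$, so $\Im\int_0^N\lambda^{-1/2}(f,R_{\pm,\lambda}(z)f)\,d\lambda$ diverges logarithmically; there is no oscillation or cancellation from the spectral measure that rescues it. Hence the intermediate identities $g_z(A_\pm)=\pi^{-1}\Re\big(\int_0^\infty\lambda^{-1/2}R_{\pm,\lambda}(z)\,d\lambda\big)$, with $\Re$ outside an untruncated integral, are ill-posed, and your step (2) (reducing integrability of $\lambda\mapsto\lambda^{-1/2}(f,R_{\pm,\lambda}(z)f)$ to that of $\lambda^{-1/2}(x^2+\lambda-z)^{-1}$) conflates the resolvent with its real part.

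The repair is to keep the real part inside the integral until the difference is formed, which is what the paper does: each term is $\pi^{-1}\int_0^\infty\lambda^{-1/2}A_\pm\big(A_\pm^2+(\lambda-z)I\big)^{-1}d\lambda$ (convergent in the weak/strong sense, since $\int_0^\infty\lambda^{-1/2}|x|(x^2+\lambda-z)^{-1}d\lambda\le\pi$ uniformly in $x$), one subtracts under a single integral, and only then uses $A_+\big(A_+^2+(\lambda-z)I\big)^{-1}-A_-\big(A_-^2+(\lambda-z)I\big)^{-1}=\Re\big(R_{+,\lambda}(z)-R_{-,\lambda}(z)\big)$. Moving $\Re$ outside the integral in the final formula is legitimate precisely because the integrand is now the \emph{difference}: by the resolvent identity, $R_{+,\lambda}(z)-R_{-,\lambda}(z)=-R_{+,\lambda}(z)\Phi R_{-,\lambda}(z)$ has norm $O\big((\lambda-z)^{-1}\big)$ (with $\Phi=A_+-A_-$ bounded), so $\int_0^\infty\lambda^{-1/2}\big[R_{+,\lambda}(z)-R_{-,\lambda}(z)\big]d\lambda$ is a norm-convergent Bochner integral and commutes with $T\mapsto(T+T^*)/2$. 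Your truncated formulation $\lim_{N\to\infty}\Re\int_0^N$ can be made to work, but only if $\Re$ is applied before taking $N\to\infty$ and the subtraction is performed before any limit; as written, the splitting into two separately convergent integrals fails, and this is exactly the delicate point the lemma is designed to get right.
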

\begin{proof} We recall the fact that for any self-adjoint operator $T$ in $\cH$, 
\begin{equation} 
\big(T^2-z I_{\cH}\big)^{-1/2} = {\pi}^{-1}\int_0^{\infty} \lambda^{-1/2} 
\big(T^2 - (z - \lambda) I_{\cH}\big)^{-1}\,d\lambda,    \quad z <0,
\end{equation}  
with a norm convergent Bochner integral (see, e.g., \cite[p.~282]{Ka80} for a 
more general result). Thus,
\begin{equation} 
g_z(A_+)-g_z(A_-)=\frac{1}{\pi}\int_0^{\infty}{\lambda^{1/2}}
\Big[A_+ \big(A_+^2 -(z - \lambda) I_{\cH}\big)^{-1} 
- A_- \big(A_-^2 - (z - \lambda) I_{\cH}\big)^{-1}\Big] \, d\lambda.
\end{equation} 
Taking into account the equality 
\begin{equation} 
A_+ \big(A_+^2 - (z - \lambda) I_{\cH}\big)^{-1} - A_- \big(A_-^2 - (z - \lambda) I_{\cH}\big)^{-1} 
=\Re(R_{+,\lambda}(z) - R_{-,\lambda}(z)),   
\end{equation}  
one concludes the proof.
\end{proof}

\begin{remark}
One observes that Lemma \ref{general integral lemma} holds for arbitrary self-adjoint operators. 
${}$  \hfill$\diamond$
\end{remark}

In what follows, we denote for brevity 
$U_{\lambda}(z)=(A_+-A_-)R_{-,\lambda}(z) = \Phi  R_{-,\lambda}(z),$
frequently suppressing the $z$-dependence of $U_{\lambda}$ and $A_{\pm,\lambda}$ in the following. 
The next result yields the first claim in Theorem \ref{main1}. In our present setting we do not resort to the double operator integration technique as in \cite{GLMST11}, but instead apply more elementary means.

\begin{proposition}\lb{g_complex}
Suppose that $\Phi\in\mnn{W^{1,1}(\bbR)\cap C_b(\bbR)}$ and 
$z <0$.  
For the operators $A_-=D\otimes I_m$, $A_+=A_-+ \Phi$ one obtains 
\begin{align} 
& g_z(A_+)-g_z(A_-) = \Phi  {(A_-^2 - z I)^{-3/2}}      \lb{gir} \\
& \quad 
+\pi^{-1}\Re\bigg(\int_0^\infty \big[\lambda^{-1/2}R_{+,\lambda}(z)U_\lambda(z)^2
- \lambda^{-1/2}[R_{-,\lambda}(z), \Phi  ]R_{-,\lambda}(z)\big]\,d\lambda\bigg),    \no 
\end{align} 
and each term on the right-hand side of \eqref{gir} lies in $\cB_1\big(L^2(\bbR)\big)$. 
In addition, 
\begin{equation} 
\|g_z(A_+)-g_z(A_-)\|_{\cB_1(L^2(\bbR) \otimes \bbC^m)} \leq\|\Phi\|_{1,1}.
\end{equation}  
\end{proposition}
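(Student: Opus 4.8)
The plan is to establish the operator identity \eqref{gir} by a second-order resolvent expansion and then bound each of the three resulting pieces in trace norm using only the hypothesis $\Phi \in M^{m\times m}(W^{1,1}(\bbR)\cap C_b(\bbR))$.

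First I would start from Lemma \ref{general integral lemma}, which writes $g_z(A_+)-g_z(A_-)$ as $\pi^{-1}\Re\int_0^\infty \lambda^{-1/2}[R_{+,\lambda}(z)-R_{-,\lambda}(z)]\,d\lambda$. The idea is to expand $R_{+,\lambda}(z)-R_{-,\lambda}(z)$ to second order in $\Phi$ via the resolvent identity: writing $R_{+,\lambda}-R_{-,\lambda} = -R_{+,\lambda}\Phi R_{-,\lambda} = -R_{-,\lambda}\Phi R_{-,\lambda} + R_{+,\lambda}\Phi R_{-,\lambda}\Phi R_{-,\lambda}$, so the difference becomes $-R_{-,\lambda}\Phi R_{-,\lambda} + R_{+,\lambda}U_\lambda^2$ with $U_\lambda = \Phi R_{-,\lambda}$. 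The leading term $-R_{-,\lambda}\Phi R_{-,\lambda}$ must then be rearranged: using the commutator identity \eqref{comut_D}, namely $[A_-,\Phi]=-i\Phi'$, one moves $\Phi$ past one resolvent at the cost of a commutator term, writing $R_{-,\lambda}\Phi R_{-,\lambda} = \Phi R_{-,\lambda}^2 + [R_{-,\lambda},\Phi]R_{-,\lambda}$, where $[R_{-,\lambda},\Phi] = -R_{-,\lambda}[A_-,\Phi]R_{-,\lambda} = i R_{-,\lambda}\Phi' R_{-,\lambda}$ is controlled by $\Phi' \in M^{m\times m}(L^\infty)$. Integrating $\Phi R_{-,\lambda}^2$ against $\pi^{-1}\lambda^{-1/2}\,d\lambda$ and taking the real part reproduces $\Phi(A_-^2-zI)^{-3/2}$ by the same scalar integral formula used in the proof of Lemma \ref{general integral lemma} (differentiated in $z$, or equivalently the formula for $(T^2-zI)^{-3/2}$); this gives the first term on the right of \eqref{gir}, and what remains is exactly the integral displayed in \eqref{gir}. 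I would be careful to justify interchanging $\Re$, the integral, and these algebraic manipulations — the Bochner integrals involved are norm-convergent near $\lambda=\infty$ because each integrand decays like $\lambda^{-3/2}$ (two resolvents each of norm $\lesssim \lambda^{-1/2}$) and is bounded near $\lambda=0$ since $z<0$ keeps $A_\pm + i(\lambda-z)^{1/2}I$ boundedly invertible.

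Next I would establish the trace-class membership and the norm bound term by term. The term $\Phi(A_-^2-zI)^{-3/2}$: since $A_- = D\otimes I_m$ with $D=-id/dx$, this is a matrix of operators of the form $\phi_{jk}(x)(D^2-z)^{-3/2}$, and by the standard criterion (see \cite[Thm.~4.1]{Si05}) $f(x)g(-id/dx)$ is trace class on $L^2(\bbR)$ with $\|f(x)g(-id/dx)\|_{\cB_1} \le C\|f\|_{L^1}\|g\|_{L^1}$ (or an $L^1$-of-Fourier-transform norm); here $g(p)=(p^2-z)^{-3/2} \in L^1(\bbR)$ for $z<0$ and $\Phi \in M^{m\times m}(W^{1,1})\subset M^{m\times m}(L^1)$, so this term is trace class. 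For the integral terms, inside each one factors as $\Phi$ times resolvents: in $\lambda^{-1/2}R_{+,\lambda}U_\lambda^2 = \lambda^{-1/2}R_{+,\lambda}\Phi R_{-,\lambda}\Phi R_{-,\lambda}$ one peels off one factor $\Phi R_{-,\lambda}$ (which is trace class, with $\cB_1$-norm $\lesssim \|\Phi\|_{L^1}\lambda^{-1/2}$ for large $\lambda$, again by \cite{Si05}) and estimates the rest in operator norm, getting an integrable-in-$\lambda$ bound; similarly $\lambda^{-1/2}[R_{-,\lambda},\Phi]R_{-,\lambda} = i\lambda^{-1/2}R_{-,\lambda}\Phi' R_{-,\lambda}^2$ has $\Phi'(x)$ times an $L^1$ symbol, trace class with norm $\lesssim \lambda^{-1/2}\cdot\lambda^{-1}$ times $\|\Phi'\|$-type quantities. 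Summing the pieces and optimizing the constants is where the clean bound $\|\Phi\|_{1,1} = \|\Phi\|_{L^1}+\|\Phi'\|_{L^1}$ should emerge.

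The main obstacle I anticipate is twofold: first, getting the trace-norm estimates to assemble into precisely the constant $\|\Phi\|_{1,1}$ rather than a worse $C\|\Phi\|_{1,1}$ — this likely requires choosing the splitting of \eqref{gir} so that each piece carries exactly one $L^1$ factor ($\Phi$ or $\Phi'$) and the remaining resolvent factors integrate to the right numerical constants via explicit scalar integrals like $\pi^{-1}\int_0^\infty \lambda^{-1/2}(p^2+\lambda-z)^{-1}d\lambda = (p^2-z)^{-1/2}$; second, the uniform-in-$\lambda$ trace-norm control of $\Phi R_{-,\lambda}(z)$ near $\lambda = 0$ and the integrability at $\lambda=\infty$, which needs the sharp form of the Birman--Solomyak/Cwikel-type estimate $\|f(x)g(D)\|_{\cB_1}\le \|f\|_{L^1}\|\hat g\|_{L^\infty}$-style inequality together with the fact that $\|\,\cdot\,R_{-,\lambda}\|$ and $\|\,\cdot\,R_{+,\lambda}\|$ decay like $\lambda^{-1/2}$ — the three integrands then behave like $\lambda^{-3/2}$, $\lambda^{-3/2}$, and $\lambda^{-3/2}$ respectively at infinity, comfortably integrable, while near $0$ they are bounded since $z<0$.
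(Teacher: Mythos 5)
Your overall skeleton — Lemma \ref{general integral lemma}, the second-order resolvent expansion $R_{+,\lambda}-R_{-,\lambda}=-\Phi R_{-,\lambda}^2-[R_{-,\lambda},\Phi]R_{-,\lambda}+R_{+,\lambda}U_\lambda^2$, and the identification of $\pi^{-1}\Re\int_0^\infty\lambda^{-1/2}\Phi R_{-,\lambda}^2\,d\lambda$ with $-\Phi(A_-^2-zI)^{-3/2}$ — is exactly the paper's route. However, your trace-norm estimates rest on two claims that are false, and this is precisely where the substance of the proposition lies. First, you peel off ``one factor $\Phi R_{-,\lambda}$, which is trace class with $\cB_1$-norm $\lesssim\|\Phi\|_{L^1}\lambda^{-1/2}$''. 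The operator $\Phi R_{-,\lambda}=\Phi\,(D+i(\lambda-z)^{1/2}I)^{-1}$ is \emph{never} trace class (for $\Phi\neq 0$): the symbol $(p+i(\lambda-z)^{1/2})^{-1}$ fails the necessary integrability condition of \cite[Theorem~4.1, Remark~4(a)]{Si05}, and the paper emphasizes this very point (it is why the GLMST relatively-trace-class hypothesis is violated here). The term $R_{+,\lambda}U_\lambda^2$ must instead be handled by H\"older with \emph{two} Hilbert--Schmidt factors, $\|U_\lambda\|_{\cB_2}\leq C\|\Phi\|_2\,(\lambda-z)^{-1/4}$, giving $\|R_{+,\lambda}U_\lambda^2\|_{\cB_1}\lesssim(\lambda-z)^{-1}$. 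Second, the ``standard criterion'' $\|f(X)g(D)\|_{\cB_1}\leq C\|f\|_{L^1}\|g\|_{L^1}$ does not exist: $f,g\in L^1$ does not imply trace class. For $\Phi(A_-^2-zI)^{-3/2}$ one needs the Birman--Solomyak-type bound \cite[Theorem~4.5]{Si05} in the $\ell^1(L^2)$ norms, and it is exactly to verify $\Phi\in\mnn{\ell^1(L^2(\bbR))}$ that the hypothesis $\Phi\in\mnn{W^{1,1}\cap C_b}$ and Lemma \ref{Cwikel_par} are used; likewise the commutator term $R_{-,\lambda}\Phi'R_{-,\lambda}^2$ is not controlled by ``$\Phi'\in L^1$ times an $L^1$ symbol'' but by splitting $\Phi'$ into $|\Phi'|^{1/2}\cdot|\Phi'|^{1/2}$ (Corollary \ref{square_root}) and taking two $\cB_2$ factors.

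There is also a gap in the final estimate $\|g_z(A_+)-g_z(A_-)\|_{\cB_1}\leq\|\Phi\|_{1,1}$. The direct term-by-term bounds yield $C\,[\|\Phi\|_1\|\Phi\|_\infty+\|\Phi\|_{1,1}]$, which is quadratic in $\Phi$ because of the $U_\lambda^2$ term; no choice of splitting or ``optimization of constants'' can make that piece carry only one $L^1$ factor. The paper removes the quadratic contribution by a separate idea you do not have: telescope $A_+-A_-$ into $n$ increments $n^{-1}\Phi$, use the gauge transformation of Lemma \ref{unit_equiv} to write each intermediate operator $A_-+\tfrac{k}{n}\Phi$ as $\Psi_{(k,n)}A_-\Psi_{(k,n)}^*$, apply the crude bound to each step with perturbation $n^{-1}\Psi_{(k,n)}^*\Phi\Psi_{(k,n)}$, and let $n\to\infty$, so that the quadratic term scales like $n\cdot n^{-2}$ and disappears, leaving (a constant times) $\|\Phi\|_{1,1}$. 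Without this (or an equivalent mechanism) your argument can only deliver the weaker, nonlinear bound.
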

\begin{proof}
Using the resolvent identity twice one can write 
\begin{align}
R_{+,\lambda}-R_{-,\lambda}&=R_{+,\lambda} \Phi  R_{-,\lambda}
=-R_{-,\lambda} \Phi  +R_{+,\lambda} \Phi  R_{-,\lambda} \Phi  R_{-,\lambda}=-R_{-,\lambda}U_\lambda+R_{+,\lambda}U_\lambda^2\no\\
&=- \Phi  R_{-,\lambda}^2-[R_{-,\lambda}, \Phi  ]R_{-,\lambda}+R_{+,\lambda}U_\lambda^2.\lb{resol_dif}
\end{align}
Next, we separately treat the three preceding terms.

First, we show  that 
\begin{equation} 
\int_0^\infty {\lambda^{-1/2}}R_{+,\lambda}U_\lambda^2\,d\lambda 
\in\cB_1\big(L^2(\bbR)\otimes \bbC^m\big).    \lb{XX}
\end{equation} 
Employing the noncommutative H\"older inequality (see, e.g., \cite[Ch.~2]{Si05}), 
\begin{align}\lb{A1lU^2}
& \bigg\|\int_0^\infty{\lambda^{-1/2}}R_{+,\lambda}U_\lambda^2 \, d\lambda 
\bigg\|_{\cB_1(L^2(\bbR) \otimes \bbC^m)}    
\leq \int_0^\infty{\lambda^{-1/2}} 
\big\|R_{+,\lambda}U_\lambda^2\big\|_{\cB_1(L^2(\bbR) \otimes \bbC^m)} \, d\lambda   \no \\
& \quad \leq \int_0^\infty {\lambda^{-1/2}} \|R_{+,\lambda}\|_{\cB(L^2(\bbR) \otimes \bbC^m)}
\|U_\lambda\|_{\cB_2(L^2(\bbR) \otimes \bbC^m)}^2 \, d\lambda.
\end{align}

Thus, applying \cite[Theorem~4.1]{Si05} and Remark \ref{viamax}, 
\begin{align}
\|U_\lambda\|_{\cB_2(L^2(\bbR) \otimes \bbC^m)} 
&=\big\| \Phi  \big(A_-+i(\lambda-z)^{1/2} I\big)^{-1}\big\|_{\cB_2(L^2(\bbR) \otimes \bbC^m)}   \no \\
&\leq\ C \, \max_{j,k=1,\dots, m}\big\|\Phi_{j,k} 
\big(D+i(1+\lambda)^{1/2} I \big)^{-1}\big\|_{\cB_2(L^2(\bbR))}  \no \\ 
& \leq \ C \, \max_{j,k}\|\Phi_{j,k}\|_2\|h_1\|_2    \no \\
&=\ C \, \|\Phi\|_2\|h_1\|_2\leq\ C \, 
\|\Phi\|_1\|\Phi\|_\infty\|h_1\|_2,
\end{align}
where 
$h_1(t)=({t+i(\lambda-z)^{1/2}})^{-1}$ and $C>0$ represents a constant that may well differ from line to line. Since $\|h_1\|_2=C (\lambda-z)^{-1/4}$, one infers that 
\begin{equation}\lb{norm_U_lambda}
\|U_\lambda\|_{\cB_2(L^2(\bbR) \otimes \bbC^m)}
\leq\ C \, \|\Phi\|_1\|\Phi\|_\infty (\lambda-z)^{-1/4}.
\end{equation}
In addition, (see \eqref{def_Alambda}) one has  
\begin{equation} 
\|R_{+,\lambda}\|_{\cB(L^2(\bbR) \otimes \bbC^m)} 
\leq\sup_{t\in\bbR}({|t+i(\lambda-z)^{1/2}|})^{-1}=(\lambda-z)^{-1/2}, 
\end{equation}  
hence, combining this estimate with \eqref{A1lU^2} and \eqref{norm_U_lambda}, one obtains 
\begin{equation}\lb{norm_firstint}
\bigg\|\int_0^\infty{\lambda^{-1/2}}R_{+,\lambda}U_\lambda^2 \, 
d\lambda\bigg\|_{\cB_1(L^2(\bbR) \otimes \bbC^m)} 
\leq\ C \, \|\Phi\|_1\|\Phi\|_\infty\int_0^\infty{\lambda^{-1/2}(\lambda-z)^{-1}}\,d\lambda.
\end{equation}
Since the integral on the right-hand side converges, the claim \eqref{XX} follows. 

Next, we show that 
\begin{equation} 
\int_0^\infty{\lambda^{-1/2}}[R_{-,\lambda}, \Phi  ]R_{-,\lambda}
\, d\lambda \in\cB_1\big(L^2(\bbR)\otimes \bbC^m\big).    \lb{YY} 
\end{equation} 
Using the formula $[C^{-1},B]=-C^{-1}[C,B]C^{-1}$ and equality \eqref{comut_D} one gets  
\begin{equation} 
[R_{-,\lambda}, \Phi  ]=-R_{-,\lambda}[A_-, \Phi  ]R_{-,\lambda}=iR_{-,\lambda} \Phi' R_{-,\lambda}. 
\end{equation} 
Hence, one infers 
\begin{align}\lb{norm_sec}
& \bigg\|\int_0^\infty{\lambda^{-1/2}}[R_{-,\lambda}, \Phi  ]R_{-,\lambda}\,
d\lambda\bigg\|_{\cB_1(L^2(\bbR) \otimes \bbC^m)}    \no \\ 
& 
\quad 
\leq\int_0^\infty{\lambda^{-1/2}} \big\|R_{-,\lambda} 
\Phi' R_{-,\lambda}^2\big\|_{\cB_1(L^2(\bbR) \otimes \bbC^m)}\, d\lambda    \no \\
& \quad \leq \int_0^\infty{\lambda^{-1/2}} 
\big\|R_{-,\lambda} |\Phi'|^{1/2} \big\|_{\cB_2(L^2(\bbR) \otimes \bbC^m)} \big\| |\Phi'|^{1/2} 
R_{-,\lambda}^2\big\|_{\cB_2(L^2(\bbR) \otimes \bbC^m)} \, d\lambda.     
\end{align}
Since by hypothesis, $\Phi'\in\mnn{L^1(\bbR)}$, Corollary \ref{square_root} implies that the matrix 
$|\Phi'|^{1/2}=\Big\{\widetilde{\Phi'}_{j,k}\Big\}_{j,k=1}^m$ belongs to $\mnn{L^2(\bbR)}$. Thus, using once more \cite[Theorem~4.1]{Si05}, one concludes that  
\begin{align}
 & \big\|R_{-,\lambda}|\Phi'|^{1/2}\big\|_{\cB_2(L^2(\bbR) \otimes \bbC^m)}   
 \leq\ C \, \max_{1 \leq j,k \leq m}
\Big\|\big(D+i(1+\lambda)^{1/2} I\big)^{-1}\widetilde{\Phi'}_{j,k} 
\Big\|_{\cB_2(L^2(\bbR))}   \no \\
& \quad 
 \leq\ C \, \max_{1 \leq j,k \leq m} \Big\|\widetilde{\Phi'}_{j,k}\Big\|_2\|h_1\|_2
\leq\ C \, \big\||\Phi'|^{1/2}\big\|_2\|h_1\|_2.
\end{align}
Arguing similarly, one obtains that  
\begin{equation} 
\big\||\Phi'|^{1/2}R_{-,\lambda}^2\big\|_{\cB_2(L^2(\bbR) \otimes \bbC^m)} 
\leq\ C \, \big\||\Phi'|^{1/2}\big\|_2\|h_2\|_2,
\end{equation}  
where
$ h_2(t)={(t+i(\lambda-z)^{1/2})^{-2}}.$
It is easy to check that 
$\|h_2\|_2=C(\lambda-z)^{-3/4}$. Appealing to the estimate 
$\big\||\Phi'|^{1/2}\big\|_2^2\leq C \, \|\Phi'\|_1$ (see Corollary \ref{square_root}), 
 \eqref{YY} is proved by estimating the RHS of \eqref{norm_sec} 
 as follows
\begin{align}\lb{norm_commut} 
\bigg\|\int_0^\infty{\lambda^{-1/2}}[R_{-,\lambda}, 
\Phi  ]R_{-,\lambda}\,d\lambda\bigg\|_{\cB_1(L^2(\bbR) \otimes \bbC^m)}   \no
& \leq\ C \, \|\Phi'\|_1\int_0^\infty{\lambda^{-1/2}(\lambda-z)^{-1}}\,d\lambda \\ 
&
<\infty. 
\end{align}

Finally, we prove that 
\begin{equation} 
\Re\bigg(\int_0^\infty{\lambda^{-1/2}} \Phi  R_{-,\lambda}^2\,d\lambda\big) \in 
\cB_1\big(L^2(\bbR) \otimes \bbC^m\bigg).   \lb{ZZ} 
\end{equation}  
First we note, 
\begin{align} 
\begin{split}  
& \Re\bigg(\int_0^\infty{\lambda^{-1/2}} \Phi  R_{-,\lambda}^2\,d\lambda\bigg)
= \Phi  \int_0^\infty{\lambda^{-1/2}}\Re\big(R_{-,\lambda}^2\big)\,d\lambda      \\
& \quad = \Phi  \int_0^\infty{\lambda^{-1/2}} \big(A_-^2+(z-\lambda) I\big)
\big(A_-^2- (z-\lambda) I\big)^{-2} \, d\lambda, 
\end{split} 
\end{align} 
and also 
\begin{equation} 
\int_0^\infty \lambda^{-1/2} \big(A_-^2+(z-\lambda) I\big) \big(A_-^2 -(z -\lambda) I\big)^{-2}
\, d\lambda =-{\pi}^{-1} \big(A_-^2-z I\big)^{-3/2}, 
\end{equation}  
so one obtains 
$
\Re\big(\int_0^\infty{\lambda^{-1/2}} 
\Phi  A_{0,\lambda}^2\,d\lambda\big) = - \pi{ \Phi  } \big(A_-^2-z I\big)^{-3/2}.
$
Furthermore, 
\cite[Theorem~4.5]{Si05} and Lemma \ref{Cwikel_par} imply 
\begin{align}\lb{first_inL1}
& \big\| \Phi \big(A_-^2-z I\big)^{-3/2}\big\|_{\cB_1(L^2(\bbR) \otimes \bbC^m)} 
\leq\ C \, \max_{j,k} \big\|\Phi_{j,k}(D^2-z I)^{-3/2}\big\|_{\cB_1(L^2(\bbR))}   \no\\
& \quad \leq\ C \, \max_{j,k=1,\dots,m}\|\Phi_{j,k}\|_{\ell^1(L^2(\bbR))}
\big\|((\cdot)^2-z)^{-3/2}\big\|_{\ell^1(L^2(\bbR))}     \no \\
& \quad \leq \ C \, \|\Phi'\|_{1,1},
\end{align}
and hence \eqref{ZZ}.

Thus, combining equality \eqref{resol_dif} with Lemma \ref{general integral lemma} and the estimates obtained in \eqref{norm_firstint}, \eqref{norm_commut} and \eqref{first_inL1} imply 
\eqref{gir}. 
In addition, the same estimates yield 
\begin{equation} 
\|g_z(A_+)-g_z(A_-)\|_{\cB_1(L^2(\bbR) \otimes \bbC^m)} 
\leq\ C \, [\|\Phi\|_1\|\Phi\|_\infty+\|\Phi\|_{1,1}].
\end{equation} 
Next, for fixed $n\in\bbN$,  
\begin{align}\lb{estimate_normg}
& \|g_z(A_+)-g_z(A_-)\|_{\cB_1(L^2(\bbR) \otimes \bbC^m)}     \no \\ 
& \quad =\big\|\sum_{k=0}^{n-1}\big(g_z\big(A_-+n^{-1}({k+1}){} \Phi \big)
-g_z\big(A_-+n^{-1}({k+1}) \Phi\big) \big)\big\|_{\cB_1(L^2(\bbR) \otimes \bbC^m)}  \no \\
& \quad \leq \sum_{k=1}^{n-1} \big\|g_z\big(A_-+n^{-1}({k+1}) \Phi \big)
-g_z\big(A_-+n^{-1}k \Phi  \big)\big\|_{\cB_1(L^2(\bbR) \otimes \bbC^m)}.
\end{align} 
Applying Lemma \ref{unit_equiv} one obtains for fixed $k \in \bbN$ the existence of a  
sequence of unitary matrices   
$\Psi_{(k,n)}$ such that $A_-+\frac{k}{n} \Phi  = \Psi_{(k,n)} A_- \Psi_{(k,n)}^*$. 
(We use the notation $\Psi_{(k,n)}$ to avoid any confusion with the matrix elements 
$\Psi_{k,\ell}$, $k,\ell = 1,\dots,m$, of $\Psi$.) Hence we have 
\begin{align} 
\begin{split} 
A_-+n^{-1}({k+1}) \Phi  
&= \Psi_{(k,n)} A_- 
\Psi_{(k,n)}^* + n^{-1} \Phi  
\\&
= \Psi_{(k,n)} \big(A_-+n^{-1}\Psi_{(k,n)}^* \Phi  \Psi_{(k,n)}\big) \Psi_{(k,n)}^*. 
\end{split} 
\end{align}
And thus, 
\begin{align}
\begin{split}  
&\|g_z(A_-+n^{-1}({k+1})\Phi ) 
- g_z(A_-+n^{-1}k \Phi )\|_{\cB_1(L^2(\bbR) \otimes \bbC^m)}  \\
& \quad = \|g_z(A_-)-g_z(A_-+n^{-1}
\Psi_{(k,n)}^* \Phi \Psi_{(k,n)})\|_{\cB_1(L^2(\bbR) \otimes \bbC^m)}.
\end{split} 
\end{align} 
Combining this with \eqref{estimate_normg} and using that every $\Psi_{(k,n)}$ 
is a unitary matrix yields 
\begin{align}
& \|g_z(A_+)-g_z(A_-)\|_{\cB_1(L^2(\bbR) \otimes \bbC^m)}   \no \\
&  \quad \leq \sum_{k=0}^{n-1}
 \big\|g_z(A_-)-g_z
\big(A_-+n^{-1} 
\Psi_{(k,n)}^* \Phi \Psi_{(k,n)}\big)\big\|_{\cB_1(L^2(\bbR) \otimes \bbC^m)}  \no \\
& \quad \leq\ C \, \sum_{k=0}^{n-1}
\|n^{-1} \Psi_{(k,n)}^* \Phi 
\Psi_{(k,n)}
\|_\infty
\|n^{-1} \Psi_{(k,n)}^* \Phi \Psi_{(k,n)}
\|_1   \no \\ 
& \qquad  + 
\|n^{-1} \Psi_{(k,n)}^* \Phi \Psi_{(k,n)}    
\|_{1,1}    \no \\
& \quad \leq n \ C [\|n^{-1}\Phi \|_\infty \| n^{-1} \Phi\|_1 
+ \|n^{-1} \Phi \|_{1,1}].
\end{align}
Hence,
\begin{align}
& \|g_z(A_+)-g_z(A_-)\|_{\cB_1(L^2(\bbR) \otimes \bbC^m)}   \no \\
& \quad \leq \ C \, \lim_{n\rightarrow \infty} 
n[\|n^{-1}\Phi\|_\infty 
\|n^{-1}\Phi\|_1+ \|n^{-1} \Phi\|_{1,1}]   \no \\
& \quad \leq C\lim_{n\rightarrow \infty} n[{n^{-2}}\|\Phi\|_\infty \|\Phi\|_1 
+n^{-1} \|\Phi\|_{1,1}]= C \|\Phi\|_{1,1}.
\end{align}
\end{proof}

\subsection{The Approximation Argument.}
In this subsection we explain the key idea of our approach.
We turn to the operators 
$A_{+,n}=A_-+P_n \Phi  P_n$, $P_n = E_{A_-}((-n,n))$, $n \in \bbN$, but first, we 
represent the difference $g_z(A_{+,n})-g_z(A_-)$ in close analogy to our expression for 
$g_z(A_{+})-g_z(A_{-})$ obtained in Proposition \ref{g_complex}.

\begin{proposition}\lb{g_n_complex}
Suppose that $\Phi\in\mnn{W^{1,1}(\bbR)\cap C_b(\bbR)}$ and  
$z <0$. Let 
 $A_{+,n}=A_-+P_n \Phi  P_n$, $n \in \bbN$, be as in \eqref{A_+,n}, and 
 \begin{equation}  
 R_{+,\lambda}^{(n)}=\big(A_{+,n}+i(\lambda-z)^{1/2} I\big)^{-1}, \quad 
 U_\lambda^{(n)}=P_nU_\lambda P_n,  \quad n \in \bbN. 
 \end{equation} 
 Then we have that $g_z(A_{+,n}) - g_z(A_-)$ is the following sum of trace class operators:
\begin{align}
\begin{split} 
& g_z(A_{+,n}) - g_z(A_-)=P_n \Phi  {(A_-^2-z)^{-3/2}}P_n    \lb{3.10} \\
& \quad +\pi^{-1}\Re\bigg(\int_0^\infty(R_{+,\lambda}^{(n)}(U^{(n)}_\lambda)^2-P_n[R_{-,\lambda}, \Phi  ]R_{-,\lambda}P_n)\lambda^{-1/2}\,d\lambda\bigg). 
\end{split} 
\end{align}
\end{proposition}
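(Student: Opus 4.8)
The plan is to mimic the proof of Proposition \ref{g_complex} line by line, carrying the spectral projections $P_n$ along, and then use the already-established trace class estimates for the unprojected quantities to control the projected ones. First I would apply Lemma \ref{general integral lemma} to the pair $(A_{+,n}, A_-)$ (which is legitimate, since the lemma was noted to hold for arbitrary self-adjoint operators), obtaining
\begin{equation}
g_z(A_{+,n})-g_z(A_-) = \pi^{-1}\Re\bigg(\int_0^{\infty}\lambda^{-1/2}\big[R_{+,\lambda}^{(n)} - R_{-,\lambda}\big]\,d\lambda\bigg). \notag
\end{equation}
Then I would expand the resolvent difference using the second resolvent identity twice, exactly as in \eqref{resol_dif}, but now with the perturbation $A_{+,n}-A_- = P_n\Phi P_n$ in place of $\Phi$. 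The key algebraic point is that $R_{+,\lambda}^{(n)}P_n\Phi P_n R_{-,\lambda} = R_{+,\lambda}^{(n)}P_n\Phi P_n R_{-,\lambda}P_n$ because $R_{-,\lambda}$ commutes with $P_n$ (as $P_n = E_{A_-}((-n,n))$). Using this repeatedly, together with $U_\lambda^{(n)} = P_n U_\lambda P_n = P_n\Phi R_{-,\lambda}P_n$, the expansion should produce
\begin{equation}
R_{+,\lambda}^{(n)} - R_{-,\lambda} = -P_n\Phi P_n R_{-,\lambda}^2 - P_n[R_{-,\lambda},\Phi]R_{-,\lambda}P_n + R_{+,\lambda}^{(n)}(U_\lambda^{(n)})^2, \notag
\end{equation}
where I would need to verify that $[R_{-,\lambda},\Phi]$ sandwiched between $P_n$'s arises correctly after commuting $P_n$ past $R_{-,\lambda}$ — the identity $[P_n A_- P_n, \Phi] = P_n[A_-,\Phi]P_n$ modulo off-diagonal terms involving $(I-P_n)$ needs a small check, but since $[A_-,\Phi] = -i\Phi'$ is bounded and $P_n$ commutes with $A_-$, the sandwiched commutator $P_n[R_{-,\lambda},\Phi]R_{-,\lambda}P_n$ should come out cleanly.

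Next I would integrate in $\lambda$ and identify the three resulting terms. The first term, after integration, gives $\Re\big(\int_0^\infty \lambda^{-1/2} P_n\Phi P_n R_{-,\lambda}^2\,d\lambda\big)$; since $P_n$ commutes with $A_-$ and hence with $R_{-,\lambda}$, this equals $P_n\Phi\big(\int_0^\infty \lambda^{-1/2}\Re(R_{-,\lambda}^2)\,d\lambda\big)P_n = -\pi P_n\Phi(A_-^2 - zI)^{-3/2}P_n$ by the same computation as in \eqref{ZZ}, which matches the first term on the RHS of \eqref{3.10} after dividing by $\pi$. The remaining two terms assemble into the integral in \eqref{3.10}. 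For the trace class claim, I would invoke the estimates already proved in Proposition \ref{g_complex}: $\|R_{+,\lambda}^{(n)}(U_\lambda^{(n)})^2\|_{\cB_1} \le \|R_{+,\lambda}^{(n)}\|_{\cB}\|U_\lambda^{(n)}\|_{\cB_2}^2 \le \|R_{+,\lambda}\|_{\cB}\cdot\|P_n\|^2\cdot\|U_\lambda\|_{\cB_2}^2$ since projecting does not increase any of the relevant norms, so \eqref{norm_firstint} applies verbatim; similarly $\|P_n[R_{-,\lambda},\Phi]R_{-,\lambda}P_n\|_{\cB_1} \le \|[R_{-,\lambda},\Phi]R_{-,\lambda}\|_{\cB_1}$ is controlled by \eqref{norm_commut}; and the first term is trace class by \eqref{first_inL1} since $\|P_n\Phi(A_-^2-zI)^{-3/2}P_n\|_{\cB_1} \le \|\Phi(A_-^2-zI)^{-3/2}\|_{\cB_1}$.

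The main obstacle I anticipate is bookkeeping the projections correctly in the double resolvent expansion — specifically, making sure that after commuting $P_n$ through the various resolvents $R_{-,\lambda}$ (which is valid) and collecting terms, one really does obtain precisely $P_n[R_{-,\lambda},\Phi]R_{-,\lambda}P_n$ and $P_n\Phi P_n R_{-,\lambda}^2$ rather than some mixed expression with stray factors of $(I-P_n)$; the cleanest route is probably to write $A_{+,n} - A_- = P_n\Phi P_n$ throughout and expand $R_{+,\lambda}^{(n)} - R_{-,\lambda} = R_{+,\lambda}^{(n)}(P_n\Phi P_n)R_{-,\lambda}$, then substitute $R_{+,\lambda}^{(n)} = R_{-,\lambda} - R_{-,\lambda}(P_n\Phi P_n)R_{+,\lambda}^{(n)}$ once more, use $P_n R_{-,\lambda} = R_{-,\lambda}P_n$ to pull all projections to the outside, and finally replace $R_{-,\lambda}(P_n\Phi P_n)R_{-,\lambda}$ by $P_n\Phi R_{-,\lambda}^2 P_n + P_n[R_{-,\lambda},\Phi]R_{-,\lambda}P_n$ using $R_{-,\lambda}\Phi = \Phi R_{-,\lambda} + [R_{-,\lambda},\Phi]$. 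Everything else is a routine transcription of the proof of Proposition \ref{g_complex} with the harmless observation that projections only shrink operator-ideal norms.
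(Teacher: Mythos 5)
Your proposal is correct and follows essentially the same route as the paper: apply the second resolvent identity twice with the perturbation $P_n\Phi P_n$, commute $P_n$ past $R_{-,\lambda}$, split off the commutator term via $R_{-,\lambda}\Phi = \Phi R_{-,\lambda}+[R_{-,\lambda},\Phi]$, and then repeat the integral representation and trace-norm estimates of Proposition \ref{g_complex}, noting that the projections can only decrease the relevant ideal norms. The "small check" you worry about is not actually needed, since the manipulation uses only $P_nR_{-,\lambda}=R_{-,\lambda}P_n$, exactly as in your final paragraph.
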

\begin{proof}
One computes, 
\begin{align}
& R_{+,\lambda}^{(n)}-R_{-,\lambda} =-R_{+,\lambda}^{(n)}(P_n \Phi  P_n)R_{-,\lambda}  \no \\
& \quad =-R_{-,\lambda}P_n \Phi P_nR_{-,\lambda}+R_{+,\lambda}^{(n)}P_n \Phi  P_nR_{-,\lambda}P_n \Phi  P_nR_{-,\lambda}  \no \\
& \quad =-P_nR_{-,\lambda} \Phi  R_{-,\lambda}P_n+R_{+,\lambda}^{(n)}P_n \Phi  R_{-,\lambda}P_n \Phi  R_{-,\lambda}P_n  \no \\
& \quad =-P_n[R_{-,\lambda}, \Phi  ]R_{-,\lambda}P_n-P_n \Phi  R_{-,\lambda}^2P_n+R_{+,\lambda}^{(n)}P_n \Phi  R_{-,\lambda}P_n \Phi  R_{-,\lambda}P_n.
\end{align}
Arguing as in the proof of Proposition \ref{g_complex} yields the claimed assertions.
\end{proof}

The following theorem is the main result of this section; it yields a trace norm approximation 
of the operator $[g_z(A_+)-g_z(A_-)]$, $z \in \bbC \backslash [0,\infty)$.

\begin{theorem}\lb{conv_rhs} 
Suppose that $\Phi\in\mnn{W^{1,1}(\bbR)\cap C_b(\bbR)}$ and 
$z <0$. Then, 
$
\lim_{n\rightarrow\infty} \big\|[g_z(A_{+,n})-g_z(A_{-})] 
- [g_z(A_+)-g_z(A_-)]\big\|_{\cB_1(L^2(\bbR)\otimes \bbC^m)}= 0. 
$
\end{theorem}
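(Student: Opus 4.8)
The plan is to compare, term by term, the explicit representation of $g_z(A_{+,n})-g_z(A_-)$ in \eqref{3.10} with that of $g_z(A_+)-g_z(A_-)$ in \eqref{gir}, and to show that each of the three corresponding pairs of terms converges in $\cB_1\big(L^2(\bbR)\otimes\bbC^m\big)$-norm. Recall that $P_n = E_{A_-}((-n,n)) \to I$ strongly as $n\to\infty$, and that $P_n$ commutes with $A_-$, hence with $R_{-,\lambda}$. So the first term in \eqref{3.10} is $P_n\,\Phi(A_-^2-z)^{-3/2}P_n$, and since $\Phi(A_-^2-z)^{-3/2}\in\cB_1$ by \eqref{first_inL1}, we have $P_n\,\Phi(A_-^2-z)^{-3/2}P_n \to \Phi(A_-^2-z)^{-3/2}$ in $\cB_1$: indeed, for any fixed $T\in\cB_1$ and strongly convergent uniformly bounded $P_n\to I$, both $P_n T\to T$ and $T P_n\to T$ in trace norm (approximate $T$ by finite rank, use $\|P_n\|\le 1$), whence $P_n T P_n\to T$ in $\cB_1$.

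Next I would treat the commutator term. We have the integrand $\lambda^{-1/2}P_n[R_{-,\lambda},\Phi]R_{-,\lambda}P_n$ versus $\lambda^{-1/2}[R_{-,\lambda},\Phi]R_{-,\lambda}$. Write the difference as $\lambda^{-1/2}\big(P_n[R_{-,\lambda},\Phi]R_{-,\lambda}P_n - [R_{-,\lambda},\Phi]R_{-,\lambda}\big)$ and bound its $\cB_1$-norm pointwise in $\lambda$ using exactly the factorization $[R_{-,\lambda},\Phi] = iR_{-,\lambda}\Phi' R_{-,\lambda}$ and the Hölder splitting through $|\Phi'|^{1/2}$ already used in \eqref{norm_sec}; this gives, uniformly in $n$, an integrable dominating function $C\|\Phi'\|_1\,\lambda^{-1/2}(\lambda-z)^{-1}$. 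For fixed $\lambda$, strong convergence $P_n\to I$ combined with the trace-class property of $R_{-,\lambda}|\Phi'|^{1/2}$ (and its adjoint factor) gives $\cB_1$-convergence of the integrand to zero, so by dominated convergence the integral of the difference tends to zero in $\cB_1$. The real-part operation and the $\pi^{-1}$ factor are harmless.

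The term I expect to be the main obstacle is the one involving the resolvents of the perturbed operators: $R_{+,\lambda}^{(n)}\big(U_\lambda^{(n)}\big)^2$ versus $R_{+,\lambda}U_\lambda^2$. Here $R_{+,\lambda}^{(n)}$ does \emph{not} commute with $P_n$, and one must show $R_{+,\lambda}^{(n)}\to R_{+,\lambda}$ in an appropriate sense while controlling $\big(U_\lambda^{(n)}\big)^2 = P_n U_\lambda P_n U_\lambda P_n$ in $\cB_1$. The natural route is: (a) show $A_{+,n}\to A_+$ in strong resolvent sense (since $P_n\Phi P_n\to\Phi$ strongly and all operators share the form $A_-+$ bounded), hence $R_{+,\lambda}^{(n)}\to R_{+,\lambda}$ strongly, uniformly in the relevant $\lambda$-parameter by the explicit resolvent bound $(\lambda-z)^{-1/2}$; (b) write $\big(U_\lambda^{(n)}\big)^2 - U_\lambda^2$ as a telescoping sum of terms each containing a factor $(P_n-I)$ adjacent to a Hilbert--Schmidt factor $U_\lambda$ or $U_\lambda^* $, so that each term $\to 0$ in $\cB_1$ by strong convergence against a fixed $\cB_2$ operator (using $\|U_\lambda\|_{\cB_2}\le C\|\Phi\|_1\|\Phi\|_\infty(\lambda-z)^{-1/4}$ from \eqref{norm_U_lambda}); (c) combine (a) and (b) via $\big\|R_{+,\lambda}^{(n)}(U_\lambda^{(n)})^2 - R_{+,\lambda}U_\lambda^2\big\|_{\cB_1} \le \big\|R_{+,\lambda}^{(n)}\big\|\,\big\|(U_\lambda^{(n)})^2-U_\lambda^2\big\|_{\cB_1} + \big\|(R_{+,\lambda}^{(n)}-R_{+,\lambda})U_\lambda^2\big\|_{\cB_1}$, the last term going to zero because $U_\lambda^2\in\cB_1$ and $R_{+,\lambda}^{(n)}\to R_{+,\lambda}$ strongly; and (d) dominate uniformly in $n$ by $C\,\lambda^{-1/2}(\lambda-z)^{-1}$ as in \eqref{norm_firstint} (the projections only improve the norms), so dominated convergence applies to the $\lambda$-integral. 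Assembling the three pieces and the first term completes the proof.
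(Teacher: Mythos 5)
Your proposal is correct and follows essentially the same route as the paper: compare the representations of Propositions \ref{g_complex} and \ref{g_n_complex} term by term, use strong convergence of $P_n$ against fixed trace class (resp.\ Hilbert--Schmidt) factors, prove strong resolvent convergence $A_{+,n}\to A_+$, and dominate the third $\lambda$-integrand uniformly in $n$ by $C\,\lambda^{-1/2}(\lambda-z)^{-1}$ to invoke dominated convergence. The only cosmetic difference is the commutator term, where the paper simply pulls $P_n$ outside the (already trace class) $\lambda$-integral and applies Lemma \ref{so_L_p} once, instead of your pointwise-in-$\lambda$ dominated convergence argument; both are valid.
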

\begin{proof}
For simplicity, we assume $z<0$ throughout this proof. 
By Proposition \ref{g_complex} and Proposition \ref{g_n_complex} it suffices to show that we have trace norm convergence:
\begin{align}
& P_n \Phi \big(A_-^2-z I\big)^{-3/2}P_n \underset{n\to\infty}{\longrightarrow} 
\Phi \big(A_-^2-z I\big)^{-3/2},    \no \\
& P_n\int_0^\infty{\lambda^{-1/2}}[R_{-,\lambda}, \Phi  ]R_{-,\lambda}\,d\lambda P_n
 \underset{n\to\infty}{\longrightarrow}  \int_0^\infty{\lambda^{-1/2}}[R_{-,\lambda}, \Phi  ]R_{-,\lambda} \, d\lambda,   \lb{3terms} \\
& \int_0^\infty{\lambda^{-1/2}}R_{+,\lambda}^{(n)}(U^{(n)}_\lambda)^2\,d\lambda 
 \underset{n\to\infty}{\longrightarrow}  \int_0^\infty{\lambda^{-1/2}}R_{+,\lambda}
 U_\lambda^2\,d\lambda. \no 
\end{align}
By \eqref{norm_commut} and \eqref{first_inL1} the operators $ \Phi  {(A_-^2-z)^{-3/2}}$ and 
$ \int_0^\infty{\lambda^{-1/2}}[R_{-,\lambda}, \Phi  ]R_{-,\lambda} \, d\lambda $ are trace class. As $P_n \underset{n\to\infty}{\longrightarrow} I$ in the strong operator topology, 
Lemma \ref{so_L_p}  implies convergence of the first two terms in \eqref{3terms}. For the third term one obtains 
\begin{align}\lb{conv_third}
\begin{split} 
& \bigg\|\int_0^\infty{\lambda^{-1/2}}R_{+,\lambda}^{(n)}(U^{(n)}_\lambda)^2\,d\lambda 
-\int_0^\infty{\lambda^{-1/2}}R_{+,\lambda} 
U_\lambda^2\,d\lambda\bigg\|_{\cB_1(L^2(\bbR) \otimes \bbC^m)}   \\
& \quad \leq \int_0^\infty{\lambda^{-1/2}}
\big\|R_{+,\lambda}^{(n)}(U^{(n)}_\lambda)^2 
- R_{+,\lambda}U_\lambda^2\big\|_{\cB_1(L^2(\bbR) \otimes \bbC^m)} \, d\lambda, 
\end{split}
\end{align}

We start by showing that $A_{+,n}\underset{n\to\infty} \longrightarrow A_+$ in the 
strong resolvent sense. Since $A_-$ is closed and densely defined, the operators 
$A_{+,n}$ and $A_+$ have the common core $\dom(A_-)$. For all $f \in \dom(A_-)$, 
$n \in \bbN$
\begin{align}
& \|A_{+,n}f - A_+ f\|_{L^2(\bbR) \otimes \bbC^m} 
= \|P_nA_- f +P_n \Phi  P_n f - A_- f - \Phi f\|_{L^2(\bbR) \otimes \bbC^m}   \no \\
& \quad \leq \|P_nA_- f - A_- f\|_{L^2(\bbR) \otimes \bbC^m}\! 
+ \|P_n \Phi  (1-P_n) f\|_{L^2(\bbR) \otimes \bbC^m} \!    \no \\
& \qquad+ \|(1-P_n) \Phi f\|_{L^2(\bbR) \otimes \bbC^m}\!.
\end{align}
The first and the last term converges to zero, since 
$\slim_{n\to\infty} P_n = I$, while the second term converges 
to zero since 
\begin{equation} 
\|P_n \Phi  (1-P_n) f\|_{L^2(\bbR) \otimes \bbC^m} 
\leq\| \Phi  \|_\infty\|(1-P_n) f\|_{L^2(\bbR) \otimes \bbC^m} 
\underset{n\to\infty}{\longrightarrow} 0. 
\end{equation} 
Thus, \cite[Theorem~VIII.25]{RS80} (see also \cite[Theorem~9.16]{We80}) implies that 
$A_{+,n}$ converges to $A_+$ in the strong resolvent sense. 

Next, we claim that 
$\lim_{n\rightarrow\infty} \big\|(U^{(n)}_\lambda)^2-U_\lambda^2
\big\|_{\cB_1(L^2(\bbR) \otimes \bbC^m)} = 0.$ We first note that 
\begin{align} 
& \big\|U_\lambda^2-P_nU_\lambda P_nU_\lambda P_n\big\|_{\cB_1(L^2(\bbR) \otimes \bbC^m)} 
\no \\
& \quad
 \leq \big\|(1-P_n)U_\lambda^2\big\|_{\cB_1(L^2(\bbR) \otimes \bbC^m)} 
+ \|U_\lambda\|_{\cB_2(L^2(\bbR) \otimes \bbC^m)}   \no \\
& \qquad \times \|U_\lambda(1-P_n) + (1-P_n)U_\lambda P_n\|_{\cB_2(L^2(\bbR) \otimes \bbC^m)}. 
\end{align} 
Since $U_\lambda\in \cB_2\big(L^2(\bbR)\big)$ (see \eqref{norm_U_lambda}), 
Lemma \ref{so_L_p} implies that 
\begin{equation} 
\big\|U_\lambda^2-P_nU_\lambda P_nU_\lambda P_n\big\|_{\cB_1(L^2(\bbR) \otimes \bbC^m)} \underset{n\to\infty}{\longrightarrow} 0.
\end{equation} 
Combining strong resolvent convergence of $A_{+,n}$ to $A_+$ as $n\to\infty$, and 
trace norm convergence of $(U^{(n)}_\lambda)^2$ to $U_\lambda^2$ as $n\to\infty$, 
Lemma \ref{so_L_p} implies 
that the integrands on the right-hand side of \eqref{conv_third} converge to zero.
In addition, one infers that  
\begin{align}
& \big\|R_{+,\lambda}^{(n)}(U^{(n)}_\lambda)^2
-R_{+,\lambda}U_\lambda^2\big\|_{\cB_1(L^2(\bbR) \otimes \bbC^m)} \leq 
\big\|R_{+,\lambda}^{(n)}\big\|_{\cB(L^2(\bbR) \otimes \bbC^m)}  
\|U_\lambda\|_{\cB_2(L^2(\bbR) \otimes \bbC^m)}^2   \no \\
& \quad + \|R_{+,\lambda}\|_{\cB(L^2(\bbR) \otimes \bbC^m)} 
\|U_\lambda\|_{\cB_2(L^2(\bbR) \otimes \bbC^m)}^2. 
\end{align} 
For the norms of resolvents one estimates (we recall that $z < 0$), 
\begin{equation} 
\big\|R_{+,\lambda}^{(n)}\big\|_{\cB(L^2(\bbR) \otimes \bbC^m)}, \quad 
\|R_{+,\lambda}\|_{\cB(L^2(\bbR) \otimes \bbC^m)} \leq (\lambda-z)^{-1/2},
\end{equation}  
and 
\begin{equation} 
\|U_\lambda\|_{\cB_2(L^2(\bbR) \otimes \bbC^m)}^2\leq (\lambda-z)^{-1/2}, 
\end{equation}  
and thus one obtains that 
$
\lambda^{-1/2}\big\|R_{+,\lambda}^{(n)}(U^{(n)}_\lambda)^2 
- R_{+,\lambda}U_\lambda^2\big\|_{\cB_1(L^2(\bbR) \otimes \bbC^m)}
$ 
is dominated by the integrable function $\lambda^{-1/2}(\lambda-z)^{-1}$. Thus, the dominated convergence theorem implies that 
\begin{equation} 
\int_0^\infty{\lambda^{-1/2}} \big\|R_{+,\lambda}^{(n)}(U^{(n)}_\lambda)^2
-R_{+,\lambda}U_\lambda^2\big\|_{\cB_1(L^2(\bbR) \otimes \bbC^m)}\, 
d\lambda \underset{n\to\infty} \longrightarrow 0.
\end{equation} 
\end{proof}

\section{The Left-Hand Side of the Trace Formula \eqref{principle}} \lb{s4} 

Our main objective in this section is to provide the second inclusion of Theorem \ref{main1} (see \eqref{incl_lhs}) and prove the trace norm convergence on the left-hand side of the trace formula \eqref{principle}.
First, we state a lemma which collects some properties of the operators 
$\bsH_{j,n}$, $n \in \bbN$, and $\bsH_j$, $j=1,2$.

\begin{lemma} \lb{l3.5}
Assume Hypothesis \ref{h3.1} and let $z\in\bbC\backslash [0,\infty)$.
Then the following assertions hold: \\
$(i)$ The operators $\bsH_{j,n}$ converge to $\bsH_j$, $j=1,2$, 
in the strong resolvent sense, 
\begin{equation}
\slim_{n \to \infty} (\bsH_{j,n}-z\, \bsI)^{-1} 
= ( \bsH_{j}-z\, \bsI)^{-1}, \quad j=1,2.     \lb{limR}
\end{equation}
$(ii)$ The operators 
$\ol{( \bsH_{1,n}-z \, \bsI)^{-1}(\bsH_{0}-z \, \bsI)}$ and
$(\bsH_{0}- z \, \bsI) ( \bsH_{1,n}- z \, \bsI)^{-1}$, $n \in \bbN$, 
are uniformly bounded with respect to $n \in \bbN$. In addition, 
\begin{align} 
& \slim_{n \to \infty} (\bsH_{0}-z \, \bsI) (\bsH_{j,n}-z\, \bsI)^{-1} 
= (\bsH_{0}-z \, \bsI)( \bsH_{j}-z\, \bsI)^{-1}, \quad j=1,2,    \lb{conv2}\\
& \slim_{n \to \infty} \ol{(\bsH_{j,n}-z \, \bsI)^{-1}(\bsH_{0} - z \, \bsI)} 
= \ol{( \bsH_{j}-z \, \bsI)^{-1}(\bsH_{0}-z \, \bsI)}, \quad j=1,2.   \lb{conv1} 
\end{align}  
\end{lemma}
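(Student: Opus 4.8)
The plan is to establish the three claims about $\bsH_{j,n}$ and $\bsH_j$ by reducing everything to strong resolvent convergence of $A_{+,n}$ to $A_+$ (already proved in the course of Theorem~\ref{conv_rhs}) together with the uniform-in-$n$ relative boundedness estimates packaged in the decompositions \eqref{HwithC} and \eqref{H_nwithC}. First I would record the elementary fact that $\bsB_n = \bsP_n \bsB \bsP_n \to \bsB$, $\bsB_n' = \bsP_n \bsB' \bsP_n \to \bsB'$, and $\bsC_n = \bsP_n \bsC \bsP_n \to \bsC$ in the strong operator topology on $L^2(\bbR^2) \otimes \bbC^m$, since $\slim_{n\to\infty}\bsP_n = \bsI$ and all three operators are bounded; moreover each family is uniformly norm-bounded in $n$. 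Next, using \eqref{H_nwithC} and the fact (from Lemma~\ref{boundedness}\,$(i)$) that $\bsA_-(\bsH_0 - z\bsI)^{-1/2}$ is bounded, I would verify that the perturbations $\bsV_{j,n} := 2\bsB_n\bsA_- + \bsC_n + \bsB_n^2 + (-1)^j\bsB_n'$ satisfy a uniform-in-$n$ $\bsH_0$-bound with $\bsH_0$-bound that can be taken $<1$ after possibly shifting $z$ (the term $2\bsB_n \bsA_-$ is controlled by writing $\bsB_n\bsA_- = \bsB_n (\bsH_0 - z\bsI)^{1/2}\cdot (\bsH_0 - z\bsI)^{-1/2}\bsA_-^{}$ and absorbing the unbounded factor into $\bsH_0$; but since the $\bsB_n \bsA_-$ term is what makes this delicate, it is cleaner to pass to the equivalent form with $2\bsA_- \bsB_n - \bsC_n$ and note $\bsA_-\bsB_n f = \bsA_- \bsP_n \bsB \bsP_n f$ is actually bounded for each fixed $n$ but only relatively bounded uniformly).

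\textbf{Key steps.} For part $(i)$, I would invoke a standard perturbation-theoretic criterion for strong resolvent convergence of a sequence of self-adjoint operators with a common core (e.g.\ \cite[Theorem~VIII.25]{RS80} in the form adapted to form/operator perturbations): since $\bsH_{j,n}$ and $\bsH_j$ share the core $\dom(\bsH_0) = W^{2,2}(\bbR^2)\otimes\bbC^m$ (Remark~\ref{common_core}), it suffices to show $\bsH_{j,n} f \to \bsH_j f$ for all $f \in \dom(\bsH_0)$, which follows from the SOT convergences of $\bsB_n,\bsB_n',\bsC_n$ together with boundedness of $\bsA_- f$ for $f$ in the core and the identity $\bsB_n \bsA_- f \to \bsB \bsA_- f$ (SOT convergence applied to the fixed vector $\bsA_- f$). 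For $(ii)$, the uniform boundedness of $\ol{(\bsH_{1,n}-z\bsI)^{-1}(\bsH_0 - z\bsI)}$ and its adjoint-type partner is exactly the statement that $\bsH_0$ is $\bsH_{1,n}$-bounded uniformly in $n$ and vice versa, which is the content of the uniform relative bound established above (combined with a Neumann-series/self-adjointness argument exactly as in the proof of Lemma~\ref{boundedness}\,$(ii)$ but tracking constants). For the two strong-limit assertions \eqref{conv2} and \eqref{conv1}, I would write
\begin{align}
& (\bsH_0 - z\bsI)(\bsH_{j,n}-z\bsI)^{-1} - (\bsH_0 - z\bsI)(\bsH_j - z\bsI)^{-1} \no \\
& \quad = (\bsH_0 - z\bsI)\big[(\bsH_{j,n}-z\bsI)^{-1} - (\bsH_j-z\bsI)^{-1}\big],
\end{align}
use the second resolvent identity $(\bsH_{j,n}-z\bsI)^{-1} - (\bsH_j-z\bsI)^{-1} = (\bsH_{j,n}-z\bsI)^{-1}(\bsV_j - \bsV_{j,n})(\bsH_j - z\bsI)^{-1}$, insert $(\bsH_0-z\bsI)$ on the left and factor as $(\bsH_0-z\bsI)(\bsH_{j,n}-z\bsI)^{-1}$ (uniformly bounded by $(ii)$'s first claim) times $(\bsV_j - \bsV_{j,n})(\bsH_j-z\bsI)^{-1}$, and observe that on the dense range of $(\bsH_j - z\bsI)^{-1}$ — equivalently on $\dom(\bsH_0)$ — the operator $(\bsV_j - \bsV_{j,n})$ tends to $0$ strongly because each of $\bsB_n - \bsB$, $\bsB_n' - \bsB'$, $\bsC_n - \bsC$ does and because $\bsA_-(\bsH_j-z\bsI)^{-1}$ maps into a fixed set on which SOT convergence applies. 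The analogous manipulation with the closed operator $\ol{(\,\cdot\,-z\bsI)^{-1}(\bsH_0-z\bsI)}$ on the right gives \eqref{conv1}, taking adjoints or using that $\bsH_j, \bsH_{j,n}$ are self-adjoint so that $\ol{(\bsH_{j,n}-z\bsI)^{-1}(\bsH_0-z\bsI)} = \big[(\bsH_0-\ol z\bsI)(\bsH_{j,n}-\ol z\bsI)^{-1}\big]^*$ and strong convergence of uniformly bounded operators whose adjoints also converge strongly.

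\textbf{Main obstacle.} The delicate point is the uniform-in-$n$ relative bound needed for the $2\bsB_n\bsA_-$ (equivalently $2\bsA_-\bsB_n$) term: one must show that the $\bsH_0$-bound of $\bsV_{j,n}$ can be chosen uniformly in $n$ and (after shifting $z$ if necessary) strictly less than one, so that the Neumann series defining $(\bsH_{j,n}-z\bsI)^{-1}(\bsH_0-z\bsI)$ converges with a constant independent of $n$. Since $\bsB_n = \bsP_n\bsB\bsP_n$ satisfies $\|\bsB_n\|\le\|\bsB\|$ and $\bsP_n$ commutes with $\bsH_0$, one has $\bsA_-\bsB_n = \bsA_-\bsP_n\bsB\bsP_n$ and $\bsA_-\bsP_n = \bsP_n\bsA_-$, so $\bsA_-\bsB_n f = \bsP_n\bsA_-\bsB\bsP_n f$; then $\|\bsA_-\bsB_n f\| \le \|\bsA_-\bsB\bsP_n f\|$, reducing the uniform bound for the approximants to the single bound $\|\bsA_-\bsB g\|\le a\|\bsH_0 g\| + b\|g\|$ — which is Hypothesis~\ref{h3.4}\,$(iv)$-type control and holds here because $\bsA_-\bsB = \bsA_-\theta(\cdot)\Phi = \theta(\cdot)\bsA_-\Phi = \theta(\cdot)(\Phi\bsA_- + i^{-1}\Phi')$ is relatively bounded with respect to $\bsA_-$, hence with respect to $\bsH_0^{1/2}$, hence has $\bsH_0$-bound $0$. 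Thus the obstacle dissolves: the $\bsH_0$-bound of every $\bsV_{j,n}$ is in fact $0$, uniformly in $n$, and standard self-adjoint perturbation theory closes all three parts. The remaining work is routine bookkeeping with the second resolvent identity and SOT limits.
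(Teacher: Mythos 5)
Your part $(i)$ coincides with the paper's proof: common core $\dom(\bsH_0)$, the criterion of \cite[Theorem~VIII.25]{RS80}, and termwise strong convergence from $\slim_{n\to\infty}\bsP_n=\bsI$, with the term $\bsB_n\bsA_-f$ handled by applying SOT convergence to the fixed vector $\bsA_-f$. Where you genuinely diverge is the uniform boundedness in part $(ii)$: you run a Kato--Rellich/Neumann-series argument, showing that the perturbations $\bsV_{j,n}:=\bsH_{j,n}-\bsH_0$ have $\bsH_0$-bound zero uniformly in $n$ (using $\bsA_-\bsP_n=\bsP_n\bsA_-$, the bounded commutator $[\bsA_-,\bsB]$, and the fact that $\bsA_-$ has $\bsH_0$-bound zero); this is correct, but it forces you to work first at a shifted spectral parameter, and you should say explicitly how to return to an arbitrary $z\in\bbC\backslash[0,\infty)$ (e.g.\ via the first resolvent identity together with $\|(\bsH_{j,n}-z\bsI)^{-1}\|\leq \dist(z,[0,\infty))^{-1}$, which holds since $\bsH_{j,n}\geq 0$). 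The paper avoids any smallness condition: it expands $(\bsH_0-z\bsI)(\bsH_{1,n}-z\bsI)^{-1}$ twice with the resolvent identity, using the ordering $2\bsB_n\bsA_-$ in the first step and $2\bsA_-\bsB_n$ in the second, so that everything is expressed through the two fixed bounded operators $\bsA_-(\bsH_0-z\bsI)^{-1}$ and $\ol{\bsA_-(\bsH_0-z\bsI)^{-1}\bsA_-}$ of Lemma~\ref{boundedness} together with the uniformly bounded factors $\bsB_n,\bsC_n,\bsB_n^{\prime},(\bsH_{1,n}-z\bsI)^{-1}$; this yields the uniform bound for every $z\in\bbC\backslash[0,\infty)$ at once. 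Your derivation of \eqref{conv2} from the second resolvent identity between $\bsH_{j,n}$ and $\bsH_j$ (a uniformly bounded factor times a factor tending strongly to zero on the range of $(\bsH_j-z\bsI)^{-1}$) is, if anything, tidier than the paper's explicit re-expansion and is sound.

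The one step whose stated justification does not work is \eqref{conv1}. Strong convergence is not preserved under passage to adjoints, so from \eqref{conv2} and the identity $\ol{(\bsH_{j,n}-z\bsI)^{-1}(\bsH_0-z\bsI)}=\big[(\bsH_0-\ol z\,\bsI)(\bsH_{j,n}-\ol z\,\bsI)^{-1}\big]^*$ you cannot conclude \eqref{conv1}; the clause ``whose adjoints also converge strongly'' is exactly what has to be proved, and the proposed ``analogous manipulation on the right'' needs care because the range of $\ol{(\bsH_j-z\bsI)^{-1}(\bsH_0-z\bsI)}$ need not lie in $\dom(\bsH_0)$, so one cannot simply apply $\bsV_j-\bsV_{j,n}$ to it. The repair is immediate and is what the paper does: for $f\in\dom(\bsH_0)$ one has $(\bsH_{j,n}-z\bsI)^{-1}(\bsH_0-z\bsI)f\to(\bsH_j-z\bsI)^{-1}(\bsH_0-z\bsI)f$ by \eqref{limR} applied to the fixed vector $(\bsH_0-z\bsI)f$, and the uniform bound from the first half of $(ii)$ together with the density of $\dom(\bsH_0)$ upgrades this to strong convergence of the closures.
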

\begin{proof} Since the proof for the operators $\bsH_{2,n},\bsH_2$ is a verbatim repetition of the proof for $\bsH_{1,n},\bsH_1$, we exclusively focus on the latter. 

\noindent 
$(i)$  By Remark \ref{common_core} the  operators $\bsH_1$ and $\bsH_{1,n}$ have the 
common core 
$\dom(\bsH_1)=\dom(\bsH_0)$. Since $\bsH_{1,n}$ and $\bsH_1$ are self-adjoint operators with a common core, by \cite[Theorem~VIII.25]{RS80} (see also 
\cite[Theorem~9.16]{We80}) it is sufficient to show that 
\begin{equation} 
\bsH_{1,n}f \underset{n\to\infty}{\longrightarrow} \bsH_1 f, \quad f \in\dom(\bsH_0). 
\end{equation} 
Equalities \eqref{HwithC} and \eqref{H_nwithC} imply convergence of every term separately. 
First, rewriting 
\begin{equation} 
\bsB^\prime-\bsB^\prime_n=\bsB^\prime-\bsP_n \bsB^\prime \bsP_n=(\bsI-\bsP_n)\bsB^\prime+\bsP_n\bsB^\prime(\bsI-\bsP_n), 
\end{equation} 
the convergence 
\begin{equation}
\slim_{n \to \infty} \bsB^\prime_n = \bsB^\prime    \lb{limBprime}
\end{equation}
follows 
since the operator $\bsB^\prime$, defined by \eqref{def_bsBprime}, is a bounded operator,  
and $\bsP_n \underset{n\to\infty}{\longrightarrow} \bsI$ in the strong operator topology. 
Arguing similarly, one infers that 
\begin{equation} 
\slim_{n \to \infty} \bsB_n = \bsB, \quad 
\slim_{n \to \infty} \bsC_n = \bsC.     \lb{limBC}
\end{equation} 
Next, one notes that 
\begin{align} 
\begin{split} 
\bsB^2-\bsB^2_n &=\bsB^2-\bsP_n\bsB\bsP_n\bsB\bsP_n      \\
&=(\bsI-\bsP_n)\bsB^2+\bsP_n\bsB\big(\bsB(\bsI-\bsP_n)+(\bsI-\bsP_n)\bsB\bsP_n\big),   
\end{split} 
\end{align}
implying, 
\begin{equation} 
\slim_{n \to \infty} \bsB^2_n = \bsB^2.    \lb{limB-2}
\end{equation} 
Thus, appealing to \eqref{HwithC} and \eqref{H_nwithC}, it remains to 
show that $\slim_{n \to \infty} \bsB_n\bsA_- f = \bsB\bsA_- f$ for all 
$f \in \dom(\bsH_0)$. The fact,  
\begin{align}
\begin{split} 
\bsB\bsA_--\bsB_n\bsA_-&=\bsB\bsA_--\bsP_n\bsB\bsA_-\bsP_n   \\
&=(\bsI-\bsP_n)\bsB\bsA_-+\bsP_n\bsB(\bsI-\bsP_n)\bsA_-,  
\end{split} 
\end{align} 
implies the required convergence. Consequently, 
\begin{equation} 
\slim_{n \to \infty} \bsH_{1,n} f = \bsH_1 f, \quad f \in \dom(\bsH_0), 
\end{equation} 
completes the proof of item $(i)$.

\smallskip 
\noindent 
$(ii)$ Fix $z \in \bbC \backslash [0,\infty)$. First we prove the uniform boundedness with 
respect to $n\in\bbN$ of the operators $\ol{( \bsH_{1,n}-z \, \bsI)^{-1}(\bsH_{0}-z \, \bsI)}$ and
$(\bsH_{0}- z \, \bsI) ( \bsH_{1,n}- z \, \bsI)^{-1}$, $n \in \bbN$.
Since the operator $( \bsH_{1,n}-z \, \bsI)^{-1}(\bsH_{0}-z \, \bsI)$ is closable, one concludes  
(see, e.g., \cite[Theorem~VIII.1]{RS80}) that 
\begin{equation}
\ol{( \bsH_{1,n}-z \, \bsI)^{-1}(\bsH_{0}-z \, \bsI)} = 
\big[(\bsH_{0}- {\ol z} \, \bsI) ( \bsH_{1,n}- {\ol z} \, \bsI)^{-1}\big]^*. 
\end{equation}
Thus, it suffices to show  that $(\bsH_{0}- z \, \bsI) ( \bsH_{1,n}- z \, \bsI)^{-1}$ is uniformly 
bounded with respect to $n \in \bbN$. 

Using the standard resolvent identity one obtains
\begin{equation}\lb{ssss2}
({\bsH}_{1,n}-z \, \bsI)^{-1}-(\bsH_0-z \, \bsI)^{-1}= 
-({\bsH}_{1,n}-z \, \bsI)^{-1}(\bsH_{1,n}-\bsH_0)(\bsH_0-z \, \bsI)^{-1}, 
\end{equation}
and hence employing \eqref{H_nwithC} one arrives at
\begin{align}
\begin{split} 
& (\bsH_0 - z \, \bsI)(\bsH_{1,n} - z \, \bsI)^{-1} 
= \bsI - \big[(\bsH_{1,n} - \bsH_0)(\bsH_{1,n} - z \, \bsI)^{-1}\big]   \\
& \quad = \bsI - \big[2 \bsB_n \bsA_- + \bsC_n + \bsB_n^2 - \bsB_n^{\prime}\big]
(\bsH_{1,n} - z \, \bsI)^{-1}, \quad n \in \bbN.   \lb{seq} 
\end{split}
\end{align}
The sequence of bounded operators $(\bsH_{1,n} - z \, \bsI)^{-1}$, $n \in \bbN$, is uniformly bounded, in addition, since the operators $\bsB,\bsB^\prime,\bsC$ are bounded, it follows from \eqref{def_bsB_n} that the sequences $\bsB_n$, $\bsB^\prime_n$, and $\bsC_n$, $n \in \bbN$, are also uniformly bounded with respect to $n\in\bbN$. Thus, by \eqref{seq} it is sufficient to prove the uniform boundedness of the sequence  $\bsA_- (\bsH_{1,n} - z \, \bsI)^{-1}$, $n \in \bbN$,  
which we focus on next. 

Again, appealing to the standard resolvent identity one obtains for each $ n \in \bbN$, 
\begin{align}\lb{seq1} 
 & \bsA_- (\bsH_{1,n} - z \, \bsI)^{-1}    \no \\
 & \quad = \bsA_- (\bsH_0 - z \, \bsI)^{-1}   
- \bsA_- (\bsH_0 - z \, \bsI)^{-1}\big[(\bsH_{1,n} - \bsH_0)
(\bsH_{1,n} - z \, \bsI)^{-1}\big]   \no \\
&  \quad= \bsA_- (\bsH_0 - z \, \bsI)^{-1} 
\no   \\ & \qquad 
- \bsA_- (\bsH_0 - z \, \bsI)^{-1}
\big[2 \bsA_- \bsB_n - \bsC_n + \bsB_n^2 - \bsB_n^{\prime}\big]
(\bsH_{1,n} - z \, \bsI)^{-1}.    
\end{align}
Arguing as above, it is sufficient to show that the operators $\bsA_- (\bsH_0 - z \, \bsI)^{-1}$ and $\ol{\bsA_- (\bsH_0 - z \, \bsI)^{-1}\bsA_-}$ are bounded. The first operator is bounded  by Lemma~\ref{boundedness}\,$(i)$ while the second operator is bounded since 
\begin{equation}
\ol{\bsA_- (\bsH_0 + \bsI)^{-1} \bsA_-} = 
\big[\bsA_- (\bsH_0 + \bsI)^{-1/2}\big] \big[\bsA_- (\bsH_0 + \bsI)^{-1/2}\big]^*,  
\end{equation}
and by Lemma~\ref{boundedness}\,$(i)$ the operator $\bsA_- (\bsH_0 +\, \bsI)^{-1/2}$ is bounded.
Thus, the sequence operators $(\bsH_0 - z \, \bsI)(\bsH_{1,n} - z \, \bsI)^{-1}$,  is uniformly bounded in $n\in\bbN$.

Next, gathering all terms from \eqref{seq} and \eqref{seq1} one arrives at 
\begin{align}
& (\bsH_0 - z \, \bsI)(\bsH_{1,n} - z \, \bsI)^{-1} 
= \bsI - \big[2 \bsB_n \bsA_- + \bsC_n + \bsB_n^2 - \bsB_n^{\prime}\big]
(\bsH_{1,n} - z \, \bsI)^{-1}    \no \\ 
& \quad = \bsI - \big[\bsC_n + \bsB_n^2 - \bsB_n^{\prime}\big]
(\bsH_{1,n} - z \, \bsI)^{-1} - 2 \bsB_n \bsA_- (\bsH_0 - z \, \bsI)^{-1}    \no \\
& \qquad + 2 \bsB_n \bsA_- (\bsH_0 - z \, \bsI)^{-1}  
\big[2 \bsA_- \bsB_n - \bsC_n + \bsB_n^2 - \bsB_n^{\prime}\big] 
(\bsH_{1,n} - z \, \bsI)^{-1}   \no \\
& \quad = \bsI - \big[\bsC_n + \bsB_n^2 - \bsB_n^{\prime}\big]
(\bsH_{1,n} - z \, \bsI)^{-1} - 2 \bsB_n \big[\bsA_- (\bsH_0 - z \, \bsI)^{-1}\big]   \no \\
& \qquad + 2 \bsB_n \big[\bsA_- (\bsH_0 - z \, \bsI)^{-1}\big]   
\big[- \bsC_n + \bsB_n^2 - \bsB_n^{\prime}\big] 
(\bsH_{1,n} - z \, \bsI)^{-1}    \no \\
& \qquad + 4 \bsB_n \big[\ol{\bsA_- (\bsH_0 - z \, \bsI)^{-1} \bsA_-}\big] \bsB_n 
(\bsH_{1,n} - z \, \bsI)^{-1}, \quad n \in \bbN.    \lb{seq3}
\end{align}
Similarly, one rewrites the right-hand side of \eqref{conv2} as 
\begin{align}
& (\bsH_0 - z \, \bsI)(\bsH_{1} - z \, \bsI)^{-1} 
 = \bsI - \big[\bsC + \bsB^2 - \bsB^{\prime}\big](\bsH_{1} - z \, \bsI)^{-1}  \no \\
& \quad - 2 \bsB \big[\bsA_- (\bsH_0 - z \, \bsI)^{-1}\big]   \no \\
& \quad + 2 \bsB \big[\bsA_- (\bsH_0 - z \, \bsI)^{-1}\big]   
\big[- \bsC + \bsB^2 - \bsB^{\prime}\big] 
(\bsH_{1} - z \, \bsI)^{-1}    \no \\
& \quad + 4 \bsB \big[\ol{\bsA_- (\bsH_0 - z \, \bsI)^{-1} \bsA_-}\big] \bsB 
(\bsH_{1} - z \, \bsI)^{-1}, \quad n \in \bbN.    \lb{seq4}
\end{align}
Thus, the strong resolvent convergence in \eqref{conv2}, \eqref{limBprime}, 
\eqref{limBC}, and \eqref{limB-2} implies strong convergence in \eqref{conv2}.   
Finally, to prove \eqref{conv1}, we first note that by 
 the strong resolvent convergence in \eqref{limR},   
\begin{equation} 
(\bsH_{1,n}-z \, \bsI)^{-1}(\bsH_{0} - z \, \bsI) f  \underset{n\to\infty}{\longrightarrow} ( \bsH_1 - z \, \bsI)^{-1}(\bsH_{0}-z \, \bsI)f, \quad f \in \dom(\bsH_0).
\end{equation} 
Since, in addition, the operators $\ol{(\bsH_{1,n}-z \, \bsI)^{-1}(\bsH_{0} - z \, \bsI)}$ are uniformly bounded with respect to $n\in\bbN$, and $\dom(\bsH_0)$ is dense in 
$L^2(\bbR^2)\otimes\bbC^m$, one infers the strong operator convergence  \eqref{conv1}.
\end{proof}

Next, we prove the second inclusion of our main result, Theorem \ref{main1}. 

\begin{proposition}\lb{fritz2} 
Assume Hypothesis \ref{h3.1} and let 
$z\in\bbC\backslash [0,\infty)$. Then, 
\begin{equation} 
\big[({\bsH}_1-z\bsI)^{-1}-({\bsH}_2-z\bsI)^{-1}\big] 
\in\cB_1\big(L^2(\bbR^2)\otimes \bbC^m\big). 
\end{equation}  
In addition,  
$
\big[({\bsH}_{1,n}-z\bsI)^{-1}-({\bsH}_{2,n}-z\bsI)^{-1}\big] 
\in\cB_1\big(L^2(\bbR^2)\otimes \bbC^m\big), \quad n \in \bbN. 
$
\end{proposition}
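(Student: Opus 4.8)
The plan is to reduce the assertion to a single two‑dimensional Hilbert--Schmidt (Cwikel‑type) estimate. By \eqref{def_dom_H_j} the operators $\bsH_1$ and $\bsH_2$ have the common domain $\dom(\bsH_0)$ on which they differ only by the \emph{bounded} operator $2\bsB^\prime$, namely multiplication by the self‑adjoint matrix‑valued function $(t,x)\mapsto\theta'(t)\Phi(x)$. Since $\bsH_1,\bsH_2\ge 0$ we have $z\in\rho(\bsH_1)\cap\rho(\bsH_2)$, and plugging $\phi=(\bsH_1-z\bsI)^{-1}\psi\in\dom(\bsH_0)$ into the relation $(\bsH_2-z\bsI)\phi=\psi+2\bsB^\prime\phi$ yields the resolvent identity
\begin{equation}
({\bsH}_1-z\bsI)^{-1}-({\bsH}_2-z\bsI)^{-1}=2\,(\bsH_2-z\bsI)^{-1}\bsB^\prime(\bsH_1-z\bsI)^{-1},\quad z\in\bbC\backslash[0,\infty).
\end{equation}
Thus it suffices to prove the right‑hand side is trace class.

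Since $\theta'$ is real‑valued and $\Phi(x)$ self‑adjoint a.e., $\bsB^\prime$ is bounded and self‑adjoint, and $|\bsB^\prime|^{1/2}$ is multiplication by $(t,x)\mapsto|\theta'(t)|^{1/2}|\Phi(x)|^{1/2}$ (with $|\Phi(x)|=(\Phi(x)^2)^{1/2}$). Writing $\bsB^\prime=|\bsB^\prime|^{1/2}\sgn(\bsB^\prime)|\bsB^\prime|^{1/2}$, recalling that a product of two Hilbert--Schmidt operators with a bounded operator interposed is trace class, and using $\big[(\bsH_2-z\bsI)^{-1}|\bsB^\prime|^{1/2}\big]^*=|\bsB^\prime|^{1/2}(\bsH_2-\ol z\,\bsI)^{-1}$, the problem reduces to
\begin{equation}
|\bsB^\prime|^{1/2}(\bsH_j-z\bsI)^{-1}\in\cB_2\big(L^2(\bbR^2)\otimes\bbC^m\big),\quad j=1,2,\ z\in\bbC\backslash[0,\infty).
\end{equation}
Factoring $|\bsB^\prime|^{1/2}(\bsH_j-z\bsI)^{-1}=\big[|\bsB^\prime|^{1/2}(\bsH_0-z\bsI)^{-1}\big]\big[(\bsH_0-z\bsI)(\bsH_j-z\bsI)^{-1}\big]$ and using Lemma~\ref{boundedness}\,$(ii)$ for the bounded second factor, it remains only to show $|\bsB^\prime|^{1/2}(\bsH_0-z\bsI)^{-1}\in\cB_2$.

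This last estimate is the heart of the argument and the only place the $(1+1)$‑dimensional structure is used. Since $\bsH_0=-\Delta\otimes I_m$ on $L^2(\bbR^2)\otimes\bbC^m$, the operator $|\bsB^\prime|^{1/2}(\bsH_0-z\bsI)^{-1}$ has the form $M_f\,g(-i\nabla)\otimes I_m$ with $f(t,x)=|\theta'(t)|^{1/2}|\Phi(x)|^{1/2}$ and $g(\xi)=(|\xi|^2-z)^{-1}$, $\xi\in\bbR^2$. The decisive point is that in \emph{two} dimensions $g\in L^2(\bbR^2)$ for every $z\in\bbC\backslash[0,\infty)$, because $|g(\xi)|\le C_z(|\xi|^2+1)^{-1}$ and $\int_{\bbR^2}(|\xi|^2+1)^{-2}\,d\xi<\infty$; this square‑integrability would fail in dimension $d\ge 4$. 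Likewise $f\in\mnn{L^2(\bbR^2)}$, since for each entry
\begin{equation}
\big\||\theta'|^{1/2}\,(|\Phi|^{1/2})_{j,k}\big\|_{L^2(\bbR^2)}^2=\|\theta'\|_{L^1(\bbR)}\,\big\|(|\Phi|^{1/2})_{j,k}\big\|_{L^2(\bbR)}^2<\infty
\end{equation}
by Hypothesis~\ref{h3.1}\,$(ii)$ and the matrix square‑root bound $|\Phi|^{1/2}\in\mnn{L^2(\bbR)}$, which follows from $\Phi\in\mnn{W^{1,1}(\bbR)}\subset\mnn{L^1(\bbR)}$ via Corollary~\ref{square_root} (applied to $\Phi$ in place of $\Phi'$). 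The matrix version of the Hilbert--Schmidt estimate \cite[Theorem~4.1]{Si05}, reduced to entries exactly as in the proof of Proposition~\ref{g_complex}, then yields $M_f\,g(-i\nabla)\otimes I_m\in\cB_2$, proving the first inclusion.

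For the approximants the same scheme goes through unchanged: $\bsH_{2,n}-\bsH_{1,n}=2\bsB^\prime_n=2\bsP_n\bsB^\prime\bsP_n$ by \eqref{H_nwithC}, so
\begin{equation}
({\bsH}_{1,n}-z\bsI)^{-1}-({\bsH}_{2,n}-z\bsI)^{-1}=2\,(\bsH_{2,n}-z\bsI)^{-1}\bsP_n\bsB^\prime\bsP_n(\bsH_{1,n}-z\bsI)^{-1};
\end{equation}
factoring $\bsP_n\bsB^\prime\bsP_n=(\bsP_n|\bsB^\prime|^{1/2})\sgn(\bsB^\prime)(|\bsB^\prime|^{1/2}\bsP_n)$, using that $\bsP_n$ commutes with $\bsH_0$ (hence with $(\bsH_0-z\bsI)^{-1}$), and writing $|\bsB^\prime|^{1/2}\bsP_n(\bsH_{1,n}-z\bsI)^{-1}=\big[|\bsB^\prime|^{1/2}(\bsH_0-z\bsI)^{-1}\big]\bsP_n\big[(\bsH_0-z\bsI)(\bsH_{1,n}-z\bsI)^{-1}\big]$, the first factor is the ($n$‑independent) Hilbert--Schmidt operator of the previous paragraph and the remaining factors are bounded uniformly in $n$ by Lemma~\ref{l3.5}\,$(ii)$; the mirror decomposition handles the other Hilbert--Schmidt factor. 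This proves both inclusions, with a trace‑norm bound uniform in $n\in\bbN$. The only genuinely nontrivial ingredient is the two‑dimensional Cwikel estimate of the third paragraph — everything else is bookkeeping already prepared in Lemmas~\ref{boundedness} and~\ref{l3.5}.
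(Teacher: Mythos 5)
Your proposal is correct and follows essentially the same route as the paper: the resolvent identity reduces everything to the Hilbert--Schmidt estimate for $|\theta'|^{1/2}|\Phi|^{1/2}(\bsH_0-z\bsI)^{-1}$ via Corollary \ref{square_root} and \cite[Theorem~4.1]{Si05}, with the bounded factors $(\bsH_0-z\bsI)(\bsH_j-z\bsI)^{-1}$ (and their adjoints) supplied by Lemma \ref{boundedness}\,$(ii)$, exactly as in the paper's proof. The only cosmetic difference is that you split $\bsB'=|\bsB'|^{1/2}\sgn(\bsB')|\bsB'|^{1/2}$ and attach each half to a resolvent, whereas the paper keeps $(\bsH_0-z\bsI)^{-1}\theta'\Phi(\bsH_0-z\bsI)^{-1}$ as a single trace-class block sandwiched between bounded operators; the approximant case is likewise handled the same way (the paper merely says ``similarly'').
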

\begin{proof} The standard resolvent identity and \eqref{HwithC} imply 
\begin{align}
& ({\bsH}_1-z\bsI)^{-1}-({\bsH}_2-z\bsI)^{-1}=2({\bsH}_1-z\bsI)^{-1} \theta' \Phi  ({\bsH}_2-z\bsI)^{-1} 
\no \\
& \quad =2\overline{({\bsH}_1-z\bsI)^{-1}(\bsH_0-z\bsI)}(\bsH_0-z\bsI)^{-1} \theta' \Phi  (\bsH_0-z\bsI)^{-1}   \no \\ 
& \qquad \times (\bsH_0-z\bsI)({\bsH}_2-z\bsI)^{-1}.
\end{align}

By Lemma \ref{l3.5}\,$(ii)$, 
\begin{align}
& \ol{(\bsH_1 - z \, \bsI)^{-1} (\bsH_0 - z  \bsI)} 
= \big[(\bsH_0 - {\ol z} \, \bsI) (\bsH_1 - {\ol z} \, \bsI)^{-1}\big]^* \in 
\cB\big(L^2(\bbR^2)\otimes\bbC^m\big),   \no \\ 
& (\bsH_0 - z \, \bsI) (\bsH_2 - z \, \bsI)^{-1} \in 
\cB\big(L^2(\bbR^2)\otimes\bbC^m\big), 
\end{align} 
and hence it suffices to show that
 \begin{equation}\lb{middleInL1}
 (\bsH_0-z\bsI)^{-1} \theta' \Phi  (\bsH_0-z\bsI)^{-1}\in\cB_1(L^2(\bbR^2) \otimes \bbC^m).
 \end{equation}
Since $\Phi\in \mnn{L^1(\bbR)}$, Corollary \ref{square_root} implies that $|\Phi|^{1/2}\in \mnn{L^2(\bbR)}$. In  addition, $\theta'\in L^1(\bbR)$, and hence, 
$|\theta'|^{1/2}|\Phi|^{1/2}\in \mnn{L^2(\bbR^2)}$. Thus, 
\cite[Theorem~4.1]{Si05} implies  
\begin{equation} 
(\bsH_0-z\bsI)^{-1} |\theta'|^{1/2} |\Phi|^{1/2}, \; |\theta'|^{1/2} |\Phi|^{1/2}
(\bsH_0-z\bsI)^{-1}\in\cB_2(L^2(\bbR^2) \otimes \bbC^m), 
\end{equation} 
and hence 
$(\bsH_0-z\bsI)^{-1} \theta' \Phi  (\bsH_0-z\bsI)^{-1}\in\cB_1(L^2(\bbR^2) \otimes \bbC^m).$
The inclusion $\big[({\bsH}_{1,n}-z\, \bsI)^{-1}-({\bsH}_{2,n}-z\, \bsI)^{-1}\big] 
\in\cB_1(L^2(\bbR^2) \otimes \bbC^m)$, $n \in \bbN$, is proved similarly.
\end{proof}

The following result shows that 
$\big[{({\bsH}_1-z\bsI)^{-1}-({\bsH}_2-z\bsI)^{-1}}\big]$ can be approximated in trace 
norm by 
$\big[({\bsH}_{1,n}-z\bsI)^{-1}-({\bsH}_{2,n}-z\bsI)^{-1}\big]$ 
as $n \to \infty$. 

\begin{proposition} \lb{t3.7}
Assume Hypothesis \ref{h3.1} and let $z \in \bbC \backslash \bbR$. Then 
\begin{align}
\begin{split} 
& \lim_{n\to\infty} \big\|\big[(\bsH_{2,n} - z \, \bsI)^{-1} - (\bsH_{1,n} - z \, \bsI)^{-1}\big]  \\
& \hspace*{1cm} - [(\bsH_2 - z \, \bsI)^{-1} - (\bsH_1 - z \, \bsI)^{-1}\big]
\big\|_{\cB_1(L^2(\bbR^2)\otimes\bbC^m)} = 0.    \lb{2.66}
\end{split} 
\end{align}
\end{proposition}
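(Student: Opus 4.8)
The plan is to reuse the factorization of the resolvent difference established in the proof of Proposition~\ref{fritz2}. Writing $R_j(z)=(\bsH_j-z\bsI)^{-1}$, $R_{j,n}(z)=(\bsH_{j,n}-z\bsI)^{-1}$, and setting $\bsX_z=\ol{R_1(z)(\bsH_0-z\bsI)}$, $\bsY_z=(\bsH_0-z\bsI)R_2(z)$, $\bsG_z=(\bsH_0-z\bsI)^{-1}\theta'\Phi(\bsH_0-z\bsI)^{-1}$, and likewise $\bsX_{z,n},\bsY_{z,n}$ with $\bsH_{1,n},\bsH_{2,n}$ in place of $\bsH_1,\bsH_2$, and $\bsG_{z,n}=(\bsH_0-z\bsI)^{-1}\bsB_n^{\prime}(\bsH_0-z\bsI)^{-1}$, exactly as there one obtains
\[
R_1(z)-R_2(z)=2\,\bsX_z\,\bsG_z\,\bsY_z,\qquad R_{1,n}(z)-R_{2,n}(z)=2\,\bsX_{z,n}\,\bsG_{z,n}\,\bsY_{z,n}.
\]
Since $\bsP_n$ commutes with $\bsH_0$ and $\bsB_n^{\prime}=\bsP_n\bsB^{\prime}\bsP_n$, the middle factor simplifies to $\bsG_{z,n}=\bsP_n\bsG_z\bsP_n$, and $\bsG_z,\bsG_z^{\,*}\in\cB_1(L^2(\bbR^2)\otimes\bbC^m)$ by (the proof of) Proposition~\ref{fritz2}.

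Next I would collect the relevant convergences. By Lemma~\ref{l3.5}\,$(ii)$ — applied both at $z$ and at $\ol z$, which is admissible since $z\in\bbC\backslash\bbR$ forces $z,\ol z\in\bbC\backslash[0,\infty)$ — the operators $\bsX_{z,n},\bsY_{z,n}$ together with the adjoints $\bsY_{z,n}^{\,*}=\ol{R_{2,n}(\ol z)(\bsH_0-\ol z\bsI)}$ are uniformly bounded in $n\in\bbN$ and converge strongly, as $n\to\infty$, to $\bsX_z,\bsY_z,\bsY_z^{\,*}$, respectively. Moreover $\bsG_{z,n}\to\bsG_z$ in $\cB_1$-norm: since $\bsG_z,\bsG_z^{\,*}\in\cB_1$ and $\slim_{n\to\infty}\bsP_n=\bsI$,
\[
\|\bsG_z-\bsP_n\bsG_z\bsP_n\|_{\cB_1}\le\|(\bsI-\bsP_n)\bsG_z\|_{\cB_1}+\|\bsG_z(\bsI-\bsP_n)\|_{\cB_1}\longrightarrow 0
\]
as $n\to\infty$, by Lemma~\ref{so_L_p}. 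By the same lemma, strong convergence $\bsX_{z,n}\to\bsX_z$ together with $\bsG_z\in\cB_1$ yields $(\bsX_{z,n}-\bsX_z)\bsG_z\to 0$ in $\cB_1$-norm, and analogously $(\bsY_{z,n}^{\,*}-\bsY_z^{\,*})\bsG_z^{\,*}\to 0$ in $\cB_1$-norm.

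Finally I would combine these via the telescoping identity
\[
\bsX_{z,n}\bsG_{z,n}\bsY_{z,n}-\bsX_z\bsG_z\bsY_z=\bsX_{z,n}(\bsG_{z,n}-\bsG_z)\bsY_{z,n}+(\bsX_{z,n}-\bsX_z)\bsG_z\,\bsY_{z,n}+\bsX_z\bsG_z(\bsY_{z,n}-\bsY_z),
\]
estimating the $\cB_1$-norm of each summand by $\|\bsR\bsS\bsT\|_{\cB_1}\le\|\bsR\|\,\|\bsS\|_{\cB_1}\,\|\bsT\|$ (and its adjoint version): the first is dominated by $\|\bsX_{z,n}\|\,\|\bsG_{z,n}-\bsG_z\|_{\cB_1}\,\|\bsY_{z,n}\|$, the second by $\|(\bsX_{z,n}-\bsX_z)\bsG_z\|_{\cB_1}\,\|\bsY_{z,n}\|$, and the third, upon passing to adjoints, by $\|\bsX_z\|\,\|(\bsY_{z,n}^{\,*}-\bsY_z^{\,*})\bsG_z^{\,*}\|_{\cB_1}$; all three tend to $0$ by the uniform bounds and the convergences just recorded. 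Multiplying by $2$ gives \eqref{2.66}. I do not expect a genuinely hard step here: the only point requiring real care is that the ``helper'' factors $\bsX_{z,n},\bsY_{z,n}$ — which carry the unbounded pieces $\bsA_-(\bsH_0-z\bsI)^{-1}$ and $\ol{\bsA_-(\bsH_0-z\bsI)^{-1}\bsA_-}$ — are uniformly bounded in $n$ and strongly convergent; but this is precisely the content of Lemma~\ref{l3.5}\,$(ii)$, so the remainder of the argument is bookkeeping resting on that lemma and on the interplay between strong-operator and trace-norm convergence encapsulated in Lemma~\ref{so_L_p}.
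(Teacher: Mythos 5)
Your proof is correct and follows essentially the same route as the paper: both factor the resolvent differences through $\bsH_0$ as in Proposition \ref{fritz2}, note that the middle factor for the approximants is $\bsP_n\bsG_z\bsP_n$ and converges in trace norm, and combine this with the uniform boundedness and strong convergence of the outer factors from Lemma \ref{l3.5}\,$(ii)$. The only cosmetic difference is that you telescope into three terms and estimate each by hand, whereas the paper invokes Lemma \ref{so_L_p} in its three-factor form $R_nS_nT_n^*$ directly.
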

\begin{proof}
An application of \eqref{HwithC}, \eqref{H_nwithC} and the resolvent equation for the difference of 
resolvents in \eqref{2.66} yields  
\begin{align}
& \big[(\bsH_{2,n} - z \, \bsI)^{-1} - (\bsH_{1,n} - z \, \bsI)^{-1}\big]
 - [(\bsH_2 - z \, \bsI)^{-1} - (\bsH_1 - z \, \bsI)^{-1}\big]    \no \\
 & \quad = - 2 (\bsH_{2,n} - z \, \bsI)^{-1} \bsB_n^{\prime} (\bsH_{1,n} - z \, \bsI)^{-1} 
 + 2 (\bsH_2 - z \, \bsI)^{-1} \bsB^{\prime} (\bsH_1 - z \, \bsI)^{-1}   \no \\
 & \quad = - 2\ol{\big[(\bsH_{2,n} - z \, \bsI)^{-1} (\bsH_0 - z \, \bsI)\big]}   
\big\{\bsP_n (\bsH_0 - z \, \bsI)^{-1} 
\bsB^{\prime} (\bsH_0 - z \, \bsI)^{-1} 
 \bsP_n\big\}   
 \no \\
& \qquad 
\quad \times \big[(\bsH_0 - z \, \bsI) (\bsH_{1,n} - z \, \bsI)^{-1}\big]      \\
& \qquad + 2\ol{\big[(\bsH_2 - z \, \bsI)^{-1} (\bsH_0 - z \, \bsI)\big]}   
 \big\{(\bsH_0 - z \, \bsI)^{-1} \bsB^{\prime} 
(\bsH_0 - z \, \bsI)^{-1}\big\}    \no \\
& \qquad \quad \times \big[(\bsH_0 - z \, \bsI) (\bsH_1 - z \, \bsI)^{-1}\big], \quad 
z \in \bbC \backslash [0,\infty).  \no
\end{align}
Since $\slim_{n\to\infty} \bsP_n = \bsI$, 
inclusion \eqref{middleInL1} and Lemma \ref{so_L_p} imply that the sequence  
$\bsP_n (\bsH_0 - z \, \bsI)^{-1} \bsB^{\prime} (\bsH_0 - z \, \bsI)^{-1} \bsP_n$ 
converges to $(\bsH_0 - z \, \bsI)^{-1} \bsB^{\prime} 
(\bsH_0 - z \, \bsI)^{-1}$ in $\cB_1\big(L^2(\bbR^2)\otimes\bbC^m\big)$-norm as 
$n\to \infty$. Another application of Lemma \ref{so_L_p} proves \eqref{2.66} since by 
Lemma \ref{l3.5}\,$(ii)$,  
\begin{align}
\slim_{n\to\infty} \big[(\bsH_0 - z \, \bsI) (\bsH_{1,n} - z \, \bsI)^{-1}\big] = 
\big[(\bsH_0 - z \, \bsI) (\bsH_1 - z \, \bsI)^{-1}\big],&     \\
\slim_{n\to\infty} \ol{\big[(\bsH_{2,n} - z \, \bsI)^{-1} (\bsH_0 - z \, \bsI)\big]} =  
\ol{\big[(\bsH_2 - z \, \bsI)^{-1} (\bsH_0 - z \, \bsI)\big]},&    \\
z \in \bbC \backslash [0,\infty),& 
\end{align} 
completing the proof. 
\end{proof}

\section{Proof of the  Trace Formula \eqref{principle} and its Implications for Relations 
Between Spectral Shift Functions} \lb{s5} 

It follows from the results in Sections \ref{s3} and \ref{s4} that both inclusion \eqref{incl_rhs} 
and  \eqref{incl_lhs} of Theorem \ref{main1} hold. In addition, one can approximate the left and right-hand sides of the principle trace formula \eqref{principle} in their respective trace norms. In this section, we finally prove the equality \eqref{principle} for the operators $A_-$, $A_+$ and 
$\bsH_j$, $j=1,2$, thereby extending the result of \cite[Theorem~2.2]{GLMST11} to the  family of operators $\{A(t)\}_{t \in \bbR}$ given by \eqref{def_B(t)A(t)}, which do not satisfy 
\cite[Hypothesis~2.1]{GLMST11}

We start by stating the principal trace formula for the operators $A_{\pm,n}$ and 
$\bsH_{j,n}$, $j=1,2$, $n\in\bbN$, defined by \eqref{A_+,n} and \eqref{H_nwithC}.

\begin{proposition}\lb{propAppr}
Assume Hypothesis \ref{h3.1} and let $z \in \bbC \backslash [0,\infty)$. 
For the operators $A_n(t)$, $A_{\pm,n}$ on $\cH$ and the operators $\bsH_{1,n}$, 
 $\bsH_{2,n}$ on $L^2(\bbR^2) \otimes \bbC^m$, obtained by replacing $A(t)$ by $A_n(t)$ in 
\eqref{def_H_j}, $n \in \bbN$, one obtains, 
\begin{align} \lb{trfn}
\begin{split} 
& \tr_{L^2(\bbR^2) \otimes \bbC^m}\big((\bsH_{2,n} - z \, \bsI)^{-1} 
- (\bsH_{1,n} - z \, \bsI)^{-1}\big)    \\
& \quad =\frac{1}{2z}\tr_{L^2(\bbR) \otimes \bbC^m}\big(g_z(A_{+,n})-g_z(A_-)\big), 
\quad n \in \bbN.      
\end{split} 
\end{align}
\end{proposition}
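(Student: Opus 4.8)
The plan is to obtain Proposition~\ref{propAppr} as a direct application of the abstract principal trace formula of \cite[Theorem~2.2]{GLMST11} (which itself builds on \cite{Pu08}), applied to the \emph{truncated} data $(A_-,\{B_n(t)\}_{t\in\bbR},\{A_n(t)\}_{t\in\bbR})$ rather than to $(A_-,\{B(t)\},\{A(t)\})$, where $B_n(t)=\theta(t)P_n\Phi P_n$, $A_n(t)=A_-+B_n(t)$, and $P_n=E_{A_-}((-n,n))$. The enabling observation is precisely the content of Remark~\ref{not_rtc}: unlike $\{B(t)=\theta(t)\Phi\}$, which violates the relatively trace class condition, the truncated family $\{B_n(t)\}$ does satisfy \cite[Hypothesis~2.1]{GLMST11} for each fixed $n\in\bbN$ (and even the stronger \cite[Hypothesis~(1.3)]{Pu08}). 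Hence \cite[Theorem~2.2]{GLMST11} applies verbatim for every $n$ and yields \eqref{trfn}, once one notes that $A_{-,n}=A_-$ by \eqref{A_+,n} (so that $g_z(A_{-,n})=g_z(A_-)$) and that the operators $\bsH_{1,n},\bsH_{2,n}$ obtained by replacing $A(t)$ with $A_n(t)$ in \eqref{def_H_j} are precisely $\bsD_{\bsA_n}^*\bsD_{\bsA_n}^{}$ and $\bsD_{\bsA_n}^{}\bsD_{\bsA_n}^*$ for $\bsD_{\bsA_n}^{}=(d/dt)\otimes I_m+\bsA_n$, with $\bsA_n$ the multiplication operator associated with $\{A_n(t)\}_{t\in\bbR}$.

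Concretely, the steps I would carry out are: (1) verify the structural hypotheses of \cite[Theorem~2.2]{GLMST11} for $(A_-,\{B_n(t)\})$ --- self-adjointness of $A_-$ from \eqref{def_A_-}; weak local absolute continuity of $t\mapsto B_n(t)$ with derivative $B_n'(t)=\theta'(t)P_n\Phi P_n$ and $\sup_{t\in\bbR}\|B_n'(t)\|_{\cB(\cH)}\le\|\theta'\|_\infty\|\Phi\|_\infty<\infty$, both inherited from Hypothesis~\ref{h3.1} and $\|P_n\|_{\cB(\cH)}=1$; (2) verify the relatively trace class condition $P_n\Phi P_n(A_-^2+I)^{-1/2}\in\cB_1(\cH)$, whence $\|B_n'(t)(A_-^2+I)^{-1/2}\|_{\cB_1(\cH)}=|\theta'(t)|\,\|P_n\Phi P_n(A_-^2+I)^{-1/2}\|_{\cB_1(\cH)}$ lies in $L^1(\bbR;dt)$ because $\theta'\in L^1(\bbR)$; (3) record the existence of the norm-resolvent limits $A_{\pm,n}$ in the explicit form \eqref{A_+,n}; and (4) read off \eqref{trfn} as the conclusion of \cite[Theorem~2.2]{GLMST11} for the pair $(A_{+,n},A_-)$ and $z\in\bbC\backslash[0,\infty)$, using that for such $z$ one has $x^2-z\notin(-\infty,0]$, so $g_z$ is unambiguously defined through the principal branch.

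The main obstacle --- in fact essentially the only nontrivial point --- is step~(2): showing that $P_n\Phi P_n(A_-^2+I)^{-1/2}$ is trace class even though $P_n$ is an infinite-rank Fourier band projection and $\Phi$ is only assumed bounded with entries in $W^{1,1}(\bbR)\cap C_b(\bbR)$. This is precisely what the estimate \eqref{trclass} provides, and in the write-up I would simply invoke it; it follows from the Cwikel-type bounds of \cite[Theorem~4.1]{Si05} applied to the band-limited resolvent, together with the $L^1$-decay of $\Phi$. I would stress that no uniformity in $n$ is needed here: the $n$-dependent constants are harmless, and the limit $n\to\infty$ is handled separately in Sections~\ref{s3}--\ref{s4} via Theorem~\ref{conv_rhs} and Propositions~\ref{fritz2} and~\ref{t3.7}. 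All remaining work is bookkeeping --- translating the conclusion of \cite[Theorem~2.2]{GLMST11} into the present notation and noting that only the trace identity is being used, so that no ambiguity of spectral shift functions enters at this stage.
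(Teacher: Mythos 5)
Your proposal is correct and follows essentially the same route as the paper: apply a known abstract principal trace formula to the truncated data $(A_-,\{B_n(t)\}_{t\in\bbR})$, the whole point being that the band projection $P_n$ restores the (relatively) trace class hypothesis, which is verified through the Hilbert--Schmidt factorization $P_n|\Phi|^{1/2}\in\cB_2\big(L^2(\bbR)\otimes\bbC^m\big)$ coming from \cite{Si05} and $|\Phi|^{1/2}\in\mnn{L^2(\bbR)}$. The only difference is one of citation: the paper invokes \cite[Proposition~1.3]{Pu08} after checking the stronger condition \eqref{trclass}, namely $\int_\bbR\|B_n'(t)\|_{\cB_1(L^2(\bbR)\otimes\bbC^m)}\,dt<\infty$ via \cite[Theorem~4.5]{Si05}, whereas you route through \cite[Theorem~2.2]{GLMST11} and the weaker relatively trace class condition, exactly as licensed by Remark~\ref{not_rtc}, and both verifications reduce to the same estimate.
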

\begin{proof}
By \cite[Proposition~1.3]{Pu08} it suffices to show that 
\beq\lb{trclass}
\int_\bbR\|B'_n(t)\|_{\cB_1(L^2(\bbR) \otimes \bbC^m)} \, dt<\infty.
\enq
It follows from \eqref{def_B_n} that in our case it is sufficient to prove that
\beq
\int_\bbR\|P_n\theta'(t) \Phi  P_n\|_{\cB_1(L^2(\bbR) \otimes \bbC^m)}\, dt < \infty.
\enq
Appealing to \cite[Theorem~4.5]{Si05} yields  
\begin{align}
\|P_n\theta'(t) \Phi  P_n\|_{\cB_1(L^2(\bbR) \otimes \bbC^m)} 
& \leq |\theta'(t)|\|P_n |\Phi|^{1/2} \|_{\cB_2(L^2(\bbR) \otimes \bbC^m)}
\||\Phi|^{1/2}P_n\|_{\cB_2(L^2(\bbR) \otimes \bbC^m)}   \no \\
& \leq \ C \, |\theta'(t)|, 
\end{align}
completing the proof since $\theta' \in L^1(\bbR)$ by Hypothesis \ref{h3.1}.
\end{proof}

By Proposition \ref{g_complex}, 
$[g(A_+)-g(A_-)] \in \cB_1\big(L^2(\bbR)\otimes \bbC^m\big)$, and so 
by \cite[Section~8.11]{Ya10} we define the spectral shift function 
for the pair $A_-,A_+$ by setting
\begin{equation}\lb{def_xiA}
\xi(\nu; A_+,A_-):=\xi(g(\nu); g(A_+),g(A_-)),
\end{equation}
where $\xi(\,\cdot\,;g(A_+),g(A_-))$ is the spectral shift function for the pair $g(A_+),g(A_-)$ uniquely defined by the requirement $\xi(\,\cdot\,;g(A_+),g(A_-))\in L^1(\bbR)$ 
(cf., \cite[Sections~9.1, 9.2]{Ya10}). It follows from definition \eqref{def_xiA} that the 
function $\xi(\,\cdot\,;A_+,A_-)$ is locally integrable on $\bbR$.
The Krein--Lifshitz trace formula in its simplest form implies that 
$
{\tr}_{L^2(\bbR) \otimes \bbC^m}(g(A_+)-g(A_-))=\int_{[-1,1]}\xi(s;g(A_+),g(A_-))\,ds.
$ 
 Hence, changing variables in the integral and using \eqref{def_xiA} results in 
\begin{equation}\lb{tr for g}
{\tr}_{L^2(\bbR) \otimes \bbC^m}(g(A_+)-g(A_-))=\int_{\bbR}\xi(\nu;A_+,A_-)g'(\nu)\,d\nu.
\end{equation}

For convenience of the reader we now recall the  Besov space 
$B_{\infty 1}^1(\bbR)$. There are several equivalent definitions of this space 
(see, e.g.,  \cite{Pe76}). Using the difference operator $\Delta_t$ defined by 
$
(\Delta_th)(s)=h(s+t)-h(t), \quad h \in C(\bbR),
$ a convenient definition for our purposes reads as follows, 
\begin{equation}
B_{\infty, 1}^1(\bbR) = \bigg\{h' \in C_{ub}(\bbR) \, \bigg| \, 
\sup_{t\in\bbR}|h'(t)|+\int_\bbR\frac{\sup_{s \in \bbR}\|(\Delta^2_t h)(s)\|}{|t|^{2}}\, dt<\infty\bigg\}, 
\lb{BesovR}
\end{equation}
Here the notation $C_{ub}(\bbR)$ stands for bounded uniformly continuous functions on $\bbR$.

For every $h\colon\bbR\to\bbR$ such that $h\circ g^{-1}\in B_{\infty 1}^1(\bbR)$ (in particular, if $h\circ g^{-1}\in C^2[-1,1]$) it follows from \cite[Theorem~4]{Pe90} (see also  \cite[Lemma~8.11.1]{Ya10}) that
$[h(A_+)-h(A_-)] \in\cB_1\big(L^2(\bbR)\otimes \bbC^m\big)$, and
\begin{align}\lb{trace formula}
{\tr}_{L^2(\bbR) \otimes \bbC^m}(h(A_+)-h(A_-))&={\tr}_{L^2(\bbR) \otimes \bbC^m}(h\circ g^{-1}(g(A_+))-h\circ g^{-1}(g(A_-)))   \no \\
&=\int_{\bbR}\xi(s;g(A_+),g(A_-))\,d(h\circ g^{-1}(s))   \no \\ 
& \hspace*{-1.5mm} 
\stackrel{\eqref{def_xiA}}{=}\int_{\bbR}\xi(\nu;A_+,A_-)h'(\nu)\,d\nu.
\end{align}
We note that the last integral above is finite. Indeed, the definition formula \eqref{def_xiA} tells us that $\xi(\cdot;A_+,A_-)$ is
integrable with the weight $\nu^{-3}$; on the other hand, since the
derivative $(h \circ g^{-1})'$ is bounded, it also implies that the
derivative $h'$ decays as $\nu^{-3}$ at $\pm\infty$. That is, the last
integral in \eqref{trace formula} is convergent. 

The following proposition presents a proof of the fact that the spectral shift function
 for the pair $(A_+,A_-)$ is constant. 
 
\begin{proposition}\lb{first lemma} Let $\Phi\in\mnn{W^{1,1}(\bbR)\cap C_b(\bbR)}$, 
$A_-=D \otimes I_m$, $A_+=A_-+ \Phi  $. Then 
$
\xi(\nu; A_+,A_-)= C \, \text{ for a.e.\ $\nu\in\bbR$.}
$
\end{proposition}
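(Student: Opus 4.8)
The plan is to prove the stronger fact that $\tr_{L^2(\bbR)\otimes\bbC^m}\bigl(h(A_+)-h(A_-)\bigr)=0$ for every $h\in C_0^{\infty}(\bbR)$, and then to deduce the statement from the trace formula \eqref{trace formula}. For $h\in C_0^{\infty}(\bbR)$ the function $h\circ g^{-1}$, extended by zero off the compact set $g(\supp h)\subset(-1,1)$, lies in $C^{2}[-1,1]$, so \eqref{trace formula} applies and yields
\[
\tr_{L^2(\bbR)\otimes\bbC^m}\bigl(h(A_+)-h(A_-)\bigr)=\int_{\bbR}\xi(\nu;A_+,A_-)\,h'(\nu)\,d\nu .
\]
Granting that the left-hand side vanishes for all such $h$, the locally integrable function $\xi(\,\cdot\,;A_+,A_-)$ has vanishing distributional derivative, hence is a.e.\ equal to a constant, which is the assertion (its precise value being left to a separate computation; cf.\ Theorem~\ref{main2}).

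To evaluate the trace I would fix $x_0\in\bbR$, let $\Psi=\Psi(\,\cdot\,,x_0)$ be the unitary matrix-valued function of Lemma~\ref{unit_equiv}, and use that $h(A_+)=\Psi\,h(A_-)\,\Psi^{*}$ by the corollary following that lemma. Since $A_-=D\otimes I_m$ is a Fourier multiplier, for $h\in C_0^{\infty}(\bbR)$ the operator $h(A_-)=h(D)\otimes I_m$ is the integral operator on $L^2(\bbR)\otimes\bbC^m$ with continuous kernel $\check h(x-y)I_m$, where $\check h\in S(\bbR)$ is the inverse Fourier transform of $h$. Therefore $h(A_+)-h(A_-)$ is the integral operator with continuous bounded kernel
\[
K_h(x,y)=\check h(x-y)\bigl[\Psi(x)\Psi(y)^{*}-I_m\bigr],\qquad x,y\in\bbR ,
\]
whose restriction to the diagonal is $K_h(x,x)=\check h(0)\bigl[\Psi(x)\Psi(x)^{*}-I_m\bigr]=0$ because $\Psi(x)$ is unitary. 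As $h(A_+)-h(A_-)$ is trace class and has a continuous kernel, its trace equals the Lebesgue integral of the diagonal of $K_h$ (a well-known fact on kernels of trace class operators), so $\tr_{L^2(\bbR)\otimes\bbC^m}\bigl(h(A_+)-h(A_-)\bigr)=\int_{\bbR}\tr_{\bbC^m}\bigl(K_h(x,x)\bigr)\,dx=0$, which is what was needed.

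The step I expect to be the main obstacle is precisely the identity ``trace $=$ integral of the diagonal of the kernel.'' One cannot reach it by cyclicity of the trace applied to $\Psi\,h(A_-)\,\Psi^{*}$, since $h(A_+)$ and $h(A_-)$ are individually not even compact; rather, one must use that the \emph{difference} is genuinely trace class --- this is exactly the input from \cite[Theorem~4]{Pe90} that already underlies \eqref{trace formula} --- together with the theorem that a trace class operator with continuous kernel has trace equal to the integral of that kernel over the diagonal. If one prefers to avoid kernel considerations, an alternative argument runs as follows: conjugation by the unitary multiplication operator $e^{i\alpha x}$ (which commutes with $\Phi$) gives $e^{-i\alpha x}A_\pm e^{i\alpha x}=A_\pm-\alpha I$, so combining unitary invariance of the spectral shift function with its covariance under the shift $\nu\mapsto\nu-\alpha$ in the definition \eqref{def_xiA} yields $\xi(\nu+\alpha;A_+,A_-)=\xi(\nu;A_+,A_-)$ for every $\alpha\in\bbR$ and a.e.\ $\nu$, again forcing $\xi(\,\cdot\,;A_+,A_-)$ to be a.e.\ constant; there the point requiring attention is that the invariance principle applies to the $g$-regularized spectral shift function used in \eqref{def_xiA}.
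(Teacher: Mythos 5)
Your main argument is correct, but it takes a genuinely different route from the paper's. The paper proves constancy via translation covariance: conjugating by the unitary $\Psi_0(x)=e^{-i\alpha x}\otimes I_m$ (which commutes with $\Phi$) gives $A_\pm+\alpha=\Psi_0A_\pm\Psi_0^*$, hence by unitary invariance of the trace and \eqref{trace formula}, $\int_\bbR h'(\nu)\,[\xi(\nu-\alpha;A_+,A_-)-\xi(\nu;A_+,A_-)]\,d\nu=0$ for all $h$ with $h'\in S(\bbR)$, and the du Bois-Reymond lemma yields $\xi(\cdot-\alpha;A_+,A_-)=\xi(\cdot;A_+,A_-)$ a.e.\ for every $\alpha$, i.e.\ constancy. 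This is precisely your ``alternative'' argument, but run through the trace formula, so the translation covariance of the $g$-regularized shift function never has to be invoked abstractly (the point you rightly flag as delicate, since $g$ itself is not translation-covariant). Your primary argument instead shows $\tr_{L^2(\bbR)\otimes\bbC^m}(h(A_+)-h(A_-))=0$ for $h\in C_0^{\infty}(\bbR)$ by exhibiting the difference, via $h(A_+)=\Psi h(A_-)\Psi^*$, as an integral operator with continuous kernel $\check h(x-y)\,[\Psi(x)\Psi(y)^*-I_m]$ vanishing on the diagonal, and then concluding $\xi'=0$ distributionally. This is sound: trace-classness comes from \eqref{trace formula}/Peller as you say, and the identity ``trace $=$ integral of the diagonal'' for a trace class operator with continuous kernel is exactly the device the paper itself uses later (Proposition \ref{trace_for_S(R)}); it does need to be cited in that precise form (trace class plus continuity, via the Lebesgue-point/averaged-kernel regularization), as you anticipate. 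What your route buys is a direct computation in the spirit of Section \ref{s6} — indeed, replacing compactly supported $h$ by $\arctan$ in the same kernel computation is how the paper later identifies the value of the constant; what the paper's route buys is complete avoidance of kernel considerations, using nothing beyond unitary invariance of the trace. One minor sign quibble: with the convention $D=-i\,d/dx$ one gets $e^{-i\alpha x}A_\pm e^{i\alpha x}=A_\pm+\alpha$ rather than $A_\pm-\alpha$, which is immaterial for the conclusion.
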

\begin{proof} Let $h$ be such that $h'$ is a Schwartz function. Then
$h\circ g^{-1}\in C^2[-1,1]$, and hence by \eqref{trace formula}, 
$[h(A_+)-h(A_-)] \in\cB_1\big(L^2(\bbR)\otimes \bbC^m\big)$. We claim that
\begin{equation}
{\tr}_{L^2(\bbR) \otimes \bbC^m}(h(A_+)-h(A_-))={\tr}_{L^2(\bbR) \otimes \bbC^m}(h(A_++\alpha)-h(A_-+\alpha)), \quad \alpha\in\bbR.
\lb{cl}
\end{equation}
As $D= - i d/dx$ on $\dom(D) = W^{1,2}(\bbR)$ is the generator of translations in $L^2(\bbR)$, introducing $\Psi_0(x)=e^{-i\alpha x}\otimes I_m$, $\alpha \in \bbR$, yields 
$A_-+\alpha= \Psi_0 A_- \ol{\Psi_0}$ and hence,
\begin{equation} 
h(A_-+\alpha)= \Psi_0^{} h (A_-) \Psi_0^{*},\quad h(A_++\alpha)
= \Psi_0^{} h (A_+) \Psi_0^{*}. 
\end{equation} 
Consequently,
\begin{align}
{\tr}_{L^2(\bbR) \otimes \bbC^m}(h(A_++\alpha)-h(A_-+\alpha))
&={\tr}_{L^2(\bbR) \otimes \bbC^m}(\Psi_0^{}[h(A_+)-h(A_-)] \Psi_0^{*})  \no \\ 
&= {\tr}_{L^2(\bbR) \otimes \bbC^m}(h(A_+)-h(A_-)),  
\end{align} 
by the unitary invariance of ${\tr}_{L^2(\bbR)\otimes\bbC^m}(\cdot)$, proves \eqref{cl}.
Since by \eqref{trace formula} the equality
\begin{align}
\begin{split}  
{\tr}_{L^2(\bbR) \otimes \bbC^m}(h(A_++\alpha)-h(A_-+\alpha)) 
&=\int_{\bbR}h'(\nu + \alpha)\xi(\nu;A_+,A_-)\,d\nu   \\
&=\int_{\bbR}h'(\nu)\xi(\nu - \alpha; A_+,A_-)\,d\nu
\end{split} 
\end{align} 
holds, one obtains 
$
\int_{\bbR}h'(\nu)[\xi(\nu - \alpha; A_+,A_-) - \xi(\nu; A_+,A_-)]\,d\nu=0. 
$
Since $h'$ is an arbitrary Schwartz function, it follows by the Lemma of Du Bois-Reymond that 
$
\xi(\nu - \alpha; A_+,A_-)-\xi(\nu; A_+,A_-)=0 \, \text{ for a.e.~$\nu\in\bbR$.}  
$
Since $\alpha \in \bbR$ was arbitrary, $\xi(\,\cdot\,; A_+,A_-)$ is constant a.e.\ on $\bbR$.
\end{proof}

Next, we present the main result of this paper. It is an analog of
\cite[Theorem~2.2]{GLMST11} and \cite[Proposition~1.3]{Pu08} for the examples studied 
here. We remark again that the hypotheses in these theorems in the cited two papers do
not apply to our examples, nevertheless, we obtain the same conclusion.

\begin{theorem}\lb{thm_PTF}
Assume Hypothesis \ref{h3.1} and let $z\in\bbC\backslash [0,\infty)$. 
Then $[g_z(A_+)-g_z(A_-)]\in\cB_1(L^2(\bbR)\otimes\bbC^m)$ and the following trace formula holds, 
\begin{align}
\begin{split} 
 \tr_{L^2(\bbR^2)\otimes\bbC^m}((\bsH_2 -z \, \bsI)^{-1}-(\bsH_1-z, 
\bsI)^{-1})    
  =\frac{1}{2z}\tr_{L^2(\bbR)\otimes\bbC^m} (g_z(A_+)-g_z(A_-)). 
\lb{TR}
\end{split} 
\end{align} 
\end{theorem}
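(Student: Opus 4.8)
The plan is to establish \eqref{TR} (equivalently \eqref{principle}) first for $z<0$, by letting $n\to\infty$ in the approximant identity \eqref{trfn}, and then to extend it to all $z\in\bbC\setminus[0,\infty)$ by analytic continuation; the trace-class inclusion $[g_z(A_+)-g_z(A_-)]\in\cB_1$ for complex $z$ will be produced along the way, by recognizing $g_z$ as a smooth function of the ``smooth sign'' $g=g_{-1}$ of \eqref{dfngz}.

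\textbf{The complex-$z$ inclusion.} First I would record the algebraic identity $g_z=F_z\circ g$ on $\bbR$, where $F_z(y)=y\,[(1+z)y^2-z]^{-1/2}$: for real $z<0$ this follows from $x^2=g(x)^2\big(1-g(x)^2\big)^{-1}$, $g_z(x)^2=x^2(x^2-z)^{-1}$ and $\sgn g_z=\sgn g$, and it extends to $z\in\bbC\setminus[0,\infty)$ by analyticity in $z$. The point is that for every $z\in\bbC\setminus[0,\infty)$ the polynomial $y\mapsto(1+z)y^2-z$ avoids $(-\infty,0]$ on a neighborhood of $[-1,1]$: over $y\in[-1,1]$ its values fill the line segment from $-z$ (at $y=0$) to $1$ (at $y=\pm1$), and this segment has imaginary part $-(\Im z)(1-y^2)$, so it meets $\bbR$ only at the endpoint value $1$ when $\Im z\neq0$, while for real $z<0$ the segment is strictly positive. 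Hence $F_z$ is the restriction of a function jointly analytic in $(y,z)$; in particular $F_z\in C^\infty([-1,1])$, and since $\|g(A_\pm)\|\le1$ so that $\sigma(g(A_\pm))\subseteq[-1,1]$, while $[g(A_+)-g(A_-)]\in\cB_1(L^2(\bbR)\otimes\bbC^m)$ by Proposition \ref{g_complex} (case $z=-1$), applying \eqref{trace formula} to the real and imaginary parts of $g_z=F_z\circ g$ gives $[g_z(A_+)-g_z(A_-)]=[F_z(g(A_+))-F_z(g(A_-))]\in\cB_1(L^2(\bbR)\otimes\bbC^m)$ for all $z\in\bbC\setminus[0,\infty)$. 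The same perturbation estimate, applied to the differences $F_z-F_{z_0}$ (whose $C^2$-norm tends to $0$ as $z\to z_0$ by joint analyticity), shows in addition that $z\mapsto g_z(A_+)-g_z(A_-)$ is analytic on $\bbC\setminus[0,\infty)$ with values in $\cB_1(L^2(\bbR)\otimes\bbC^m)$.

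\textbf{The trace formula for $z<0$.} Proposition \ref{propAppr} supplies \eqref{trfn} for every $n\in\bbN$. Fix $z<0$ and let $n\to\infty$. By Theorem \ref{conv_rhs}, $g_z(A_{+,n})-g_z(A_-)\to g_z(A_+)-g_z(A_-)$ in $\cB_1(L^2(\bbR)\otimes\bbC^m)$-norm, so the right-hand sides of \eqref{trfn} converge to the right-hand side of \eqref{TR}. By Proposition \ref{t3.7} (whose argument uses only $z\in\bbC\setminus[0,\infty)$), $(\bsH_{2,n}-z\bsI)^{-1}-(\bsH_{1,n}-z\bsI)^{-1}\to(\bsH_2-z\bsI)^{-1}-(\bsH_1-z\bsI)^{-1}$ in $\cB_1(L^2(\bbR^2)\otimes\bbC^m)$-norm, so the left-hand sides of \eqref{trfn} converge to the left-hand side of \eqref{TR}. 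Since $|\tr(S)|\le\|S\|_{\cB_1}$, passing to the limit in \eqref{trfn} yields \eqref{TR} for every $z<0$.

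\textbf{Analytic continuation, and the main difficulty.} Both sides of \eqref{TR} are analytic on the connected open set $\bbC\setminus[0,\infty)$. For the right-hand side this is the content of the last assertion of the first step, together with analyticity of $z\mapsto(2z)^{-1}$ on $\bbC\setminus\{0\}$ and continuity of the trace on $\cB_1$. For the left-hand side, the factorization used in the proof of Proposition \ref{fritz2} writes $(\bsH_2-z\bsI)^{-1}-(\bsH_1-z\bsI)^{-1}$ as a product of operator-valued maps that are analytic in $z$ with values in $\cB(L^2(\bbR^2)\otimes\bbC^m)$ or $\cB_2(L^2(\bbR^2)\otimes\bbC^m)$ (the two Hilbert--Schmidt factors arising from the splitting of $(\bsH_0-z\bsI)^{-1}\theta'\Phi(\bsH_0-z\bsI)^{-1}$, cf.\ \eqref{middleInL1}), hence analytic with values in $\cB_1$; composing with the trace gives an analytic scalar function. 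As the two sides coincide on $(-\infty,0)$ by the second step, the identity theorem gives \eqref{TR} throughout $\bbC\setminus[0,\infty)$. I expect the genuine work to be concentrated in the first step: identifying $g_z$ as a $C^2$ (in fact real-analytic) function of $g$ and pinning down the branch of $[(1+z)y^2-z]^{-1/2}$ over $[-1,1]$ for complex $z$, which is exactly what reduces the complex-$z$ inclusion to the already established real case $z=-1$ and the trace-class perturbation result recorded in \eqref{trace formula}. The limiting and continuation steps are then routine given Propositions \ref{propAppr}, \ref{t3.7}, \ref{fritz2} and Theorem \ref{conv_rhs}.
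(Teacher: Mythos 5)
Your proposal is correct, and its skeleton is the same as the paper's: prove \eqref{TR} for $z<0$ by passing to the limit $n\to\infty$ in the approximant identity \eqref{trfn} (Proposition \ref{propAppr}) using the trace-norm convergences of Theorem \ref{conv_rhs} and Proposition \ref{t3.7}, and then extend to all of $\bbC\setminus[0,\infty)$ by analyticity of both sides and the identity theorem. Where you genuinely differ is in the two analyticity/inclusion ingredients. For the right-hand side, the paper disposes of the complex-$z$ inclusion and the analyticity of $z\mapsto\tr(g_z(A_+)-g_z(A_-))$ by citing the proof of \cite[Lemma~7.3]{GLMST11}; you instead give a self-contained argument entirely inside the paper's toolkit, writing $g_z=F_z\circ g$ with $F_z(y)=y[(1+z)y^2-z]^{-1/2}$ analytic in a neighborhood of $[-1,1]$ (your branch discussion is correct: $(1+z)y^2-z=(x^2-z)/(x^2+1)$, so the principal branches match), and then invoking the Peller/Besov perturbation statement already recorded in \eqref{trace formula} together with the single real case $z=-1$ of Proposition \ref{g_complex}. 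This is an attractive reduction: everything for complex $z$ follows from $[g(A_+)-g(A_-)]\in\cB_1$. One point to tighten: the observation that $\|F_z-F_{z_0}\|_{C^2[-1,1]}\to0$ only yields continuity of $z\mapsto g_z(A_+)-g_z(A_-)$ in $\cB_1$; for analyticity apply the same bound to the difference quotients, i.e.\ note that $z\mapsto F_z$ is analytic as a $C^2[-1,1]$-valued map (Cauchy estimates from joint analyticity) and that $h\mapsto h(g(A_+))-h(g(A_-))$ is a bounded linear map from $C^2[-1,1]$ into $\cB_1$, so the composition is analytic -- this is a one-line repair, and in any case only scalar analyticity of the trace is needed. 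For the left-hand side, the paper proves analyticity via the standard two-resolvent identity anchored at a fixed $z_0$, whereas you re-use the factorization from the proof of Proposition \ref{fritz2} with $\cB_2$-valued analytic factors; both work, yours requiring the (true, and provable via $\bsA_-(\bsH_1-z\bsI)^{-1}=\bsA_-(\bsH_1-z_0\bsI)^{-1}[\bsI+(z-z_0)(\bsH_1-z\bsI)^{-1}]$) norm-analyticity of the closed bounded factors, the paper's being slightly more economical. Your parenthetical remark that Proposition \ref{t3.7} applies for $z<0$ (despite its statement being phrased for $z\in\bbC\setminus\bbR$) is also accurate and matches how the paper itself uses it.
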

\begin{proof} Using the result of Proposition \ref{g_complex} and following the proof of \cite[Lemma~7.3]{GLMST11}, one can prove that $[g_z(A_+)-g_z(A_-)]\in\cB_1(L^2(\bbR)\otimes\bbC^m)$ for all $z\in\C\backslash [0,\infty)$ and that the function $z\mapsto \frac{1}{2z}\tr(g_z(A_+)-g_z(A_-))$ is analytic on $\C\backslash [0,\infty)$.~Writing (see, e.g., \cite[Exercise~7.8]{We80})
\begin{align}
\begin{split} 
& \big((\bsH_2 -z \, \bsI)^{-1}- (\bsH_1-z \, 
\bsI)^{-1}\big)=(\bsH_2 -z_0 \, \bsI)(\bsH_2-z \, 
\bsI)^{-1}   \\
&\quad \times\big((\bsH_2 -z_0 \, \bsI)^{-1}-(\bsH_1-z_0 \, 
\bsI)^{-1}\big)(\bsH_1 -z \, \bsI)^{-1}(\bsH_1-z_0\, \bsI)^{-1}, 
\end{split} 
\end{align}
one obtains that the function on $z\mapsto\tr\big((\bsH_2 -z \, \bsI)^{-1}-(\bsH_1-z \, 
\bsI)^{-1}\big)$ is also analytic on $\C\backslash [0,\infty)$. By analytic continuation it is sufficient to prove the principle trace formula \eqref{principle} for $z<0$. 
For $z<0$, Proposition \ref{propAppr} implies  
\begin{align}
\begin{split} 
& \tr_{L^2(\bbR^2)\otimes\bbC^m}\big((\bsH_{2,n} - z \, \bsI)^{-1}-(\bsH_{1,n} - z \, \bsI)^{-1}\big) \\
& \quad = \frac{1}{2z}\tr_{L^2(\bbR)\otimes\bbC^m} (g_z(A_{+,n})-g_z(A_-)), \quad n \in \bbN. 
\end{split} 
\end{align} 
An application of Theorem \ref{conv_rhs} then implies that the right-hand side converges to $\frac{1}{2z}\tr_{L^2(\bbR)\otimes\bbC^m}\big(g_z(A_{+})-g_z(A_{-})\big)$, whereas by Proposition \ref{t3.7} the left-hand side converges to 
$
\tr_{L^2(\bbR^2)\otimes\bbC^m}\big((\bsH_{2} - z \, \bsI)^{-1} 
- (\bsH_{1} - z \, \bsI)^{-1}\big), 
$ 
implying \eqref{TR}.
\end{proof}

Proposition \ref{fritz2}  implies that the spectral shift function 
$\xi(\, \cdot \, ; \bsH_2, \bsH_1)$ for the pair $(\bsH_2, \bsH_1)$ is well-defined and satisfies
\begin{equation}
\xi(\, \cdot \, ; \bsH_2, \bsH_1) \in L^1\big(\bbR; (\lambda^2 + 1)^{-1} \, d\lambda\big). 
\end{equation} 
Since $\bsH_j\geq 0$, $j=1,2$, one uniquely introduces $\xi(\,\cdot\,; \bsH_2,\bsH_1)$ 
by requiring that
\begin{equation}
\xi(\lambda; \bsH_2,\bsH_1) = 0, \quad \lambda < 0.    \lb{2.46c}
\end{equation}

Having established the principle trace formula \eqref{principle}, we now prove that the 
spectral shift functions $\xi(\,\cdot\,;A_+,A_-)$ and $\xi(\,\cdot\,;\bsH_2,\bsH_1)$ coincide.

\begin{theorem}\lb{equ_ssf}
Assume Hypothesis \ref{h3.1}.
Then, for $($Lebesgue\,$)$ a.e.~$\lambda > 0$ and a.e.~$\nu \in \bbR$,  
\begin{equation}
\xi(\lambda; \bsH_2, \bsH_1) = \xi(\nu; A_+, A_-) + C,    \lb{4.1}
\end{equation}
with $C$ the constant in Proposition \ref{first lemma}. 
\end{theorem}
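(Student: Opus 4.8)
The plan is to derive \eqref{4.1} from the trace formula \eqref{TR} by converting it into an identity of spectral shift functions, following the strategy of Pushnitski \cite{Pu08} and its adaptation in \cite{GLMST11}. First I would rewrite both sides of \eqref{TR} in terms of the respective spectral shift functions. For the left-hand side, the representation
\begin{equation}
\tr_{L^2(\bbR^2)\otimes\bbC^m}\big((\bsH_2 - z \, \bsI)^{-1} - (\bsH_1 - z \, \bsI)^{-1}\big) = - \int_{\bbR} \frac{\xi(\lambda; \bsH_2, \bsH_1)\, d\lambda}{(\lambda - z)^2}, \quad z \in \bbC \backslash [0,\infty),
\end{equation}
holds by the general theory recalled before Theorem \ref{t8.3} (together with the normalization \eqref{2.46c}). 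For the right-hand side, I would first express $\tr_{L^2(\bbR)\otimes\bbC^m}(g_z(A_+) - g_z(A_-))$ via $\xi(\,\cdot\,; A_+, A_-)$: applying \eqref{trace formula} with $h = g_z$ (which is legitimate since $g_z \circ g^{-1}$ is smooth on $[-1,1]$ for $z < 0$, being a Möbius-type reparametrization), one gets
\begin{equation}
\tr_{L^2(\bbR)\otimes\bbC^m}(g_z(A_+) - g_z(A_-)) = \int_{\bbR} \xi(\nu; A_+, A_-)\, g_z'(\nu)\, d\nu.
\end{equation}

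Next, I would insert both expressions into \eqref{TR} and use Proposition \ref{first lemma}, which tells us $\xi(\nu; A_+, A_-) = C$ for a.e.\ $\nu$. Thus the right-hand side collapses: $\int_{\bbR} C\, g_z'(\nu)\, d\nu = C\,[g_z(+\infty) - g_z(-\infty)] = C\,[1 - (-1)] = 2C$ (using $g_z(x) = x(x^2 - z)^{-1/2} \to \pm 1$ as $x \to \pm\infty$). Therefore \eqref{TR} becomes
\begin{equation}
- \int_{\bbR} \frac{\xi(\lambda; \bsH_2, \bsH_1)\, d\lambda}{(\lambda - z)^2} = \frac{1}{2z} \cdot 2C = \frac{C}{z}, \quad z < 0.
\end{equation}
Now the task is to identify a function $\xi$ with $\xi(\lambda) = 0$ for $\lambda < 0$ whose ``Stieltjes-type'' transform $\int_{\bbR} \xi(\lambda)(\lambda - z)^{-2}\, d\lambda$ equals $-C/z$. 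Since $-C/z = -C \int_0^\infty (\lambda - z)^{-2}\, d\lambda$ for $z < 0$ (an elementary integral: $\int_0^\infty (\lambda - z)^{-2} d\lambda = (-z)^{-1} = -1/z$), we would conclude $\int_{\bbR} [\xi(\lambda; \bsH_2, \bsH_1) - C\,\chi_{(0,\infty)}(\lambda)](\lambda - z)^{-2}\, d\lambda = 0$ for all $z < 0$. By the uniqueness of the Stieltjes transform (the measure $[\xi(\lambda;\bsH_2,\bsH_1) - C\chi_{(0,\infty)}]\,d\lambda$ is determined by this transform, or equivalently by differentiating under the integral and using a Stone--Weierstrass/analyticity argument on $z \mapsto \int (\lambda-z)^{-2}$), it follows that $\xi(\lambda; \bsH_2, \bsH_1) = C$ for a.e.\ $\lambda > 0$, which combined with Proposition \ref{first lemma} gives \eqref{4.1}.

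The main obstacle I anticipate is the justification that \eqref{trace formula} may be applied with $h = g_z$ for \emph{complex} $z \in \bbC \backslash [0,\infty)$, not just $z < 0$ — one needs $g_z \circ g^{-1}$ to lie in the Besov class $B^1_{\infty,1}(\bbR)$ (or at least that $[g_z(A_+) - g_z(A_-)] \in \cB_1$ with the trace formula valid), and the analytic continuation in $z$ must be controlled; however, since the identity we ultimately need only requires $z < 0$ (the transform uniqueness argument works on any ray), this can be sidestepped by working exclusively with $z < 0$, where everything has already been established in Proposition \ref{g_complex} and Theorem \ref{thm_PTF}. A secondary technical point is the rigorous passage from ``$\int(\lambda - z)^{-2}\, d\mu(\lambda) = 0$ for all $z < 0$'' to ``$\mu = 0$'': this follows by noting the left side is a holomorphic function of $z$ on $\bbC \backslash \supp(\mu)$, or more directly by differentiating the resolvent-type kernel and invoking the fact that $\{(\lambda - z)^{-2} : z < 0\}$ has dense span, so that the finite signed measure $\mu$ with $\int (\lambda^2+1)^{-1} d|\mu| < \infty$ must vanish. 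This is routine and parallels the corresponding step in \cite[Sect.~4]{GLMST11}.
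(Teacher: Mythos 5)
Your argument is correct and is essentially the paper's own proof: both convert the principal trace formula \eqref{TR} into the identity $\int_{[0,\infty)}\xi(\lambda;\bsH_2,\bsH_1)(\lambda-z)^{-2}\,d\lambda=\tfrac12\int_{\bbR}\xi(\nu;A_+,A_-)(\nu^2-z)^{-3/2}\,d\nu$ and then recover $\xi(\,\cdot\,;\bsH_2,\bsH_1)$ by a Stieltjes-inversion/uniqueness argument (holomorphy in $z$ plus the inversion formula), with Proposition \ref{first lemma} supplying the constant. The only difference is that you substitute the constancy of $\xi(\,\cdot\,;A_+,A_-)$ before inverting, so you never need the general Pushnitski-type formula \eqref{SSFP}, whereas the paper inverts first and inserts $C$ afterwards -- a harmless reordering of the same ingredients.
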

\begin{proof}
Following the arguments in \cite[Sections~ 7, 8]{GLMST11} one can obtain the 
Pushnitski-type formula
\begin{align}
\begin{split}  
\xi(\lambda; \bsH_2, \bsH_1)&=\frac{1}{\pi}\int_{-\lambda^{1/2}}^{\lambda^{1/2}}
\frac{\xi(\nu; A_+,A_-)\, d\nu}{(\lambda-\nu^2)^{1/2}}   \\ 
& = \frac{C}{\pi}\int_{-\lambda^{1/2}}^{\lambda^{1/2}}\frac{d\nu}{(\lambda-\nu^2)^{1/2}}  
= C \, \text{  for a.e.~$\lambda>0$,}    \lb{SSFP}
\end{split} 
\end{align} 
employing the a.e.\ constancy of $\xi(\nu; A_+,A_-)$ established in 
Proposition \ref{first lemma} in the second line of \eqref{SSFP}. 
To obtain some degree of completeness we sketch the principal steps in the derivation of the first equality in \eqref{SSFP}: One starts with 
\begin{equation}
\tr_{\cH}\big(g_{z}(A_+) - g_{z}(A_-)\big)
  = - z \int_{\bbR} \frac{\xi(\nu; A_+, A_-) \, d\nu}{(\nu^2 - z)^{3/2}},
\quad z\in\bbC\backslash [0,\infty).       \lb{2.38}
\end{equation}
The trace identity \eqref{TR} then yields
\begin{equation}
\int_{[0, \infty)}  \frac{\xi(\lambda; \bsH_2, \bsH_1) \, 
d\lambda}{(\lambda -z)^{2}}
= \frac{1}{2} \int_{\bbR} \frac{\xi(\nu; A_+, A_-) \, d\nu}{(\nu^2 - z)^{3/2}},
\quad z\in\bbC\backslash [0,\infty),
\end{equation}
and hence,
\begin{align}
& \int_{[0, \infty)}  \xi(\lambda; \bsH_2, \bsH_1) 
\bigg(\frac{d}{dz}(\lambda -z)^{-1}\bigg)
  d\lambda
= \int_{\bbR} \xi(\nu; A_+, A_-)\bigg(\frac{d}{dz} (\nu^2 - z)^{-1/2}\bigg) d\nu,  \no  \\
& \hspace*{9cm} z\in\bbC\backslash [0,\infty).     \lb{2.40}
\end{align}
Integrating \eqref{2.40} with respect to $z$ from a fixed point $z_0 
\in (-\infty,0)$ to
$z\in\bbC\backslash\bbR$ along a straight line connecting $z_0$ and 
$z$ then results in
\begin{align}
\begin{split}
& \int_{[0, \infty)}  \xi(\lambda; \bsH_2, \bsH_1)
\bigg(\frac{1}{\lambda - z} - \frac{1}{\lambda - z_0}\bigg) d\lambda      \\
& \quad = \int_{\bbR} \xi(\nu; A_+, A_-)\big[(\nu^2 - z)^{-1/2} - (\nu^2 - 
z_0)^{-1/2}\big] \, d\nu,
\quad z\in\bbC\backslash [0,\infty).    \lb{2.42}
\end{split} 
\end{align}

Applying the Stieltjes inversion formula (cf., e.g., \cite{AD56}, 
\cite[Theorem\ B.3]{We80}) to \eqref{2.42} then yields
\begin{align}
\xi (\lambda; \bsH_2, \bsH_1)
&= \lim_{\varepsilon\downarrow 0} \frac{1}{\pi} \int_{[0,\infty)}
\xi (\lambda'; \bsH_2, \bsH_1) \Im\big((\lambda'  - 
\lambda) - i \varepsilon)^{-1}\big) d\lambda'
\no  \\
& =  \lim_{\varepsilon\downarrow 0} \frac{1}{\pi}  \int_{\bbR} \xi(\nu; A_+, A_-)
\Im\big((\nu^2 - \lambda - i \varepsilon)^{-1/2}\big) d\nu     \no  \\
& = \frac{1}{\pi}  \int_{- \lambda^{1/2}}^{\lambda^{1/2}}
\frac{\xi(\nu; A_+, A_-) \, d\nu}{(\lambda - \nu^2)^{1/2}}   \no \\
& = \frac{C}{\pi}  \int_{- \lambda^{1/2}}^{\lambda^{1/2}}
\frac{d\nu}{(\lambda - \nu^2)^{1/2}}   \no \\
& = C  \, \text{ for a.e.\ $\lambda > 0$.}   \lb{2.43}
\end{align} 
\end{proof}

\section{Computation of the Spectral Shift Function for the Pair $(A_-,A_+)$} \lb{s6} 

By Proposition \ref{first lemma}, the spectral shift function $\xi(\,\cdot\,; A_+,A_-)$ is constant 
a.e.\ on $\bbR$. The principal goal in this section is to compute this constant. By Theorem \ref{equ_ssf} this also yields the value of $\xi(\, \cdot \,; \bsH_2, \bsH_1)$ a.e.\ on $(0,\infty)$.

In order to calculate the precise value of the constant $\xi(\,\cdot\,; A_+,A_-)$ we consider the  auxiliary function $\arctan (\cdot)$. Since $\arctan (\cdot)\circ g^{-1}\in C^2([-1,1])$,  it follows 
from the arguments preceding  \eqref{trace formula} that 
\begin{equation} \lb{arc_in_L_1}
[\arctan(A_+)-\arctan(A_-)] \in \cB_1\big(L^2(\bbR)\otimes\bbC^m\big), 
\end{equation} 
and that for a.e.~$\nu\in\bbR$, 
\begin{equation}\lb{via_arctan}
\tr_{L^2(\bbR)\otimes\bbC^m}(\arctan(A_+)-\arctan(A_-))
=\int_\bbR\frac{\xi(\nu; A_+,A_-)}{\nu^2 + 1}\,d\nu=\pi \xi(\nu; A_+,A_-).   
\end{equation}
Thus, our task is computing the value of the right-hand side in \eqref{via_arctan}.

We temporarily assume that $m=1$. Given $\phi \in W^{1,1}(\bbR)\cap C_b(\bbR)$, our aim 
is to represent the operator $[\arctan(A_+)-\arctan(A_-)]$ as an integral operator on 
$L^2(\bbR)$ (cf.\ \eqref{as_intop}). The unitary equivalence in \eqref{unitary} implies 
\begin{align} 
\begin{split} 
\arctan(A_+)-\arctan(A_-)&=\psi\arctan(A_-) \ol{\psi}-\arctan{A_-}\\
&=\psi\cF^{-1} \arctan(\cdot)\cF \ol{\psi}-\cF^{-1} \arctan(\cdot)\cF, 
\end{split} 
\end{align}
where $\cF$ denotes  the Fourier transform on $L^2(\bbR)$,
\begin{equation} 
(\cF \eta)(s)=(2\pi)^{-1/2} \slim_{T \to \infty}\int_{[-T,T]} e^{- i s x}\eta(x)\,dx, 
\quad \eta\in L^2(\bbR). 
\end{equation} 
Fix $\eta\in L^2(\bbR)\cap L^1(\bbR)$, then 
\begin{equation}\lb{Fourier}
(\cF^{-1} \arctan(\cdot)\cF \eta)(x)
= (2\pi)^{-1} \int_{\bbR^2}\eta(x_1)\arctan(s_0)e^{-is_0(x_1-x)}\,ds_0dx_1.
\end{equation}
We would like to identify the quantity on the right-hand side of \eqref{Fourier} with the
integral 
\begin{equation}\lb{formal_int}
(2\pi)^{-1/2}\int_\bbR \eta(s_1)(\cF \arctan)(s_1-s)\,ds_1.
\end{equation}
However, this identification is not possible due to the fact that 
\begin{equation} 
(\cF\arctan)(s)=\frac1{is}\cF\big(\frac{1}{1+x^2}\big)(s)
=\big(\frac{\pi}{2}\big)^{1/2}\frac{1}{is}e^{-|s|}, 
\end{equation} 
that is, the function $(\cF\arctan)(s_1-s)$ is discontinuous at the point $s_1-s=0$. Thus, we have to replace \eqref{formal_int} by the principal value 
\begin{equation}\lb{Fourier_pv}
\frac1{2i}\lim_{\varepsilon\rightarrow 0}\int_{|s_1-s|>\varepsilon}\frac{ e^{-|s_1-s|}\eta(s_1)}{s_1-s}ds_1.
\end{equation}
The identification of the right-hand sides of \eqref{Fourier} and \eqref{Fourier_pv} will be done 
in Lemma \ref{pv_arctan} below.

The next lemma is crucial for our representation of the operator $\arctan(A_+)-\arctan(A_-)$ as an integral operator. 
\begin{lemma}\lb{pv_arctan}
Define $D$ as in \eqref{defD} and write $\pv \frac{e^{-|x|}}{x}$ for the principle value integral as a tempered distribution (see appendix B). Then, 
\begin{equation} 
(\arctan{D})(\eta) = - \frac{1}{2i}\pv \frac{e^{-|x|}}{x}*\eta,\quad \eta\in S(\bbR). 
\end{equation} 
\end{lemma}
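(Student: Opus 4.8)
The statement $(\arctan D)(\eta) = -\tfrac{1}{2i}\,\pv\tfrac{e^{-|x|}}{x}*\eta$ for $\eta \in S(\bbR)$ is an identity in $L^2(\bbR)$ (indeed both sides are in $S'(\bbR)$), so it suffices to verify it after applying the Fourier transform $\cF$. The plan is to compute $\cF$ of both sides and check they agree as tempered distributions. On the left, $\arctan D = \cF^{-1} M_{\arctan} \cF$ where $M_{\arctan}$ is multiplication by the function $s \mapsto \arctan(s)$; hence $\cF(\arctan(D)\eta)(s) = \arctan(s)\,\widehat\eta(s)$. On the right, since $\cF$ turns convolution into (a constant times) multiplication, $\cF\big(\pv\tfrac{e^{-|x|}}{x}*\eta\big) = (2\pi)^{1/2}\, \cF\big(\pv\tfrac{e^{-|x|}}{x}\big)\cdot \widehat\eta$ (with the normalization of $\cF$ used in the paper). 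So the whole identity reduces to the single distributional Fourier-transform computation
\[
\cF\Big(\pv \frac{e^{-|x|}}{x}\Big)(s) = 2i\,(2\pi)^{-1/2}\arctan(s).
\]

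**Carrying it out.** First I would record that $\dfrac{e^{-|x|}}{x}$ is, away from $0$, the derivative (up to the obvious correction) of a nice function: indeed $\dfrac{d}{dx}\big(-e^{-|x|}\big) = \sgn(x)\,e^{-|x|}$, which does not immediately give $e^{-|x|}/x$; the cleaner route is to write $\pv\tfrac{e^{-|x|}}{x} = \pv\tfrac1x \cdot e^{-|x|} \cdot \big(\tfrac{x}{e^{-|x|}} \cdot \tfrac{e^{-|x|}}{x}\big)$ — better yet, observe directly that the function $x \mapsto e^{-|x|}/x$ for $x \ne 0$ is locally integrable *near* $\pm\infty$ but has a $1/x$ singularity at the origin, so $\pv\tfrac{e^{-|x|}}{x}$ is a well-defined element of $S'(\bbR)$, odd, and equals $\pv\tfrac1x + (\text{the } L^1 \text{ function } (e^{-|x|}-1)/x)$. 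Then $\cF(\pv\tfrac1x)$ is the classical computation $\cF(\pv\tfrac1x)(s) = -i(\pi/2)^{1/2}\sgn(s)$ (with this paper's normalization), and $\cF$ of the $L^1$ remainder $(e^{-|x|}-1)/x$ is an explicit continuous function computed by a contour/elementary integral. Adding, one should land on $2i(2\pi)^{-1/2}\arctan(s)$; a sanity check is $\tfrac{d}{ds}\arctan(s) = (1+s^2)^{-1}$, which matches differentiating under $\cF$ the relation $x\cdot(\text{that distribution}) = e^{-|x|}$ together with $\cF(e^{-|x|})(s) = (2/\pi)^{1/2}(1+s^2)^{-1}$.

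Alternatively — and this is the route I would actually prefer since the paper has already essentially done the work — I would run the computation backwards from the fragments preceding the lemma: the paper records $(\cF\arctan)(s) = (\pi/2)^{1/2}\tfrac{1}{is}e^{-|s|}$, i.e. $\arctan = \cF^{-1}\big[(\pi/2)^{1/2}(is)^{-1}e^{-|s|}\big]$ interpreted as a principal value. Applying $\cF^{-1}$ to the target identity's Fourier side, one needs exactly that $\cF^{-1}$ of $2i(2\pi)^{-1/2}\arctan(s)$ equals $-\tfrac{1}{2i}\pv\tfrac{e^{-|x|}}{x}$ — but by Fourier inversion and the oddness of $\arctan$, $\cF^{-1}(\arctan)(x) = (\cF\arctan)(-x) = -(\cF\arctan)(x) = -(\pi/2)^{1/2}(ix)^{-1}e^{-|x|} = i(\pi/2)^{1/2}\,\pv\tfrac{e^{-|x|}}{x}$ as distributions, and then $2i(2\pi)^{-1/2}\cdot i(\pi/2)^{1/2}\pv\tfrac{e^{-|x|}}{x} = -\tfrac12\cdot \pv\tfrac{e^{-|x|}}{x}\cdot \tfrac{\pi^{1/2}}{\pi^{1/2}}\cdots$ — at which point one must just pin down the normalization constant carefully so that $2i \cdot i \cdot (2\pi)^{-1/2}(\pi/2)^{1/2} = -\tfrac{1}{2i}\cdot(\pm i)\cdots$; the bookkeeping closes to give exactly $-\tfrac{1}{2i}$.

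**The main obstacle.** There is no deep difficulty here — the content is a standard distributional Fourier transform. The one genuinely error-prone point is the constant and the $i$'s: one must fix a single convention for $\cF$ (the paper uses $(\cF\eta)(s) = (2\pi)^{-1/2}\int e^{-isx}\eta(x)\,dx$), track how $\cF$ acts on convolutions under that convention, and handle the principal value consistently (in particular the fact that $\cF(\pv\tfrac1x)$ carries a factor of $\sgn$ and an $i$, and that $\arctan$ itself must be read as the distribution $\pv$-regularized where needed since it is not integrable). The honest way to avoid a sign slip is to verify the identity by testing against an arbitrary Schwartz function $\varphi$: show $\langle \arctan(D)\eta, \varphi\rangle = \langle -\tfrac{1}{2i}\pv\tfrac{e^{-|x|}}{x}*\eta, \varphi\rangle$ directly, moving the $\pv$ onto $\varphi$, which reduces everything to Fubini plus the elementary integral $\lim_{\varepsilon\downarrow0}\int_{|u|>\varepsilon} u^{-1}e^{-|u|}e^{-isu}\,du = -2i\arctan(s)$, a one-line odd-part computation. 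That last integral identity is the real (and only) computational kernel of the lemma.
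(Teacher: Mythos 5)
Your approach is correct in substance but follows a different route from the paper. You reduce the lemma, via the convolution theorem, to the single distributional Fourier transform of $\pv\frac{e^{-|x|}}{x}$, and you propose to compute that transform directly by testing against Schwartz functions, Fubini on the truncated ($|u|>\varepsilon$, hence $L^1$) kernels, and the elementary odd-part integral $\lim_{\varepsilon\downarrow 0}\int_{|u|>\varepsilon}u^{-1}e^{-|u|}e^{-isu}\,du=-2i\arctan(s)$; that closes the argument and is, if anything, more elementary than the paper's proof. The paper instead regularizes: it introduces $Q_t(x)=\frac{x}{t^2+x^2}e^{-|x|}$, identifies $\lim_{t\downarrow 0}Q_t=\pv\frac{e^{-|x|}}{x}$ in $S'(\bbR)$ via the Sokhotski--Plemelj formulas, computes $\cF(Q_t)$ as a convolution of explicit Fourier transforms, and passes to the limit twice by dominated convergence to obtain $\cF\big(\pv\frac{e^{-|x|}}{x}\big)=-\frac{2i}{(2\pi)^{1/2}}\arctan(\cdot)$, after which the convolution theorem gives the lemma exactly as in your reduction. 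So the two proofs share the same skeleton (compute the transform of the pv distribution, then invert/convolve); yours replaces the $Q_t$-regularization and Plemelj step by a direct $\varepsilon$-truncation computation, which buys brevity, while the paper's route avoids having to discuss the $\varepsilon\downarrow 0$ limit inside the convolution by working with honest $L^2$ functions throughout.

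Two slips you should repair before writing this up. First, the displayed reduction in your plan has the wrong sign: with the paper's normalization one needs $\cF\big(\pv\frac{e^{-|x|}}{x}\big)(s)=-2i(2\pi)^{-1/2}\arctan(s)$, not $+2i(2\pi)^{-1/2}\arctan(s)$; your final kernel integral ($=-2i\arctan(s)$) is the correct one and is consistent with the lemma, so only the intermediate display needs fixing. Second, in your sketched alternative decomposition, $(e^{-|x|}-1)/x$ is \emph{not} an $L^1$ function (it decays only like $1/|x|$ at infinity); it is bounded and lies in $L^2$, so that route can still be made to work, but not as stated. Relatedly, ``running the computation backwards'' from the formula $(\cF\arctan)(s)=(\pi/2)^{1/2}(is)^{-1}e^{-|s|}$ quoted before the lemma is essentially circular, since giving that formula a rigorous distributional meaning is precisely what the lemma accomplishes; stick with your direct testing argument.
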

\begin{proof} For every $t>0$, consider the function $Q_t:\bbR\to\bbR$ defined by 
\begin{equation} 
Q_t(x)=\frac{x}{t^2+x^2} e^{-|x|}, \quad x\in\bbR. 
\end{equation}  
It is clear that $Q_t \in L^2(\bbR)$, $t > 0$, and hence the Fourier transform of $Q_t$, 
$t > 0$, is also square-integrable.
One can consider the function $Q_t$ as a tempered distribution \cite[Section~3.3]{SW71}. 
Next, we claim that 
$\lim_{t\downarrow 0}Q_t=\pv \frac{e^{-|x|}}{x}$
in the sense of tempered distributions, that is, 
$
\lim_{t\downarrow 0} Q_t(\eta)=\pv \frac{e^{-|x|}}{x}(\eta), \quad \eta\in S(\bbR) 
$
(see also a similar, but slightly different result in \cite[Proposition~3.1]{Du01}).
Indeed, one can write $Q_t=\frac12\big(\frac1{x+it}+\frac1{x-it}\big)e^{-|x|}$, and by the 
Sokhotski--Plemelj formulas (see, e.g., \cite[p.~33--34]{GS64}) obtain for every 
$\eta\in S(\bbR)$, 
\begin{equation} 
\lim_{t\downarrow 0}\int_\bbR\frac1{x+it}e^{-|x|}\eta(x) \, dx
=-i\pi\eta(0)+ \pv \int_\bbR \frac{e^{-|x|}\eta(x)}{x}\,dx, 
\end{equation} 
and 
\begin{equation} 
\lim_{t\downarrow 0}\int_\bbR\frac1{x-it}e^{-|x|}\eta(x)\,dx
=+i\pi\eta(0)+ \pv \int_\bbR \frac{e^{-|x|}\eta(x)}{x}\,dx, 
\end{equation} 
that is,  
\begin{align} 
\begin{split} 
\lim_{t\downarrow 0} Q_t(\eta)&=\lim_{t\downarrow 0}\int_\bbR\frac1{x+it}e^{-|x|}\eta(x)\,dx
+\lim_{t\downarrow 0}\int_\bbR\frac1{x-it}e^{-|x|}\eta(x)\,dx\\
&=\pv \frac{e^{-|x|}}{x}(\eta), \quad \eta\in S(\bbR). 
\end{split} 
\end{align}

Next, standard properties of the Fourier transform imply 
\begin{align}
\cF(Q_t)(s)&=\cF\bigg(\frac{x}{t^2+x^2} e^{-|x|}\bigg)(s)=\frac1{(2\pi)^{1/2}}\bigg(\cF\bigg(\frac{x}{t^2+x^2}\bigg)*\cF( e^{-|x|})\bigg)(s)\no\\
&=i\frac1{(2\pi)^{1/2}}\bigg(\bigg(\cF(\frac{1}{t^2+x^2})\bigg)'*\cF( e^{-|x|})\bigg)(s)\no\\
&=-i\frac1{(2\pi)^{1/2}}\bigg((e^{-t|x|}\sgn(x))*\frac{1}{1+x^2}\bigg)(s).
\end{align}
Lebesgue's dominated convergence theorem implies 
\begin{align}
\lim_{t\downarrow 0}\cF(Q_t)(s)&=-i\frac1{(2\pi)^{1/2}}\lim_{t\downarrow 0}\int_\bbR 
e^{-t|x|}\sgn(x)\frac{1}{1+(x-s)^2}\,dx      \lb{lim_fQt} \\
&=-i\frac1{(2\pi)^{1/2}}\int_\bbR\sgn(x)\frac{1}{1+(x-s)^2}\,dx 
=-\frac{2i}{(2\pi)^{1/2}}\arctan{(s)}.    \no
\end{align}
In addition, since the Fourier transform is a continuous map of $S'(\bbR)$ onto itself (see, e.g., \cite[Theorem~7.15]{Ru91}), 
$\cF\big(\pv\frac{e^{-|x|}}{x}\big)=\cF(\lim_{t\downarrow 0}Q_t)=\lim_{t\rightarrow 0} \cF(Q_t),$ 
 in $S'(\bbR)$, or equivalently, 
\begin{equation} 
\cF\bigg(\pv\frac{e^{-|x|}}{x}\bigg)(\eta) = \frac1{(2\pi i)^{1/2}}\int_\bbR \big((e^{-t|x|}\sgn(x))* 
(1+x^2)^{-1}\big)(s)\eta(s)\,ds,
\  \eta\in S(\bbR).
\end{equation}
Since 
\begin{equation} 
\big\|(e^{-t|\cdot|}\sgn(\cdot))* (1+|\cdot|^2)^{-1}\big\|_\infty 
\leq \big\|(1+|\cdot|^2)^{-1}\big\|_1 
\big\|e^{- t |\cdot|}\sgn(\cdot)\big\|_\infty\leq \pi
\end{equation}  
(see, e.g., \cite[Section~1.1, Theorem~1.3]{SW71}), and $\eta\in S(\bbR)$, one infers that the integrand $((e^{-t|x|}\sgn(x))* (1+x^2)^{-1})(\cdot)\eta(\cdot)$ is dominated by the integrable function $\pi\eta(\cdot)$. Hence, by \eqref{lim_fQt}, applying once again Lebesgue's dominated convergence theorem, one arrives at 
\begin{equation} 
\cF\bigg(\pv\frac{e^{-|x|}}{x}\bigg) (\eta) = - \frac{2i}{(2\pi)^{1/2}}\int_\bbR \arctan(s)\eta(s)\,ds, 
\end{equation} 
that is, the distribution $\cF\big(\pv\frac{e^{-|x|}}{x}\big)$ is, in fact, the function $-\frac{2i}{(2\pi)^{1/2}}\arctan(\cdot)$. Thus,
\begin{equation}\lb{arctan}
\cF^{-1}\arctan (\cdot)=-\frac{(2\pi)^{1/2}}{2i}\pv\frac{e^{-|x|}}{x}.
\end{equation}
Finally, for an arbitrary $\eta\in S(\bbR)$ by \cite[Theorem~7.19]{Ru91} one obtains  
\begin{align}
(\arctan(D)\eta)(s)&=(\cF^{-1} \arctan(\cdot)\cF\eta)(s)=\cF^{-1}(\arctan\cdot 
\cF\eta)(s)    \\
&=\frac{1}{(2\pi)^{1/2}}\big(\eta*\cF^{-1}\arctan\big)(s)\stackrel{\eqref{arctan}}{=}-\frac{1}{2i}\big(\eta*\pv\frac{e^{-|x|}}{x}\big)(s),   \no 
\end{align}
completing the proof. 
\end{proof}

For the special case where the operator $D$ is perturbed by a Schwartz function 
$\phi \in S(\bbR)$, we also state the following result: 

\begin{corollary}\lb{arctan_cor}
Let $\phi \in S(\bbR)$. Then the operator $A_+=D+ \phi$,  
$\dom(A_+) = \dom(D) = W^{1,2}(\bbR)$, in $L^2(\bbR)$ satisfies  
\begin{equation} 
(\arctan{A_+})\eta=-\frac{1}{2i}\psi\;\pv \frac{e^{-|x|}}{x}*(\ol{\psi}\eta),\quad \eta\in S(\bbR). 
\end{equation} 
\end{corollary}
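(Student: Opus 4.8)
The plan is to reduce the statement to Lemma~\ref{pv_arctan} via the scalar unitary equivalence of Lemma~\ref{unit_equiv} (in the form spelled out in Remark~\ref{Psi}\,$(ii)$) together with the functional calculus identity~\eqref{fc_D,DMf}. Since any $\phi \in S(\bbR)$ satisfies Hypothesis~\ref{h3.1}\,$(i)$ in the scalar case $m=1$, Lemma~\ref{unit_equiv} applies: fixing $x_0 \in \bbR$ and writing $\psi(\cdot) = \psi(\cdot,x_0) = \exp\!\big(-i\int_{x_0}^{\cdot}\phi(y)\,dy\big)$, one has $\psi^* A_+ \psi = A_- = D$, equivalently $A_+ = \psi\,D\,\ol{\psi}$ (recall $|\psi|\equiv 1$, so $\psi^*$ is multiplication by $\ol{\psi}$). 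Hence \eqref{fc_D,DMf} with $h = \arctan(\cdot)$ gives the bounded operator identity $\arctan(A_+) = \psi\,\arctan(D)\,\ol{\psi}$ on $L^2(\bbR)$.

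First I would check that this identity can be evaluated on Schwartz functions in the claimed form. The point is that multiplication by $\ol{\psi}$ maps $S(\bbR)$ into itself: $\psi \in C^\infty(\bbR)$ with $\psi' = -i\phi\psi$, and iterating this relation shows that $\psi^{(k)}$ equals a universal polynomial in $\phi,\phi',\dots,\phi^{(k-1)}$ times $\psi$, hence is bounded for every $k$; multiplication of a Schwartz function by a smooth function all of whose derivatives are bounded preserves $S(\bbR)$. Consequently, for $\eta \in S(\bbR)$ we have $\ol{\psi}\,\eta \in S(\bbR)$, so Lemma~\ref{pv_arctan} applies to the argument $\ol{\psi}\,\eta$ and yields
\[
(\arctan(A_+)\eta)(s) = \psi(s)\,\big(\arctan(D)(\ol{\psi}\,\eta)\big)(s) = -\frac{1}{2i}\,\psi(s)\,\Big(\pv \frac{e^{-|x|}}{x} * (\ol{\psi}\,\eta)\Big)(s),
\]
which is exactly the asserted formula. (The choice of $x_0$ is irrelevant, since changing $x_0$ only multiplies $\psi$ by a unimodular constant that cancels against $\ol{\psi}$.)

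Given Lemma~\ref{pv_arctan}, Lemma~\ref{unit_equiv}, and \eqref{fc_D,DMf}, the argument is essentially bookkeeping; the only point requiring a little care — and the closest thing to an obstacle — is the $S(\bbR)$-invariance of multiplication by $\ol{\psi}$, which is what legitimizes invoking Lemma~\ref{pv_arctan} with argument $\ol{\psi}\,\eta$ rather than with $\eta$ itself. I do not anticipate any genuine difficulty here.
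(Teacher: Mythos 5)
Your argument is correct and is essentially the paper's own proof: the paper likewise writes $\arctan(A_+)=\psi\,\arctan(D)\,\ol{\psi}$ via the scalar unitary equivalence of Remark \ref{Psi}\,$(ii)$, observes that $\ol{\psi}\eta\in S(\bbR)$ for $\eta\in S(\bbR)$ because $\psi$ is smooth (with all derivatives bounded, as you verify in slightly more detail), and then invokes Lemma \ref{pv_arctan}. Nothing further is needed.
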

\begin{proof} Since $\phi$ is a Schwartz test function, 
$\psi(x)=\exp(- i \int_0^x \phi(x') \, dx')$ is infinitely differentiable and 
$\ol{\psi}\eta\in S(\bbR)$ for every $\eta\in S(\bbR)$. Hence, one can write
\begin{equation} 
(\arctan{A_+})\eta=\psi\arctan(D) \psi \, \eta=\psi \, \arctan(D)(\ol{\psi}\eta), 
\end{equation} 
and Lemma \ref{pv_arctan} completes the proof.
\end{proof}

\begin{proposition}\lb{trace_for_S(R)}
Let $\phi \in S(\bbR)$ and introduce $A_-=D$, $A_+=A_-+ \phi$, 
$\dom(A_\pm) = W^{1,2}(\bbR)$, in $L^2(\bbR)$. Then,  
\begin{equation} 
[\arctan(A_+)-\arctan(A_-)] \in \cB_1\big(L^2(\bbR) \otimes \bbC^m\big),   \lb{arctanB1}
\end{equation} 
and 
\begin{equation} 
\tr_{L^2(\bbR) \otimes \bbC^m}(\arctan(A_+)-\arctan(A_-))=\frac12\int_\bbR \phi(x) \, dx. 
\lb{trarctan}
\end{equation} 
\end{proposition}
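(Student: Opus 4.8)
The plan is to reduce the trace computation to an explicit integral-kernel calculation using Corollary~\ref{arctan_cor}, and then to evaluate that integral. First I would observe that for $\phi\in S(\bbR)$ the unitary $\psi(x)=\exp\big(-i\int_0^x\phi(x')\,dx'\big)$ is smooth with $\psi,\psi^{-1}\in C_b(\bbR)$ and $\psi'=-i\phi\psi\in S(\bbR)\cdot C_b(\bbR)$, so in particular $\psi-1$ and $\ol\psi-1$ decay at $\pm\infty$ in a controlled way (they need not be in $L^2$, but $\phi$ and its primitive differences are). By Corollary~\ref{arctan_cor}, on $S(\bbR)$,
\begin{equation}
(\arctan(A_+)\eta)(s) = -\frac{1}{2i}\,\psi(s)\int_{\bbR}\frac{e^{-|s-s_1|}}{s-s_1}\,\ol{\psi(s_1)}\,\eta(s_1)\,ds_1
\end{equation}
(principal value understood), while $(\arctan(A_-)\eta)(s)=-\frac{1}{2i}\int_{\bbR}\frac{e^{-|s-s_1|}}{s-s_1}\eta(s_1)\,ds_1$ by Lemma~\ref{pv_arctan}. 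Hence $\arctan(A_+)-\arctan(A_-)$ is the operator with (distributional) integral kernel
\begin{equation}
K(s,s_1) = -\frac{1}{2i}\,\big(\psi(s)\ol{\psi(s_1)}-1\big)\,\pv\frac{e^{-|s-s_1|}}{s-s_1}.
\end{equation}
The key point is that the prefactor $\psi(s)\ol{\psi(s_1)}-1$ vanishes on the diagonal $s=s_1$, which cancels the $1/(s-s_1)$ singularity and renders $K$ a genuine (bounded, decaying) kernel; this is what will make the operator trace class, and what lets us compute its trace by integrating $K(s,s)$.

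Next I would establish \eqref{arctanB1}: since $\arctan\circ g^{-1}\in C^2([-1,1])$, the trace-class inclusion \eqref{arctan_cor}--type statement already follows from \eqref{arc_in_L_1} (the arguments preceding \eqref{trace formula}), applied with $m=1$ and the Schwartz potential $\phi$; no new argument is needed here beyond citing that the hypotheses of Hypothesis~\ref{h3.1}$(i)$ are met by $\Phi=\phi\in S(\bbR)\subset W^{1,1}(\bbR)\cap C_b(\bbR)$. For the trace itself I would use that for a trace-class integral operator on $L^2(\bbR)$ with a continuous kernel the trace equals $\int_{\bbR}K(s,s)\,ds$ (justifiable here because $K$ extends continuously across the diagonal). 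Taking the limit $s_1\to s$ in $K(s,s_1)$: writing $\psi(s)\ol{\psi(s_1)}-1 = \psi(s)\big(\ol{\psi(s_1)}-\ol{\psi(s)}\big)$ and using $\ol{\psi}'(s) = i\phi(s)\ol{\psi(s)}$, one gets
\begin{equation}
K(s,s) = -\frac{1}{2i}\,\psi(s)\cdot\Big(\lim_{s_1\to s}\frac{\ol{\psi(s_1)}-\ol{\psi(s)}}{s_1-s}\Big)\cdot\lim_{s_1\to s}\big(-e^{-|s-s_1|}\big) = -\frac{1}{2i}\,\psi(s)\,\big(i\phi(s)\ol{\psi(s)}\big)\,(-1) = \frac{1}{2}\,\phi(s),
\end{equation}
using $|\psi(s)|=1$. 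Integrating over $s\in\bbR$ yields $\tr(\arctan(A_+)-\arctan(A_-)) = \tfrac12\int_{\bbR}\phi(x)\,dx$, which is \eqref{trarctan}.

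The main obstacle I expect is the rigorous justification of the trace-equals-diagonal-integral step in the presence of the principal-value structure: one must carefully argue that, although $\pv\,e^{-|x|}/x$ is only a tempered distribution, the product with the vanishing-on-the-diagonal factor $\psi(s)\ol{\psi(s_1)}-1$ is an ordinary continuous function on $\bbR^2$, with enough decay in $|s-s_1|$ (exponential) and in $|s+s_1|$ (coming from the Schwartz decay of $\phi$ and hence of $\psi-1$ away from $0$... actually $\psi$ tends to distinct constant limits at $\pm\infty$, so decay of $\psi(s)\ol{\psi(s_1)}-1$ must be extracted from the near-diagonal regime combined with the exponential factor) to apply the kernel-trace theorem. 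I would handle this by a regularization: replace $e^{-|x|}/x$ by $Q_t(x)=\frac{x}{t^2+x^2}e^{-|x|}$ as in Lemma~\ref{pv_arctan}, compute the trace of the resulting Hilbert--Schmidt-composable operator by its diagonal, and pass $t\downarrow 0$, using the already-established $\cB_1$ convergence (from \eqref{arc_in_L_1} and the construction in Lemma~\ref{pv_arctan}) to interchange limit and trace. Alternatively, one can smear $\phi$ and use density of $S(\bbR)$ together with the $\cB_1$-continuity estimate $\|\arctan(A_+)-\arctan(A_-)\|_{\cB_1}\le C\|\phi\|_{1,1}$-type bound implicit in the earlier propositions; but the regularization route is cleaner and stays within the tools already developed.
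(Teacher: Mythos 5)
Your proposal is correct and follows essentially the same route as the paper: the difference $\arctan(A_+)-\arctan(A_-)$ is realized via Lemma \ref{pv_arctan} and Corollary \ref{arctan_cor} as an integral operator whose kernel $-\tfrac{1}{2i}\ol{\psi(x)}\,\tfrac{\psi(y)-\psi(x)}{y-x}\,e^{-|y-x|}$ has the singularity removed on the diagonal, the trace-class property is taken from \eqref{arc_in_L_1}, and the trace is the diagonal integral, giving $\tfrac12\int_\bbR\phi$. The only difference is in the final rigor step: where you propose a $Q_t$-regularization, the paper extends the kernel identity from $S(\bbR)$ to $L^2(\bbR)$ by proving the kernel operator is bounded via a Schur-type test using only $\|\psi'\|_\infty$ and the exponential factor (no off-diagonal decay in $s+s_1$ is needed), and then invokes the continuous-kernel trace formula directly.
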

\begin{proof}
The claim \eqref{arctanB1} has been discussed already in the context of \eqref{arc_in_L_1}. 
To prove \eqref{trarctan}, let $\eta\in S(\bbR)$. Combining Lemma \ref{pv_arctan} and 
Corollary \ref{arctan_cor} one infers, 
\begin{align}
& ((\arctan(A_+) - \arctan(A_-))\eta)(y)   \no \\
& \quad =-\frac{1}{2i}\big(\psi\;\pv \frac{e^{-|x|}}{x}*(\ol{\psi}\eta)(y) 
- \pv\frac{e^{-|x|}}{x}*(\eta)(y)\big)   \no \\
& \quad =-\frac{1}{2i}\lim_{\varepsilon\downarrow 0}\int_{|x|>\varepsilon}
\big(\psi(y)\ol{\psi(y-x)}-1\big)\frac{e^{-|x|}}{x}\eta(y-x)\,dx   \no \\
& \quad =-\frac{1}{2i}\lim_{\varepsilon\downarrow 0}\int_{|y-x|>\varepsilon}\big(\psi(y)-\psi(x)\big)\ol{\psi(x)} \, \frac{e^{-|y - x|}}{y - x}\eta(x)\,dx   \no \\
& \quad =-\frac{1}{2i}\int_{\bbR}\ol{\psi(x)} \, \frac{\psi(y)-\psi(x)}{y - x}e^{-|y - x|}\eta(x)\,dx,
\end{align}
where the last equality is due to continuity of 
$\ol{\psi(x)} \, \frac{\psi(y) - \psi(x)}{y - x}e^{-|y - x|}\eta(x)$ for all $x \in \bbR$ (given 
$y \in \bbR$).

Next, we will show that the preceding equality can be extended to arbitrary 
$\eta\in L^2(\bbR)$ and thus  
\begin{equation}\lb{as_intop}
\big((\arctan(A_+)-\arctan(A_-))\eta\big)(y)
=-\frac{1}{2i}\int_{\bbR}\ol{\psi(x)} \, \frac{\psi(y) - \psi(x)}{y - x}e^{-|y - x|}\eta(x)\,dx 
\end{equation} 
holds. Since $S(\bbR)$ is dense in $L^2(\bbR)$, for every $\eta\in L^2(\bbR)$ there exists 
a sequence $\{\eta_n\}_{n=1}^\infty \subset S(\bbR)$, such that $\|\eta_n-\eta\|_2\underset{n\to\infty}{\longrightarrow}  0$. 
On one hand, 
\begin{equation} 
\|(\arctan(A_+)-\arctan(A_-))(\eta_n-\eta)\|_2 \underset{n\to\infty}{\longrightarrow} 0, 
\end{equation}  
since $[\arctan(A_+)-\arctan(A_-)] \in \cB\big(L^2(\bbR)\big)$. 
On the other hand, we claim that the integral operator $K$ in $L^2(\bbR)$ with integral kernel 
\begin{equation} 
K(x,y)=-\frac{1}{2i}\ol{\psi(x)} \, \frac{\psi(y) - \psi(x)}{y - x}e^{-|x - y|} 
\end{equation}  
is a bounded operator on $L^2(\bbR)$. By 
\cite[Equation~(2.2)]{BS77} this will follow from the estimates 
\begin{equation}\lb{estimate_kernel}
\|K(\,\cdot\,,\,\cdot\,)\|_{L^\infty(\bbR;dx;L^1(\bbR;dy))} < \infty, \quad 
\|K(\,\cdot\,,\,\cdot\,)\|_{L^\infty(\bbR;dy,L^1(\bbR;dx))}<\infty 
\end{equation}
(Bochner norms are used in this context).
Since $|K(x,y)|=\frac12\big|\frac{\psi(y) - \psi(x)}{y - x}\big|e^{-|y - x|}$, it is sufficient to estimate 
one of the two norms in \eqref{estimate_kernel}. We estimate the norm of 
$\|K(\,\cdot\,,\,\cdot\,)\|_{L^\infty(\bbR;dx;L^1(\bbR;dy))}$ next:  
\begin{align}
& \|K(\,\cdot\,,\,\cdot\,)\|_{L^\infty(\bbR;dx;L^1(\bbR;dy))}=\frac12\sup_{x\in\bbR}\bigg(\int_\bbR
\bigg|\frac{\psi(y) - \psi(x)}{y - x}\bigg|e^{-|y - x|}\,dy\bigg)    \no \\
& \quad \leq\frac12\sup_{x \in \bbR} \bigg(\int_\bbR\sup_{(x,y)\in\bbR^2}
\bigg|\frac{\psi(y) - \psi(x)}{y -x}\bigg| e^{-|y  - x|}\,dy\bigg)   \no \\
& \quad \leq\frac12\|\psi'\|_\infty \sup_{x\in\bbR} \bigg(\int_\bbR e^{-|y - x|}\,dy\bigg)  
\leq \|\psi'\|_\infty<\infty.
\end{align}

Hence indeed, $K \in \cB\big(L^2(\bbR)\big)$ and 
$K \eta_n \underset{n\to\infty}{\longrightarrow} K \eta$ in $L^2(\bbR)$.
Thus, equality \eqref{as_intop} holds for all $\eta\in L^2(\bbR)$. 
Moreover, since the integral kernel $K(\,\cdot\,,\,\cdot\,)$ is continuous, invoking 
\eqref{arctanB1} implies  
\begin{align} 
\begin{split} 
& {\tr}_{L^2(\bbR) \otimes \bbC^m}(\arctan(A_+) - \arctan(A_-))=\int_\bbR K(x,x)\,dx\\
& \quad =-\frac{1}{2i}\int_\bbR \psi'(x) \ol{\psi(x)} \,dx = \frac12\int_\bbR \phi(x) \, dx, 
\end{split} 
\end{align}
completing the proof. 
\end{proof}

Next, we proceed to the case of arbitrary $m \in \bbN$. 

\begin{proposition}\lb{matrix_trace}
Let $\Phi \in \mnn{S(\bbR)}$ and consider $A_-=D\otimes I_m$, $A_+=A_-+ \Phi$, 
$\dom(A_{\pm}) = W^{1,2}(\bbR) \otimes \bbC^m$, in $L^2(\bbR) \otimes \bbC^m$, 
$m \in \bbN$. Then,  
\begin{equation} 
\tr_{L^2(\bbR)\otimes \bbC^m}(\arctan(A_+)-\arctan(A_-))
=\frac12\int_\bbR \tr_{\bbC^m}(\Phi(x))\,dx.    \lb{trarctana} 
\end{equation} 
\end{proposition}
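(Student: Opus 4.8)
The plan is to carry the proof of Proposition \ref{trace_for_S(R)} over to the matrix setting, replacing the scalar gauge factor $\psi$ by the matrix-valued fundamental solution of Lemma \ref{unit_equiv} and keeping track of the order of the matrix products. Since $\Phi$ is self-adjoint with entries in $S(\bbR)\subset W^{1,1}(\bbR)\cap C_b(\bbR)$ and bounded derivative, Hypothesis \ref{h3.1}\,$(i)$ holds, so Lemma \ref{unit_equiv} produces a unitary $m\times m$ matrix-valued function $\Psi:=\Psi(\,\cdot\,,0)$ which, by Remark \ref{Psi}\,$(i)$, lies in $\mnn{C^\infty(\bbR)}$, satisfies $\partial_x\Psi(x)=-i\Phi(x)\Psi(x)$ with $\Psi(0)=I_m$, and obeys $\Psi^*A_+\Psi=A_-$; the functional calculus then gives $\arctan(A_+)=\Psi\,\arctan(A_-)\,\Psi^*$.

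Tensoring Lemma \ref{pv_arctan} with $I_m$ shows that $\arctan(A_-)=\arctan(D)\otimes I_m$ acts by componentwise convolution with $-\tfrac{1}{2i}\pv\tfrac{e^{-|x|}}{x}$. Repeating the computation of Corollary \ref{arctan_cor} and of the displayed chain of equalities at the start of the proof of Proposition \ref{trace_for_S(R)} --- now with $\psi(y)\,(\cdots)\,\overline{\psi(x)}$ replaced by $\Psi(y)\,(\cdots)\,\Psi(x)^*$, and using $\Psi(y)\Psi(x)^*-I_m=(\Psi(y)-\Psi(x))\Psi(x)^*$ to remove the principal value --- one obtains, first for $\eta\in S(\bbR)\otimes\bbC^m$ and then, by the density-plus-boundedness argument of that proof, for all $\eta\in L^2(\bbR)\otimes\bbC^m$,
\begin{equation*}
\big((\arctan(A_+)-\arctan(A_-))\eta\big)(y)=\int_\bbR K(x,y)\,\eta(x)\,dx,\qquad
K(x,y)=-\frac{1}{2i}\,\big(\Psi(y)\Psi(x)^*-I_m\big)\,\frac{e^{-|y-x|}}{y-x}.
\end{equation*}
Boundedness of this integral operator follows from \cite[Eq.~(2.2)]{BS77} exactly as in the scalar proof, via the mean value bound $\big\|\big(\Psi(y)\Psi(x)^*-I_m\big)/(y-x)\big\|\le\sup_{\xi\in\bbR}\|\Psi'(\xi)\|=\|\Phi\|_\infty$; trace-class membership is \eqref{arc_in_L_1}; and $K$ extends to a continuous $\mnn{\bbC}$-valued function on $\bbR^2$ since $\Psi\in\mnn{C^\infty(\bbR)}$.

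It remains to evaluate the kernel on the diagonal and read off the trace. Differentiating $\Psi(y)\Psi(x)^*$ in $y$ at $y=x$ and using $\partial_x\Psi=-i\Phi\Psi$ together with $\Psi\Psi^*=I_m$ gives $\partial_y\big(\Psi(y)\Psi(x)^*\big)\big|_{y=x}=-i\Phi(x)$, hence $K(x,x)=\tfrac{1}{2}\Phi(x)$. Writing $T:=\arctan(A_+)-\arctan(A_-)$, each diagonal block $P_jT P_j$ (with $P_j$ the projection onto the $j$-th component) is trace class, with $\|P_jT P_j\|_1\le\|T\|_1$, and has the continuous scalar kernel $K_{jj}$; applying to it the scalar trace-equals-diagonal identity from the end of the proof of Proposition \ref{trace_for_S(R)} and summing over $j$ yields
\begin{equation*}
\tr_{L^2(\bbR)\otimes\bbC^m}\big(\arctan(A_+)-\arctan(A_-)\big)=\int_\bbR\tr_{\bbC^m}\big(K(x,x)\big)\,dx=\frac{1}{2}\int_\bbR\tr_{\bbC^m}(\Phi(x))\,dx,
\end{equation*}
which is \eqref{trarctana}. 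The only point that genuinely requires care is the non-commutative bookkeeping: the gauge matrices must be kept in the order $\Psi(y)\,(\cdot)\,\Psi(x)^*$, so that they cancel to $I_m$ precisely on the diagonal and, after differentiating there, reproduce exactly $-i\Phi(x)$; all the analytic input --- existence and smoothness of $\Psi$, the convolution representation of $\arctan(A_-)$, boundedness and trace-class of the difference, and the identification of the trace with the integral of the kernel diagonal --- carries over verbatim from the scalar statements already established.
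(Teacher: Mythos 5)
Your proposal is correct and follows essentially the same route as the paper's proof: gauge transformation via the unitary $\Psi$ from Lemma \ref{unit_equiv}, the convolution representation of $\arctan(D)$ from Lemma \ref{pv_arctan}, removal of the principal value through the first-order vanishing of $\Psi(y)\Psi(x)^*-I_m$ on the diagonal, and identification of the trace with the integral of the (continuous) kernel's diagonal, which equals $\tfrac12\Phi(x)$. The only difference is presentational: you keep the kernel in matrix form throughout, whereas the paper carries out the same cancellation componentwise via the projections $P_{j,k}$ and the unitarity relation $\sum_j\Psi_{k,j}\overline{\Psi_{\ell,j}}=\delta_{k,\ell}$ before summing the diagonal block traces.
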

\begin{proof}
Employing the unitary equivalence in \eqref{fc_D,DMf}, one writes  
\begin{align}
& \Psi(\,\cdot\,, x_0) \arctan(A_-) \Psi(\,\cdot\,,x_0)^*    \no \\
& \quad =\sum_{j,k=1}^m P_{j,k}\otimes \Psi_{j,k}(\,\cdot\,, x_0) 
\arctan(D)\sum_{\ell,r =1}^mP_{\ell,r}\otimes \overline{\Psi_{r,\ell}(\,\cdot\,,x_0)}   \no \\
& \quad =\sum_{j,k,\ell,r=1}^mP_{j,k}P_{\ell,r} \otimes \Psi_{j,k}(\,\cdot\,, x_0) \arctan(D)
\overline{\Psi_{r,\ell}(\,\cdot\,,x_0)}   \no \\
& \quad =\sum_{j,\ell,r=1}^mP_{j,r}\otimes \Psi_{j,\ell}(\,\cdot\,, x_0)
\arctan(D)\overline{\Psi_{r,\ell}(\,\cdot\,,x_0)},
\end{align}
utilizing $P_{j,k} P_{\ell,r} = P_{j,r} \delta_{k,\ell}$, $j,k,\ell,r = 1,\dots,m$ (cf.\ \eqref{Pjk}). 
Thus,  
\begin{equation} 
\Psi(\\,\cdot\,, x_0) \arctan(A_-) \overline{\Psi(\,\cdot\,,x_0)}
=\sum_{k, \ell=1}^mP_{k,\ell}\otimes\sum_{j=1}^m \Psi_{k,j}(\,\cdot\,, x_0)\arctan(D) 
\overline{\Psi_{\ell,j}(\,\cdot\,,x_0)}.
\end{equation}  
Hence, the operator 
$[\Psi(\,\cdot\,, x_0) \arctan(A_-) \Psi(\,\cdot\,,x_0)^*-\arctan(A_-)]$ is an 
operator-valued block matrix with $(k,k)$-th entry given by 
\begin{align} 
\begin{split} 
& \sum_{j=1}^m \Psi_{k,j}(\,\cdot\,, x_0) \arctan(D) \overline{\Psi_{k,j}(\,\cdot\,,x_0)}-\arctan(D) \\
& \quad =\sum_{j=1}^m \Psi_{k,j}(\,\cdot\,, x_0) \cF^{-1} \arctan(\cdot) \cF 
\overline{\Psi_{k,j}(\,\cdot\,,x_0)}-\cF^{-1} \arctan (\cdot) \cF. 
\end{split} 
\end{align}
Thus, 
\begin{align}\lb{trace_aux}
& \tr_{L^2(\bbR)\otimes\bbC^m}(\arctan(A_+)-\arctan(A_-))   \\
& \quad =\sum_{k=1}^m\tr_{L^2(\bbR)}\bigg(\sum_{j=1}^m \Psi_{k,j}(\,\cdot\,, x_0) \cF^{-1} \arctan(\cdot)\cF \overline{\Psi_{k,j}(\,\cdot\,,x_0)}-\cF^{-1} \arctan(\cdot)\cF\bigg).   \no 
\end{align}
Fix $k=1,\dots,m$. Since $\Psi(\,\cdot\,, x_0)$ is infinitely differentiable  (cf.\  
Remark \ref{Psi}\,$(i)$), it follows that $\Psi_{j,k}(\,\cdot\,, x_0) \eta \in S(\bbR)$ for every 
$\eta\in S(\bbR)$, $j,k = 1, \dots,m$, and hence by Lemma \ref{pv_arctan} and Corollary \ref{arctan_cor}, 
\begin{align}
& \bigg(\sum_{j=1}^m \Psi_{k,j}(\,\cdot\,, x_0) \cF \arctan(\cdot) \cF^{-1} 
\overline{\Psi_{k,j}(\,\cdot\,,x_0)}-\cF \arctan(\cdot) \cF^{-1}\bigg)\eta(x)     \no \\
& \quad =\lim_{\varepsilon\downarrow 0}\bigg(\sum_{j=1}^m \frac{(-1)}{2i}\int_{|x-x_1|>\varepsilon}  \Psi_{k,j}(x,x_0)\overline{\Psi_{k,j}(x_1,x_0)}\eta(x_1)(x-x_1)^{-1}e^{-|x-x_1|}\,dx_1    \no \\
&\qquad +\frac{1}{2i}\int_{|x-x_1|>\varepsilon} (x-x_1)^{-1}e^{-|x-x_1|}\eta(x_1)\,dx_1\bigg), 
\quad \eta \in S(\bbR).  
\end{align}
Since the matrix $\Psi$ is unitary, $\sum_{j=1}^m \Psi_{k,j}(\,\cdot\,,x_0)
\overline{\Psi_{\ell,j}(\,\cdot\,,x_0)} = \delta_{k,\ell}$, and hence 
\begin{align}
& \bigg(\sum_{j=1}^m \Psi_{k,j}(\,\cdot\,, x_0) \cF \arctan(\cdot) \cF^{-1} 
\overline{\Psi_{k,j}(\,\cdot\,,x_0)}-\cF \arctan(\cdot) \cF^{-1}\bigg)\eta(x)    \no \\
& \quad =\lim_{\varepsilon\downarrow 0}\bigg(
\sum_{j=1}^m \frac{(-1)}{2i}\int_{|x-x_1|>\varepsilon}\Psi_{k,j}(x,x_0)\overline{\Psi_{k,j}(x_1,x_0)} \eta(x_1)(x-x_1)^{-1}e^{-|x-x_1|}\,dx_1   \no \\
&\qquad +\sum_{j=1}^m\frac{1}{2i}\int_{|x-x_1|>\varepsilon}\Psi_{k,j}(x_1,x_0)
\overline{\Psi_{k,j}(x_1,x_0)} (x-x_1)^{-1}e^{-|x-x_1|}\eta(x_1)\,dx_1\bigg)    \no \\
& \quad =-
\sum_{j=1}^m \frac{1}{2i}\int_\bbR \frac{\Psi_{k,j}(x,x_0)-\Psi_{k,j}(x_1,x_0)}{x-x_1}
\overline{\Psi_{k,j}(x_1,x_0)} \eta(x_1)e^{-|x-x_1|}\,dx_1.
\end{align}
Arguing as in the proof of equality \eqref{as_intop} one concludes 
\begin{align} 
\begin{split} 
& \bigg(\sum_{j=1}^m \Psi_{k,j}(\,\cdot\,, x_0) \cF \arctan (\cdot) \cF^{-1} 
\overline{\Psi_{k,j}(\,\cdot\,,x_0)}-\cF \arctan(\cdot) \cF^{-1} \bigg)\eta(x)\\ 
& \quad =-\sum_{j=1}^m \frac{1}{2i}\int_\bbR \frac{\Psi_{k,j}(x,x_0)-\Psi_{k,j}(x_1,x_0)}{x-x_1}
\overline{\Psi_{k,j}(x_1,x_0)}\eta(x_1)e^{-|x-x_1|}\,dx_1
\end{split} 
\end{align}
for arbitrary $\eta\in L^2(\bbR)$. Since the function $\Psi_{j,k}(\,\cdot\,, x_0) \in C^{\infty}(\bbR)$, 
it follows that the integral kernel 
\begin{equation} 
K_{k}(x,x_1)=\frac1{2i}\sum_{j=1}^m\frac{\Psi_{k,j}(x,x_0)-\Psi_{k,j}(x_1,x_0)}{x-x_1}
\overline{\Psi_{k,j}(x_1,x_0)} e^{-|x-x_1|}
\end{equation} 
is continuous and 
$
K_k(x,x)=\frac1{2i}\sum_{j=1}^m\Psi'_{k,j}(x,x_0)\overline{\Psi_{k,j}(x,x_0)}.
$
Hence, 
\begin{align}
& \tr_{L^2(\bbR)}\bigg(\sum_{j=1}^m \Psi_{k,j}(\,\cdot\,, x_0) \cF \arctan(\cdot) \cF^{-1} 
\overline{\Psi_{k,j}(\,\cdot\,,x_0)}-\cF \arctan(\cdot) \cF^{-1}\bigg)    \\ 
& \quad =\int_\bbR K_k(x,x)\,dx =-\frac1{2i}\sum_{j=1}^m\int_\bbR \Psi'_{k,j}(x,x_0)
\overline{\Psi_{k,j}(x,x_0)}\,dx \stackrel{\eqref{5.10}}{=}\frac1{2}\int_\bbR \Phi_{k,k}(x)\,dx. \no
\end{align}
Finally, by \eqref{trace_aux} one obtains 
\begin{equation}
\tr_{L^2(\bbR)\otimes\bbC^m}(\arctan(A_+)-\arctan(A_-))
=\frac1{2}\sum_{k=1}^m\int_\bbR \Phi_{k,k}(x)dx
=\frac12\int_\bbR\tr_{\bbC^m}(\Phi(x))dx.
\end{equation}
\end{proof}

By Proposition \ref{matrix_trace} and equality \eqref{via_arctan}, 
\begin{equation}\lb{xi_for_S(R)}
\xi(\nu; A_+,A_-)=\frac{1}{2\pi}\int_\bbR\tr_{\bbC^m}(\Phi(x))\,dx 
 \, \text{ for a.e.\ $\nu \in \bbR$,}
\end{equation} 
as soon as $\Phi\in \mnn{S(\bbR)}$. The following theorem extends this result to an 
arbitrary $\Phi\in\mnn{W^{1,1}(\bbR)\cap C_b(\bbR)}$.

\begin{theorem}\lb{value_via_arctan} 
Assume that $\Phi\in\mnn{W^{1,1}(\bbR)\cap C_b(\bbR)}$, $m\in \bbN$. Then, 
\begin{equation} 
\xi(\nu; A_+,A_-)=\frac{1}{2\pi}\int_\bbR\tr_{\bbC^m}(\Phi(x))\,dx 
\, \text{ for a.e.\ $\nu \in \bbR$.}  
\end{equation} 
\end{theorem}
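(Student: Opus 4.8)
The plan is to combine the identity \eqref{via_arctan}, which reduces the claim to computing $\tr_{L^2(\bbR)\otimes\bbC^m}(\arctan(A_+)-\arctan(A_-))$, with an approximation of $\Phi$ by Schwartz matrices, for which \eqref{xi_for_S(R)} (equivalently, Proposition \ref{matrix_trace}) already evaluates this trace. Since $S(\bbR)$ is dense in $W^{1,1}(\bbR)$ and $W^{1,1}(\bbR)$ embeds continuously into $C_b(\bbR)$ with $\|f\|_\infty\le\|f'\|_1$, I would choose self-adjoint $\Phi_k\in\mnn{S(\bbR)}$ with $\Phi_k\to\Phi$ entrywise in $W^{1,1}(\bbR)$ (symmetrizing via $\Phi_k\mapsto(\Phi_k+\Phi_k^*)/2$ keeps self-adjointness since $\Phi=\Phi^*$); then automatically $\Phi_k\to\Phi$ uniformly and $\sup_k\|\Phi_k\|_\infty<\infty$. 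Setting $A_{+,k}=A_-+\Phi_k$, Proposition \ref{matrix_trace} gives $\tr_{L^2(\bbR)\otimes\bbC^m}(\arctan(A_{+,k})-\arctan(A_-))=\frac12\int_\bbR\tr_{\bbC^m}(\Phi_k(x))\,dx$, whose right-hand side converges to $\frac12\int_\bbR\tr_{\bbC^m}(\Phi(x))\,dx$ by $L^1$-convergence. Hence, by continuity of the trace on $\cB_1$, the theorem follows once I show $\|\arctan(A_{+,k})-\arctan(A_+)\|_{\cB_1(L^2(\bbR)\otimes\bbC^m)}\to0$.

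To handle this difference I would use the unitary reduction of Lemma \ref{unit_equiv}: let $\Psi_k\in\mnn{C^\infty(\bbR)}$ be the unitary-valued solution of $\partial_x\Psi_k=-i\Phi_k\Psi_k$, $\Psi_k(x_0,x_0)=I_m$, smooth by Remark \ref{Psi}\,$(i)$. Then $A_{+,k}=\Psi_kA_-\Psi_k^*$, so $A_+=\Psi_k(A_-+\Delta_k)\Psi_k^*$ with $\Delta_k:=\Psi_k^*(\Phi-\Phi_k)\Psi_k$ a self-adjoint element of $\mnn{W^{1,1}(\bbR)\cap C_b(\bbR)}$; using $\|\Psi_k\|_\infty=1$, $\|\Psi_k'\|_\infty\le\|\Phi_k\|_\infty=O(1)$ and $\Phi-\Phi_k\to0$ in $W^{1,1}(\bbR)$ and uniformly, one gets $\|\Delta_k\|_{1,1}+\|\Delta_k\|_\infty\|\Delta_k\|_1\to0$. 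By the functional calculus and unitary invariance of $\|\cdot\|_{\cB_1}$, $\|\arctan(A_{+,k})-\arctan(A_+)\|_{\cB_1}=\|\arctan(A_-+\Delta_k)-\arctan(A_-)\|_{\cB_1}$, so it remains to prove the quantitative bound
\[
\|\arctan(A_-+\Delta)-\arctan(A_-)\|_{\cB_1(L^2(\bbR)\otimes\bbC^m)}\le C\big(\|\Delta\|_{1,1}+\|\Delta\|_\infty\|\Delta\|_1\big)
\]
for every self-adjoint $\Delta\in\mnn{W^{1,1}(\bbR)\cap C_b(\bbR)}$, with $C$ independent of $\Delta$. (That the difference is trace class at all is already contained in the discussion around \eqref{arc_in_L_1}; here only the norm bound is needed.)

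For this bound I would rerun the proof of Proposition \ref{g_complex} almost verbatim, the only change being the scalar weight in the basic integral representation. From $\arctan(x)=\frac{x}{2}\int_1^\infty s^{-1/2}(s+x^2)^{-1}\,ds=\frac12\int_0^\infty(1+\lambda)^{-1/2}\Re\big((x+i(1+\lambda)^{1/2})^{-1}\big)\,d\lambda$ one obtains the operator identity
\[
\arctan(A_-+\Delta)-\arctan(A_-)=\frac12\int_0^\infty(1+\lambda)^{-1/2}\Re\big(R_{+,\lambda}-R_{-,\lambda}\big)\,d\lambda,\quad R_{\pm,\lambda}=\big(A_\pm+i(1+\lambda)^{1/2}I\big)^{-1},
\]
with $A_+:=A_-+\Delta$. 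Inserting the resolvent expansion \eqref{resol_dif} (with $\Phi$ replaced by $\Delta$) splits the integrand into $-\Delta R_{-,\lambda}^2$, $-[R_{-,\lambda},\Delta]R_{-,\lambda}$ and $R_{+,\lambda}U_\lambda^2$ with $U_\lambda=\Delta R_{-,\lambda}$, and the noncommutative H\"older and Cwikel-type estimates of Proposition \ref{g_complex} — via \cite[Theorems~4.1 and~4.5]{Si05} and Corollary \ref{square_root} — apply word for word with $z=-1$, the only $\lambda$-integral occurring being $\int_0^\infty(1+\lambda)^{-3/2}\,d\lambda<\infty$ in place of $\int_0^\infty\lambda^{-1/2}(\lambda+1)^{-1}\,d\lambda$; this gives the bound, and the theorem then follows from \eqref{via_arctan}. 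The one genuinely delicate point — and the main (if modest) obstacle — is the justification of the displayed integral identity: with weight $(1+\lambda)^{-1/2}$ the integral $\int_0^\infty(1+\lambda)^{-1/2}\Re((x+i(1+\lambda)^{1/2})^{-1})\,d\lambda$ converges to $\arctan(A_\pm)$ only strongly (its truncations are uniformly bounded functions of $A_\pm$ converging strongly but not in operator norm), so one must verify that the two individually non-norm-convergent tails cancel upon differencing, making $\frac12\int_0^\infty(1+\lambda)^{-1/2}\Re(R_{+,\lambda}-R_{-,\lambda})\,d\lambda$ a genuinely norm-convergent — and, after the expansion, $\cB_1$-convergent — Bochner integral.
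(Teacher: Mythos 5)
Your argument is correct, and its skeleton --- Schwartz-class approximation of $\Phi$, the gauge transformation of Lemma \ref{unit_equiv} rewriting $A_+$ as a unitary conjugate of $A_-+\Delta_k$ with $\Delta_k=\Psi_k^*(\Phi-\Phi_k)\Psi_k$ small in the relevant norms, a quantitative $\cB_1$-stability bound, and the exact Schwartz-case computation of Proposition \ref{matrix_trace} --- is the same as the paper's. Where you differ is the stability step: the paper routes the approximation through $g=g_{-1}$, so that the required bound $\|g(A_-+\Delta)-g(A_-)\|_{\cB_1(L^2(\bbR)\otimes\bbC^m)}\le C\|\Delta\|_{1,1}$ is precisely the final assertion of Proposition \ref{g_complex} and can be quoted as is; it then converts $\tr_{L^2(\bbR)\otimes\bbC^m}(g(A_-+\Phi_n)-g(A_-))$ into $\frac1\pi\int_\bbR\tr_{\bbC^m}(\Phi_n(x))\,dx$ via the Krein formula \eqref{tr for g} combined with \eqref{xi_for_S(R)}, passes to the limit using $\|\Phi_n-\Phi\|_1\to0$, and reads off $\xi(\,\cdot\,;A_+,A_-)$ from \eqref{tr for g} once more. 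You instead keep $\arctan$ throughout, so you must prove a new Lipschitz-type bound $\|\arctan(A_-+\Delta)-\arctan(A_-)\|_{\cB_1}\le C(\|\Delta\|_{1,1}+\|\Delta\|_\infty\|\Delta\|_1)$; your plan for it is sound: the scalar identity $\arctan(x)=\frac12\int_0^\infty(1+\lambda)^{-1/2}\Re\big((x+i(1+\lambda)^{1/2})^{-1}\big)\,d\lambda$ is correct, and after inserting \eqref{resol_dif} the three terms are estimated exactly as in Proposition \ref{g_complex} with $z=-1$, the weight $(1+\lambda)^{-1/2}\le\lambda^{-1/2}$ only improving integrability, while the exactly computed first term becomes a multiple of $\Delta(A_-^2+I)^{-1}$, still covered by \cite[Theorem~4.5]{Si05} and Lemma \ref{Cwikel_par} since $((\cdot)^2+1)^{-1}\in\ell^1(L^2(\bbR))$. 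The point you flag as delicate is in fact routine: by the second resolvent identity $\|R_{+,\lambda}-R_{-,\lambda}\|_{\cB(L^2(\bbR)\otimes\bbC^m)}\le\|\Delta\|_\infty(1+\lambda)^{-1}$, so the Bochner integral of the difference converges in norm, whereas the truncated integrals of each resolvent separately converge strongly to $\arctan(A_\pm)$ by bounded pointwise convergence of the truncated scalar functions; comparing the two limits yields the operator identity without any tail-cancellation bookkeeping. In sum, the paper's route buys economy (Proposition \ref{g_complex} is reused verbatim, no second integral representation), while yours buys a self-contained $\arctan$-based argument in which \eqref{via_arctan} is invoked only once, at the cost of rerunning the resolvent-expansion estimates with the new weight.
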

\begin{proof} Since $\Phi\in\mnn{W^{1,1}(\bbR)}$, one concludes the existence of a sequence $\{\Phi_n\}_{n=1}^\infty\subset\mnn{S(\bbR)}$ such that $\|\Phi_n-\Phi\|_{1,1} \underset{n\to\infty}{\longrightarrow}  0$. By Lemma \ref{unit_equiv}, $A_- + \Phi_n = 
\Psi_n(\,\cdot\,,x_0) A_- \Psi_n(\,\cdot\,,x_0)^*$, and 
\begin{align}
\begin{split}  
& A_-+ \Phi  =A_- + \Phi_n + (\Phi-\Phi_n)   \\
& \quad = \Psi_n(\,\cdot\,,x_0) \big(A_- + \Psi_n(\,\cdot\,,x_0)^* (\Phi-\Phi_n)  
\Psi_n(\,\cdot\,,x_0) \big) \Psi_n(\,\cdot\,,x_0)^*,  
\end{split} 
\end{align} 
that is, $A_-+ \Phi  $ is unitarily equivalent to 
$A_- + \Psi_n(\,\cdot\,,x_0)^* (\Phi-\Phi_n)\Psi_n(\,\cdot\,,x_0)$. Hence, applying 
Proposition \ref{g_complex}, 
\begin{align}
& |{\tr}_{L^2(\bbR) \otimes \bbC^m} \big(g(A_-+ \Phi  )- g(A_-)\big) 
- {\tr}_{L^2(\bbR) \otimes \bbC^m}\big(g(A_- + \Phi_n)-g(A_-)\big)|   \no \\
& \quad \leq \|g(A_-+ \Phi  )-g(A_- + \Phi_n)\|_{\cB_1(L^2(\bbR)  \otimes \bbC^m)}  \no \\
& \quad = \|g(A_- + \Psi_n(\,\cdot\,,x_0)^* (\Phi-\Phi_n) 
\Psi_n(\,\cdot\,,x_0)) - g(A_-)\|_{\cB_1(L^2(\bbR) \otimes \bbC^m)}  \no \\
& \quad \leq\ C \, \| \Psi_n(\,\cdot\,,x_0)^* (\Phi-\Phi_n) \Psi_n(\,\cdot\,,x_0)\|  \no \\
& \quad \leq\ C \, \|\Phi-\Phi_n\|_{1,1} \underset{n\to\infty}{\longrightarrow} 0,
\end{align}
that is,  
\begin{equation} 
{\tr}_{L^2(\bbR) \otimes \bbC^m}\big(g(A_-+ \Phi  )-g(A_-)\big)=\lim_{n\rightarrow\infty}
{\tr}_{L^2(\bbR) \otimes \bbC^m}\big(g(A_- + \Phi_n)-g(A_-)\big).
\end{equation} 
Since $\Phi_n\in\mnn{S(\bbR)}$, $n \in \bbN$, equalities \eqref{tr for g} and 
\eqref{xi_for_S(R)} imply that 
\begin{align} 
{\tr}_{L^2(\bbR) \otimes \bbC^m}\big(g(A_-+ \Phi  )-g(A_-)\big) 
&=\lim_{n\rightarrow\infty}\xi(\,\cdot\,;A_+,A_-)\big(g(+\infty)-g(-\infty)\big)   \no \\
&=\lim_{n\rightarrow\infty} \frac{1}{\pi}\int_\bbR \tr_{\bbC^m}(\Phi_n(x))\,dx,
\end{align} 
Moreover, the convergence $\|\Phi_n-\Phi\|_{1,1} \underset{n\to\infty}\longrightarrow 0$ 
implies that $\|\Phi_n-\Phi\|_1 \underset{n\to\infty}\longrightarrow 0$, and thus, 
$\int_\bbR\tr_{\bbC^m}(\Phi_n(x))\,dx \underset{n\to\infty}\longrightarrow \int_\bbR\tr_{\bbC^m}(\Phi(x))\,dx$,  
that is,  
\begin{equation} 
{\tr}_{L^2(\bbR) \otimes \bbC^m}\big(g(A_-+ \Phi  )-g(A_-)\big) 
= \frac{1}{\pi}\int_\bbR \tr_{\bbC^m}(\Phi(x))\,dx. 
\end{equation} 
Using once more equality \eqref{tr for g}, one concludes  
\begin{equation} 
\xi(\nu;A_+,A_-)=\frac{1}{2\pi}\int_\bbR\tr_{\bbC^m}(\Phi(x))\,dx 
 \, \text{ for a.e.\ $\nu \in \bbR$,}  
\end{equation} 
finishing the proof. 
\end{proof}

Combining Theorems \ref{value_via_arctan} and \ref{equ_ssf}, one arrives at the following  fact.

\begin{corollary} Assume that $\Phi\in\mnn{W^{1,1}(\bbR)\cap C_b(\bbR)}$, $m\in \bbN$. Then,  
\begin{equation} 
\xi(\lambda;\bsH_2,\bsH_1)=\frac1{2\pi}\int_{\bbR}\tr_{\bbC^m}(\Phi(x))\,dx  
\, \text{ for a.e.~ $\lambda>0$.} 
\end{equation} 
\end{corollary}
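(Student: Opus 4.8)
The plan is to combine two results already established in the excerpt, so the argument is essentially a one-line deduction. First I would recall Theorem~\ref{equ_ssf}, which asserts that under Hypothesis~\ref{h3.1} one has, for a.e.~$\lambda>0$ and a.e.~$\nu\in\bbR$,
\begin{equation*}
\xi(\lambda;\bsH_2,\bsH_1)=\xi(\nu;A_+,A_-)+C,
\end{equation*}
where $C$ is the constant appearing in Proposition~\ref{first lemma}. Then I would invoke Theorem~\ref{value_via_arctan}, which computes the a.e.~constant value of $\xi(\,\cdot\,;A_+,A_-)$ explicitly as $\frac{1}{2\pi}\int_\bbR\tr_{\bbC^m}(\Phi(x))\,dx$.

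The only genuinely delicate point is the bookkeeping of the additive constant $C$. In Theorem~\ref{equ_ssf} the relation is stated with a $+C$, because $\xi(\,\cdot\,;\bsH_2,\bsH_1)$ is normalized by the condition $\xi(\lambda;\bsH_2,\bsH_1)=0$ for $\lambda<0$ (since $\bsH_j\ge 0$), whereas $\xi(\,\cdot\,;A_+,A_-)$ was normalized through the definition $\xi(\nu;A_+,A_-)=\xi(g(\nu);g(A_+),g(A_-))$ with $\xi(\,\cdot\,;g(A_+),g(A_-))\in L^1(\bbR)$. One must check that with these normalizations the $+C$ in \eqref{4.1} does \emph{not} produce a double counting when Theorem~\ref{value_via_arctan} is substituted: inspecting the proof of Theorem~\ref{equ_ssf}, the derivation of \eqref{SSFP} shows that $\xi(\lambda;\bsH_2,\bsH_1)=\frac{C}{\pi}\int_{-\lambda^{1/2}}^{\lambda^{1/2}}(\lambda-\nu^2)^{-1/2}\,d\nu=C$ for a.e.~$\lambda>0$, where $C=\xi(\,\cdot\,;A_+,A_-)$ is precisely the constant of Proposition~\ref{first lemma}, now identified by Theorem~\ref{value_via_arctan}. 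Thus the final value of $\xi(\,\cdot\,;\bsH_2,\bsH_1)$ is exactly $\frac{1}{2\pi}\int_\bbR\tr_{\bbC^m}(\Phi(x))\,dx$, with no extra additive term, since the ``$C$'' in \eqref{SSFP} already \emph{is} that integral and the normalization \eqref{2.46c} fixes any remaining ambiguity.

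Concretely, the proof would read: By Proposition~\ref{first lemma}, $\xi(\nu;A_+,A_-)$ equals a constant for a.e.~$\nu\in\bbR$, and by Theorem~\ref{value_via_arctan} that constant equals $\frac{1}{2\pi}\int_\bbR\tr_{\bbC^m}(\Phi(x))\,dx$. By Theorem~\ref{equ_ssf} (more precisely, by the chain of identities \eqref{SSFP}--\eqref{2.43} in its proof), $\xi(\lambda;\bsH_2,\bsH_1)$ equals this same constant for a.e.~$\lambda>0$. Hence
\begin{equation*}
\xi(\lambda;\bsH_2,\bsH_1)=\frac{1}{2\pi}\int_{\bbR}\tr_{\bbC^m}(\Phi(x))\,dx\quad\text{for a.e.\ }\lambda>0,
\end{equation*}
which is the assertion. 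The main obstacle, as noted, is making sure the normalization constants line up so that the right-hand side is the bare integral rather than twice it or the integral plus an additional $C$; everything else is a direct citation of the two cited theorems.
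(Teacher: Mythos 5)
Your proposal is correct and follows essentially the same route as the paper, which obtains the corollary simply by combining Theorem \ref{value_via_arctan} with Theorem \ref{equ_ssf}. Your careful remark about the additive constant is well taken: as the chain \eqref{SSFP}--\eqref{2.43} in the proof of Theorem \ref{equ_ssf} shows, $\xi(\lambda;\bsH_2,\bsH_1)$ equals the a.e.\ constant value of $\xi(\,\cdot\,;A_+,A_-)$ itself (no extra additive term), so substituting the value from Theorem \ref{value_via_arctan} gives exactly $\frac{1}{2\pi}\int_\bbR\tr_{\bbC^m}(\Phi(x))\,dx$, as claimed.
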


In addition Theorem \ref{value_via_arctan} and equality \eqref{trace formula} implies the 
following result. 

\begin{corollary}\lb{Trace_g_z}
Assume that $\Phi\in\mnn{W^{1,1}(\bbR)\cap C_b(\bbR)}$, $m\in \bbN$. Then,  
\begin{equation}  
\tr_{L^2(\bbR)\otimes\bbC^m}(g_z(A_+)-g_z(A_-)) 
=\frac1{\pi}\int_{\bbR}\tr_{\bbC^m}(\Phi(x))\,dx, \quad z\in\bbC\backslash [0,\infty).
\end{equation}  
\end{corollary}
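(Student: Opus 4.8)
The plan is to derive Corollary \ref{Trace_g_z} as a direct consequence of the trace formula for $g$ contained in \eqref{trace formula} together with the a.e.\ evaluation of the spectral shift function $\xi(\,\cdot\,;A_+,A_-)$ provided by Theorem \ref{value_via_arctan}. First I would record that, by Proposition \ref{g_complex}, $[g(A_+)-g(A_-)]\in\cB_1(L^2(\bbR)\otimes\bbC^m)$, so the left-hand side makes sense; applying \eqref{trace formula} with $h=g$ (legitimate since $g\circ g^{-1}=\mathrm{id}$ on $[-1,1]$, so trivially $g\circ g^{-1}\in C^2[-1,1]$, and more directly one simply invokes \eqref{tr for g}) yields
\[
\tr_{L^2(\bbR)\otimes\bbC^m}(g(A_+)-g(A_-))=\int_{\bbR}\xi(\nu;A_+,A_-)\,g'(\nu)\,d\nu.
\]
Then I would insert the value $\xi(\nu;A_+,A_-)=\frac{1}{2\pi}\int_{\bbR}\tr_{\bbC^m}(\Phi(x))\,dx$ for a.e.\ $\nu$ from Theorem \ref{value_via_arctan}; pulling this constant out of the integral, what remains is $\int_{\bbR}g'(\nu)\,d\nu=g(+\infty)-g(-\infty)$. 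Since $g(x)=x(x^2+1)^{-1/2}$, we have $g(+\infty)-g(-\infty)=1-(-1)=2$, giving
\[
\tr_{L^2(\bbR)\otimes\bbC^m}(g(A_+)-g(A_-))=\frac{1}{\pi}\int_{\bbR}\tr_{\bbC^m}(\Phi(x))\,dx,
\]
which is the asserted identity at $z=-1$.

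To obtain the statement for general $z\in\bbC\backslash[0,\infty)$, I would argue by analytic continuation exactly as in the proof of Theorem \ref{thm_PTF}: following \cite[Lemma~7.3]{GLMST11} and using Proposition \ref{g_complex}, the map $z\mapsto \tr_{L^2(\bbR)\otimes\bbC^m}(g_z(A_+)-g_z(A_-))$ is analytic on $\bbC\backslash[0,\infty)$, while the right-hand side $\frac{1}{\pi}\int_{\bbR}\tr_{\bbC^m}(\Phi(x))\,dx$ is a ($z$-independent) constant. Since the two analytic functions agree on the ray $(-\infty,0)$ — indeed, the computation above works verbatim with $g$ replaced by $g_z$, noting that $g_z(+\infty)-g_z(-\infty)$ needs to be recomputed — they agree on all of $\bbC\backslash[0,\infty)$. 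Actually it is cleanest to run the $g_z$ version directly for $z<0$: $g_z(x)=x(x^2-z)^{-1/2}$, so $g_z(\pm\infty)=\pm 1$ for every $z<0$, hence $g_z(+\infty)-g_z(-\infty)=2$ again, and the spectral shift function for the pair $(A_+,A_-)$ is $z$-independent; combined with the trace formula $\tr(g_z(A_+)-g_z(A_-))=\int_{\bbR}\xi(\nu;A_+,A_-)\,g_z'(\nu)\,d\nu$ (which is \eqref{trace formula} applied with $h=g_z$, using that $g_z\circ g^{-1}\in C^2[-1,1]$) this yields the formula for all $z<0$, and then analytic continuation extends it to $\bbC\backslash[0,\infty)$.

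I do not expect any genuine obstacle here: every ingredient is already in place. The only point requiring a modicum of care is the justification that $h=g_z$ is an admissible function in the trace formula \eqref{trace formula}, i.e.\ that $g_z\circ g^{-1}\in B^1_{\infty,1}(\bbR)$ or at least $\in C^2[-1,1]$; this is immediate because $g^{-1}$ maps $[-1,1]$ onto $[-\infty,\infty]$ smoothly on $(-1,1)$ and $g_z$ is smooth with the right decay, so $g_z\circ g^{-1}$ extends to a $C^2$ function on the compact interval $[-1,1]$ — exactly the verification already used for $\arctan$ in Section \ref{s6}. The convergence of $\int_{\bbR}g_z'(\nu)\,d\nu$ is likewise guaranteed by the $\nu^{-3}$ decay of $g_z'$ noted right after \eqref{trace formula}. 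Thus the proof is essentially a one-line substitution plus the standard analytic-continuation boilerplate.
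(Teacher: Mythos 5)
Your argument is correct and is essentially the paper's own: the corollary is obtained there precisely by inserting the constant value of $\xi(\,\cdot\,;A_+,A_-)$ from Theorem \ref{value_via_arctan} into the trace formula \eqref{trace formula} applied with $h=g_z$ (so that $\int_{\bbR}g_z'(\nu)\,d\nu=2$), with the extension to all $z\in\bbC\backslash[0,\infty)$ handled exactly as in Theorem \ref{thm_PTF}. Your additional checks (admissibility of $g_z\circ g^{-1}$ and the analytic-continuation step) are the same routine verifications the paper relies on implicitly.
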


\begin{remark}
By unitary equivalence (see Lemma \ref{unit_equiv}) and unitary invariance of the trace 
$\tr_{L^2(\bbR)\otimes\bbC^m}(\cdot)$, one concludes 
\begin{equation}  
\tr_{L^2(\bbR)\otimes\bbC^m}([\Psi(\,\cdot\,,x_0),g(A_-)])
=\frac1{\pi}\int_{\bbR}\tr_{\bbC^m}(\Phi(x))\,dx. 
\end{equation} 
So, if the matrix $\Phi\in \mnn{W^{1,1}(\bbR)\cap C_b(\bbR)}$ is such that 
$
\int_{\bbR}\tr_{\bbC^m}(\Phi(x))\,dx\neq 0, 
$
$[\Psi(\,\cdot\,,x_0),g(A_-)]$ represents an explicit example of a commutator with nonzero trace. \hfill $\diamond$
\end{remark}

\section{The Witten Index} \lb{WI_section}

In this section we briefly discuss the Witten index for the model operator $\bsD_\bsA^{}$ introduced in \eqref{def_D_A} following the detailed treatment in \cite{CGPST15}.

Firstly we recall the definition of the resolvent regularized Witten index (see \cite{BGGSS87})
\begin{definition} \lb{d8.1} 
Let $T$ be a closed, linear, densely defined operator in $\cH$ and  
suppose that for some $($and hence for all\,$)$ 
$z \in \bbC \backslash [0,\infty)$,  
\begin{equation} 
\big[(T^* T - z I_{\cH})^{-1} - (TT^* - z I_{\cH})^{-1}\big] \in \cB_1(\cH).   \lb{8.1} 
\end{equation}  
Then introducing the resolvent regularization 
\begin{equation}
\Delta_r(T, \lambda) = (- \lambda) \tr_{\cH}\big((T^* T - \lambda I_{\cH})^{-1}
- (T T^* - \lambda I_{\cH})^{-1}\big), \quad \lambda < 0,        \lb{8.2} 
\end{equation} 
the resolvent regularized Witten index $W_r (T)$ of $T$ is defined by  
\begin{equation} 
W_r(T) = \lim_{\lambda \uparrow 0} \Delta_r(T, \lambda),      \lb{8.3}
\end{equation}
whenever this limit exists. 
\end{definition} 

Here, in obvious notation, the subscript ``$r$'' indicates the use of the resolvent 
regularization (for a semigroup or heat kernel regularization we refer to \cite{CGPST15}).
Before proceeding to compute the Witten index for the model operator $\bsD_\bsA^{}$, 
we recall the known consistency between the Fredholm and Witten index 
whenever $T$ is Fredholm:

\begin{theorem} $($\cite{BGGSS87}, \cite{GS88}.$)$  \lb{t8.2} 
Suppose that $T$ is a Fredholm operator in $\cH$. If \eqref{8.1} holds, then the 
resolvent regularized Witten index $W_r(T)$ exists, equals the Fredholm index, 
$\ind (T)$, of $T$, and
\begin{equation} 
W_r(T) =  \ind (T) = \xi(0_+; T T^*, T^* T).    \lb{8.4}
\end{equation}
\end{theorem}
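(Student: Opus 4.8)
The plan is to prove the sharper statement that the regularized quantity $\Delta_r(T,\lambda)$ of \eqref{8.2} is in fact \emph{independent of} $\lambda<0$ and equals $\ind(T)$. Once this is shown, \eqref{8.3} yields at once both the existence of $W_r(T)$ and the equality $W_r(T)=\ind(T)$, and the identification with $\xi(0_+;TT^*,T^*T)$ will come from the explicit form of the spectral shift function produced along the way, giving \eqref{8.4}. First I would record the structural consequences of the Fredholm hypothesis: $\ker(T^*T)=\ker(T)$ and $\ker(TT^*)=\ker(T^*)$ are finite-dimensional, $0\notin\sigma_{\mathrm{ess}}(T^*T)\cup\sigma_{\mathrm{ess}}(TT^*)$, so $0$ is either in the resolvent set of, or an isolated eigenvalue of finite multiplicity of, each of the non-negative operators $T^*T$ and $TT^*$, and $\ind(T)=\dim\ker(T^*T)-\dim\ker(TT^*)$. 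The polar decomposition $T=V(T^*T)^{1/2}$ shows that the restrictions of $T^*T$ and $TT^*$ to the orthogonal complements of their kernels are unitarily equivalent (through $V$), so the nonzero spectra of $T^*T$ and $TT^*$ coincide, multiplicities included.

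Next I would bring in the spectral shift function. Hypothesis \eqref{8.1} is exactly condition \eqref{B.26a} for the pair $(T^*T,TT^*)$, so $\xi(\,\cdot\,;T^*T,TT^*)$ is defined up to an additive integer, lies in $L^1(\bbR;(x^2+1)^{-1}dx)$, and satisfies
\begin{equation*}
\tr_{\cH}\big((T^*T-\lambda I_{\cH})^{-1}-(TT^*-\lambda I_{\cH})^{-1}\big)=-\int_{\bbR}\frac{\xi(x;T^*T,TT^*)}{(x-\lambda)^2}\,dx,\qquad \lambda<0.
\end{equation*}
I would then pin $\xi$ down completely. Since both operators are non-negative, the integer ambiguity is removed by the standard normalization $\xi(x;T^*T,TT^*)=0$ for $x<0$. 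Since the nonzero spectra of $T^*T$ and $TT^*$ coincide with multiplicities, $\xi(\,\cdot\,;T^*T,TT^*)$ is constant on $(0,\infty)$, and the classical jump formula for the spectral shift function at an isolated eigenvalue — applicable precisely because $0\notin\sigma_{\mathrm{ess}}$ — identifies this constant with $\dim\ker(TT^*)-\dim\ker(T^*T)=-\ind(T)$. Thus $\xi(x;T^*T,TT^*)=-\ind(T)\,\chi_{(0,\infty)}(x)$, equivalently $\xi(\,\cdot\,;TT^*,T^*T)=\ind(T)\,\chi_{(0,\infty)}$, so that $\xi(0_+;TT^*,T^*T)=\ind(T)$.

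Inserting this into the trace identity above gives, for every $\lambda<0$,
\begin{equation*}
\tr_{\cH}\big((T^*T-\lambda I_{\cH})^{-1}-(TT^*-\lambda I_{\cH})^{-1}\big)=\ind(T)\int_0^\infty\frac{dx}{(x-\lambda)^2}=\frac{\ind(T)}{-\lambda},
\end{equation*}
whence $\Delta_r(T,\lambda)=(-\lambda)\cdot\ind(T)/(-\lambda)=\ind(T)$ for all $\lambda<0$; letting $\lambda\uparrow 0$ in \eqref{8.3} produces $W_r(T)=\ind(T)=\xi(0_+;TT^*,T^*T)$, which is \eqref{8.4}. The semigroup-regularized variant of \cite{CGPST15} would be treated identically, with $(x-\lambda)^{-2}$ replaced by $t\,e^{-tx}$ and $\lambda\uparrow 0$ by $t\to\infty$.

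The main obstacle is the determination of $\xi(\,\cdot\,;T^*T,TT^*)$ in the middle step. Its constancy on $(0,\infty)$ rests on the coincidence of the nonzero spectra together with the fact that the spectral shift function of a pair is unchanged by a common, unitarily equivalent direct summand, and its value at $0$ uses the jump theorem for $\xi$ at an isolated eigenvalue. It is exactly here that the Fredholm hypothesis is indispensable: without it $0\in\sigma_{\mathrm{ess}}$, and both the ``isolated eigenvalue'' and the clean additive behaviour of $\xi$ near $0$ fail. Everything else is routine bookkeeping with the resolvent trace identity, and the argument reproduces the results of \cite{BGGSS87} and \cite{GS88}.
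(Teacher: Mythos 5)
Your argument contains a genuine error at its central step: the claim that $\xi(\,\cdot\,;T^*T,TT^*)$ is constant on all of $(0,\infty)$ (equivalently, that $\Delta_r(T,\lambda)$ in \eqref{8.2} is independent of $\lambda<0$) is false in general, even for Fredholm $T$ satisfying \eqref{8.1}. The unitary equivalence of $T^*T\restriction_{(\ker T)^\perp}$ and $TT^*\restriction_{(\ker T^*)^\perp}$ via the polar decomposition holds for \emph{every} closed densely defined $T$, but it does not control the spectral shift function: the two block decompositions $\cH=\ker T\oplus(\ker T)^\perp$ and $\cH=\ker T^*\oplus(\ker T^*)^\perp$ are different, so there is no ``common direct summand'' with respect to which the pair decomposes simultaneously, and in any case $\xi(\,\cdot\,;UAU^*,A)$ need not vanish or be trivial (Proposition \ref{first lemma} of this very paper exhibits a unitarily equivalent pair whose spectral shift function is a nonzero constant). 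A concrete counterexample to your constancy claim sits inside the framework reviewed here: take $T=\bsD_{\bsA}$ in the Pushnitski/GLMST setting with $0\in\rho(A_+)\cap\rho(A_-)$, so that $T$ is Fredholm and \eqref{8.1} holds; then Pushnitski's formula
\begin{equation*}
\xi(\lambda;\bsH_2,\bsH_1)=\frac{1}{\pi}\int_{-\lambda^{1/2}}^{\lambda^{1/2}}\frac{\xi(\nu;A_+,A_-)\,d\nu}{(\lambda-\nu^2)^{1/2}}
\end{equation*}
shows that $\xi(\,\cdot\,;TT^*,T^*T)$ is genuinely $\lambda$-dependent on $(0,\infty)$ whenever $\xi(\,\cdot\,;A_+,A_-)$ is not a.e.\ constant (e.g.\ the scalar case $A(t)=-id/dx+a(t)$ with $0<a_-<a_+$), and accordingly $\Delta_r(T,\lambda)$ is not constant in $\lambda$; only its limit as $\lambda\uparrow 0$ equals $\ind(T)$.

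What \emph{is} true, and what the cited proofs of \cite{BGGSS87}, \cite{GS88} actually use (the paper itself gives no proof of Theorem \ref{t8.2}, it quotes these references), is weaker and local: Fredholmness gives a common spectral gap $(0,\varepsilon^2)$ for $T^*T$ and $TT^*$, on which $\xi(\,\cdot\,;TT^*,T^*T)$ is an integer constant, and the kernel-dimension (jump) count at the isolated point $0$ — this part of your argument is sound — identifies that constant with $\dim\ker(T^*T)-\dim\ker(TT^*)=\ind(T)$, i.e.\ $\xi(0_+;TT^*,T^*T)=\ind(T)$. The missing substance is then the limiting step you bypassed: one must show that
\begin{equation*}
\Delta_r(T,\lambda)=(-\lambda)\int_{[0,\infty)}\frac{\xi(x;TT^*,T^*T)}{(x-\lambda)^2}\,dx\ \underset{\lambda\uparrow 0}{\longrightarrow}\ \xi(0_+;TT^*,T^*T),
\end{equation*}
which follows because the kernels $(-\lambda)(x-\lambda)^{-2}\,dx$ form an approximate identity concentrating at $x=0_+$ as $\lambda\uparrow 0$, combined with the constancy of $\xi$ on $(0,\varepsilon^2)$ and the integrability of $\xi(x)(1+x)^{-2}$ furnished by the trace formula. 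This Abelian/approximate-identity argument (the resolvent-regularized analogue of the Lebesgue-point statement in Theorem \ref{t8.3}) is where the existence of $W_r(T)$ is actually established; without it, and with the incorrect global constancy claim removed, your proof does not go through as written.
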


\begin{remark}
Following the proof of \cite[Theorem 2.6]{CGPST15} one can show that the operator $\bsD_\bsA^{}$ is Fredholm if and only if the operators $A_\pm$ are boundedly invertible. Therefore, the fact that $\sigma(A_-)=\sigma(A_+)=\bbR$ implies that the operator $\bsD_\bsA^{}$ is not Fredholm. Furthermore, $\sigma_{\rm ess}(\bsD_\bsA^{})=\bbC$ (cf. \cite[Corollary 2.8]{CGPST15}).
 \hfill$\diamond$
\end{remark}

Although $\bsD_\bsA^{}$ is not a Fredholm operator in $L^2(\bbR^2)$, we can determine 
the resolvent regularized Witten index of $\bsD_\bsA^{}$ (generalizing \cite{CGLPSZ14}) as follows:

\begin{theorem} 
Assume Hypothesis \ref{h3.1}. Then $W_r(\bsD_\bsA^{})$ exists and equals 
\begin{equation}
W_r(\bsD_\bsA^{}) = \xi(0_+; \bsH_2, \bsH_1) = 
\xi(0; A_+, A_-) =\frac1{2\pi}\int_{\bbR}\tr_{\bbC^m}(\Phi(x))\,dx       \lb{8.5}
\end{equation}
\end{theorem}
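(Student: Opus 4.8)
The plan is to combine the principle trace formula of Theorem~\ref{thm_PTF} with the explicit trace computation of Corollary~\ref{Trace_g_z}, so that the resolvent regularization of Definition~\ref{d8.1} can be evaluated in closed form. First I would observe that Proposition~\ref{fritz2} supplies the trace class inclusion \eqref{8.1} for $T=\bsD_\bsA^{}$ (with $T^*T=\bsH_1$ and $TT^*=\bsH_2$), so that $\Delta_r(\bsD_\bsA^{},\lambda)$ is well defined for every $\lambda<0$, and the Witten index $W_r(\bsD_\bsA^{})$ of \eqref{8.3} makes sense as soon as the relevant limit exists.

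Next, for $\lambda<0$ I would compute $\Delta_r(\bsD_\bsA^{},\lambda)$ directly. By Definition~\ref{d8.1} and Theorem~\ref{thm_PTF},
\[
\Delta_r(\bsD_\bsA^{},\lambda)=(-\lambda)\,\tr_{L^2(\bbR^2)\otimes\bbC^m}\big((\bsH_1-\lambda\bsI)^{-1}-(\bsH_2-\lambda\bsI)^{-1}\big)=\tfrac12\,\tr_{L^2(\bbR)\otimes\bbC^m}\big(g_\lambda(A_+)-g_\lambda(A_-)\big).
\]
By Corollary~\ref{Trace_g_z} the right-hand side equals $\frac{1}{2\pi}\int_\bbR\tr_{\bbC^m}(\Phi(x))\,dx$, which is independent of $\lambda$. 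Hence $\Delta_r(\bsD_\bsA^{},\cdot)$ is constant on $(-\infty,0)$, so the limit in \eqref{8.3} exists trivially and $W_r(\bsD_\bsA^{})=\frac{1}{2\pi}\int_\bbR\tr_{\bbC^m}(\Phi(x))\,dx$.

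It then remains to identify this value with the two spectral shift function quantities in \eqref{8.5}. For $\xi(0;A_+,A_-)$: Proposition~\ref{first lemma} shows that $\xi(\,\cdot\,;A_+,A_-)$ is a.e.\ constant, so $0$ is a Lebesgue point, and Theorem~\ref{value_via_arctan} evaluates the constant as $\frac{1}{2\pi}\int_\bbR\tr_{\bbC^m}(\Phi(x))\,dx$. For $\xi(0_+;\bsH_2,\bsH_1)$: the corollary following Theorem~\ref{value_via_arctan} gives $\xi(\lambda;\bsH_2,\bsH_1)=\frac{1}{2\pi}\int_\bbR\tr_{\bbC^m}(\Phi(x))\,dx$ for a.e.\ $\lambda>0$, while \eqref{2.46c} gives $\xi(\lambda;\bsH_2,\bsH_1)=0$ for $\lambda<0$; in particular $\xi(\,\cdot\,;\bsH_2,\bsH_1)$ agrees a.e.\ with a constant on $(0,\infty)$, so $0$ is a right Lebesgue point and $\xi(0_+;\bsH_2,\bsH_1)$ equals that same constant. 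Assembling the three identities yields \eqref{8.5}.

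As for the main obstacle: essentially all the analytic work has already been carried out in Sections~\ref{s3}--\ref{s6}, and the two points requiring attention are mild. The first is to confirm that $\Delta_r(\bsD_\bsA^{},\cdot)$ is genuinely constant rather than merely convergent at $0$ --- but this is exactly what Theorem~\ref{thm_PTF} combined with Corollary~\ref{Trace_g_z} delivers, since $\tr_{L^2(\bbR)\otimes\bbC^m}(g_z(A_+)-g_z(A_-))$ is $z$-independent. The second is to verify that the explicit spectral shift function formulas are valid near $0$ in the sense of (one-sided) Lebesgue points, so that the symbols $\xi(0;A_+,A_-)$ and $\xi(0_+;\bsH_2,\bsH_1)$ are meaningful; this follows from the a.e.\ constancy statements just quoted. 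One should, however, resist invoking the abstract identity $W_r(T)=\xi(0_+;TT^*,T^*T)$ of \cite{CGPST15} as a black box, since its hypotheses --- in particular the relatively trace class condition $(iii')$ --- are not met by the present family $\{B(t)=\theta(t)\Phi\}_{t\in\bbR}$ (see Remarks~\ref{Dirac} and~\ref{not_rtc}); instead the identity is re-established here in the concrete $(1+1)$-dimensional setting via the trace formula.
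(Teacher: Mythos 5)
Your proposal is correct and follows essentially the same route as the paper: both rest on the principle trace formula of Theorem~\ref{thm_PTF} together with the a.e.\ constancy and explicit value of $\xi(\,\cdot\,;A_+,A_-)$ from Proposition~\ref{first lemma} and Theorem~\ref{value_via_arctan}, plus Theorem~\ref{equ_ssf} to handle $\xi(0_+;\bsH_2,\bsH_1)$. The only cosmetic difference is that you quote Corollary~\ref{Trace_g_z} for the $z$-independence of $\tr_{L^2(\bbR)\otimes\bbC^m}(g_z(A_+)-g_z(A_-))$, whereas the paper re-derives the same constant by inserting the constant spectral shift function into the resolvent representation; these are the same ingredients assembled in a slightly different order.
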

\begin{proof}
The equality $ \xi(0_+; \bsH_2, \bsH_1) = 
\xi(0; A_+, A_-) $ follows immediately from Theorem \ref{equ_ssf}. 
By Theorem \ref{value_via_arctan} we know that $\xi(\cdot; A_+, A_-)$ is constant  and for a.e.~$\nu \in \bbR$, 
\begin{equation} 
\xi(\nu; A_+, A_-) = \f{1}{2 \pi} \int_{\bbR} \tr_{\bbC^m}(\Phi(x))\,dx. 
\end{equation} 
Thus, combining the trace formula \eqref{principle} and \eqref{trace formula} one obtains 
the equality 
\begin{align}
& z\tr_{L^2(\bbR^2) \otimes \bbC^m} \big((\bsH_2 - z \, \bsI)^{-1}-(\bsH_1 - z \, \bsI)^{-1}\big) \no \\
& \quad = \frac{1}{2}\tr_{L^2(\bbR) \otimes \bbC^m} (g_z(A_+)-g_z(A_-))
=-\frac{z}{2}\int_\bbR\frac{\xi(\nu;A_+,A_-)}{(\nu^2-z)^{3/2}}\, d\nu  \no \\
& \quad = \xi(0;A_+,A_-).
\end{align}
Hence, by definition of the regularized Witten index one infers that 
\begin{align}
W_r(\bsD_\bsA^{}) &= \lim_{\lambda \uparrow 0}z\tr_{L^2(\bbR^2) \otimes \bbC^m}
\big((\bsH_2 - z \, \bsI)^{-1}-(\bsH_1 - z \, \bsI)^{-1}\big)  \no \\
& =\xi(0;A_+,A_-)=\f{1}{2 \pi} \int_{\bbR} \tr_{\bbC^m}(\Phi(x))\,dx, 
\end{align}
finishing the proof. 
\end{proof}

\appendix
\section{Connections to the Theory of Fredholm Modules}      \lb{sA} 

In this appendix we use the terminology of \cite{CGRS2}.
For the function $\psi$ on $\bbR$ the study of the commutator
$[\sgn(D),\psi]$ acting in the Hilbert space $L^2(\bbR)$, where 
\begin{equation} 
D= - i d/dx, \quad \dom(D) = W^{1,2}(\bbR),
\end{equation}  
is relevant to the discussion of Fredholm modules in \cite{Co94}. 
The function $g(t)=t(1+t^2)^{-1/2}$ is 
a ``smoothed" sign function and it is shown in \cite[Section~2.7]{CGRS2} that the operators $\sgn(D)$ and $g(D)$ define Fredholm modules for the spectral triple $(C_0^\infty(\bbR), L^2(\bbR), D)$, which lie in the same Kasparov class.
However, our results show that the trace class properties of the commutator $[\sgn(D),\psi]$  differ dramatically from the trace class properties of the commutator $[g(D),\psi]$, that is, the following result holds:

\begin{proposition}\lb{g_versus_sgn}
Let $f\in W^{1,1}(\bbR)\cap C_b(\bbR)$ and assume that  
\begin{equation} 
\bigg(\int_0^\infty f(x)\,dx - \int_0^{-\infty} f(x)\,dx\bigg) \notin 2\pi \bbZ. 
\end{equation} 
Then for the function 
$\psi(x)=\exp(- i \int_0^x f(y)\,dy)$, the commutator of $\psi$ and $g(D)$ is trace class, 
$[\psi,g(D)] \in \cB_1\big(L^2(\bbR)\big)$, while that of $\psi$ and $\sgn(D)$ is not, 
$[\psi,\sgn(D)] \notin \cB_1\big(L^2(\bbR)\big)$.
\end{proposition}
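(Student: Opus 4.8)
The plan is to treat the two commutators entirely separately, using the ``smoothed'' sign function result already established and a classical Hankel-operator obstruction for the genuine sign function. For the trace class statement, observe that $g(D) = D(D^2+I)^{-1/2} = g_{-1}(D)$ in the notation \eqref{dfngz}, so that $g(D)$ is exactly the operator appearing in $g_z(A_-)$ with $z=-1$ and $m=1$. The unitary equivalence of Lemma \ref{unit_equiv} (in the scalar form of Remark \ref{Psi}\,$(ii)$) gives $\psi g(D) \overline{\psi} = g(\psi D \overline{\psi}) = g(A_+)$ where $A_+ = A_- + f$ (identifying $\phi := f$), since $-i\psi'(x)\overline{\psi(x)} = f(x)$. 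Hence
\begin{equation}
[\psi, g(D)] = \psi g(D) - g(D)\psi = \big(\psi g(D)\overline{\psi} - g(D)\big)\psi = \big(g(A_+) - g(A_-)\big)\psi,
\end{equation}
and Proposition \ref{g_complex} (or Theorem \ref{thm_PTF} with $z=-1$, $m=1$) shows $[g(A_+) - g(A_-)] \in \cB_1(L^2(\bbR))$; multiplying by the bounded operator $\psi$ preserves $\cB_1$. This disposes of the first assertion with essentially no extra work; the decay hypothesis $\big(\int_0^\infty f - \int_0^{-\infty} f\big)\notin 2\pi\bbZ$ is not even needed for this half.

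For the second assertion, I would argue by contradiction: if $[\psi,\sgn(D)]\in\cB_1(L^2(\bbR))$, then so is $[\psi, g(D) - \sgn(D)]$ since we have just shown $[\psi, g(D)]\in\cB_1$. Now $g(D) - \sgn(D) = m(D)$ where $m(p) = p(p^2+1)^{-1/2} - \sgn(p)$ is a bounded function that is continuous and decays like $O(p^{-2})$ at $\pm\infty$ but is still discontinuous at $p=0$ — in fact $m(0^+) = -1$, $m(0^-) = +1$, so $m$ has the same jump $-2$ at the origin as $-\sgn$. The point is that the trace ideal behaviour of $[\psi, \mu(D)]$ for $\mu$ with a single jump discontinuity is governed by Peller's criterion / the theory of Hankel operators: for $\mu$ bounded with a jump at one point, $[\psi,\mu(D)]$ trace class forces a strong regularity/decay condition on $\widehat{\psi}$ — and in particular, because $\psi$ is a unimodular function with winding $\exp\!\big(-i\int_{-\infty}^{\infty} f\big)\neq 1$ (this is precisely where the hypothesis $\int f \notin 2\pi\bbZ$ enters: it guarantees $\psi(+\infty)\neq\psi(-\infty)$, so $\psi$ is genuinely discontinuous ``at infinity'' after one-point compactification), the symbol product $\psi \cdot \mu$ has a nontrivial jump structure incompatible with trace class. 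Equivalently, $[\psi, \sgn(D)]$ is (up to unitaries) a sum of two Hankel operators with symbols built from $\psi$; a Hankel operator $H_b$ is trace class iff $b \in B^1_{1,1}$, and one checks that the relevant symbol fails to lie in this Besov space when $\psi(+\infty)\neq\psi(-\infty)$. Combining, $[\psi,\sgn(D)] = [\psi,g(D)] - [\psi,m(D)]$ would be a difference of a $\cB_1$ operator and (the supposedly $\cB_1$) operator, hence $\cB_1$ — wait, that is circular; rather: $[\psi, m(D)] = [\psi,g(D)] - [\psi,\sgn(D)]$, and if both operators on the right were trace class then $[\psi,m(D)]\in\cB_1$, but $[\psi,m(D)]$ has the same jump data at $p=0$ as $-[\psi,\sgn(D)]$ and hence the same obstruction, so it suffices to show directly that $[\psi,\sgn(D)]\notin\cB_1$.

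So the real content of the second half is the statement: \emph{for $\psi$ a unimodular $C^1$ function with $\psi' \in L^1$ and $\psi(+\infty)\neq\psi(-\infty)$, the commutator $[\psi,\sgn(D)]$ is not trace class.} I would prove this via the Fourier-side identification with Hankel operators: conjugating by $\cF$, $\sgn(D)$ becomes multiplication by $\sgn(\cdot)$ on $L^2(\bbR)$, so $L^2(\bbR) = L^2(\bbR_+)\oplus L^2(\bbR_-) = H^2_+ \oplus H^2_-$ and $[\psi,\sgn(D)]$ is (up to sign and the factor $2$) the pair of Hankel operators $P_\mp \psi P_\pm$ with symbol $\psi$ (convolution operators). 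By Peller's theorem (\cite{Pe80}, \cite{Pe90}), such a Hankel operator lies in $\cB_1$ if and only if $\psi$ belongs to the Besov class $B^1_{1,1}(\bbR)$; but a function in $B^1_{1,1}(\bbR)$ is continuous and vanishes at $\pm\infty$ (it extends continuously to the one-point compactification), contradicting $\psi(+\infty)\neq\psi(-\infty)$. The main obstacle — and the step requiring the most care — is making this Hankel/Peller identification clean on the line $\bbR$ (as opposed to the circle $\bbT$) and invoking exactly the right form of Peller's characterization for trace class; once that machinery is quoted, the winding hypothesis does the rest. A detailed writeup would either cite \cite{Pe80, Pe90} for the $\cB_1 \Leftrightarrow B^1_{1,1}$ equivalence on $\bbR$, or transfer to the circle via a Cayley-type change of variables and use the classical result there.
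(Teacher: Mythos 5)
Your proposal is correct and follows essentially the paper's own route: the trace class half is exactly the paper's argument (the unitary equivalence $\psi D\ol{\psi}=D+f$ of Remark \ref{Psi}\,$(ii)$ combined with Proposition \ref{g_complex}), and the non-trace-class half rests, as in the paper, on Peller's $\cB_1$-criterion for Hankel operators together with the observation that the winding hypothesis forces $\psi(+\infty)\neq\psi(-\infty)$. The only step needing care --- which you yourself flag --- is that the naive line statement ``$[\psi,\sgn(D)]\in\cB_1$ iff $\psi\in B^1_{1,1}(\bbR)$'' must be read in terms of the Hankel parts of the symbol modulo constants (as literally stated it would already fail for $\psi\equiv 1$), and the clean fix is precisely the paper's Lemma \ref{lA.2}: transfer to the circle by the Cayley-type unitary and apply Peller's theorem on $\bbT$, where continuity of $B_1^1(\bbT)$-functions contradicts the jump of $\psi\circ\gamma^{-1}$ at $1\in\bbT$.
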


Our proof is based on Peller's theorem \cite{Pe80}  (see also \cite[Theorem IV.3.4]{Co94}). The required results in \cite{Pe80} and \cite[Theorem IV.3.4]{Co94} are stated for the circle, and so a preparatory lemma, connecting the commutators $[\sgn(D),\psi]$ and 
$[\sgn(D_0),\psi\circ\gamma^{-1}]$, is required. Here $D_0$ in $L^2(\bbT)$ is defined by 
\begin{equation} 
D_0 = - i d/dx, \quad \dom(D_0) = \big\{g \in L^2(\bbT) \, \big| \, g \in AC([0, 2 \pi]); 
\, g(0_+) = g(2 \pi_-)\big\},  
\end{equation}  
and $\gamma$ denotes the Cayley transform
$
\gamma(x)=({x-i})({x+i})^{-1},\quad x\in\bbR.
$

\begin{lemma} \lb{lA.2} 
The commutator $[\psi,\sgn(D)]$ belongs to a symmetric ideal $\cE$ (not necessarily proper) in $\cB(L^2(\bbR))$ if and only if $[\psi\circ\gamma^{-1},\sgn(D_0)]$ belongs to the symmetric ideal 
$\cE$ in $\cB(L^2(\bbT))$. In particular, the operator $[\psi,\sgn(D)]$ is bounded in $L^2(\bbR)$ if and only if $[\psi\circ\gamma^{-1},\sgn(D_0)]$ is bounded in $L^2(\bbT)$.
\end{lemma}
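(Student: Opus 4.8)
The plan is to transport the whole problem from the real line to the circle via the unitary induced by the Cayley transform $\gamma$, reducing both commutators to Riesz-projection commutators that are unitarily equivalent up to a finite-rank correction. First I would introduce the Cayley unitary $W\colon L^2(\bbT)\to L^2(\bbR)$, given up to normalization by $(Wf)(x)=\sqrt 2\,(x+i)^{-1}f(\gamma(x))$, and check that it is isometric and onto via the substitution $e^{i\theta}=\gamma(x)$ together with $|\gamma'(x)|=2(1+x^2)^{-1}$. Two intertwining identities are then needed. For a bounded $\psi$ one has $W^{*}\psi\,W=\psi\circ\gamma^{-1}$ (a one-line computation in which the weight $(x+i)^{-1}$ cancels), where $\psi\circ\gamma^{-1}$ is the multiplication operator on $L^2(\bbT)$. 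And the Cayley transform maps Hardy space to Hardy space: $W$ carries $H^2(\bbT)=\overline{\operatorname{span}}\{e^{in\theta}:n\ge 0\}$ onto $\ran(P_+)$, the Hardy space of the upper half-plane, $P_+$ denoting the Fourier multiplier by $\mathbf 1_{[0,\infty)}$; hence $W^{*}P_+W=P$, with $P$ the Riesz projection of $L^2(\bbT)$ onto $H^2(\bbT)$. (That $\gamma$, not $\overline\gamma$, yields the \emph{upper} half-plane follows from $\gamma(i)=0$, i.e.\ $\gamma(\bbC_+)=\bbD$.)

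Next I would combine these. Since $\ker(D)=\{0\}$ we have $\sgn(D)=2P_+-I$ exactly on $L^2(\bbR)$, while on the circle $\sgn(D_0)=2P-I-P_0$ with $P_0$ the rank-one projection onto the constants (the $n=0$ eigenspace of $D_0$, where $\sgn$ vanishes). Using both intertwiners,
\begin{align*}
W^{*}[\psi,\sgn(D)]\,W&=2\,W^{*}[\psi,P_+]\,W=2\,[\psi\circ\gamma^{-1},P]\\
&=[\psi\circ\gamma^{-1},\sgn(D_0)]+[\psi\circ\gamma^{-1},P_0],
\end{align*}
and $[\psi\circ\gamma^{-1},P_0]$ has rank at most two; so $W^{*}[\psi,\sgn(D)]W$ and $[\psi\circ\gamma^{-1},\sgn(D_0)]$ differ by a finite-rank operator on $L^2(\bbT)$. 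I expect the Hardy-space transfer $W^{*}P_+W=P$ — essentially Hoffman's identification of $H^2$ under the Cayley map, the one genuinely non-formal ingredient — to be where most of the care goes, in particular pinning down orientations and the normalization of $W$.

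To conclude I would use two standard facts about a symmetric ideal $\cE$: it is invariant under unitary conjugation, so, identifying $\cB(L^2(\bbT))$ with $\cB(L^2(\bbR))$ through $W$ (which preserves singular values), membership in $\cE$ is unaffected by passing through $W$; and every nonzero symmetric ideal contains all finite-rank operators, so the rank-at-most-two discrepancy is immaterial (if $\cE=\cB(\cdot)$ both facts are trivial). Hence $[\psi,\sgn(D)]\in\cE$ iff $W^{*}[\psi,\sgn(D)]W\in\cE$ iff $[\psi\circ\gamma^{-1},\sgn(D_0)]\in\cE$, the asserted equivalence. The ``in particular'' statement is the case $\cE=\cB(L^2)$; there one only needs the finite-rank correction to be a bona fide bounded operator, which holds since $\psi$, hence $\psi\circ\gamma^{-1}$, is bounded.
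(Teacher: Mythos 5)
Your argument is correct and follows essentially the same route as the paper: conjugation by the Cayley-transform isometry, the intertwining $W^{*}\psi W=\psi\circ\gamma^{-1}$, the Hardy-space transfer $W^{*}P_{+}W=P$ (the paper cites Nikol'ski\u{\i}'s treatment of the image of $H^2$ under the Cayley map), and unitary invariance of symmetric ideals. The only cosmetic difference is that the paper adopts the convention $\sgn(D_0)=2P_{H^2(\bbT)}-1$ (i.e.\ $\sgn(0)=+1$ on the constants), so no correction term appears, whereas you take $\sgn(0)=0$ and absorb the resulting rank-$\le 2$ commutator with the projection onto constants using the fact that every nonzero ideal contains the finite-rank operators; both handlings are fine.
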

\begin{proof}
Following \cite[p.~252]{Ni86}, one considers the operator 
$U:L^2(\mathbb{T})\to L^2(\bbR)$ defined by setting
$
(Uf)(x)=\pi^{-1/2}(x+i)^{-1}f(\gamma(x)),
\ \  x\in\bbR. 
$
This operator is an isometry from $L^2(\mathbb{T})$ onto $L^2(\bbR).$ It follows from 
the \lq\lq Lemma about the image of $H^2$\rq\rq in \cite[p.~253]{Ni86} and the Paley--Wiener Theorem in \cite[p.~254]{Ni86} that the isometry $U$ maps the Hardy space $H^2(\mathbb{T})$ onto the set $H^2_\Pi$, where $H^2_\Pi=\mathcal{F}^{-1}(\chi_{[0,\infty)}L^2(\mathbb{R)})$. 
For the operator $D_0$ in $L^2(\mathbb{T})$ one notes the equality 
$\sgn(D_0)=2P_{H^2(\bbT)}-1$ \cite[p.~317]{Co94}. In addition, $\sgn(D)=2P_{H^2_\Pi}-1$ (see 
\cite[Lemma~3.1]{CHOB82}), where $P_{H^2(\bbT)}$ and $P_{H^2_\Pi}$ are orthogonal projections on $H^2(\bbT)$ and $H^2_\Pi$ respectively. Thus, one obtains 
\begin{equation}\lb{UsgnD}
U{\rm sgn}(D_0)=U(2P_{H^2(\bbT)}-1)=(2P_{H^2_\Pi}-1)U = \sgn(D)U.
\end{equation}

It follows directly from the definition of $U$ that
\begin{align} 
\begin{split}  
& (U\circ (\psi\circ \gamma^{-1}) f)(x) = (U(\psi\circ\gamma^{-1}(\cdot)f)(x))    \\
& \quad = \pi^{-1/2}(x+i)^{-1} \psi(x)f(\gamma(x))=\psi (Uf)(x), \quad 
f \in L^2(\bbR), 
\end{split} 
\end{align} 
and hence,
$
U^{-1} \psi U= \psi\circ\gamma^{-1}. 
$
Consequently, 
\begin{align}\lb{nikolskii pass}
\begin{split}
[\psi,{\rm sgn}(D)]&=UU^{-1}\psi\sgn(D)UU^{-1}-\sgn(D)UU^{-1}\psi UU^{-1}\\
&\stackrel{\eqref{UsgnD}}{=}
U[U^{-1} \psi U,{\rm sgn}(D_0)]U^{-1}=U^{-1}[ \psi\circ\gamma^{-1},{\rm sgn}(D_0)]U.
\end{split}
\end{align}
Since $U$ is an isometry from $L^2(\bbR)$ onto $L^2(\bbR)$, the claim follows. 
\end{proof}

We note that $\psi\circ\gamma^{-1}\in BMO(\bbT),$ or, equivalently, $\psi\in BMO(\bbR)$ 
(see, e.g., \cite[Ch.~6, Corollary~1.3]{Garnett}) is a sufficient and necessary condition for 
$[\psi\circ\gamma^{-1},\sgn(D_0)] \in \cB\big(L^2(\bbT)\big)$ and 
$[\psi,\sgn(D)] \in \cB\big(L^2(\bbR)\big)$ (see, e.g., \cite{Pe82}, 
\cite[Theorem~1.1.10]{Pe03}). 

Before proceeding to the proof of Proposition \ref{g_versus_sgn}, we recall the 
definition of the Besov space $B_1^1(\bbT)$ (see e.g. \cite{Pe05}), 
\begin{equation} 
B_1^1(\bbT) = \big\{h \in L^1(\bbT) \, \big| \, 
\int_\bbT\frac{\|h(\tau t)-h(t)\|_1}{|1-\tau|^2}\, d{\bf m}(\tau)<\infty\bigg\}, 
\end{equation} 
where ${\bf m}$ denotes the (normalized) Lebesgue measure on $\bbT$. 

\begin{proof}[Proof of Proposition \ref{g_versus_sgn}]
By Proposition \ref{g_complex}, 
$[g(A_+)-g(A_-)] \in\cB_1\big(L^2(\bbR)\big)$. Unitary equivalence of the operator $D+ \phi$ and $D$ (see Remark \ref{Psi}\,$(ii)$) implies that 
$g(A_+)-g(A_-)=[\psi,g(D)] \ol{\psi}$, and hence 
$[\psi,g(D)] \in \cB_1\big(L^2(\bbR)\big)$.

On the other hand, by Peller's theorem, \cite{Pe80},
$[\psi\circ\gamma^{-1},\sgn(D_0)] \in \cB_1\big(L^2(\bbT)\big)$ if and only if 
$\psi\circ\gamma^{-1}\in B_1^1(\bbT)$. However, since 
\begin{equation} 
\psi(+\infty)=\exp\bigg(-i\int_0^\infty \phi (x) \, dx\bigg) 
\neq \exp\bigg(-i\int_0^{-\infty} \phi(x) \, dx\bigg)
=\psi(-\infty), 
\end{equation} 
and since the Cayley transform maps $\pm\infty$ to the same point $1\in\bbT$, the function $\psi\circ\gamma^{-1}$ is discontinuous at the point $1\in\bbT$. Since functions in the Besov class $B_1^1(\bbT)$ are necessarily continuous, one concludes that 
$[\psi\circ\gamma^{-1},\sgn(D_0)]\notin\cB_1\big(L^2(\bbT)\big)$.
\end{proof}

\section{Some Useful Technicalities} \lb{sB}

The following lemma states a well-known fact, for which we have not been able to find a convenient reference. The 
symbol $B_p((a,b),\mnn{\bbR})$ denotes the Bochner $L^p$-space of all 
$\mnn{\bbC}$-valued functions on $(a,b) \subseteq \bbR$, that is,
\begin{align} 
&B_p((a,b),M^{m \times m}(\mathbb{C}))    \\
& \quad = \bigg\{F\colon (a,b) \to M^{m \times m}(\mathbb{C}) \, 
\text{measurable} \, \bigg|  
\int_{(a,b)} \|F(x)\|^p_{M^{m \times m}(\mathbb{C})} \, dx < \infty\bigg\}.   \no 
\end{align} 
In the following we denote by $P_{j,k}$, $j,k =1,\dots,m$, the projection onto the $j,k$-th element 
in a (block) matrix, such that
\begin{equation}
P_{j,k} P_{\ell,r} = \delta_{k, \ell} P_{j,r}, \quad j,k,\ell,r = 1,\dots,m.   \lb{Pjk} 
\end{equation}

\begin{lemma} \lb{matrixlemma}
Let $m \in \bbN$, $p \in [1,\infty) \cup \{\infty\}$, $(a,b) \subseteq \bbR$, and suppose that 
$F$ is an $m \times m$ matrix-valued function with complex-valued entries. Then, 
\begin{equation} 
F \in B_p((a,b),\mnn{\bbC}) \, \text{ if and only if } \, F\in \mnn{L^p(a,b)}.
\end{equation}  
In this case, the corresponding norms are equivalent.
\end{lemma}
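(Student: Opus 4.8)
Looking at this, the final statement is Lemma B.1 (labeled \texttt{matrixlemma}): an $m \times m$ matrix-valued function $F$ with complex entries lies in the Bochner space $B_p((a,b), M^{m\times m}(\mathbb{C}))$ if and only if each entry lies in $L^p(a,b)$, with equivalent norms. This is a routine fact about finite-dimensional matrix spaces.

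Let me sketch the proof.

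The plan is to exploit the fact that all norms on the finite-dimensional vector space $M^{m\times m}(\mathbb{C})$ are equivalent. Pick any fixed matrix norm $\|\cdot\|_{M^{m\times m}(\mathbb{C})}$ on $M^{m\times m}(\mathbb{C})$ and compare it with the norm $\|C\|_{\max} := \max_{1\le j,k\le m} |C_{j,k}|$ (or, equivalently, $\big(\sum_{j,k}|C_{j,k}|^2\big)^{1/2}$). By equivalence of norms in finite dimensions, there are constants $0 < c \le C < \infty$, depending only on $m$, such that $c\,\|C\|_{\max} \le \|C\|_{M^{m\times m}(\mathbb{C})} \le C\,\|C\|_{\max}$ for all $C \in M^{m\times m}(\mathbb{C})$.

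First I would establish measurability: $x \mapsto F(x)$ is (strongly/Bochner) measurable into the separable Banach space $M^{m\times m}(\mathbb{C})$ if and only if each scalar entry $x \mapsto F_{j,k}(x) = P_{j,k}\,F(x)$ is measurable. This follows since the coordinate projections $P_{j,k}$ are continuous linear functionals on the finite-dimensional space, so $F_{j,k} = P_{j,k}\circ F$ is measurable whenever $F$ is, while conversely $F = \sum_{j,k} F_{j,k}\,E_{j,k}$ (with $E_{j,k}$ the standard matrix units) is a finite sum of measurable scalar functions times fixed vectors, hence measurable.

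Next, for the integrability/norm equivalence: apply the pointwise norm equivalence to $C = F(x)$ and integrate. For $1 \le p < \infty$, raising to the $p$-th power and integrating over $(a,b)$ gives
\begin{align}
c^p \int_{(a,b)} \|F(x)\|_{\max}^p\,dx
&\le \int_{(a,b)} \|F(x)\|_{M^{m\times m}(\mathbb{C})}^p\,dx
\le C^p \int_{(a,b)} \|F(x)\|_{\max}^p\,dx.
\end{align}
Since $\|F(x)\|_{\max}^p = \max_{j,k} |F_{j,k}(x)|^p$ is comparable (with constants depending only on $m$) to $\sum_{j,k} |F_{j,k}(x)|^p$, the middle quantity is finite if and only if each $F_{j,k} \in L^p(a,b)$, and this chain of inequalities also yields the equivalence of the Bochner $L^p$-norm $\|F\|_{B_p} = \big(\int \|F(x)\|_{M^{m\times m}(\mathbb{C})}^p\,dx\big)^{1/p}$ with, say, $\max_{j,k}\|F_{j,k}\|_{L^p(a,b)}$ or $\big(\sum_{j,k}\|F_{j,k}\|_{L^p(a,b)}^p\big)^{1/p}$. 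For $p = \infty$ one argues identically with essential suprema in place of integrals: $\|F\|_{B_\infty} = \esssup_x \|F(x)\|_{M^{m\times m}(\mathbb{C})}$ is equivalent to $\esssup_x \|F(x)\|_{\max} = \max_{j,k}\|F_{j,k}\|_{L^\infty(a,b)}$.

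There is no real obstacle here; the only point requiring a word of care is that the statement of the Bochner space $B_p$ in the excerpt is written in terms of an unspecified matrix norm $\|\cdot\|_{M^{m\times m}(\mathbb{C})}$, so one must invoke equivalence of norms on the finite-dimensional space $M^{m\times m}(\mathbb{C})$ once at the outset to make the choice of matrix norm immaterial, and to make the resulting constants depend only on $m$ (and on $p$ only through the elementary inequality relating $\max_{j,k}|a_{j,k}|^p$ and $\sum_{j,k}|a_{j,k}|^p$, with the latter constant $m^2$ independent of $p$). Once that is said, the proof is a two-line application of monotonicity of the integral.
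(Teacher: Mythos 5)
Your proof is correct and is essentially the same argument as the paper's: the paper proves the norm comparison by the explicit pointwise bound $|F_{j,k}(x)|\le\|F(x)\|_{\mnn{\bbC}}$ in one direction and the decomposition $F=\sum_{j,k,\ell,r}P_{k,\ell}F_{\ell,j}P_{j,r}$ with the triangle inequality in the other, which is just a hands-on instance of the finite-dimensional norm equivalence you invoke, and the measurability step (entries from Bochner measurability via the pointwise bound, Bochner measurability from entries via the decomposition) matches yours as well.
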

\begin{proof}
We note that $F$ is a Bochner measurable $\mnn{\bbC}$-valued function if and only if each entry $F_{j,k}$, $j,k=1,\dots,m$, of the matrix $F$ is a measurable function on $(a,b)$. Indeed, since by the definition each Bochner measurable function $F$ can be approximated (in $\mnn{\bbC}$-norm) by step functions,  the inequality 
$|F_{j,k}(x)|\leq \|F(x)\|_{\mnn{\bbR}}$ for a.e.~$x \in (a,b)$, 
shows that Bochner measurability 
of $F:(a,b)\to \mnn{(a,b)}$ implies measurability of each function 
$F_{j,k}(\cdot)$, $j,k = 1,\dots,m$. On the other hand, by the fact 
$F =\sum_{j,k,\ell, r=1}^m P_{k,\ell} F_{\ell,j}P_{j,r}$, with $P_{j,k}$ the projections 
introduced in \eqref{Pjk}, one concludes Bochner measurability of $F$ from measurability 
of all $F_{j,k}$, $j,k=1,\dots,m$.

Next, let $F\in\mnn{L^p(a,b)}$. Then 
\begin{equation} 
\int_{(a,b)} |F_{j,k}(x)|^p\,dx \leq \int_{(a,b)} \|F(x)\|_{\mnn{\bbC}}^p\,dx < \infty,
\end{equation}   
that is, $F_{j,k}\in L^p(a,b)$ for all $j,k =1,\dots,m$. 
Conversely, 
\begin{align}
& \bigg(\int_{(a,b)} \|F(x)\|_{\mnn{\bbC}}^p\,dx\bigg)^{1/p}   \no \\
& \quad =\bigg(\int_{(a,b)} \bigg\|\sum_{j,k,\ell,r=1}^m P_{k,\ell}F_{\ell,j}(x)P_{j,r}\bigg\|_{\mnn{\bbC}}^p\,dx\bigg)^{1/p}   \no \\
& \quad \leq \sum_{j,k,\ell,r=1}^m\bigg(\int_{(a,b)} |F_{\ell,j}(x)|^p\,dx\bigg)^{1/p}<\infty.
\end{align}
\end{proof}

The next result guarantees that $F \in \mnn{L^{1}(\bbR)}$ 
is equivalent to $|F|^{1/2} \in \mnn{L^{2}(\bbR)}$. Here, as usual, we 
write $|F|=(F^*F)^{1/2}$ for matrices and bounded operators. The following result immediately follows from 
Lemma \ref{square_root} and standard properties of Bochner $L^p$-spaces.

\begin{corollary}\lb{square_root}
Let $m \in \bbN$, $p \in [1,\infty) \cup \{\infty\}$, $(a,b) \subseteq \bbR$.  
$F \in \mnn{L^p(\bbR)}$ if and only if  
$|F|^{1/2} \in \mnn{L^{2p}(\bbR)}$. In addition, 
$\big\|F^{1/2}\big\|_2^2\leq\ C \, \|F\|_1$. 
\end{corollary}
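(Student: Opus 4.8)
The statement to prove is Corollary~\ref{square_root}: for $F$ an $m\times m$ matrix-valued function on $(a,b)\subseteq\bbR$ and $p\in[1,\infty)\cup\{\infty\}$, one has $F\in\mnn{L^p(\bbR)}$ if and only if $|F|^{1/2}\in\mnn{L^{2p}(\bbR)}$, together with the quantitative bound $\||F|^{1/2}\|_2^2\le C\|F\|_1$. Let me think about how I would prove this.

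The plan is to reduce Corollary~\ref{square_root} to an elementary pointwise identity between a matrix and the square root of its modulus, and then transport that identity through Lemma~\ref{matrixlemma}.

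First I would record the following fact about a single matrix: for every $M\in M^{m\times m}(\bbC)$ one has $\bigl\||M|^{1/2}\bigr\|^2=\|M\|$, where $\|\cdot\|$ denotes the operator norm on $M^{m\times m}(\bbC)$. Indeed $|M|^{1/2}=(M^*M)^{1/4}$ is self-adjoint and nonnegative, so by the $C^*$-identity $\bigl\||M|^{1/2}\bigr\|^2=\bigl\|\,(|M|^{1/2})^*|M|^{1/2}\bigr\|=\bigl\||M|\bigr\|=\|(M^*M)^{1/2}\|$, and applying the $C^*$-identity once more gives $\|(M^*M)^{1/2}\|^2=\|M^*M\|=\|M\|^2$, whence $\bigl\||M|^{1/2}\bigr\|^2=\|M\|$. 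Since the map $M\mapsto|M|^{1/2}$ is continuous on $M^{m\times m}(\bbC)$ (the square root being continuous on the cone of nonnegative matrices), if $F$ is a matrix-valued function with measurable entries — hence Bochner measurable by Lemma~\ref{matrixlemma} — then $x\mapsto|F(x)|^{1/2}$ is again Bochner measurable.

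Next, applying this pointwise identity with $M=F(x)$ and integrating the $2p$-th power (with the obvious modification when $p=\infty$) yields
\[
\int_{(a,b)}\bigl\||F(x)|^{1/2}\bigr\|^{2p}\,dx=\int_{(a,b)}\|F(x)\|^{p}\,dx,
\]
i.e.\ $F\in B_p\bigl((a,b),M^{m\times m}(\bbC)\bigr)$ if and only if $|F|^{1/2}\in B_{2p}\bigl((a,b),M^{m\times m}(\bbC)\bigr)$, with $\bigl\||F|^{1/2}\bigr\|_{B_{2p}}^2=\|F\|_{B_p}$. Combining this with Lemma~\ref{matrixlemma} applied once to $F$ at exponent $p$ and once to $|F|^{1/2}$ at exponent $2p$ gives the stated equivalence $F\in\mnn{L^p(\bbR)}\Leftrightarrow|F|^{1/2}\in\mnn{L^{2p}(\bbR)}$ together with equivalence of the corresponding norms, since Lemma~\ref{matrixlemma} already provides the equivalence between the $\mnn{L^q}$-norm and the Bochner $B_q$-norm. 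Finally, for the quantitative bound $\bigl\||F|^{1/2}\bigr\|_2^2\le C\|F\|_1$ one takes $p=1$ in the displayed identity, obtaining $\bigl\||F|^{1/2}\bigr\|_{B_2}^2=\|F\|_{B_1}$, and invokes the norm equivalences from Lemma~\ref{matrixlemma} (with constant depending only on $m$) to convert this into the asserted inequality for whichever fixed matrix norm is used to define the $\mnn{L^p}$-norms.

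The only point that requires attention — such as it is — is bookkeeping of which norm on the finite-dimensional space $M^{m\times m}(\bbC)$ is in force at each step: the $C^*$-identity above is stated for the operator norm, whereas the matrix-valued $L^p$-norms in the statement may be defined entrywise, so one passes through the equivalence of all norms on $M^{m\times m}(\bbC)$, exactly as in the proof of Lemma~\ref{matrixlemma}; this equivalence is absorbed into the constant $C$. No genuine analytic obstacle arises.
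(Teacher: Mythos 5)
Your proposal is correct, and it is essentially the argument the paper has in mind: the paper offers no separate proof, asserting only that the corollary "immediately follows" from Lemma \ref{matrixlemma} and standard properties of Bochner $L^p$-spaces, which is precisely what you supply via the pointwise identity $\bigl\||M|^{1/2}\bigr\|^2=\|M\|$, measurability of $x\mapsto|F(x)|^{1/2}$, and the norm equivalences of Lemma \ref{matrixlemma}. No gaps; the bookkeeping of matrix norms you flag is exactly the point absorbed into the constant $C$.
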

Both Lemma \ref{matrixlemma} and Corollary \ref{square_root} extend to arbitrary measure spaces. 

\begin{remark}\lb{viamax}
We recall that any bounded operator $A$ on $L^2(\bbR)\otimes \bbC^m$ can be represented as 
a matrix $\{A_{j,k}\}_{j,k=1}^m$, where the entries $A_{j,k}$ are bounded operators on 
$L^2(\bbR)$. In particular, for arbitrary $p\geq 1$, 
\begin{align}
\|A\|_{\cB_p(L^2(\bbR)\otimes\bbC^m)} 
&= \bigg\|\sum_{j,k,\ell,r =1}^mP_{k,l}A_{lj}P_{j,r}\bigg\|_{\cB_p(L^2(\bbR)\otimes\bbC^m)} 
\leq \sum_{\ell,j=1}^m\|A_{\ell,j}\|_{\cB_p(L^2(\bbR))}   \no \\
& \leq\ C \, \max_{\ell,j=1,\dots,m}\|A_{\ell,j}\|_{\cB_p(L^2(\bbR))}. 
\end{align} 
${}$ \hfill $\diamond$
\end{remark}

For various trace class estimates we need a simple sufficient condition for a function on $\bbR$  to be contained in the space $\ell^p(L^2(\bbR))$, $1\leq p\leq 2$, introduced in \cite{BS69}. We recall that the space $\ell^p(L^2(\bbR))$, $1\leq p\leq 2$, consists of all functions such that 
\begin{equation} 
\|f\|_{\ell^p(L^2(\bbR))}=\bigg(\sum_{n\in\bbZ}\|f\chi_{[n-1,n]}\|_2^p\bigg)^{1/p}<\infty. 
\end{equation} 

\begin{lemma}\lb{Cwikel_par} If $f\in W^{1,p}(\bbR)\cap C_b(\bbR), 1\leq p<\infty$,  then $f\in \ell^p(L^2(\bbR))$ and
\begin{equation} 
\|f\|_{\ell^p(L^2(\bbR))}\leq\ C \, \|f\|_{1,p}. 
\end{equation} 
\end{lemma}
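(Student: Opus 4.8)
The plan is to establish the local $L^2$-summability estimate $\|f\|_{\ell^p(L^2(\bbR))} \le C \|f\|_{1,p}$ by working on each unit interval $[n-1,n]$, $n \in \bbZ$, and then summing. First I would fix $n \in \bbZ$ and estimate $\|f \chi_{[n-1,n]}\|_2$ in terms of the $W^{1,p}$-data of $f$ restricted to (a slight enlargement of) that interval. The natural tool is a Sobolev-type embedding on a bounded interval: for $f \in W^{1,p}(I)$ with $I$ of length comparable to $1$, one has $\|f\|_{L^\infty(I)} \le C(\|f\|_{L^p(I)} + \|f'\|_{L^p(I)}) = C\|f\|_{W^{1,p}(I)}$, where $C$ is independent of $n$ by translation invariance. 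Since $[n-1,n]$ has finite measure, $\|f\chi_{[n-1,n]}\|_2 \le \|f\|_{L^\infty([n-1,n])} \le C \|f\|_{W^{1,p}([n-1,n])}$.

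Next I would raise to the $p$-th power and sum over $n \in \bbZ$. Using $\|f\chi_{[n-1,n]}\|_2^p \le C^p \|f\|_{W^{1,p}([n-1,n])}^p \le C^p (\|f\|_{L^p([n-1,n])}^p + \|f'\|_{L^p([n-1,n])}^p)$ (here I am using that $(a+b)^p \le 2^{p-1}(a^p+b^p)$, absorbing the constant into $C$), summation gives
\begin{equation}
\sum_{n\in\bbZ}\|f\chi_{[n-1,n]}\|_2^p \le C^p \sum_{n\in\bbZ}\big(\|f\|_{L^p([n-1,n])}^p + \|f'\|_{L^p([n-1,n])}^p\big) = C^p\big(\|f\|_p^p + \|f'\|_p^p\big),
\end{equation}
since the intervals $[n-1,n]$, $n \in \bbZ$, tile $\bbR$ up to a null set. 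Finally, $\|f\|_p^p + \|f'\|_p^p \le (\|f\|_p + \|f'\|_p)^p = \|f\|_{1,p}^p$, so taking $p$-th roots yields $\|f\|_{\ell^p(L^2(\bbR))} \le C\|f\|_{1,p} < \infty$, which in particular shows $f \in \ell^p(L^2(\bbR))$.

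The hypothesis $f \in C_b(\bbR)$ plays an auxiliary role: it guarantees that $f$ genuinely is the continuous representative and that pointwise/$L^\infty$ statements on each interval make sense without fussing over modifications on null sets; the quantitative estimate itself comes entirely from the one-dimensional Sobolev embedding $W^{1,p} \hookrightarrow L^\infty$ on bounded intervals. I expect the only mildly delicate point to be making the embedding constant uniform in $n$ — but this is immediate from translation invariance of the interval $[n-1,n]$, or alternatively one can invoke the standard extension/fundamental-theorem-of-calculus argument: for $x, y \in [n-1,n]$, $|f(x)| \le |f(y)| + \int_{n-1}^n |f'(t)|\,dt$, then integrate in $y$ over $[n-1,n]$ and apply Hölder's inequality to both terms, which directly produces the bound with an $n$-independent constant. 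No genuine obstacle is anticipated; the statement is a routine consequence of one-dimensional Sobolev embedding combined with the tiling of $\bbR$ by unit intervals.
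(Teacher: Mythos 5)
Your proposal is correct and follows essentially the same route as the paper: the paper also reduces to the estimate $\sup_{[n,n+1]}|f| \le \big(\int_n^{n+1}|f|^p\big)^{1/p}+\big(\int_n^{n+1}|f'|^p\big)^{1/p}$ (obtained by picking, via continuity, a point where $|f|$ equals its $L^p$-average and then applying the fundamental theorem of calculus and H\"older, i.e.\ exactly the uniform local $W^{1,p}\hookrightarrow L^\infty$ bound you invoke and re-derive), and then sums $p$-th powers over the unit intervals tiling $\bbR$. No gaps; the argument is fine.
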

\begin{proof}
Since $f$ is continuous, for every $n\in\mathbb{Z}$, there exists  $x_n\in[n,n+1]$, such that
\begin{equation} 
|f(x_n)|=\bigg(\int_n^{n+1}|f(x)|^p\,dx\bigg)^{1/p}. 
\end{equation} 
For every $x\in[n,n+1]$, one has  
\begin{align}
|f(x)|&= \bigg|f(x_n)+\int_{x_n}^xf'(s)\,ds\bigg|\leq |f(x_n)|+\int_{x_n}^x|f'(s)|\,ds  \no \\
&\leq\bigg(\int_n^{n+1}|f(s)|^p \,ds\bigg)^{1/p}+\int_n^{n+1}|f'(s)|\,ds   \no \\
&\leq\bigg(\int_n^{n+1}|f(s)|^p \,ds\bigg)^{1/p}+\bigg(\int_n^{n+1}|f'(s)|^p \,ds\bigg)^{1/p}  \no \\
&\leq 2^{1-1/p}\bigg(\int_n^{n+1}|f(s)|^p \,ds+\int_n^{n+1}|f'(s)|^p \,ds\bigg)^{1/p}.
\end{align}
Thus,  
\begin{align}
& \|f\|_{\ell^p(L^2(\bbR))} \leq \|f\|_{\ell^p(L^\infty(\bbR))} 
=\sum_{n\in\mathbb{Z}}\sup_{x\in[n,n+1)}|f(x)|^p    \no \\
& \quad \leq \ C \, \sum_{n\in\mathbb{Z}}  \bigg(\int_n^{n+1}|f(x)|^p \,dx
+\int_n^{n+1}|f'(x)|^p \,dx\bigg)^{1/p}   \\
& \quad \leq \ C \, ( \|f\|_p+\|f'\|_p).  \qedhere
\end{align}
\end{proof}

We will also recall the following lemma, which, together with pertinent hints to the literature, 
can be found in \cite{GLMST11}.

\begin{lemma}\lb{so_L_p}
Let $p\in[1,\infty)$ and assume that $R,R_n,T,T_n\in\cB(\cH)$, $n\in\bbN$, satisfy
$\slim_{n\to\infty}R_n = R$  and $\slim_{n\to \infty}T_n = T$ and that
$S,S_n\in\cB_p(\cH)$, $n\in\bbN$, satisfy 
$\lim_{n\to\infty}\|S_n-S\|_{\cB_p(\cH)}=0$.
Then $\lim_{n\to\infty}\|R_n S_n T_n^\ast - R S T^\ast\|_{\cB_p(\cH)}=0$.
\end{lemma}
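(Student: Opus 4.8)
The final statement to be proved is Lemma \ref{so_L_p}, which asserts that strong convergence $R_n \to R$, $T_n \to T$ together with $\cB_p$-norm convergence $S_n \to S$ implies $\cB_p$-norm convergence $R_n S_n T_n^* \to R S T^*$. The plan is to split the difference into a telescoping sum, handle each piece by separating a $\cB_p$ factor from $\cB(\cH)$ factors, and invoke the uniform boundedness principle for the strongly convergent families.

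First I would write the standard three-term decomposition
\begin{align}
\begin{split}
R_n S_n T_n^* - R S T^* &= R_n (S_n - S) T_n^* + R_n S (T_n^* - T^*) \\
& \quad + (R_n - R) S T^*.
\end{split}
\end{align}
For the first term, use the ideal property $\|R_n (S_n - S) T_n^*\|_{\cB_p(\cH)} \leq \|R_n\|_{\cB(\cH)} \|S_n - S\|_{\cB_p(\cH)} \|T_n\|_{\cB(\cH)}$; since $\slim R_n = R$ and $\slim T_n = T$, the uniform boundedness principle gives $\sup_n \|R_n\|_{\cB(\cH)} < \infty$ and $\sup_n \|T_n\|_{\cB(\cH)} < \infty$, so this term tends to $0$ because $\|S_n - S\|_{\cB_p(\cH)} \to 0$. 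The third term $(R_n - R) S T^*$ requires a little more care: here $S T^* \in \cB_p(\cH)$ is fixed, and one uses the fact that right multiplication is continuous from the strong topology to the $\cB_p$-norm topology when the left factor converges strongly against a fixed $\cB_p$ operator. Concretely, approximate $S T^*$ in $\cB_p$-norm by a finite-rank operator $K$; then $\|(R_n - R) K\|_{\cB_p(\cH)} \to 0$ because on each of the finitely many rank-one summands $(R_n - R)$ converges strongly, while $\|(R_n - R)(S T^* - K)\|_{\cB_p(\cH)} \leq (\sup_n \|R_n\| + \|R\|) \|S T^* - K\|_{\cB_p(\cH)}$ is uniformly small.

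The middle term $R_n S (T_n^* - T^*)$ is the one combining both subtleties: $R_n$ is only uniformly bounded (not norm convergent), $S \in \cB_p(\cH)$ is fixed, and $T_n^* - T^* \to 0$ only strongly. I would handle it by first noting $\|R_n S (T_n^* - T^*)\|_{\cB_p(\cH)} \leq (\sup_n \|R_n\|_{\cB(\cH)}) \|S(T_n^* - T^*)\|_{\cB_p(\cH)}$, thereby reducing to showing $\|S(T_n^* - T^*)\|_{\cB_p(\cH)} \to 0$. Since $T_n \to T$ strongly, $T_n^* \to T^*$ weakly, but in fact one only needs: for fixed $S \in \cB_p(\cH)$, the map $X \mapsto S X$ from the strong-$*$ (or even the bounded-strong) topology into $\cB_p$-norm is continuous. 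Again approximate $S$ by a finite-rank $K$: $\|(S - K) (T_n^* - T^*)\|_{\cB_p(\cH)} \leq \|S - K\|_{\cB_p(\cH)} \sup_n \|T_n^* - T^*\|_{\cB(\cH)}$ is uniformly small, while $\|K (T_n^* - T^*)\|_{\cB_p(\cH)} \to 0$ since $K$ has finite rank and $(T_n^* - T^*)$ acts on its (finite-dimensional) range — more precisely, writing $K = \sum_{j=1}^N (\cdot, \phi_j)_{\cH} \eta_j$, we get $K(T_n^* - T^*) = \sum_{j=1}^N (\cdot, (T_n - T)\phi_j)_{\cH} \eta_j$, and $\|(T_n - T)\phi_j\|_{\cH} \to 0$ because $T_n \to T$ strongly.

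The main obstacle is the middle term, precisely because neither the left factor nor the right factor converges in norm; the finite-rank approximation of the fixed $\cB_p$ operator $S$ is the device that resolves it, trading the noncompact factors for a uniformly bounded error. Since $\cB_p(\cH)$ for $p \in [1,\infty)$ contains the finite-rank operators as a dense subspace, this approximation is legitimate, and assembling the three estimates via the triangle inequality completes the proof. (An alternative, slicker route would be to cite the known general principle — e.g. as recorded in \cite{GLMST11} — that multiplication $\cB(\cH) \times \cB_p(\cH) \times \cB(\cH) \to \cB_p(\cH)$ is jointly continuous when the outer factors are given the strong operator topology on bounded sets; the argument above is simply the elementary proof of that principle specialized to sequences.)
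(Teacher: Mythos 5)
Your proof is correct. Note that the paper does not actually prove Lemma \ref{so_L_p}: it is only recalled, with a pointer to \cite{GLMST11} ``together with pertinent hints to the literature'' (the underlying fact is the Gr\"umm-type convergence theorem, cf.\ \cite[Ch.~2]{Si05}). Your argument is precisely the standard elementary proof hiding behind that citation: the three-term telescoping decomposition, uniform boundedness of $\{R_n\}$ and $\{T_n\}$ via Banach--Steinhaus, and density of the finite-rank operators in $\cB_p(\cH)$ for $p<\infty$. The delicate point is indeed the middle term, since $\slim_{n\to\infty}T_n = T$ only gives weak convergence of $T_n^*$ to $T^*$; your device of approximating the fixed $\cB_p$ factor by a finite-rank operator and then rewriting $K(T_n^*-T^*)$ so that $T_n-T$ acts on the finitely many fixed vectors defining $K$ is exactly what restores strong convergence, and it is carried out correctly, with the error term controlled uniformly by $\sup_n\|T_n^*-T^*\|_{\cB(\cH)}<\infty$. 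One cosmetic remark: with the paper's convention that the scalar product is linear in the second argument, a rank-one operator should be written $(\phi_j,\cdot)_{\cH}\,\eta_j$ rather than $(\cdot,\phi_j)_{\cH}\,\eta_j$; this changes nothing in the estimates. So your proposal supplies a complete, self-contained proof of a statement the paper only quotes, which is a legitimate (and arguably preferable) alternative to deferring to \cite{GLMST11}.
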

Finally we note the following (known) fact.
The  principle value of $\frac{e^{-|x|}}{x}$ (in the sense of distributions), abbreviated by $\pv\frac{e^{-|x|}}{x}$, is a tempered distribution, and hence the convolution on right-hand side of the equality in Lemma \ref{pv_arctan} is well-defined. 
The result follows from using the fact that  $\arctan(\cdot)$ is bounded,  so we may regard it as a tempered distribution 
(see, e.g., \cite[Section I.3]{SW71}). We consider the principle value of $\frac{e^{-|x|}}{x}$ introduced by the equality 
\begin{equation}\lb{def_pv}
\pv \frac{e^{-|x|}}{x}(\eta)
=\lim_{\varepsilon\downarrow 0}\int_{|x|>\varepsilon}\frac{e^{-|x|}\eta(x)}{x}\, dx, \quad \eta\in S(\bbR).
\end{equation}
This is a tempered distribution since for arbitrary $\eta\in S(\bbR)$, 
\begin{align}
\pv \frac{e^{-|x|}}{x}(\eta)&=\lim_{\varepsilon\downarrow 0}\int_{\varepsilon<|x|<1}\frac{e^{-|x|}\eta(x)}{x} \, dx+\int_{|x|>1}\frac{e^{-|x|}\eta(x)}{x} \, dx   \no \\
&=\lim_{\varepsilon\downarrow 0}\int_{\varepsilon<|x|<1}\frac{e^{-|x|}(\eta(x)-\eta(0))}{x} \, dx+\eta(0)\lim_{\varepsilon\downarrow 0}\int_{\varepsilon<|x|<1}\frac{e^{-|x|}}{x} \, dx   \no \\
&\quad + \int_{|x|>1}\frac{e^{-|x|}\eta(x)}{x} \, dx, 
\end{align}
and since the next to last integral equals zero, 
\begin{equation} 
\bigg|\pv \frac{e^{-|x|}}{x}(\eta)\bigg|\leq\ C \, \big[\|\eta'\|_\infty+\|\eta\|_\infty\big]. 
\end{equation} 
Thus, by \cite[Section~1.3, Theorem~3.11]{SW71}, $\pv \frac{e^{-|x|}}{x}$ is a tempered distribution.

\section{The Gaiotto--Moore--Witten Model: A Counterexample}      \lb{sC} 

We briefly review a Dirac-type operator that arose recently in \cite{GMW15} in connection with 
Ginzburg--Landau models. We will consider the very simplest instance of the general theory 
developed in \cite{GMW15} and 
show that it represents a model that violates even the most basic of our trace class hypotheses 
and hence we dubbed it a ``counterexample''. 

First, consider a family of model operators of the form
\begin{equation} \lb{model2}
\wti A(t)= i \frac{d}{dx} \sigma_3 + V(t, \, \cdot \,) 
\begin{pmatrix}  0 & 0\\ 1 & 0 \end{pmatrix} 
 + \ol{V(t,\, \cdot \,)} \begin{pmatrix}  0 & 1\\ 0 & 0 \end{pmatrix}, \quad t \in \bbR,
\end{equation} 
acting in $L^2(\bbR) \otimes \bbC^2$. (Here $\sigma_3= \big(\begin{smallmatrix}  1 & 0 \\ 0& -1 \end{smallmatrix}\big)$ denotes a standard Pauli matrix.) 
Provided the complex-valued function $V$ on $\bbR^2$ is
chosen so that the asymptotes $\wti A_\pm$ exist in norm resolvent sense as $t \to \pm\infty$
then the theory in \cite{CGLS14} applies. 
Examples of operators of this form appear in studies of Andreev reflection in superconductivity
\cite{St96}.
While this class is quite interesting in its own right we will not further pursue the study of this general family 
$\big\{\wti A(t)\big\}_{t \in \bbR}$, but rather focus on a far simpler situation that illustrates
why strong assumptions of the kind described earlier in this paper are needed.

Returning to \cite{GMW15}, we show by using an  explicit particular case of the general situation from \cite{GMW15} that the  hypotheses of \cite{CGLS14} cannot hold for the examples considered there. The simplest form of the Ginzburg--Landau equation in one dimension is, 
\begin{equation} 
\xi^2 \psi'' = \psi^3 - \psi, 
\end{equation} 
where $\xi$ is a constant and we seek real-valued solutions on $\bbR$. 
A summary with references to further literature is in \cite{BY98}. The solutions
are given by elliptic functions, specifically, 
\begin{equation} 
\psi(x)=\big[2k^2(1+k^2)^{-1}\big]^{1/2} \, \mbox{sn}\big((x-x_0)(\xi(1+k^2)^{1/2})^{-1};k\big), 
\quad x \in \bbR,     \lb{sn} 
\end{equation} 
where $x_0 \in \bbR$, $k \in [0,1]$, are integration constants with $k$ being the elliptic modulus of the Jacobi elliptic function $\mbox{sn}(\, \cdot \,; \, \cdot \,)$, see\ \cite[Ch.\ 22]{OLBC10}. 

In \cite{GMW15} only solutions with well defined  asymptotic values at $\pm\infty$ are considered.
As 
$\mbox{sn}$ is periodic, it does not have well-defined asymptotics at $\pm\infty$, except in the limiting  case $k=1$, when the solution \eqref{sn} degenerates into (cf.\ \cite[p.~555]{OLBC10}), 
\begin{equation} \lb{tanh}
\psi(x)=\tanh\big((x-x_0)/\big(2^{1/2} \xi\big)\big), \quad x \in \bbR.
\end{equation} 
We will only consider this special case in the following discussion and for simplicity employ the normalization $x_0 = 0$ and $\xi = 2^{-1/2}$.

In \cite{GMW15} the solutions to the Ginzburg--Landau equation are used to define
an associated Dirac-type operator. 
Following \cite{GMW15} for this very simple situation, introduce a polynomial $W$ in one variable  (termed the `super-potential').
One also has a `boosted soliton' $\phi$, constructed as a function on $\bbR^2$
from a function $\psi$ on $\bbR$ that is  a solution of the Ginzburg--Landau
equation, by defining $\phi$ on $\bbR^2$ by 
\begin{equation} \lb{boost}
\phi(t, x)=\psi(x\cos(\theta) + t \sin(\theta)), \quad (t,x) \in \bbR^2, 
\end{equation} 
where $\theta \in (0,\pi) \backslash \{\pi/2\}$ is a parameter (called the `boost variable' in 
\cite{GMW15}). (We chose $\theta \neq \pi/2$ to avoid the case that $\phi(t,x)$ becomes 
$x$-independent.) Next we  form  the composition $W''\circ \phi \equiv W''(\phi)$. 

The existence of well-defined pointwise limits 
$\psi_\pm$ for the solution (\ref{tanh}) at spatial infinity and the definition of $\phi$ in 
 (\ref{boost}) imply that
 as $t\to\pm\infty$, pointwise limits $\phi_\pm$, exist 
and that these are independent of the variable $x$.
In turn these fix $W_\pm= W(\phi_\pm)$. 

In \cite{GMW15} the notation $\zeta$ is used for the  additional 
parameter  which is the product of $e^{i\theta}$ with the phase of $W_+-W_-$
and the authors study a special case of the operator (\ref{model2})
by  introducing the family of Dirac-type operators
\begin{align} 
\begin{split} 
& \, A(t)= i \frac{d}{dx} \sigma_3 +  2^{-1} \ol{\zeta}W''(\phi(t, \, \cdot \, )) 
\begin{pmatrix}  0 & 0\\ 1 & 0 \end{pmatrix}  
+ 2^{-1} \zeta \ol{W''(\phi(t, \, \cdot \,))} \begin{pmatrix}  0 & 1\\ 0 & 0 \end{pmatrix},     \\[1mm] 
& \dom(A(t)) = W^{1,2}(\bbR) \otimes \bbC^2, \quad  t \in \bbR, 
\end{split} 
\end{align} 
is self-adjoint in $L^2(\bbR) \otimes \bbC^2$.  

We emphasize that we specialise here to the $\phi$-constructed from the solution $\psi$ in 
(\ref{tanh}) in order to illustrate the general case.

If one treats $t \in \bbR$ as a flow parameter (for the spectral flow for example) then for 
fixed $(x, \theta) \in \bbR \times [(0,\pi)\backslash\{\pi/2\}]$, one infers the pointwise limits
\begin{equation} 
\lim_{t \to \pm \infty}\phi(t, x) = \lim_{t \to \pm \infty}\psi (x \cos(\theta) + t \sin(\theta)) = \pm 1. 
\lb{C.7} 
\end{equation} 
These limits are not uniform in $(x, \theta) \in \bbR \times [(0,\pi)\backslash\{\pi/2\}]$ (e.g.,  
choose $x = - \tanh(\theta) t$). 

The next step is, following \cite{CGLS14}, to introduce the model operator 
$\bsD_\bsA= (d/dt) + \bsA$ acting in $L^2(\bbR; L^2(\bbR) \otimes \bbC^2)$: 
If one chooses the function $W$ to be a cubic (say, $W(x) =x^3/3$) then for this example the parameter $\zeta=e^{i\theta}$ and the operator  $A(t)$, $t \in \bbR$, in $L^2(\bbR) \otimes \bbC^2$ 
becomes (for $\psi$ as in \eqref{tanh}),
\begin{equation}  
A(t)= i \frac{d}{dx} \sigma_3 
+  \phi(t,\, \cdot \,) \begin{pmatrix}  0 & e^{i\theta} \\ e^{-i\theta} & 0 \end{pmatrix}, 
\quad \dom(A(t)) = W^{1,2}(\bbR) \otimes \bbC^2, \;  t \in \bbR.
\end{equation} 
The asymptotic operators $A_\pm$ in $L^2(\bbR) \otimes \bbC^2$ are then given by
\begin{equation} 
A_{\pm} = i \frac{d}{dx} \sigma_3 \pm  
\begin{pmatrix} 0 & e^{i\theta}\\ e^{-i\theta} & 0 \end{pmatrix},   \quad 
\dom(A_{\pm}) = W^{1,2}(\bbR) \otimes \bbC^2.  
\end{equation} 
They are unitarily equivalent as $A_-=\sigma_3A_+\sigma_3$. Employing the identity,
\begin{align} 
& (A(t) - z I)^{-1} - (A_{\pm} - z I)^{-1}   \\ 
& \quad = - (A(t) - z I)^{-1} [\phi(t, \, \cdot \,) \mp 1] \begin{pmatrix} 
0 & e^{i \theta} \\ e^{- i \theta} & 0 \end{pmatrix} 
(A_{\pm} - z I)^{-1}, \quad t \in \bbR, \; z \in \bbC \backslash \bbR,    \no 
\end{align} 
and the elementary estimate 
$\big\|(S - z I)^{-1}\big\|_{\cB(\cH)} \leq |\Im(z)|^{-1}$, $z \in \bbC \backslash \bbR$, for any 
self-adjoint operator $S$ in $\cH$, proves strong resolvent convergence of  $A(t)$ to 
$A_{\pm}$ as $t \to \pm \infty$. (However, since $\sup_{(x,t) \in \bbR^2} |\phi(t,x) -1| = 1$, 
one does not obtain norm resolvent convergence in this example as $t \to \pm \infty$.) 

Since, 
\begin{equation} 
A_\pm^2= \bigg(- \frac{d^2}{dx^2} + I\bigg) I_2, \quad 
\dom \big(A_{\pm}^2\big) = W^{2,2}(\bbR) \otimes \bbC^2,  
\end{equation}
one concludes that 
\begin{equation} 
A_\pm^{-1} \in \cB\big(L^2(\bbR) \otimes \bbC^2\big),    \lb{C.12}
\end{equation}  
and hence $A_{\pm}$ are Fredholm in $L^2(\bbR) \otimes \bbC^2$. Moreover,
the fact,
\begin{align} 
\begin{split} 
(A_+ - z I)^{-1} - (A_- - z I)^{-1} = - 2 (A_+ - z I)^{-1}  
\begin{pmatrix} 0 & e^{i \theta} \\ e^{- i \theta} & 0 \end{pmatrix} (A_- - z I)^{-1},&    \\
z \in \bbC \backslash \bbR,&  
\end{split} 
\end{align}
and differentiating this identity with respect to $z$ implies that 
\begin{equation}
\big[(A_+ - z I)^{-r} - (A_- - z I)^{-r}\big] \notin \cB_{\infty}\big(L^2(\bbR) \otimes \bbC^2\big), 
\quad r \in \bbN, \; z \in \bbC \backslash \bbR.
\end{equation} 

Noticing that 
\begin{equation}
A(t)^2 = \bigg(- \frac{d^2}{dx^2} + I\bigg) I_2 + \big[\phi(t, \, \cdot \,)^2 - 1\big] I_2 
+  i \phi_x(t, \, \cdot \,) \begin{pmatrix} 0 & e^{i\theta}\\ - e^{-i\theta}  & 0 \end{pmatrix}, 
\quad t\in \bbR,    \lb{C.15} 
\end{equation} 
and using the fact that the functions in the zeroth-order terms of $A(t)^2$ vanish rapidly at spatial infinity, one concludes that 
\begin{align} 
\begin{split} 
& \bigg(-\frac{d^2}{dx^2} +I\bigg)^{-1} I_2  
\bigg[\big[\phi(t, \, \cdot \, )^2-1\big] I_2 +  i \phi_x(t,\,\cdot\,) 
\begin{pmatrix}  0 & e^{i\theta}\\ - e^{-i\theta}  & 0 \end{pmatrix}\bigg]   \\
& \quad \in \cB_{\infty}\big(L^2(\bbR) \otimes \bbC^2\big), \quad t \in \bbR, \; 
\theta \in (0,\pi) \backslash \{\pi/2\}, 
\end{split}
\end{align} 
is compact. Hence, $\big(-\frac{d^2}{dx^2} +I \big)^{-1} I_2$ is a parametrix for $A(t)^2$, $t\in\bbR$. 
It follows that both $A(t)^2$ and hence $A(t)$, $t \in \bbR$, are Fredholm, and hence, the spectral flow along the path $\{A(t)\}_{t \in \bbR}$ is well-defined. (We emphasize that 
$\theta \in (0,\pi) \backslash \{\pi/2\}$ throughout this appendix.)  

 Relation \eqref{C.15} and the asymptotic behavior \eqref{C.7} of $\phi(t,x)$ as $|x|\to\infty$ 
also proves that 
\begin{equation}
\sigma_{\rm ess} \big(A(t)^2\big) = [1,\infty), \quad t \in \bbR,    \lb{C.17} 
\end{equation}
and hence 
\begin{equation}
\sigma_{\rm ess}(A(t)) = (-\infty,-1] \cup [1,\infty), \quad t \in \bbR.   \lb{C.18}
\end{equation}

Next we prove that $A(t)$ has a one dimensional kernel for any real $t \in \bbR$. Indeed, 
\begin{equation} 
A(t)\psi=0, \quad \psi (t,\,\cdot\,) = (\psi_1(t,\,\cdot\,), \, \psi_2(t,\,\cdot\,))^\top 
\in W^{1,2}(\bbR) \otimes \bbC^2, \;  t \in \bbR,    \lb{C.19} 
\end{equation} 
is equivalent to
\begin{align} 
\begin{split} 
 i \psi_{1,x}(t,x) + e^{i\theta} \tanh(x\cos\theta +t\sin\theta) \psi_2(t,x)&= 0,   \\
 -i \psi_{2,x}(t,x) + e^{-i\theta}\tanh(x\cos\theta +t\sin\theta) \psi_1(t,x) &= 0.   \lb{C.20}
\end{split} 
\end{align}
This yields 
\begin{align} 
\psi_1(t,x) &= \begin{cases} 
[\cosh(x\cos(\theta) + t \sin(\theta))]^{-1/\cos(\theta)}, & \theta \in (0, \pi/2), \\
[\cosh(x\cos(\theta) + t \sin(\theta))]^{1/\cos(\theta)}, & \theta \in (\pi/2,\pi),    \lb{C.21} 
\end{cases}   \\ 
\psi_2(t,x) &= \begin{cases} ie^{-i\theta}\psi_1(t,x), & \theta \in (0, \pi/2), \\ 
- ie^{-i\theta}\psi_1(t,x), & \theta \in (\pi/2,\pi),       \lb{C.22} 
\end{cases} 
\quad (t,x) \in \bbR^2. 
\end{align}
Since $A(t)$, $t \in \bbR$ is in the limit point case at $x = \pm \infty$, there can be no second, 
linearly independent $L^2(\bbR) \otimes \bbC^2$-solution of $A(t) \psi = 0$, and hence $0$ is a 
simple eigenvalue of $A(t)$ for all $t \in \bbR$. In particular, the kernel of $A(t)$ is given by
\begin{equation}
\ker(A(t)) = {\rm lin.span} \big\{\psi (t,\,\cdot\,) = (\psi_1(t,\,\cdot\,), \, \psi_2(t,\,\cdot\,))^\top\big\}, 
\quad t \in \bbR,   \lb{C.23} 
\end{equation}
with $\psi_j$ given by \eqref{C.19}, \eqref{C.20}. 

Since by \eqref{C.18}, $A(t)$ has the essential spectral gap $(-1,1)$ and an isolated eigenvalue $0$ for all $t \in \bbR$, the spectral flow for the family $\{A(t)\}_{t \in \bbR}$ is actually zero.  

One observes that the existence of an eigenvalue zero of $A(t)$ for all $t \in \bbR$, yet the fact 
that $\ker(A_{\pm}) = \{0\}$ by \eqref{C.12}, underscores that $A(t)$ cannot converge to $A_{\pm}$ 
in norm resolvent case as $t \to \pm \infty$. 

It is straightforward to check that 
\begin{align} 
\begin{split} 
& \, \bsH_j= \bsH_0+2\bsB\bsA_- +[\bsA_-,\bsB] +\bsB^2+(-1)^j\bsB',     \\ 
&\dom(\bsH_j) = W^{2,2}(\bbR^2) \otimes \bbC^2, \; j=1,2, 
\end{split} 
\end{align} 
is self-adjoint in $L^2(\bbR^2) \otimes \bbC^2$, where 
\begin{align} 
& \bsH_0 = - \bigg(\frac{\partial^2}{\partial t^2} + \frac{\partial^2}{\partial x^2} +I\bigg) I_2, \quad 
\dom(\bsH_0) = W^{2,2}(\bbR^2) \otimes \bbC^2, \\ 
& B(t) = [\phi(t,\, \cdot \,) +1] \begin{pmatrix} 0 & e^{i\theta}\\ e^{-i\theta}  & 0 \end{pmatrix},  
\quad t \in \bbR,   \\
& B(t)^2= [\phi(t,\, \cdot \,)+1]^2{I_2}, \quad
B'(t) = \phi_t(t,\, \cdot \,)
\begin{pmatrix} 0 & e^{i\theta} \\ e^{-i\theta} & 0 \end{pmatrix}, \quad t \in \bbR,   \lb{C.27} \\
& [A_-, B(t)] = i \phi_x(t,\, \cdot \,) \begin{pmatrix} 0 &- e^{i\theta}\\ e^{-i\theta} & 0 \end{pmatrix} 
+ 2i [\phi(t,\, \cdot \,)+1] \begin{pmatrix} 0 &- e^{i\theta}\\ e^{-i\theta} & 0 \end{pmatrix}\frac
{\partial}{\partial x}.      \lb{C.28}
\end{align} 

Thus, 
\begin{align}
& \|B'(t)\|_{\cB(L^2(\bbR) \otimes \bbC^2)} = \|\phi_t(t,\, \cdot \,)]\|_{L^{\infty}(\bbR;dx)}   \no \\
& \quad = {\rm ess.sup}_{x \in \bbR} \big|\sin(\theta) [\cosh(x \cos(\theta) + t \sin(\theta))]^{-2}\big| 
\no \\
& \quad = \sin(\theta), \quad \theta \in (0,\pi/2) \cup (\pi/2,\pi),  
\end{align}
is independent of $t \in \bbR$, and hence hypothesis \eqref{intB'a} is clearly violated. In 
addition, since $B_+$ is  the operator of multiplication by a constant matrix, 
Hypothesis \ref{h3.4}\,$(ii)$ cannot hold as well. 

However, strictly speaking, in order to test the applicability of \cite[Theorem~2.6]{CGPST15} (or
\cite[Corollary~8.4]{GLMST11}) to decide the Fredholm property of $\bsD^{}_{\bsA}$,  we should 
compare with the relative trace class hypotheses employed in \cite{CGPST15} and 
\cite{GLMST11} which reads, 
\begin{equation}
\big\|B'(\cdot) (|A_-| + I)^{-1}\big\|_{\cB_1(L^2(\bbR) \otimes \bbC^2)} \in L^1(\bbR; dt).     \lb{C.30}
\end{equation} 
From the outset it is clear from \cite[Remark~(a) on p.~39]{Si05} that  
\begin{equation}
B'(t) (|A_-| + I)^{-1} \notin \cB_1\big(L^2(\bbR) \otimes \bbC^2\big), \quad t \in \bbR,
\end{equation}
since $(| \cdot| + 1)^{-1} \notin L^1(\bbR)$, rendering condition \eqref{C.30} a moot point. (Here we ignored the $2 \times 2$ matrix structure in $B'(\cdot) (|A_-| + I)^{-1}$ as the latter represents a unitary matrix in $\bbC^2$.) Actually, \eqref{C.30} does not work in the Hilbert--Schmidt context either as an application of the Fourier transform and Plancherel's identity yield
\begin{equation}
\|f(X) g(- i \nabla)\|_{\cB(L^2(\bbR^n)} = (2 \pi)^{- n/2} \|f\|_{L^2(\bbR^n)} \|g\|_{L^2(\bbR^n)}, 
\quad f, g \in L^2(\bbR^n),  
\end{equation}
where $f(X)$ denotes the operator of multiplication by $f(\cdot)$ in $L^2(\bbR^n)$. Thus, one obtains 
\begin{align}
& \big\|B'(\cdot) (|A_-| + I)^{-1}\big\|_{\cB_2(L^2(\bbR) \otimes \bbC^2)}^2 = (2\pi)^{-1} 
\|\phi_t(t, \, \cdot \,)\|_{L^2(\bbR)}^2 \, \big\|(|\cdot| + 1)^{-1}\big\|_{L^2(\bbR)}^2   \no \\
& \quad = \pi^{-1} [\sin(\theta)]^2 \int_{\bbR} dx \, [\cosh(x \cos(\theta) + t \sin(\theta))]^{-4}   \no \\ 
& \quad = \f{4}{3 \pi} \f{[\sin(\theta)]^2}{|\cos(\theta)|},  \quad \theta \in (0,\pi/2) \cup (\pi/2,\pi), 
\end{align}
is again independent of $t \in \bbR$. Thus, we cannot decide at this point in time whether or not 
$\bsD^{}_{\bsA}$ is a Fredholm operator. (If $\bsD^{}_{\bsA}$ were Fredholm, a simple application of the homotopy invariance of the Fredholm index would prove that its index equals zero. Indeeed, upon 
replacing $B(\cdot)$ by $s B(\cdot)$ with $ s \in (0,1]$, observing that the index is clearly zero for 
$0 < s$ sufficiently small, would prove this claim.)

Next, one notes that   
\begin{equation} 
(\bsH_2-z\bsI)^{-1} - (\bsH_1-z\bsI)^{-1} = (\bsH_2-z\bsI)^{-1}2 \bsB'(\bsH_1-z\bsI)^{-1}, 
\quad z \in \bbC \backslash \bbR,    \lb{2.18}
\end{equation}  
with $\bsB'$ the operator of multiplication operator on $L^2(\bbR^2) \otimes \bbC^2$ by a 
$2 \times 2$ matrix whose off-diagonal entries are essentially equal to the  function 
$\phi_t(t, \, \cdot \,)$ (cf.\ \eqref{C.27}). However, since the latter is a function of $(t,x)$ only via 
the combination $x\cos(\theta)+t\sin(\theta)$, where $\theta \in (0,\pi) \backslash \{\pi/2\}$, this 
means that the resolvent difference \eqref{2.18} cannot be trace class. In fact, choosing 
$|z| > 0$ sufficiently large, applying a Neumann series of $(\bsH_j - z\bsI)^{-1}$, $j=1,2$, in 
terms of $(\bsH_0 - z\bsI)^{-1}$, and investigating 
the integral kernel of $(\bsH_0 - z\bsI)^{-1}2 \bsB' (\bsH_0 - z\bsI)^{-1}$, readily proves that the 
latter is not even compact. Once more differentiation with respect to $z$ then implies  
\begin{equation} 
\big[(\bsH_2-z\bsI)^{-r} - (\bsH_1-z\bsI)^{-r}\big] \notin 
\cB_{\infty}\big(L^2(\bbR^2) \otimes \bbC^2\big), 
\quad r \in \bbN, \; z \in \bbC \backslash \bbR. 
\end{equation}  

So the Witten index of Section \ref{WI_section} is not defined, and neither are the spectral shift functions for the pairs $(A_+, A_-)$ and $(\bsH_2, \bsH_1)$. This demonstrates the existence of an elementary example which is 
not amenable to the techniques discussed in the bulk of this paper. Of course, this is not 
really surprising as no single technique based on scattering theoretic concepts can be expected 
to handle all Fredholm, let alone, non-Fredholm, situations.  

\medskip 
 
\noindent 
{\bf Acknowledgments.}  We are indebted to the anonymous referee for a careful reading 
of our manuscript and for his numerous and helpful suggestions. We thank Greg Moore for 
showing us a preliminary version of the manuscript \cite{GMW15}.
A.C., F.G., D.P., and F.S.\ thank the Erwin Schr\"odinger International 
Institute for Mathematical Physics (ESI), Vienna, Austria, for funding support 
for this collaboration in form of a Research-in Teams project, ``Scattering Theory 
and Non-Commutative Analysis'' for the duration of June 22 to July 22, 2014. 
F.G. and G.L.\ are indebted to Gerald Teschl and the ESI for a kind invitation to visit the 
University of Vienna, Austria, for a period of two weeks in June/July of 2014. 
A.C., G.L., and F.S.\ gratefully acknowledge financial support from the Australian Research 
Council. A.C.\ also thanks the Alexander von Humboldt 
Stiftung and colleagues at the University of M\"unster. 

 
\end{document}